\DeclareFontFamily{U}{min}{}
\DeclareFontShape{U}{min}{m}{n}{<-> udmj30}{}
\newcommand{\yo}{\!\text{\usefont{U}{min}{m}{n}\symbol{'210}}\!}
\tikzset{
	rot90/.style={anchor=south, rotate=90, inner sep=.5mm}
}
\tikzset{
	rot45/.style={anchor=south, rotate=-45, inner sep=.5mm}
}
\theoremstyle{definition}
\newtheorem{nul}{}[section]
\newtheorem{dfn}[nul]{Definition}
\newtheorem{rmk}[nul]{Remark}
\newtheorem{cnstr}[nul]{Construction}
\newtheorem{ntn}[nul]{Notation}
\newtheorem{exm}[nul]{Example}
\newtheorem{wrn}[nul]{Warning}
\newtheorem{qst}[nul]{Question}
\newtheorem*{dfn*}{Definition}
\newtheorem*{axm*}{Axiom}
\newtheorem*{ntn*}{Notation}
\newtheorem*{exm*}{Example}
\newtheorem*{exr*}{Exercise}
\newtheorem*{int*}{Intuition}
\newtheorem*{qst*}{Question}
\newtheorem*{rmk*}{Remark}
\theoremstyle{plain}
\newtheorem{thm}[nul]{Theorem}
\newtheorem{prop}[nul]{Proposition}
\newtheorem{lem}[nul]{Lemma}
\newtheorem{cor}[nul]{Corollary}
\newtheorem*{thm*}{Theorem}
\newtheorem*{prop*}{Proposition}
\newtheorem*{cor*}{Corollary}
\newtheorem*{lem*}{Lemma}
\newtheorem*{cnj*}{Conjecture}
\newcommand\xqed[1]{%
	\leavevmode\unskip\penalty9999 \hbox{}\nobreak\hfill
	\quad\hbox{#1}}
\newcommand\tqed{\xqed{$\triangleleft$}}
\title{\texorpdfstring{$c$}{c}-structures and trace methods beyond connective rings}
\author{Ishan Levy}
\address{Department of Mathematics, Institute of Advanced Studies, USA}
\email{ishanl@ias.edu}
\author{Vladimir Sosnilo}
\address{
RIKEN iTHEMS, Wako, Saitama 351-0198, Japan
}
\email{\href{mailto:vsosnilo@gmail.com}{vsosnilo@gmail.com}}
	\newcommand{\NB}[1]{\todo[color=gray!40]{#1}}
	\newcommand{\TODO}[1]{\todo[color=red]{#1}}
	\newcommand{\NB}[1]{}
	\newcommand{\TODO}[1]{}	
	\renewcommand{\todo}[1]{}
	\renewcommand{\todo}[1]{}
\date{\today}
\begin{document}

\begin{abstract}
	We introduce the notion of a $c$-category, which is a kind of category whose behaviour is controlled by connective ring spectra. More precisely, 
	any $c$-category admits a finite step resolution by categories of compact modules over 
	connective ring spectra. We introduce nilpotent extensions of $c$-categories, and show that they induce isomorphisms on truncating invariants, such as the fiber of the cyclotomic trace map. We show that for many stacks, the category of perfect complexes is naturally a $c$-category and 
	deduce a generalization of the Dundas--Goodwillie--McCarthy theorem to such stacks. 
\end{abstract}

\maketitle{}
\tableofcontents

\section*{Introduction}

There are many results in homological algebra and homotopy theory that require connectivity hypotheses in order to work. 
As an example, we can consider the Dundas--Goodwillie--McCarthy theorem, a fundamental result in the algebraic K-theory of ring spectra, which asserts in particular that the natural square
\begin{center}
	\begin{tikzcd}
		\K(R) \ar[r]\ar[d] &\TC(R) \ar[d]\\
		\ar[r] \K(S)& \TC(S)
	\end{tikzcd}
\end{center}
is pullback for a map of connective ring spectra $R \to S$ that is a nilpotent extension, i.e. it is surjective on 
$\pi_0$ with nilpotent kernel. Because $\TC$ is often much more accessible a priori than $K$-theory, using the nilpotent extension $R \to \pi_0R$, this essentially 
reduces the study of $\K$-theory of $R$ to that of $\pi_0R$ and the study of $\TC$. 

Many categories which one would like to understand do not fit into the framework of connective rings. For example, quasi-coherent sheaves on a scheme or algebraic stack that isn't an affine scheme do not, and many categories in stable homotopy theory such as the $K(n)$-local category also do not. 
In this paper, we introduce the notion of a \textit{$c$-structure} on a stable category 
$\cC$ (Definition~\ref{dfn:c_structure}), which is a type of structure that encodes a weak connectivity\footnote{$c$ stands 
for connectivity.} property of $\cC$ that allows one to extend many results that are true for connective rings to $\cC$. 
We call a stable category with a $c$-structure a \textit{$c$-category}. We list some properties of $c$-categories. 
\begin{enumerate}
	\item Any bounded weighted category in the sense of Bondarko is a $c$-category.
	\item For a $c$-category $\cC$, $\Ind(\cC)$ admits a weight structure, which does not necessarily restrict to $\cC$. 
	Moreover, every object of $\cC$ is bounded in this weight structure, i.e. we have the inclusion $\cC \subset \Ind(\cC)^{\mathrm{b}}$. 
	\item For large enough cardinals $\kappa$ we also have the embedding $\cC \to \Ind(\cC)^{\mathrm{b},\kappa}$ into $\kappa$-small bounded objects, and the quotient 
	\[
 		\frac{\Ind(\cC)^{\mathrm{b},\kappa}}{\cC}
	\]
	is again a $c$-category. Repeating this procedure finitely many times produces a bounded weighted category. 
\end{enumerate}
The number of steps needed for the procedure above to produce a bounded weighted category is called the \textit{width} of a 
$c$-category.

We now explain how trace methods can be extended to $c$-categories. Recall that a localizing invariant $E$ is truncating if it 
induces an equivalence $E(R) \to E(S)$ for $R \to S$ a nilpotent extension of connective rings. For example, the 
Dundas--Goodwillie--McCarthy theorem is equivalent to the fact that the functor $\K^{\inv} = \fib(\K \to \TC)$ is a truncating 
invariant. 
Recall from \cite[Definition~5.1.1]{nilpotentextns} that a functor $f:\cC \to \cD$ of bounded weighted categories is a 
nilpotent extension if it is surjective on equivalence classes of objects in the weight heart, morphisms between them, 
and the kernel ideal of morphisms is nilpotent. 
\begin{thm}[{\ref{thm:truncating_invariants}}]\label{thm:nilpintro}
	If $f:\cC \to \cD$ is a map of $c$-categories such that $\Ind(\cC)^{\mathrm{b}} \to \Ind(\cD)^{\mathrm{b}}$ is a nilpotent extension, then $f$ induces an isomorphism on all truncating invariants.
\end{thm}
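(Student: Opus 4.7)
The plan is to induct on the width of the $c$-category $\cC$. In the base case of width zero, $\cC$ and $\cD$ are themselves bounded weighted categories, so the hypothesis reduces to the assertion that $\cC \to \cD$ is a nilpotent extension of bounded weighted categories, and the conclusion is the main theorem of \cite{nilpotentextns}.

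For the inductive step, fix a regular cardinal $\kappa$ large enough that property (3) of the introduction applies to both $\cC$ and $\cD$, and consider the commutative diagram of Verdier sequences of $c$-categories
\[
\begin{tikzcd}
\cC \ar[r] \ar[d] & \Ind(\cC)^{\mathrm{b},\kappa} \ar[r] \ar[d] & \Ind(\cC)^{\mathrm{b},\kappa}/\cC \ar[d] \\
\cD \ar[r] & \Ind(\cD)^{\mathrm{b},\kappa} \ar[r] & \Ind(\cD)^{\mathrm{b},\kappa}/\cD,
\end{tikzcd}
\]
in which the middle terms are bounded weighted categories and the rightmost terms are $c$-categories of width strictly smaller than that of $\cC$. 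Any truncating invariant $E$ is in particular localizing, so applying $E$ produces a map of cofiber sequences of spectra, and by the two-out-of-three property it suffices to show that the middle and right vertical maps become equivalences after $E$.

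For the middle column I would verify that $\Ind(\cC)^{\mathrm{b},\kappa} \to \Ind(\cD)^{\mathrm{b},\kappa}$ remains a nilpotent extension of bounded weighted categories, inheriting the surjectivity on weight-heart objects and morphisms and the nilpotency of the kernel ideal from the hypothesis on $\Ind(\cC)^{\mathrm{b}} \to \Ind(\cD)^{\mathrm{b}}$ by restriction to $\kappa$-small objects; the base case then applies. For the right column I would verify that $\Ind(\Ind(\cC)^{\mathrm{b},\kappa}/\cC)^{\mathrm{b}} \to \Ind(\Ind(\cD)^{\mathrm{b},\kappa}/\cD)^{\mathrm{b}}$ is again a nilpotent extension, so that the inductive hypothesis applies to the quotient $c$-categories. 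This last step is the main obstacle: it requires comparing the weight structure on $\Ind$ of the Verdier quotient with the Verdier quotient of $\Ind$, and carefully tracking the weight heart together with its ideal of morphisms through the quotient construction. Once this compatibility is established, the inductive step closes and the theorem follows.
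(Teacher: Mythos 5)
Your overall strategy --- resolve $\cC$ by the quotient $\Ind(\cC)^{\mathrm{b},\kappa}/\cC$, induct on width, and reduce to the bounded weighted case --- is the same as the paper's (\Cref{cor:resolution_full} and \Cref{prop:motivic_resolution}), but two steps are genuinely broken or missing. First, your base case is wrong as stated: a $c$-category of width $0$ is \emph{not} known to be a bounded weighted category; the paper raises exactly this as \Cref{question:width0} and suspects the answer is negative. For this reason the paper's resolution always has length $N=\max(n,1)$: even at width $\le 1$ one quotient step is performed, and it is only the quotient $\cC^{(1)}$ (a heart category of cohomological dimension $0$) that is a bounded weighted category to which the theorem of \cite{nilpotentextns} applies. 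Relatedly, your treatment of the middle column is unnecessary: $\Ind(\cC)^{\kappa,\mathrm{b}}$ admits countable direct sums of uniformly bounded objects, so every localizing invariant vanishes on it by the Eilenberg swindle and the middle vertical map is an equivalence for free; this is how the paper obtains $\Sigma E(\cC)\simeq E(\cC^{(1)})$ and ultimately $\Sigma^N E(\cC)\simeq E(\mathrm{Perf}(R_\cC))$ for a connective ring $R_\cC$.

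Second, the step you defer as ``the main obstacle'' --- that $\cC^{(1)}\to\cD^{(1)}$ is again a nilpotent extension --- is the technical heart of the whole theorem, and your sketch contains no idea for it. In the paper this is \Cref{thm:nilpotent_resolution}, and it is not a routine compatibility of weight hearts with Verdier quotients: it needs the horizontal right adjointability $\bar{f}_!\,L^*_{\cC}\simeq L^*_{\cD}\,f^{(1)}_!$ of the $\Ind$-ed square (\Cref{lem:bcres}), the ``killing weights $\le k$'' machinery of \Cref{lem:example_nilp_weight0} and \Cref{cor:higher_nilpotence}, and separate arguments for $0$-affineness and $0$-connectivity of $f^{(1)}$ (\Cref{cor:resolution_affine}, \Cref{thm:resolution_connective}). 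Crucially, the nilpotence rank is \emph{not} simply ``inherited'': it degrades from $k$ to $kn$ where $n$ is the width, because an object of the quotient's weight heart only lifts to an object of weight amplitude $[0,n]$ upstairs, so one must compose $kn$ (not $k$) morphisms vanishing under $f$ before the composite dies. Any argument that tracks the kernel ideal verbatim through the quotient will therefore fail; until these points are supplied the induction does not close.
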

In particular, the theorem above shows that $\K(\cC)$ can be understood in terms of $\K(\cD)$ and $\TC$.

A typical example of a nilpotent extension of $c$-categories is as follows. Let $G$ be a pro-$p$-group of finite cohomological 
dimension, and consider the map $(\ZZ/p^n)^{hG} \to \FF_p^{hG}$ of bounded below rings. On perfect module categories, this 
induces a nilpotent extension of $c$-categories of width given by the cohomological dimension of $G$. \Cref{thm:nilpintro} 
then implies that there is a pullback square
\begin{center}
	\begin{tikzcd}
		\K((\ZZ/p^n)^{hG}) \ar[r]\ar[d] & \K(\FF_p^{hG})\ar[d]\\
	 \TC((\ZZ/p^n)^{hG})	\ar[r] & \TC(\FF_p^{hG})
	\end{tikzcd}
\end{center}
Using the regularity of $\FF_p$, $\K(\FF_p^{hG})$ can be identified with $\K(\FF_p)$ (see \cite{kcoconn}), reducing the 
understanding of $\K((\ZZ/p^n)^{hG})$ to a $\TC$ computation. This example illustrates how $c$-categories allow one to access 
structural information about categories that are `bounded below' but not quite coming from connective rings. 

Our results are naturally viewed as a generalization of both the results of the first author \cite{levy2022algebraic} and 
Elmanto and the second author \cite{nilpotentextns}. Indeed, the former shows that trace methods extend to $-1$-connective 
rings, which can be interpreted in the context of this paper as saying that perfect module categories of $-1$-connective rings 
naturally have a $c$-structure (see \Cref{exm:minusone}). The latter shows that trace methods extend to bounded weighted 
categories, which corresponds to the fact that these naturally have $c$-structures of width $0$. We will also show that a 
stable category $\cC$, for which $\Ind(\cC)$ is weakly approximable in the sense of Neeman 
(see \cite{neeman2018triangulated}), admits a $c$-structure. The axioms in the definition of a weakly approximable category 
are in fact very close to that of a $c$-structure. However, note that different choices of a generator give rise to different 
$c$-structures.

The key ingredient to \Cref{thm:nilpintro} is the following categorical statement which relates $c$-categories to 
perfect module categories of connective rings, whose proof is the direct application of the procedure introduced above.
\begin{thm}[\Cref{cor:resolution_full}]\label{thm:catresintro}
	Fix $n\geq1$ and an uncountable regular cardinal $\kappa$.
	Let $\cC$ be a $\kappa$-small $c$-category of width $n$.
	There exists a natural exact complex of stable categories 
	\[
		0 \to \cC \to \cC_0 \to \cC_1 \to \dots \to \cC_{n-1} \to \cC_n \to 0
	\]
	such that every localizing invariant vanishes on $\cC_i$ for $i<n$, and each $\cC_i$ is a equivalent to the perfect module 
	category of a connective ring.
\end{thm}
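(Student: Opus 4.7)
The plan is to iterate the procedure of property (3) from the introduction. Set $\cC^{(0)} := \cC$ and, for each $i$, define $\cC_i := \Ind(\cC^{(i)})^{\mathrm{b},\kappa}$ and $\cC^{(i+1)} := \cC_i / \cC^{(i)}$; by the definition of width, this terminates with $\cC^{(n)}$ a bounded weighted category, and we set $\cC_n := \cC^{(n)}$. Concatenating the localization sequences $\cC^{(i)} \hookrightarrow \cC_i \twoheadrightarrow \cC^{(i+1)}$ produces the claimed exact complex, manifestly natural in $\cC$.

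Next I would identify each $\cC_i$ with perfect modules over a connective ring. For $i = n$, this is the standard fact that a bounded weighted category is equivalent to the perfect modules over the (connective) endomorphism ring of a generator of its weight heart, as already used in~\cite{nilpotentextns}. For $i < n$, the category $\cC_i = \Ind(\cC^{(i)})^{\mathrm{b},\kappa}$ is an idempotent-complete small stable category with an inherited weight structure; I would exhibit a weight-zero generator $G_i$ (for instance a $\kappa$-indexed coproduct of a compact generator of $\Ind(\cC^{(i)})$ sitting in weight zero), check that its thick closure is all of $\cC_i$, and then realize $\cC_i \simeq \mathrm{Perf}(\mathrm{End}(G_i))$ with $\mathrm{End}(G_i)$ connective by the standard endomorphism-ring identification.

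For the vanishing of all localizing invariants on the intermediate $\cC_i$ with $i<n$, I would apply the Eilenberg swindle. Because $\kappa$ is uncountable, $\cC_i = \Ind(\cC^{(i)})^{\mathrm{b},\kappa}$ is closed under countable coproducts: $\kappa$-compactness is preserved by $\kappa$-small colimits, and bounded objects of a weight structure on a presentable category are closed under coproducts of uniformly bounded families. Any additive invariant on a stable category with countable coproducts vanishes by the standard endofunctor argument $\mathrm{id} \oplus \sigma \simeq \sigma$ for $\sigma(X) = X^{(\omega)}$, and this applies in particular to every localizing invariant.

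The main obstacle I anticipate is choosing $\kappa$ uniformly for all $n$ steps while tracking how compact generation, the weight-structure data, and $\kappa$-smallness propagate through each Verdier quotient. This ought to follow from a sufficiently large initial choice of $\kappa$ together with repeated application of the $c$-structure axioms, but the coherence of these choices across iterations (and the verification that the generator $G_i$ really thickly generates $\cC_i$) is where the most careful bookkeeping is needed.
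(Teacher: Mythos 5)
Your architecture is the same as the paper's: iterate the quotient construction $\cC^{(i+1)} = \Ind(\cC^{(i)})^{\kappa_i,\mathrm{b}}/\cC^{(i)}$, identify each middle term with $\mathrm{Perf}$ of a connective ring via a single weight-heart generator together with Schwede--Shipley, and kill localizing invariants on the middle terms by the Eilenberg swindle using closure under countable coproducts of uniformly weight-bounded families. This is exactly how \Cref{cor:resolution_full} is assembled from \Cref{cnstr:resolution}, \Cref{lem:singlegenerator} and the swindle argument in \Cref{prop:motivic_resolution}, including the device of letting the cardinals $\kappa_i$ grow at each stage.

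The genuine gap is the sentence ``by the definition of width, this terminates with $\cC^{(n)}$ a bounded weighted category.'' In the paper, width is \emph{not} defined as the number of steps of this procedure: \Cref{dfn:c_structure} defines it intrinsically, by asking that every weight-connective object of $\Ind(\cC)$ be a filtered colimit of compact objects along maps that eventually kill weights $\le -n-1$. The statement that the quotient $\Ind(\cC)^{\kappa,\mathrm{b}}/\cC$ is again a $c$-category --- in fact a bounded heart category of cohomological dimension $\le \max(n-1,0)$, hence of width one less --- together with the functoriality of this structure in morphisms of $c$-categories (needed for your ``manifestly natural''), is precisely \Cref{thm:resolution}, the main technical result of the section; its proof runs through the recollement $\Ind(\cC) \rightleftarrows \Ind(\Ind(\cC)^{\kappa,\mathrm{b}}) \rightleftarrows \Ind(\cC^{(1)})$, the connectivity estimate for $\yo_!\yo^*$ in \Cref{lem:ccategoryequiv}, and the adjointability statement of \Cref{lem:bcres}. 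Without it your induction cannot even form the next stage (you need $\cC^{(1)}$ to be a pre-$c$-category before $\Ind(\cC^{(1)})^{\kappa_1,\mathrm{b}}$ carries a weight structure at all), let alone terminate after $n$ steps. A smaller point: the generator should be a large coproduct of \emph{all} isomorphism classes of weight-heart objects, as in \Cref{lem:singlegenerator}, rather than of copies of an arbitrary compact generator placed in weight zero; one can reduce to a single object, but only after checking that it generates the weight structure, which is what guarantees via \Cref{lem:generation} that every heart object is a retract of a coproduct of copies of it.
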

The above theorem gives a natural isomorphism that $E(\cC) \cong \Sigma^{-n}E(\cC_n)$ for each localizing invariant $E$, and 
$\cC_n$ is the perfect module category of a connective ring spectrum. To obtain \Cref{thm:nilpintro} from this, one just needs 
to show that given a nilpotent extension $\cC \to \cC'$ of $c$-categories, the induced maps $\cC_i \to \cD_i$ come 
from nilpotent extensions of connective rings. 

The main applications we discuss in this paper are to the theory of algebraic stacks (Section \Cref{sec:stacks}). We show that 
for a good enough quasi-geometric stack $X$ the category of perfect complexes $\mathrm{Perf}(X)$ admits a $c$-structure. 
Moreover, for an affine map of such stacks $X \to Y$ the pullback map $\mathrm{Perf}(Y) \to \mathrm{Perf}(X)$ is a
morphism of $c$-categories and for a nilpotent extension it is, furthermore, a nilpotent extension of $c$-categories. 
This, combined with \Cref{thm:nilpintro}, automatically yields fundamental results about $\K$-theory of algebraic stacks, 
previously unknown in this generality. 

Theorem~\ref{thm:catresintro} provides `cohomological' obstructions to both existence of $c$-structures and weak 
approximability, hence allows us to answer positvely the question of Canonaco, Neeman, and Stellari about the necessity of 
both axioms in the definition of weak approximability \cite{canonaco2024}.

The results of this paper should be applicable to understand structural information about many other categories of interest. 
For example, the $\K$-theory of many interesting categories and ring spectra in chromatic homotopy theory can be accessed 
using our methods. In \cite{levy2022algebraic}, the algebraic $\K$-theory of the $L_1$-local and $K(1)$-local categories were 
accessed using a version \Cref{thm:nilpintro} for $-1$-connective ring spectra, essentially reducing it to the 
study of $\TC(j_{\zeta})$ where $j_{\zeta}$ is an $\EE_{\infty}$-ring that is the $-1$-connective cover of the $K(1)$-local 
sphere. Being able to access these $\K$-theories using $\TC$ of $-1$-connective rings is one of the key ingredients in the 
work of the first author with Burklund, Hahn, and Schlank \cite{telescope} in order to disprove the telescope conjecture at 
heights $\geq2$. Forthcoming work of the first author will use the tools of this paper to give integral formulas for the 
algebraic $\K$-theory of certain $K(n)$-local categories and $L_n$-local categories in terms of $\TC$.

The paper is structured as follows. 
We begin by reviewing weighted categories in \Cref{sec:weights}, and then introduce fcd heart categories in \Cref{sec:fcdht}, 
a slightly stronger notion than that of a $c$-category. In \Cref{sec:cstr} we study $c$-structures, and prove 
\Cref{thm:catresintro}. In \Cref{sec:morphisms}, we study special kinds of morphisms of $c$-categories such as 
nilpotent extensions, 
and show these are suitably compatible with the categorical resolution of \Cref{thm:catresintro}. 
In \Cref{sec:localizing_invariants_of_c_categories}, we obtain applications to localizing invariants, and 
in \Cref{sec:stacks}, we study $c$-structures on algebraic stacks. In \Cref{sec:counterexamples}, we provide examples to 
indicate subtleties in notions surrounding that of $c$-categories. 
We end with two appendicies. The first discusses the relation between $c$-categories and weak approximability in the sense 
of \cite{neeman2018triangulated}, and the second provides a characterization of those categories that admit resolutions such 
as those provided in \Cref{thm:catresintro}.

\addtocontents{toc}{\protect\setcounter{tocdepth}{1}}
\subsection*{Acknowledgments}

The authors are grateful to Ko Aoki, Robert Burklund, Sasha Efimov, Jeremy Hahn, Mike Hopkins, Akhil Mathew, Thomas Nikolaus, Piotr Pstragowski, Maxime Ramzi, Victor Saunier, Tomer Schlank, Christoph Winges, Yuchen Wu for helpful conversations related to this paper. We thank Victor Saunier for comments on an earlier draft of the paper.
We are also thankful to the organizers of the 2023 IHES summer school ``Recent Advances in Algebraic K-theory'', during which 
the project started. 

\subsection*{Conventions}

Some basic notions to be introduced:
\begin{enumerate}
	\item We use the word `category' to mean category.\footnote{or more precisely, $(\infty,1)$-category.} 
	We denote the category of small categories by $\Cat$, the category of presentable categories is denoted with $\PrL$.
	\item For a category $\cC$ we denote its ind-completion by $\Ind(\cC)$. A finite colimit-preserving functor 
	$f:\cC \to \cD$ induces the Kan extension functor $f_!:\Ind(\cC) \to \Ind(\cD)$ and the restriction functor 
	$f^*:\Ind(\cD) \to \Ind(\cC)$.
	\item A subcategory $\cC \subset \cC'$ is extension closed if for any fiber 
	sequence $x \to y \to z$ in $\cC'$ with $x,z\in \cC$, $y$ also belongs to $\cC'$. 
	\item The idempotent completion of $\cC$ is denoted by 
	$\cC^{\mathrm{ic}}$.
	\item The mapping space between any two objects in a category is denoted with $\Map(-,-)$. In case there is a canonical 
	spectral enrichment (e.g. when the category is stable), the mapping spectrum is denoted with $\map(-,-)$.
	\item We say that a sequence of functors $\cA \stackrel{i}\to \cB \stackrel{p}\to \cC$ between stable categories is a localization sequence if $i$ is fully faithful, $p\circ i \simeq 0$ and the natural induced map 
	$(\cB/\cA)^{\mathrm{ic}} \to (\cC)^{\mathrm{ic}}$ is an equivalence.
\end{enumerate}

\addtocontents{toc}{\protect\setcounter{tocdepth}{2}}
\section{Weighted categories}\label{sec:weights}

Here we recall some preliminary results about weight structures that we use throughout the paper. 
\begin{dfn}\label{dfn:weight} A {\bf weight structure} on a stable category $\cC$ is the data of two idempotent closed subcategories $(\cC_{w\geq 0}, \cC_{w\leq 0})$ of connective and, respectively, coconnective objects such that:
	\begin{enumerate}
		\item $\cC_{w\geq0}$ is closed under suspension, and $\cC_{w\leq0}$ is closed under desuspensions.
		\item $\map(x,y)$ is connective if $x \in \cC_{w\leq0}, y \in \cC_{w\geq0}$.
		\item for any object $x \in \cC$ there exists a cofiber sequence
		\[
		x_{\leq 0} \rightarrow x \rightarrow x_{\geq 1},
		\]
		where $x_{\leq 0} \in \cC_{w \leq 0}$ and $\Sigma^{-1}x_{\geq 1} \in  \cC_{w \geq 0}.$ We call these {\bf weight truncations} of $x$.
	\end{enumerate} 
	We warn the reader that despite the notation, the weight truncations of $x$ are not uniquely determined by $x$.
By a {\bf weighted category} we mean a stable category endowed with a weight structure. 
A functor between weighted stable categories 
\[
F: \cC \to \cD
\]
is said to have \textbf{weight amplitude $[a,b]$} if it sends connective objects to $a$-connective objects and coconnective objects to $b$-coconnective objects. It is said to be of \textbf{bounded weight amplitude} if it is weight amplitude $[a,b]$ for some $a,b \in \mathbf{Z}$. It is said to be \textbf{weight exact} if it is weight amplitude $[0,0]$.
The subcategory $\cC^{w\heartsuit} \subset \cC_{w\ge 0} \cap \cC_{w\le 0}$ is called the {\bf heart} of the weight structure. 
We write $\cC_{w \geq n} := \Sigma^n\cC_{w \geq 0}$ and $\cC_{w \leq k} := \Sigma^k\cC_{w \leq 0}.$

We say that the weight structure on $\cC$ restricts to a subcategory $\cD \subset \cC$ if 
$(\cD \cap \cC_{w\le 0},\cD \cap \cC_{w\ge 0})$ defines a weight structure on the subcategory.
\end{dfn}

Let $\cC$ be a stable category endowed with a weight structure.
Every object $x\in \cC$ admits shifted weight decompositions 
$w_{\le n} x \to X \to w_{\ge n+1} x$. We freely use the elementary fact 
that that these can be made compatible for any map $x \to y$ (though they are not functorial).

\begin{rmk}\label{rmk:orthogonality}
The notion of a weight structure is self dual, in that a weight structure on $\cC$ induces one on $\cC^{\mathrm{op}}$ by 
swapping the roles of $\cC_{\geq0}$ and $\cC_{\leq0}$. Additionally, the subcategories $\cC_{\geq0}$, $\cC_{\leq0}$ 
determine one another via the orthogonality axiom (2) of \Cref{dfn:weight}.
\tqed \end{rmk}

\begin{dfn}\label{dfn:smallweightedcat}
We say that the weight structure on $\cC$ is {\bf bounded} if 
$\cC = \bigcup\limits_n \left(\cC_{w \ge -n}\cap \cC_{w \le n}\right)$. We call a pair of a small stable category $\cC$ equipped with a bounded weight structure a  \textbf{bounded weighted category}.  
Given any weighted category $\cC$, we define $\cC^\mathrm{b} = \bigcup\limits_n \left(\cC_{w \ge -n}\cap \cC_{w \le n}\right)$, 
that is the maximal bounded weighted subcategory.
We denote the category of bounded weighted categories and weight exact functors by $\Cat^\mathrm{bw}$. We use 
$\Cat^\mathrm{bw}_\mathrm{b}$ to denote the cateory of bounded weighted categories and bounded weight amplitude functors.
\end{dfn}

\begin{rmk}\label{rmk:generating_bdd_ws}
Any generating subcategory $N \subset \cC$, for which the mapping spectra $\map(x,y)$ are 
connective for all $x,y \in N$, gives rise to a bounded weight structure on $\cC$ with 
\[
\cC_{w\ge 0} = \{\text{retracts of finite colimits of } N\},
\]
\[
\cC_{w\le 0} = \{\text{retracts of finite limits of } N\}.
\]
The heart in this case is the retract-closure of $N$ in $\cC$ (see \cite[Theorem 4.3.2(II)]{Bondarko_2010}). 
\tqed \end{rmk}

\begin{exm}\label{exm:weight_conn_ring}
Let $R$ be a connective associative ring spectrum. 
The category of perfect complexes $\mathrm{Perf}(R)$ admits a generating subcategory $\{R\}$ which satisfies the 
assumptions of Remark~\ref{rmk:generating_bdd_ws}. 
In particular, $\mathrm{Perf}(R)$ admits a bounded weight structure with 
$\mathrm{Perf}(R)_{w\ge 0}$ being the class of connective perfect complexes and $\mathrm{Perf}(R)_{w\le 0}$ being the 
minimal retract-closed extension-closed subcategory containing $\Sigma^n R$ for $n\le 0$. 
\tqed \end{exm}

\begin{exm}\label{exm:psigma}
For a small additive category $\cA$ consider the initial additive functor into a stable category 
$\cA \to \cA^{\mathrm{fin}}$ (see \cite[Definition~2.1.17]{nilpotentextns}). 
There is then a canonical weight structure 
$w$ on $\cA^{\mathrm{fin}}$ with $\cA$ as the heart (see \cite[Construction 2.2.7]{nilpotentextns}).
\tqed \end{exm}
The next theorem shows that in fact all bounded weighted categories take the form of Example~\ref{exm:psigma}.

\begin{thm}[{\cite[Corollary~3.4]{Sosnilo_2019}, \cite[Theorem~2.2.9]{nilpotentextns}}]\label{thm:vova_heart}
We have an adjoint pair
\[
((-)^{\mathrm{fin}},w): \mathbf{Cat}^{\mathrm{add}} \rightleftarrows \mathbf{Cat}^{\mathrm{bw}}: (-)^{w\heartsuit}.
\]
where the right adjoint functor of taking the heart $(-)^{w\heartsuit}$ is fully faithful. 
The adjoint pair restricts to an equivalence of categories of idempotent complete categories on both 
sides.
\end{thm}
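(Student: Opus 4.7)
The plan is to construct the unit and counit of the adjunction directly using the universal property of $(-)^{\mathrm{fin}}$, verify adjointness, and then pin down the precise subcategories on which unit and counit are equivalences.

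First, I would build the adjunction. The functor $(-)^{w\heartsuit}: \mathbf{Cat}^{\mathrm{bw}} \to \mathbf{Cat}^{\mathrm{add}}$ lands in additive categories because the heart of a weight structure is by orthogonality an additive subcategory. The key geometric input is the universal property of $(-)^{\mathrm{fin}}$: finite-colimit-preserving functors $\cA^{\mathrm{fin}} \to \cD$ into a stable category correspond to additive functors $\cA \to \cD$. Given an additive functor $F: \cA \to \cC^{w\heartsuit}$, compose with the inclusion into $\cC$ to get an additive functor into a stable category, which extends uniquely to an exact $\widetilde{F}: \cA^{\mathrm{fin}} \to \cC$. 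I would then check that $\widetilde{F}$ is weight-exact: by \Cref{rmk:generating_bdd_ws}, $(\cA^{\mathrm{fin}})_{w\ge 0}$ consists of retracts of finite colimits of objects of $\cA$, and $\widetilde{F}$ sends such into retracts of finite colimits of objects of $\cC^{w\heartsuit} \subseteq \cC_{w\ge 0}$; since $\cC_{w\ge 0}$ is closed under these operations, weight-exactness in the connective direction follows, and the coconnective direction is dual. Conversely, a weight-exact functor restricts to the heart, giving the other half of the bijection; the two constructions are mutually inverse by the universal property.

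Second, I would show $(-)^{w\heartsuit}$ is fully faithful by proving the counit $\epsilon_\cC: (\cC^{w\heartsuit})^{\mathrm{fin}} \to \cC$ is an equivalence for every bounded weighted category $\cC$. Essential surjectivity uses boundedness and iterated weight decompositions: any $x \in \cC$ with weight amplitude $[-n,n]$ can be built from finitely many heart objects via $n$ extensions, so after idempotent completion $x$ lies in the image of $\epsilon_\cC$. For full faithfulness, both source and target have weight structures with the same heart (by \Cref{rmk:generating_bdd_ws} applied to the target of $\epsilon_\cC$), and both are weight-bounded, so a standard induction on weight amplitude reduces the claim on mapping spectra to the tautological isomorphism on the heart; the orthogonality axiom ensures the relevant mapping spectra are well-controlled under cofiber sequences so the induction closes.

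Third, for the equivalence on idempotent-complete subcategories, I would verify that the unit $\eta_\cA: \cA \to (\cA^{\mathrm{fin}})^{w\heartsuit}$ is essentially surjective exactly when $\cA$ is idempotent complete: by \Cref{rmk:generating_bdd_ws} the heart of $\cA^{\mathrm{fin}}$ is the retract-closure of $\cA$ inside $\cA^{\mathrm{fin}}$, which agrees with $\cA$ precisely in that case. Full faithfulness of $\eta_\cA$ on mapping spectra follows from the construction of $\cA^{\mathrm{fin}}$ (where mapping spaces out of heart objects compute as the heart morphisms in degree zero with no higher homotopy from $\cA$).

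I anticipate the main obstacle to be the full-faithfulness part of step two — verifying that the counit is fully faithful, not just essentially surjective, at the level of mapping spectra. The cleanest way to push this through is the inductive argument using cofiber sequences and weight truncations, keeping track that cofibers of weight-exact maps behave predictably; this is precisely the content that is established in \cite{Sosnilo_2019} and reworked in \cite{nilpotentextns}, and for the proof here I would appeal to the formalism there rather than redoing the bookkeeping.
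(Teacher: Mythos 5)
The paper gives no proof of this theorem — it is imported verbatim from \cite[Corollary~3.4]{Sosnilo_2019} and \cite[Theorem~2.2.9]{nilpotentextns} — so your sketch has to be judged on its own terms, and it has one genuine gap. For the adjoint pair to \emph{restrict} to the idempotent complete subcategories, the left adjoint must carry idempotent complete additive categories to idempotent complete stable categories, i.e.\ one needs: if $\cA$ is idempotent complete then so is $\cA^{\mathrm{fin}}$. This is the $\infty$-categorical analogue of the Balmer--Schlichting theorem and is the real content of the cited Corollary~3.4 of \cite{Sosnilo_2019}. It does \emph{not} follow from your steps: knowing that the unit $\cA \to (\cA^{\mathrm{fin}})^{w\heartsuit}$ and the counit are equivalences on idempotent complete inputs says nothing about whether $\cA^{\mathrm{fin}}$ itself has split idempotents (the triangle identities just return $\cA^{\mathrm{fin}} \simeq \cA^{\mathrm{fin}}$). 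Your closing appeal to the references covers only the full faithfulness of the counit, not this point, so the ``restricts to an equivalence'' clause is not established by your argument.

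Two further corrections. First, in step two the hedge ``after idempotent completion $x$ lies in the image of $\epsilon_\cC$'' is wrong in a way that matters: if essential surjectivity only held up to retracts, the counit would fail to be an equivalence for non-idempotent-complete $\cC$ and $(-)^{w\heartsuit}$ would not be fully faithful on all of $\Cat^{\mathrm{bw}}$. In fact no completion is needed: for $x\in\cC_{w\ge0}\cap\cC_{w\le n}$ a weight truncation $w_{\le 0}x$ lies in the fiber sequence $\Omega w_{\ge1}x \to w_{\le0}x \to x$ whose outer terms are in $\cC_{w\ge0}$, which is extension closed by orthogonality, so $w_{\le0}x$ is an honest heart object and every bounded object is a finite iterated extension of shifts of heart objects. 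Second, the claim in step three that the retract closure of $\cA$ in $\cA^{\mathrm{fin}}$ equals $\cA$ ``exactly when'' $\cA$ is idempotent complete is false in the only-if direction: take $\cA$ the finitely generated free modules over a Dedekind domain with nontrivial class group; since stably free implies free there, the only retracts of free modules lying in the dense subcategory $\cA^{\mathrm{fin}}\subset\mathrm{Perf}(R)$ are again free, so the retract closure is $\cA$ even though $\cA$ is not idempotent complete. Only the ``if'' direction is used, so this does not sink the argument, but it should not be asserted as an equivalence.
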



The weight structures that are relevant to us will be generated by a collection of compact objects. To make sense of what 
it means for a weight structure to be generated, we have the following lemma, which explains the operations under which 
$\cC_{\leq0}$ is closed:

\begin{lem}\label{lem:closureofneg}
Suppose that $\cC$ is equipped with a weight structure. Then $\cC_{\leq0}$ is closed under infinite direct sums, 
finite limits, extensions, retracts, and countable filtered colimits of maps whose cofiber is in $\cC_{\leq0}$.
\end{lem}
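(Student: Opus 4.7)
The plan is to first extract from the orthogonality axiom a Whitehead-style characterization of $\cC_{w\le 0}$ and then verify each closure property by routine long-exact-sequence chases applied to mapping spectra.

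Specifically, I claim
\[
\cC_{w \le 0} = \{x \in \cC : \pi_0 \map(x, y) = 0 \text{ for all } y \in \cC_{w \ge 1}\}.
\]
The inclusion $\subseteq$ is immediate from axiom (2): for $y = \Sigma y'$ with $y' \in \cC_{w \ge 0}$, we have $\map(x,y) \simeq \Sigma \map(x,y')$, which is $1$-connective. For the reverse inclusion, take a weight decomposition $x_{\le 0} \to x \to x_{\ge 1}$; the map $x \to x_{\ge 1}$ represents a class in $\pi_0 \map(x, x_{\ge 1}) = 0$, so it is null. A null map $B \to C$ in a stable category has fiber equivalent to $B \oplus \Sigma^{-1} C$, so $x_{\le 0} \simeq x \oplus \Sigma^{-1} x_{\ge 1}$, exhibiting $x$ as a retract of $x_{\le 0} \in \cC_{w\le 0}$. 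Idempotent closure then gives $x \in \cC_{w\le 0}$.

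With this characterization in hand, the first four closure properties follow mechanically by testing against $w \in \cC_{w\ge 1}$. Direct sums use $\map(\bigoplus x_i, w) \simeq \prod \map(x_i, w)$ and the fact that $\pi_0$ commutes with products of spectra. Extensions follow from the long exact sequence of $\pi_\ast$ for the fiber sequence $\map(z,w) \to \map(y,w) \to \map(x,w)$, which sandwiches $\pi_0 \map(y,w)$ between two vanishing groups. Finite limits reduce to fibers, and a fiber sequence $z \to x \to y$ rotates to a cofiber sequence $\Sigma^{-1} y \to z \to x$ whose outer terms lie in $\cC_{w\le 0}$ (using $\Sigma^{-1} y \in \cC_{w\le -1} \subset \cC_{w\le 0}$), reducing to the extension case. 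Retract closure is simply the idempotent-closure hypothesis on $\cC_{w\le 0}$, and is also visible directly from the characterization.

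The nontrivial step, and the one I expect to be the main obstacle, is closure under countable filtered colimits of maps whose cofiber lies in $\cC_{w\le 0}$. Given such a sequence $x_0 \to x_1 \to \cdots$ with cofibers $c_i \in \cC_{w\le 0}$ and a test object $w \in \cC_{w\ge 1}$, we have $\map(\mathrm{colim}\, x_i, w) \simeq \lim_i \map(x_i, w)$, and the Milnor short exact sequence gives
\[
0 \to {\lim_i}^1 \pi_1 \map(x_i, w) \to \pi_0 \lim_i \map(x_i, w) \to \lim_i \pi_0 \map(x_i, w) \to 0.
\]
The right-hand term vanishes because each $x_i \in \cC_{w\le 0}$. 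The cofiber hypothesis enters through the fiber sequence $\map(c_i, w) \to \map(x_{i+1}, w) \to \map(x_i, w)$: combined with $\pi_0 \map(c_i, w) = 0$, the associated long exact sequence forces $\pi_1 \map(x_{i+1}, w) \to \pi_1 \map(x_i, w)$ to be surjective. Hence the tower satisfies the Mittag-Leffler condition and its $\lim^1$ vanishes, so $\pi_0 \map(\mathrm{colim}\, x_i, w) = 0$ as required. Note that this step genuinely uses the cofiber hypothesis rather than just the membership of the terms, since otherwise one could only control $\pi_0$ (not $\pi_1$) of the mapping spectra.
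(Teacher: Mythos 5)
Your proof is correct and follows essentially the same route as the paper: the first four closure properties are read off from the orthogonality characterization of $\cC_{w\le 0}$, and the countable filtered colimit case is handled by the same $\lim^1$/Mittag--Leffler argument, using the cofiber hypothesis to get surjectivity of the transition maps on the relevant homotopy group. The only cosmetic difference is that you test $\pi_0$ against $\cC_{w\ge 1}$ where the paper tests connectivity against $\cC_{w\ge 0}$, which is the same statement up to a shift.
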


\begin{proof}
Most of these are immediate from the characterization of $\cC_{\leq0}$ via condition (2) of \Cref{dfn:weight}. For 
countable filtered colimits of maps whose cofiber is in $\cC_{\leq0}$, the mapping spectrum to an object in $\cC_{\geq0}$ 
is a countable inverse limit of connective spectra under maps which are extensions of connective spectra. In particular, 
each map is a surjection on $\pi_0$, so the $\lim^1$-term computing $\pi_{-1}$ of the inverse limit vanishes, so the 
inverse limit is connective.
\end{proof}


\begin{dfn}\label{dfn:generation}
We say that a weight structure on $\cC$ is {\bf generated by a collection of objects} $S\subset \cC_{\leq0}$ if $S$ generates 
$\cC_{\leq0}$ under the operations in \Cref{lem:closureofneg}. 
A stable presentable category $\cC$ with a weight structure is called a {\bf compactly generated weighted category} if it 
admits a set of compact generators that also generates the weight structure on $\cC$.

We denote the category of presentable categories with a compactly generated weight structure and weight exact 
strongly cocontinuous functors by $\Cat^{\mathrm{cgw}}$.
\end{dfn}

\begin{rmk}\label{rmk:generated_ws}
In case $\cC_{\leq0}$ is generated by $S$, Remark~\ref{rmk:orthogonality} yields that we have 
\[
\cC_{\ge 0} = \{ x \in \cC : \map(y,x) \text{ is connective for all } y \in S\}.
\]
In particular, there is a unique weight structure generated by any set of objects.
\tqed \end{rmk}

\begin{dfn}\label{dfn:size}
The \textit{size} of a small category $\cC$ is the cardinality of the coproduct of all of the homotopy groups of all of the mapping 
spaces between all pairs of isomorphism classes of objects of $\cC$.
\end{dfn}

\begin{thm}[{\cite[Theorem~5]{Pauksztello}, \cite[Theorem~2.3.4]{Bondarko_2021}}]\label{thm:generated_ws}
Let $\cC$ be a compactly generated stable presentable category. 
Then for any set of compact objects $S \subset \cC$, there exists a weight structure on $\cC$ generated by $S$. 
If the size of $S$ is $<\kappa$ for some uncountable regular cardinal $\kappa$, then this weight structure restricts to 
$\kappa$-compact objects.
\end{thm}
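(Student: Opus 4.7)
The plan is to let $\cC_{w\le 0}$ be the smallest subcategory of $\cC$ containing $S$ and closed under the operations from \Cref{lem:closureofneg}, and to set
\[
\cC_{w\ge 0} \;:=\; \{\, x \in \cC : \map(s,x) \text{ is connective for all } s \in S\,\}.
\]
A routine induction on the inductive construction of $\cC_{w\le 0}$ shows this coincides with $\{\,x : \map(y,x)\text{ is connective for all } y \in \cC_{w\le 0}\,\}$: the functor $\map(-,x)$ carries direct sums to products, finite limits to finite colimits, extensions to fiber sequences, and countable filtered colimits to Milnor limits, and the relevant Mittag--Leffler condition holds because the cofibers in the filtered colimit closure operation also lie in $\cC_{w\le 0}$. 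This gives axiom (2) of \Cref{dfn:weight}. Closure of $\cC_{w\ge 0}$ under suspension is automatic from $\map(s,\Sigma x) = \Sigma\map(s,x)$, and closure of $\cC_{w\le 0}$ under desuspension is a special case of its closure under finite limits.

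The key step is the construction of weight decompositions, for which I would run a Pauksztello-style small object argument on the set of compact objects
\[
S' \;:=\; \{\, \Sigma^i y : y \in S,\ i \le 0 \,\}.
\]
Given $x \in \cC$, set $x_0 := x$ and inductively
\[
x_{n+1} \;:=\; \mathrm{cofib}\Bigl(\, \bigoplus_{s \in S',\ f \in \pi_0 \map(s,x_n)} s \xrightarrow{\,(f)\,} x_n \,\Bigr),
\]
then take $x_{\ge 1} := \mathrm{colim}_n x_n$. Compactness of each $s \in S'$ gives $\pi_0 \map(s,x_{\ge 1}) = \mathrm{colim}_n \pi_0\map(s,x_n)$, and every class is killed at the very next stage by construction; hence this colimit vanishes, which is exactly the condition $x_{\ge 1} \in \cC_{w\ge 1}$. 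Setting $x_{\le 0} := \mathrm{fib}(x \to x_{\ge 1})$, a short diagram chase via the $3 \times 3$ lemma identifies the cofiber of $\mathrm{fib}(x\to x_n) \to \mathrm{fib}(x\to x_{n+1})$ with the direct sum $\bigoplus_{s,f} s \in \cC_{w\le 0}$. Starting from $\mathrm{fib}(x\to x_0) = 0$, every intermediate fiber lies in $\cC_{w\le 0}$ by closure under extensions and sums, and then $x_{\le 0}$ itself lies in $\cC_{w\le 0}$ by the countable-colimit clause of \Cref{lem:closureofneg}.

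For the $\kappa$-compact refinement, assume $|S| < \kappa$ and $x$ is $\kappa$-compact. Since $\kappa > \aleph_0$ the set $S'$ still has size $<\kappa$, and mapping spectra between $\kappa$-compact objects have homotopy groups of size $<\kappa$; regularity of $\kappa$ then ensures the coproduct indexing set at each stage of the construction is of size $<\kappa$, so every $x_n$ remains $\kappa$-compact, as does the countable colimit $x_{\ge 1}$ and its fiber $x_{\le 0}$. Thus the decomposition restricts to the $\kappa$-compact subcategory. The main technical obstacle is the termination of the small object argument at $\omega$, which is exactly where the compactness hypothesis on $S$ is essential: it converts the sequential colimit $\mathrm{colim}_n \pi_0 \map(s, x_n)$ into actual vanishing rather than an ind-vanishing. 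Everything else reduces to bookkeeping inside the closure operations of \Cref{lem:closureofneg}.
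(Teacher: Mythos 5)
Your construction is exactly the one in the references the paper cites for this statement (the paper gives no proof of its own): orthogonality against $S$ for $\cC_{w\ge 0}$, the closure of $S$ under the operations of Lemma~\ref{lem:closureofneg} for $\cC_{w\le 0}$, and an $\omega$-step small object argument attaching cells from $\Sigma^i S$, $i\le 0$, for the decompositions. That part is correct: the identification of $\cof\bigl(\fib(x\to x_n)\to\fib(x\to x_{n+1})\bigr)$ with $\bigoplus_{s,f}s$ and the appeal to the sequential-colimit clause of Lemma~\ref{lem:closureofneg} are exactly right, modulo the usual care that the transfinite closure defining $\cC_{w\le 0}$ should take the cofibers appearing in the colimit clause from earlier stages, so that your orthogonality induction is not circular.

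The $\kappa$-compact refinement, however, has a genuine gap. The assertion that ``mapping spectra between $\kappa$-compact objects have homotopy groups of size $<\kappa$'' is false, and it is the only thing controlling the size of the coproducts in your construction. The size hypothesis bounds the homotopy groups of maps \emph{between objects of $S$}, not of maps from $S$ into an arbitrary $\kappa$-compact object. Concretely, let $k$ be a finite field, $V$ a $k$-vector space of dimension $\ge\kappa$, and $A=\begin{pmatrix}k&V\\0&k\end{pmatrix}$; take $\cC=\Mod(A)$ and $S=\{e_2A\}$. Then $S$ has size $|k|<\kappa$ and both $e_2A$ and $e_1A$ are compact, yet $\pi_0\map(e_2A,e_1A)=e_1Ae_2=V$ has cardinality $\ge\kappa$, so already your $x_1$ is the cofiber of a map out of a $\kappa$-sized coproduct and there is no reason for it to be $\kappa$-compact.

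Moreover the problem is not merely with your justification: in this example $e_1A$ admits no weight decomposition inside $\cC^{\kappa}$ at all. Any $y\in\cC_{w\le 0}$ with $\cof(y\to e_1A)\in\cC_{w\ge 1}$ must have $\pi_0\map(e_2A,y)$ surjecting onto $V$; but $\cC_{w\le 0}$ lies in the localizing subcategory generated by $e_2A$, which is equivalent to $\Mod(k)$ with $\map(e_2A,-)$ the underlying object, and a $\kappa$-compact object there has $\pi_0$ of cardinality $<\kappa$. So the refinement needs the additional input that the object being decomposed is built from $S$ --- for instance that $S$ generates $\cC$ as a localizing subcategory, so that every compact object is, up to retracts, a finite cell object over $S$ and every $\kappa$-compact object is a $\kappa$-small filtered colimit of such; regularity of $\kappa$ then gives $|\pi_0\map(s,x_n)|<\kappa$ at every stage and your argument goes through verbatim. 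That generation hypothesis holds in every application of the theorem in this paper, but your proof (and the statement read completely literally) is missing it.
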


%

\begin{exm}\label{exm:from_small_to_big_ws}
Let $\cC$ be a bounded weighted category. Then Theorem~\ref{thm:generated_ws} yields a weight structure on 
$\Ind(\cC)$ generated by $\cC_{w\le 0}$. In this case we have $\Ind(\cC_{w\ge 0}) \subset \Ind(\cC)_{w\ge 0}$ because 
$\cC_{w\ge 0}$ satisfies the necessary orthogonality condition. In fact, the opposite inclusion also holds. 
Indeed, by Theorem~\ref{thm:vova_heart} we have equivalence
\[
\Ind(\cC) \simeq \Fun_\mathrm{ex}(\cC^{\mathrm{op}}, \Sp) \simeq \Fun_\mathrm{add}((\cC^{w\heartsuit})^{\mathrm{op}}, \Sp).
\]
sending $x\in \Ind(\cC)$ to the presheaf $y \in \cC^{w\heartsuit} \mapsto \map(y,x)$. 
Under this equivalence we may rewrite 
\[
\Ind(\cC)_{w\ge 0} \simeq \Fun_\mathrm{add}((\cC^{w\heartsuit})^{\mathrm{op}}, \Sp_{\ge 0}).
\]
The latter category is compactly generated since representable presheaves are compact, hence we have 
$\Ind(\cC)_{w\ge 0} = \Ind(\cC_{w\ge 0})$.

The heart in this case consists of retracts of infinite sums of objects of $\cC^{w\heartsuit}$. To see this  
for $x \in \Ind(\cC)^{w\heartsuit}$ take the direct sum of all maps from objects of $\cC^{w\heartsuit}$ 
\[
\bigoplus\limits_i x_i \to x.
\]
The cofiber of this map $x'$ belongs to $\Ind(\cC)_{w\ge 1}$, so the map is a split epimorphism. 
For more details on this example see \cite[Section~4.5]{Bondarko_2010}.
\tqed \end{exm}

\begin{rmk}\label{rmk:full_embedding}
The construction described in Example~\ref{exm:from_small_to_big_ws} is functorial: for a weight exact functor 
$\cC \to \cD$ the functor $\Ind(\cC) \to \Ind(\cD)$ is also weight exact. In particular, this defines a functor
\[
\Cat^{\mathrm{bw}} \to \Cat^{\mathrm{cgw}}
\]
which is fully faithful when restricted to idempotent complete bounded weighted categories.
\tqed \end{rmk}


\begin{dfn}\label{dfn:hypercomplete}
We say that the weight structure $w$ on $\cC$ is {\bf hypercomplete} if $\bigcap\limits_n \cC_{w \ge n} = 0$. 
\end{dfn}

\begin{rmk}\label{rmk:generated-hypercomplete}
Let $S$ be a set of objects in a presentable category $\cC$ that generates a weight structure on $\cC$ under the 
operations of Lemma~\ref{lem:closureofneg}. This weight structure is hypercomplete if and only if $S$ generates $\cC$ as a 
localizing subcategory. Indeed, $\bigcap\limits_n \cC_{w \ge n}$ can be identified with the right orthogonal to the 
localizing subcategory generated by $S$.
\tqed \end{rmk}


\begin{prop}\label{prop:hearts_gen}
If $\cC$ is a stable presentable category that admits a hypercomplete weight structure such that $\cC_{w\ge 0}$ 
closed under colimits, then $\cC_{w\ge 0}$ is generated under small colimits by the objects of the heart.
\end{prop}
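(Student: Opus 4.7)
The plan is to define $\overline{\cC} \subseteq \cC$ to be the smallest full subcategory of $\cC$ that is closed under small colimits and contains the heart $\cC^{w\heartsuit}$. Since $\cC_{w\ge 0}$ contains the heart and is closed under small colimits by hypothesis, the inclusion $\overline{\cC} \subseteq \cC_{w\ge 0}$ is immediate, and the content of the proposition is the reverse inclusion. I would prove this by showing that every $x \in \cC_{w \ge 0}$ arises as a sequential colimit of objects built iteratively from the heart.

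Given such an $x$, I would construct an $\mathbb{N}$-indexed tower $x = x_0 \to x_1 \to x_2 \to \cdots$ together with cofiber sequences $\Sigma^n y_n \to x_n \to x_{n+1}$ where $x_n \in \cC_{w \ge n}$ and $y_n \in \cC^{w\heartsuit}$, by iteratively choosing weight truncations. Inductively, given $x_n \in \cC_{w\ge n}$, write $x_n = \Sigma^n z_n$ with $z_n \in \cC_{w\ge 0}$, and take a weight truncation $y_n \to z_n \to z_n'$ with $y_n \in \cC_{w \le 0}$ and $z_n' \in \cC_{w \ge 1}$; then suspend $n$ times to obtain $\Sigma^n y_n \to x_n \to x_{n+1}$. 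The key algebraic check is that $y_n$ actually lies in the heart: it sits in a cofiber sequence $\Omega z_n' \to y_n \to z_n$ whose outer terms are both in $\cC_{w\ge 0}$, so by extension-closure of $\cC_{w\ge 0}$ (which follows from the orthogonality axiom), $y_n \in \cC_{w\ge 0} \cap \cC_{w \le 0} = \cC^{w\heartsuit}$.

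Setting $G_n := \mathrm{fib}(x \to x_n)$, the octahedral axiom produces cofiber sequences $G_n \to G_{n+1} \to \Sigma^n y_n$, so by induction (with base case $G_0 = 0$) each $G_n$ lies in $\overline{\cC}$, using that $\overline{\cC}$ is closed under suspensions and cofibers. Hence $G := \mathrm{colim}_n G_n \in \overline{\cC}$. Since sequential colimits commute with finite limits in the stable presentable category $\cC$, we get $G \simeq \mathrm{fib}(x \to \mathrm{colim}_n x_n)$. It remains to check $\mathrm{colim}_n x_n = 0$: for each $k$, cofinality of $\{n \ge k\} \hookrightarrow \mathbb{N}$ lets us compute this colimit over $n \ge k$, and since $\cC_{w \ge k} = \Sigma^k \cC_{w\ge 0}$ is closed under colimits, $\mathrm{colim}_n x_n \in \cC_{w \ge k}$ for every $k$; hypercompleteness then forces $\mathrm{colim}_n x_n \in \bigcap_k \cC_{w\ge k} = 0$. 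Thus $x \simeq G \in \overline{\cC}$.

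The main subtlety I anticipate is the coherent construction of the tower $\{x_n\}$ and the induced maps $G_n \to G_{n+1}$ in the $\infty$-categorical setting, since weight truncations are only non-functorial choices; however, sequential diagrams can be built up one stage at a time by choosing the relevant (co)fiber data, so this should not be a serious obstacle. The essential ingredients are the extension-closure of $\cC_{w\ge 0}$, the shift-compatibility of the colimit-closure hypothesis, and hypercompleteness to kill the tail $\mathrm{colim} x_n$.
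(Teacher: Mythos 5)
Your proof is correct and follows essentially the same route as the paper's: both arguments run the weight filtration $w_{\le n}x \to x \to w_{\ge n+1}x$, observe that the bounded pieces are finite extensions of suspensions of heart objects (which you verify by an explicit induction the paper merely asserts), and invoke hypercompleteness together with colimit-closure of $\cC_{w\ge k}$ to kill $\colim_n w_{\ge n}x$. No gaps.
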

\begin{proof}
Denote by $\cC'$ the subcategory of $\cC$ generated by the heart $\cC^{w\heartsuit}$ under taking small colimits. It suffices 
to show that $\cC_{w\ge 0} \subset \cC'$. 
Objects of $\cC_{w\ge 0} \cap \cC_{w\le n}$ are finite extensions of suspensions of objects of the heart, so they all 
belong to $\cC'$. 
On the other hand, for $x \in \cC_{w\ge 0}$ the weight structure gives a filtration
$w_{\le *} x$ which fits into a fiber sequence of filtered objects 
\[
w_{\le *} x \to x \to w_{\ge *+1} x.
\]
The colimit $\colim w_{\ge *+1} x$ belongs to $\bigcap\limits_n \cC_{w\ge n}$ for all $n$, so is $0$ by hypercompleteness. Hence, we have $\colim w_{\le *} x \simeq x \in \cC'$.
\end{proof}

\begin{prop}\label{prop:hearts_cogen}
Let $\cC$ be a stable presentable category that admits a weight structure, generated by a set of 
weight bounded objects $S$ and assume that the weight structure restricts to $\cC^{\kappa}$ for 
some cardinal $\kappa$. Then the weight structure is generated by $\cC^{\kappa,w\heartsuit}$. 

If the weight structure is hypercomplete then $\cC^{\kappa,w\heartsuit}$ generates $\cC$ under 
small colimits and shifts. 
\end{prop}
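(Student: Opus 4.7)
The plan is to show $\cC_{w\le 0}$ coincides with the closure $\cD$ of $T := \cC^{\kappa,w\heartsuit}$ under the operations of Lemma~\ref{lem:closureofneg}. The inclusion $\cD \subseteq \cC_{w\le 0}$ is immediate since $T \subseteq \cC_{w\le 0}$ and the latter is closed under those operations. For the reverse, because $S$ generates $\cC_{w\le 0}$ under the same operations, it suffices to prove $S \subseteq \cD$.

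Each $s \in S$ is weight bounded, so $s \in \cC_{w\le 0}\cap \cC_{w\ge -n}$ for some $n\ge 0$. Iterating the weight decomposition inside $\cC$ expresses $s$ as a finite iterated extension of shifts $\Sigma^{-i}h_i$ with $h_i \in \cC^{w\heartsuit}$ and $0 \le i \le n$. Since each negative shift $\Sigma^{-i}t$ for $t \in T$ is a finite limit of $t$ (e.g.\ $\Sigma^{-1}t = \mathrm{fib}(0 \to t)$) and therefore lies in $\cD$, and $\cD$ is closed under extensions, the problem reduces to showing $\cC^{w\heartsuit} \subseteq \cD$.

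The crux is then to show every $h \in \cC^{w\heartsuit}$ lies in $\cD$. Using $\cC \simeq \mathrm{Ind}_\kappa(\cC^\kappa)$, present $h \simeq \colim_j x_j$ as a $\kappa$-filtered colimit of $\kappa$-compact objects. Since the weight structure restricts to $\cC^\kappa$, for each $x_j$ the double truncation $t_j := ((x_j)_{w\ge 0})_{w\le 0}$ lies in $T$. The map $x_j \to h$ lifts on $\pi_0$ through $(x_j)_{w\ge 0} \to h$ because $h \in \cC_{w\ge 0}$ forces $\pi_0 \map((x_j)_{w\le -1}, h) = 0$ by orthogonality; together with a parallel lift argument using $h \in \cC_{w\le 0}$, one organizes this data into a cofinal sub-diagram indexed by pairs $(t,\alpha\colon t\to h)$ with $t \in T$. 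Realizing $h$ as the $\kappa$-filtered colimit of this sub-diagram and reorganizing that colimit as a transfinite sequence of countable filtered colimits (with cofibers in $\cC_{w\le 0}$) and direct sums places $h$ in $\cD$ by the operations allowed in Lemma~\ref{lem:closureofneg}.

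For the second part, suppose $w$ is hypercomplete. The first part yields that $T$ generates the weight structure, so by Remark~\ref{rmk:generated-hypercomplete} the set $T$ generates $\cC$ as a localizing subcategory. In a presentable stable category a localizing subcategory is exactly a subcategory closed under small colimits and shifts, since cofibers and fibers are pushouts/pullbacks of diagrams involving $0$ and retracts arise as filtered colimits of idempotents, yielding the claim.

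The principal obstacle is the factorization step in the heart reduction. Weight truncations in $\cC^\kappa$ are not functorial, so replacing the diagram $\{x_j\}$ by one valued in $T$ requires delicate use of orthogonality on both sides of the heart and a transfinite reorganization of the resulting $\kappa$-filtered colimit to fit it into the countable filtered colimits permitted by Lemma~\ref{lem:closureofneg}.
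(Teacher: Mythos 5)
Your overall skeleton (reduce to $S\subseteq\cD$, then use weight boundedness of $s\in S$) matches the paper's, but you then take a wrong turn that creates a genuine gap. After writing $s$ as a finite iterated extension of desuspended heart objects, you place those heart pieces in the \emph{full} heart $\cC^{w\heartsuit}$ and declare that the problem reduces to $\cC^{w\heartsuit}\subseteq\cD$. This throws away exactly the hypothesis that does the work: since the weight structure restricts to $\cC^{\kappa}$ (and the generators $S$ live there), the weight decompositions of $s$ can be performed inside $\cC^{\kappa}$, so the resulting heart pieces already lie in $\cC^{\kappa,w\heartsuit}$ and $s$ is a finite limit of objects of $\cC^{\kappa,w\heartsuit}$. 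That one observation is the paper's entire proof of the first claim; no statement about the full heart is needed.

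The "crux" you substitute for this is not carried out and, as sketched, does not go through. Lemma~\ref{lem:closureofneg} only permits \emph{countable} filtered colimits along maps with cofiber in $\cC_{w\le 0}$, and a $\kappa$-filtered colimit for large $\kappa$ cannot in general be "reorganized as a transfinite sequence of countable filtered colimits and direct sums" — you give no construction, and this is precisely the hard point. (The true statement, that every object of $\cC^{w\heartsuit}$ is a retract of a direct sum of objects of $\cC^{\kappa,w\heartsuit}$, is proved in the paper by a different mechanism — the split-epimorphism argument of Example~\ref{exm:from_small_to_big_ws} and Lemma~\ref{lem:wtheartforheartstr} — and in any case it is most naturally deduced \emph{after} one knows the weight structure is generated by $\cC^{\kappa,w\heartsuit}$, so your route risks circularity.) Your treatment of the hypercomplete case is essentially the paper's and is fine once the first part is repaired.
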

\begin{proof}
Denote by $\cC'$ the subcategory of $\cC$ generated by $\cC^{\kappa,w\heartsuit}$ under the operations of 
Lemma~\ref{lem:closureofneg}. Certainly, $\cC'$ is contained in $\cC_{w\le 0}$, so it suffices to show that $S$ is contained 
in $\cC'$. This follows since any $x\in S$ is weight bounded, so it is a finite limit of objects of 
$\cC^{\kappa,w\heartsuit}$. 

The claim about hypercomplete weight structures follows since by 
Remark~\ref{rmk:generated-hypercomplete} $S$ generates $\cC$ as a localizing subcategory and $S$ belongs to $\cC'$. 

To show the moreover part note that the connectivity of $\map(-,x)$ is a property that is closed under the operations of 
Lemma~\ref{lem:closureofneg}. 
\end{proof}

\section{Finite cohomological dimension heart categories}\label{sec:fcdht}
\subsection{Basics on heart categories}

\begin{dfn}\label{dfn:heart-structure}
Let $\cC$ be a stable category. A {\bf heart structure} on $\cC$ consists of a pair of full subcategories 
$(\cC_{\le 0},\cC_{\ge 0})$ satisfying the following axioms:
\begin{enumerate}
\item $\cC_{\le 0}$ and $\cC_{\ge 0}$ are closed under extensions, 
$\cC_{\ge 0}$ under suspensions and $\cC_{\le 0}$ under desuspensions;
\item For any $x \in \cC$ there exists a fiber sequence called a decomposition 
\[
x_{\le 0} \to x \to x_{\ge 1}
\]
such that $x_{\le 0} \in \cC_{\le 0}$ and $x_{\ge 1} \in \Sigma\cC_{\ge 0}$. 
\end{enumerate} 

We will employ the following notation: $\cC_{\ge n} = \Sigma^n\cC_{\le 0}$, $\cC_{\le n} = \Sigma^n\cC_{\le 0}$. 
We say that the heart structure is {\bf bounded} if $\cC = \bigcup\limits_{n\in\NN} \cC_{\le n}\cap \cC_{\ge -n}$. 
The heart of $\cC$ is the subcategory spanned by $\cC_{\ge 0} \cap \cC_{w\le 0}$, we denote it by $\cC^{\heartsuit}$. 

We call a stable category endowed with a heart structure a {\bf heart category}. We say that a functor between 
heart categories $\cC \to \cD$ is {\bf heart exact} if it sends $\cC_{\le 0}$ to $\cD_{\le 0}$ and $\cC_{\ge 0}$ to 
$\cD_{\ge 0}$. The category of small heart categories is denoted by 
$\Cat^{\heartsuit}$. The full subcategory on bounded heart-categories is denoted by 
$\Cat^{\heartsuit, b}$. 
\end{dfn}


\begin{exm}\label{exm:derived_category}
An exact category $\cE$ in the sense of \cite{barwick2015exact} gives rise to the stable category 
$\mathcal{D}^\mathrm{b}(\cE)$, called its stable hull (see \cite{Klemenc_2022}). There $\mathcal{D}^\mathrm{b}(\cE)_{\ge 0}$ 
is set to be the extension-closure of $\bigcup\limits_{n\in\NN} \Sigma^n\cE$ and $\mathcal{D}^\mathrm{b}(\cE)_{\ge 0}$ is
$\bigcup\limits_{n\in\NN} \Omega^n\cE$.
If $\cE$ is a classical exact category, $\mathcal{D}^\mathrm{b}(\cE)$ coincides with the derived category in the classical sense. 
It turns out that, conversely, any bounded heart category arises from an exact category in this way. In fact, we have 
an equivalence 
\[
	\mathcal{D}^\mathrm{b}(\cC^{\heartsuit}) \simeq \cC
\]
by \cite[Theorem~1.6]{saunier-winges}.
\tqed \end{exm}

\begin{exm}
As a consequence of the previous example, we see that the category of perfect complexes over a quasi-projective scheme $X$ 
is naturally endowed with a heart structure, since we have
\[
	\mathrm{Perf}(X) \simeq \mathcal{D}^\mathrm{b}(\mathrm{Vect}(X)).
\]
\tqed \end{exm}

\begin{exm}\label{exm:localizations}
Let $\cC$ be a weighted category and consider an exact localization functor $F: \cC \to \cD$. 
Define $\cD_{\le 0}$ and $\cD_{\ge 0}$ to be the extension closures of $F(\cC_{w \le 0})$ and $F(\cC_{w \ge 0})$,  
respectively. 
Since every object of $\cD$ lifts to $\cC$, it is clear that the decompositions exist and that 
$(\cD_{\le 0}, \cD_{\ge 0})$ satisfies the axioms of a heart structure. 
Moreover, it is a bounded heart structure if $(\cC_{w\le 0}, \cC_{w\ge 0})$ is a bounded weight structure.

The above construction works similarly if $\cC$ is a heart category and $\cC \to \cD$ is any essentially surjective functor. 
\tqed \end{exm}

\begin{dfn}\label{dfn:fcd_heart_category}
Let $\cC$ be a bounded heart category. We say that $\cC$ is of cohomological dimension $\le n$ if there exits 
$n \in \mathbb{N}$ such that for any $x \in \cC_{\le 0}$ and $y\in \cC_{\ge 0}$ we have that $\map(x,y)$ is 
$-n$-connective. If the natural number is unspecified, we say that $\cC$ is of {\bf finite cohomological dimension} or 
{\bf fcd}.
\end{dfn}

\begin{rmk}\label{rmk:equiv-fcd-dfn}
It suffices to the condition in Definition~\ref{dfn:fcd_heart_category} for any $x, y \in \cC^\heartsuit$. To see this, note 
that $\cC_{\le 0}$ and $\cC_{\ge 0}$ are the extension-closures of $\bigcup\limits_{n\in \NN} \Sigma^{-n}\cC$ and 
$\bigcup\limits_{n\in \NN} \Sigma^n\cC$, respectively.
\tqed \end{rmk}

\begin{exm}
Let $\cA$ be a small abelian category. Then $\mathcal{D}^\mathrm{b}(\cA)$ is an fcd heart category if and only if all 
ext-groups in $\cA$ vanish above some $n$. 
\tqed \end{exm}

\begin{cnstr}\label{cnstr:t-str-and-weight-str}
Let $\cC$ be a bounded heart category. Using Proposition~\ref{thm:generated_ws} we may endow $\Ind(\cC)$ with a 
canonical weight structure so that $\Ind(\cC)_{w\ge 0}$ consists of objects $x\in \cC$ with 
$\map_{\cC}(y, \Sigma x)$ connective for all $y\in \cC_{\le 0}$. Since $\cC_{\le 0}$ generates $\Ind(\cC)$ it is 
furthermore hypercomplete (see Remark~\ref{rmk:generated-hypercomplete}).

If $\cC$ is an fcd heart category, then every object of $\cC$ is bounded in the sense of the weight structure on $\Ind(\cC)$. 
In particular, we have an exact embedding 
\[
	\cC \to \Ind(\cC)^\mathrm{b}.
\]
By Example~\ref{exm:localizations}, the quotient category 
\[
	\frac{\Ind(\cC)^\mathrm{b}}{\cC}
\]
is also a heart category. It turns out that it is also an fcd heart 
category\footnote{it is possible to prove directly, but we 
do not include a proof here, as it will be discussed in next section.} 
and, unless $\cC$ is a weighted category, the cohomological dimension of the quotient is strictly smaller than that of $\cC$. 
In particular, in finitely many steps we may resolve $\cC$ by bounded weighted categories. 

One may try to repeat the above construction for any weight structure on $\Ind(\cC)$ such that $\cC$ is bounded in it. It is 
then possible that the quotient becomes a heart category even if $\cC$ is not a heart category. 
A $c$-structure on $\cC$, defined in the next section, is precisely what guarantees that to happen. 
\end{cnstr}

\begin{wrn}
We may also define an accessible t-structure on $\Ind(\cC)$ with $\Ind(\cC)_{t \ge 0}$ generated under colimits by 
$\cC_{\ge 0}$ (see \cite[Proposition~1.4.4.11(1)]{HA}). We note that $\Ind(\cC)_{w\ge 0} \neq \Ind(\cC)_{t\ge 0}$ unless $\cC$ 
is a bounded weighted category. However, by Corollary~\ref{cor:weight_heart_relation} below, that there is always an inclusion 
in one direction.
\end{wrn}

\subsection{Tools for heart categories}

This subsection is to include several technical tools we need to work with heart structures.

\begin{dfn}\label{dfn:left_special}
Let $\cD$ be an exact category. 
A full extension closed subcategory $\cE \subset \cD$ is called {\bf left special} if for every exact epimorphism 
$u \twoheadrightarrow y$ in $\cD$ with $y\in \cE$ there exists a map $x \to u$ with $x\in \cE$ such that the composite 
$x \to u \to y$ is an epimorphism in $\cE$. 
\end{dfn}

\begin{exm}\label{exm:special_subcategory}
For any exact category $\cE$, the embedding $\cE \subset \mathcal{D}^{\mathrm{b}}(\cE)_{\ge 0}$ is left special 
(see \cite[Lemma~3.12]{saunier-winges}). 
\tqed \end{exm}

\begin{prop}\label{prop:speciality}
Let $\cE$ be an extension closed subcategory of a prestable category $\cC_{\ge 0}$. 
And assume that $\cC_{\ge 0}$ is generated by $\cE$ under finite colimits. 
The following conditions are equivalent:
\begin{enumerate}
\item $\cE$ is left special in $\cC_{\ge 0}$;
\item for any $x\in \cE$ and $y \in \Sigma \cC_{\ge 0}$ any morphism $x\to y$ decomposes as 
\[
x \to \Sigma u \to y
\]
for some $u\in \cE$;
\item for any $x,y \in \cE$ and $n>0$ any morphism $x \to \Sigma^n y$ decomposes as a composite
\[
x \to \Sigma u \to \Sigma^n y
\]
for some $u \in \cE$.
\end{enumerate}
\end{prop}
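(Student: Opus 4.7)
My plan is to argue via the cycle $(1) \Rightarrow (2) \Rightarrow (3) \Rightarrow (2) \Rightarrow (1)$, with $(3) \Rightarrow (2)$ carrying the real content. The implication $(2) \Rightarrow (3)$ is immediate, since $y \in \cE \subseteq \cC_{\ge 0}$ forces $\Sigma^n y \in \Sigma\cC_{\ge 0}$ whenever $n \ge 1$, so (2) applies.

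For $(1) \Rightarrow (2)$, I will start from $f \colon x \to \Sigma z$ with $x \in \cE$ and $z \in \cC_{\ge 0}$, and rotate to the fiber sequence $z \to u \to x$; extension closure of $\cC_{\ge 0}$ gives $u \in \cC_{\ge 0}$, and $u \to x$ is an exact epimorphism with fiber $z$. Applying (1) produces $x' \in \cE$ with $x' \to u$ such that the composite $x' \to x$ has fiber $w \in \cE$, and since $u \to x \to \Sigma z$ already vanishes, $f$ must factor through $\cofib(x' \to x) = \Sigma w$. For the reverse direction $(2) \Rightarrow (1)$, I will begin with an exact epi $u \twoheadrightarrow y$ where $y \in \cE$ and the fiber $z$ lies in $\cC_{\ge 0}$, and factor the boundary $y \to \Sigma z$ as $y \to \Sigma w \to \Sigma z$ with $w \in \cE$ using (2). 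The object $x' := \fib(y \to \Sigma w)$ sits in a rotated sequence $w \to x' \to y$ and lies in $\cE$ by extension closure; the composite $x' \to y \to \Sigma z$ vanishes because it factors through $x' \to y \to \Sigma w = 0$, so $x' \to y$ lifts to a map $x' \to u$, and the composite $x' \to u \to y$ has fiber $w \in \cE$, exhibiting it as an epi in $\cE$.

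The substantive step is $(3) \Rightarrow (2)$, which I plan to handle by a closure argument. Let $\cD \subseteq \cC_{\ge 0}$ be the full subcategory of those $z$ such that every map $x \to \Sigma^n z$ with $x \in \cE$ and $n \ge 1$ factors through $\Sigma w$ for some $w \in \cE$. Condition (3) gives $\cE \subseteq \cD$, and by the generation hypothesis it suffices to show that $\cD$ is closed under finite direct sums and cofibers in $\cC_{\ge 0}$. Direct sums are routine: split the target, apply the hypothesis componentwise, and use extension closure of $\cE$ to combine the resulting $w_1, w_2$. For cofibers, given $z_1 \to z_2 \to z_3$ with $z_1, z_2 \in \cD$ and a map $g \colon x \to \Sigma^n z_3$, I will first factor the boundary composite $x \to \Sigma^{n+1} z_1$ through $\Sigma w_1$ using $z_1 \in \cD$. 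Setting $x_1 := \fib(x \to \Sigma w_1) \in \cE$, the map $x_1 \to x \to \Sigma^n z_3$ then lifts to some $h \colon x_1 \to \Sigma^n z_2$, which factors as $x_1 \to \Sigma w_2 \to \Sigma^n z_2$ by $z_2 \in \cD$. Forming the pushout $P := \Sigma w_2 \cup_{x_1} x$ produces a cofiber sequence $\Sigma w_2 \to P \to \Sigma w_1$, so $P \in \Sigma \cC_{\ge 0}$; writing $P = \Sigma w$ and using full faithfulness of $\Sigma$ to desuspend, $w$ is an extension of $w_1$ by $w_2$ in $\cC_{\ge 0}$ and hence lies in $\cE$. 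The pushout universal property then delivers the desired factorization $x \to \Sigma w \to \Sigma^n z_3$ of $g$.

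The hard part will be this cofiber step: one has to coherently combine the two intermediate factorizations obtained from $z_1 \in \cD$ and $z_2 \in \cD$ via a pushout, and then verify that the pushout is itself a suspension of an object of $\cE$. This verification depends crucially on both the extension closure of $\cE$ inside the prestable category $\cC_{\ge 0}$ and on $\Sigma$ being fully faithful on $\cC_{\ge 0}$.
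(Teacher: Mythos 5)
Your proof is correct, and the overall skeleton (rotate fiber sequences for $(1)\Leftrightarrow(2)$, observe $(2)\Rightarrow(3)$ is trivial, and run a closure argument for the converse) matches the paper's. The $(1)\Rightarrow(2)$ argument is literally the paper's, and your explicit $(2)\Rightarrow(1)$ is the same rotation run backwards. Where you genuinely diverge is in the hard direction out of $(3)$: the paper proves $(3)\Rightarrow(1)$ directly by considering the subcategory of $y\in\Sigma\cC_{\ge 0}$ such that every map $v\to y$ from $\cE$ can be killed by an exact epimorphism $v'\twoheadrightarrow v$ in $\cE$, and showing this subcategory is closed under \emph{extensions} --- the extension step is just a two-fold composition of epimorphisms, and one then invokes that $\Sigma\cC_{\ge 0}$ is the extension closure of $\bigcup_{n>0}\Sigma^n\cE$ (a small consequence of the finite-colimit generation hypothesis that the paper leaves implicit). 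You instead prove $(3)\Rightarrow(2)$ by considering the subcategory of $z\in\cC_{\ge 0}$ for which maps $x\to\Sigma^n z$ factor through $\Sigma\cE$, and showing it is closed under finite sums and cofibers, which plugs directly into the generation hypothesis as stated. The underlying two-step refinement (first dispose of $z_1$ via the boundary map, then of $z_2$ via the lift) is the same mechanism as the paper's composition of two epimorphisms, but your formulation in terms of factorizations rather than epimorphisms forces the extra work of gluing the two factorizations along the pushout $\Sigma w_2\cup_{x_1}x$ and checking that this pushout desuspends to an extension of $w_1$ by $w_2$ lying in $\cE$; that verification is correct (it uses that $\Sigma\cC_{\ge 0}$ is closed under extensions and that $\Sigma$ is fully faithful on a prestable category), it is just the price you pay for avoiding the epimorphism language. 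Either route is fine; the paper's is slightly leaner at the closure step, yours is slightly more self-contained with respect to the generation hypothesis.
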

\begin{proof} 
First we prove (2) assuming (1). 
Consider $x\in \cE$, $y \in \Sigma \cC_{\ge 0}$ and a morphism $\phi:x\to y$. 
The fiber of $\phi$ belongs to $\cC_{\ge 0}$ and the fiber of the map $\fib(\phi) \to x$
is also in $\cC_{\ge 0}$. Hence, $\fib(\phi) \to x$ is an exact epimorphism in $\cC_{\ge 0}$. 
Left specialty implies that there exists $x' \in \cE$ and a map $x' \to \fib(\phi)$ such that 
the composite 
\[
p: x' \to \fib(\phi) \to x
\]
is an epimorphism. Now, $\fib(p) \in \cE$ and we see that the map $x \to y$ factors through 
$\cof(p)= \Sigma \fib(p)$.

The condition (3) is a special case of (2), so it suffices to prove that (3) implies (1). 
Consider $x \in \cE$ and an exact epimorphism $\phi:w \to x \in \cC_{\ge 0}$. Then $\cof(\phi) \in \Sigma\cC_{\ge 0}$. 
To show left specialty it suffices to construct an exact epimorphism 
$x' \twoheadrightarrow x \in \cE$ such that the composite $x' \to x \to \cof(\phi)$ is zero. 
If $\cof(\phi) \in \bigcup\limits_{n>0} \Sigma^n\cE$, then by assumption we may choose a factorization 
\[
x \to \Sigma u \to \cof(\phi)
\]
and take $x'$ to be the fiber of the map $x\to \Sigma u$. Now 
consider the subcategory $\cD \subset \Sigma\cC_{\ge 0}$ consisting of objects $y$ such that for any $v\in \cE$ 
and any map $v \to y$ there exists an exact epimorphism $v' \twoheadrightarrow v \in \cE$ such that the composite 
$v' \to v \to y$ is zero. 
To finish the proof it suffices to show that $\cD$ is extension closed. 

Consider a map $\psi:v \to y$ such that $y$ fits into an exact sequence
\[
y' \stackrel{g}\to y \stackrel{f}\to y''
\] 
with $y', y'' \in \cD$. By assumption there exists an exact epimorphism $p:v' \twoheadrightarrow v$ in 
$\cE$ such that $f\circ \psi \circ p$ is 
zero. In particular, the map $v' \to y$ factors through $y'$. Using the assumption on $y'$, there exists an exact 
epimorphism $q:v'' \twoheadrightarrow v'$ such that the composite 
\[
v'' \to v' \to y' \to y
\] 
is zero. Thus the composite $v'' \twoheadrightarrow v' \twoheadrightarrow v$ is zero after composed with $\psi$ and 
so $y$ belongs to $\cD.$ 
\end{proof}

\begin{thm}[Keller's criterion {\cite[Theorem~4.5]{saunier-winges}}]\label{thm:Kellers_criterion}
Let $\cE \subset \cD$ be a left special subcategory of an exact category $\cD$. 
Then the induced functor 
\[
\tau:\mathcal{D}^\mathrm{b}(\cE) \to \mathcal{D}^\mathrm{b}(\cD)
\]
is fully faithful. 
\end{thm}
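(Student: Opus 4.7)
The plan is to prove fully faithfulness of $\tau$ by verifying it on mapping spectra between objects of the heart. Because $\mathcal{D}^\mathrm{b}(\cE)$ is generated by $\cE$ under shifts and extensions and $\tau$ is exact, a standard devissage on both variables — using the five-lemma style argument applied to the cofiber sequences defining $\mathcal{D}^\mathrm{b}(\cE)_{\ge 0}$ and $\mathcal{D}^\mathrm{b}(\cE)_{\le 0}$ — reduces the claim to showing that for all $x,y \in \cE$ and all $n \in \mathbf{Z}$, the comparison
\[
  \mathrm{Ext}^n_{\mathcal{D}^\mathrm{b}(\cE)}(x,y) \longrightarrow \mathrm{Ext}^n_{\mathcal{D}^\mathrm{b}(\cD)}(x,y)
\]
is an isomorphism. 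For $n<0$ both sides vanish because mapping spectra between heart objects in a stable hull are connective. For $n=0$ both sides are $\mathrm{Hom}_\cE(x,y) = \mathrm{Hom}_\cD(x,y)$ since $\cE \hookrightarrow \cD$ is fully faithful. So the genuine content lies in the range $n \ge 1$.

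The key technical input is the following factorization property, an analogue of condition (3) of Proposition~\ref{prop:speciality} but tailored to $\cE \subset \mathcal{D}^\mathrm{b}(\cD)_{\ge 0}$: every map $f: x \to \Sigma^n y$ in $\mathcal{D}^\mathrm{b}(\cD)$ with $x, y \in \cE$ and $n \ge 1$ admits a factorization $x \to \Sigma u \to \Sigma^n y$ for some $u \in \cE$. For $n = 1$ this follows directly from the hypothesized left specialness of $\cE \subset \cD$: represent $f$ by an honest extension $y \to z \to x$ in $\cD$, apply the left specialness condition to the admissible epimorphism $z \twoheadrightarrow x$ to obtain $u \in \cE$ with $u \to z$ such that $u \twoheadrightarrow x$ is an admissible epimorphism in $\cE$, and then the fiber $u' \in \cE$ of $u \to x$ yields a map of extensions producing the required factorization $x \to \Sigma u' \to \Sigma y$. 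For $n \ge 2$ one iterates: represent $f$ as a Yoneda $n$-extension in $\cD$ and apply the $n=1$ case to the outermost layer, then proceed inductively on the remaining $(n-1)$-fold piece.

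With this factorization available, the induction on $n$ closes cleanly. For surjectivity, a class $f \in \mathrm{Ext}^n_{\mathcal{D}^\mathrm{b}(\cD)}(x,y)$ factors as $x \to \Sigma u \to \Sigma^n y$; the first factor is an element of $\mathrm{Ext}^1_{\mathcal{D}^\mathrm{b}(\cD)}(x,u)$ which lifts by the $n=1$ case, and the second corresponds to a class in $\mathrm{Ext}^{n-1}_{\mathcal{D}^\mathrm{b}(\cD)}(u,y)$ which lifts by the inductive hypothesis. For injectivity, one applies the analogous factorization inside $\mathcal{D}^\mathrm{b}(\cE)_{\ge 0}$, where the relevant left specialness is supplied by Example~\ref{exm:special_subcategory}, and compares the two factorizations via naturality of $\tau$. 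The step I expect to be the main obstacle is the $n \ge 2$ case of the factorization property, where one must carefully bridge the exact-category formulation of left specialness (single extensions in $\cD$) with the prestable formulation (higher maps in the stable hull): concretely, making precise that $n$-fold Yoneda representatives of elements of $\mathrm{Ext}^n_{\mathcal{D}^\mathrm{b}(\cD)}(x,y)$ can be thinned layer-by-layer to live in $\cE$ via iterated application of the hypothesis. Once this technical bridge is in place, the remainder is routine.
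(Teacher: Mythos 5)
First, a point of reference: the paper does not prove this statement at all --- it imports it from \cite[Theorem~4.5]{saunier-winges} --- so I am judging your argument on its own terms and against the tools the paper does develop. Your reduction to comparing $\mathrm{Ext}^n$ between objects of $\cE$, and your handling of $n\le 0$ and $n=1$, are sound. The genuine gap is exactly where you place it: the factorization $x \to \Sigma u \to \Sigma^n y$ for $n\ge 2$. Your route rests on the claim that every class in $\pi_0\map_{\mathcal{D}^{\mathrm{b}}(\cD)}(x,\Sigma^n y)$ is represented by a Yoneda $n$-extension in $\cD$; this comparison between derived and Yoneda Ext is a substantive theorem even for classical exact categories, is nowhere available in the paper, and is not something you can wave through for stable hulls of exact $\infty$-categories. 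Worse, the ``peel one layer'' induction is forced out of $\cD$ immediately: after one step the target $\Sigma^{n-1}y$ is a suspension of an object of $\mathcal{D}^{\mathrm{b}}(\cD)_{\ge 0}$, not of $\cD$, so the $n=1$ case as you prove it (via honest extensions in $\cD$) does not apply to the remaining piece. The repair is available inside the paper and avoids Yoneda extensions entirely: left specialness is transitive, so the hypothesis $\cE\subset\cD$ combined with \Cref{exm:special_subcategory} shows that $\cE$ is left special in the prestable category $\mathcal{D}^{\mathrm{b}}(\cD)_{\ge 0}$; the implication $(1)\Rightarrow(2)$ of \Cref{prop:speciality}, whose proof does not use the generation hypothesis, then applies with target $\Sigma^n y\in\Sigma\mathcal{D}^{\mathrm{b}}(\cD)_{\ge 0}$ and produces the factorization for all $n\ge 1$ in one stroke.

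A second, fixable, defect is the injectivity step, which is not an argument as written. If $\tilde f\in\mathrm{Ext}^n_{\mathcal{D}^{\mathrm{b}}(\cE)}(x,y)$ factors as $x\to\Sigma u'\to\Sigma^n y$ and $\tau(\tilde f)=0$, nothing forces either factor to vanish, so ``comparing the two factorizations via naturality of $\tau$'' does not close the induction. What does work: the factorization arises from an exact sequence $u'\to u\to x$ in $\cE$ and exhibits $\tilde f$ as $\delta(\tilde h)$ for $\tilde h\in\mathrm{Ext}^{n-1}(u',y)$, where $\delta$ is the boundary map of that sequence. Comparing the two long exact sequences, $\tau(\tilde h)$ lies in the image of $\mathrm{Ext}^{n-1}_{\mathcal{D}^{\mathrm{b}}(\cD)}(u,y)$; surjectivity in degree $n-1$ for $(u,y)$ together with injectivity in degree $n-1$ for $(u',y)$ then forces $\tilde h$ to lie in the image of $\mathrm{Ext}^{n-1}_{\mathcal{D}^{\mathrm{b}}(\cE)}(u,y)$, whence $\tilde f=\delta(\tilde h)=0$. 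This is the point where surjectivity in lower degrees is genuinely consumed, so the induction must carry bijectivity, not injectivity and surjectivity separately; you should say this explicitly.
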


\begin{cor}\label{cor:derivedcat_heartcat}
Let $\cC$ be a bounded heart category. Then the embedding $\cC^{\heartsuit} \to \cC$ is left special and the natural functor
\[
	\mathcal{D}^\mathrm{b}(\cC^{\heartsuit}) \to \cC
\]
is an equivalence. 
\end{cor}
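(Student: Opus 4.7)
The plan is to first verify that $\cC^{\heartsuit}$ sits as a left special subcategory of the exact category $\cC_{\geq 0}$ and then deduce fully faithfulness of $\mathcal{D}^{\mathrm{b}}(\cC^{\heartsuit}) \to \cC$ from Keller's criterion (\Cref{thm:Kellers_criterion}), with essential surjectivity coming for free from boundedness of the heart structure.

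For left speciality, I would start with an exact epimorphism $p \colon w \twoheadrightarrow x$ in $\cC_{\geq 0}$ (that is, $\fib(p) \in \cC_{\geq 0}$) having target $x \in \cC^{\heartsuit}$, and apply the heart decomposition of \Cref{dfn:heart-structure} to $w$ to obtain a fiber sequence $w_{\leq 0} \to w \to w_{\geq 1}$. Rotating this sequence exhibits $w_{\leq 0}$ as an extension of $w \in \cC_{\geq 0}$ by $\Omega w_{\geq 1} \in \cC_{\geq 0}$, so extension closure of $\cC_{\geq 0}$ forces $w_{\leq 0} \in \cC^{\heartsuit}$. The main remaining task, and the step I expect to require the most care, is to show that the composite $g \colon w_{\leq 0} \to w \to x$ is an exact epimorphism in $\cC^{\heartsuit}$, i.e.\ $\fib(g) \in \cC^{\heartsuit}$, equivalently $\cof(g) \in \Sigma \cC^{\heartsuit}$.

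I would establish this by bounding $\cof(g)$ from both sides using the heart axioms. Rotating the defining cofiber sequence of $g$ yields $x \to \cof(g) \to \Sigma w_{\leq 0}$, which displays $\cof(g)$ as an extension of two objects of $\cC_{\leq 1}$ (since $x \in \cC_{\leq 0} \subset \cC_{\leq 1}$ and $\Sigma w_{\leq 0} \in \cC_{\leq 1}$), so extension closure places $\cof(g) \in \cC_{\leq 1}$. In the other direction, the octahedral axiom applied to the composite $w_{\leq 0} \to w \to x$, together with the identification $\cof(w \to x) = \Sigma \fib(p)$, produces a cofiber sequence
\[
w_{\geq 1} \to \cof(g) \to \Sigma \fib(p)
\]
whose outer terms both lie in $\cC_{\geq 1}$ (using $\fib(p) \in \cC_{\geq 0}$ from exactness of $p$). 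Extension closure of $\cC_{\geq 1}$ then gives $\cof(g) \in \cC_{\geq 1}$, and combining with the previous inclusion yields $\cof(g) \in \Sigma \cC^{\heartsuit}$ as required.

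Once left speciality is in hand, \Cref{thm:Kellers_criterion} supplies a fully faithful functor $\mathcal{D}^{\mathrm{b}}(\cC^{\heartsuit}) \to \mathcal{D}^{\mathrm{b}}(\cC_{\geq 0})$. The natural functor $\mathcal{D}^{\mathrm{b}}(\cC_{\geq 0}) \to \cC$ coming from the universal property of the stable hull is an equivalence for a bounded heart category, because boundedness forces $\cC = \bigcup_n \Sigma^{-n}\cC_{\geq 0}$. The resulting composite $F \colon \mathcal{D}^{\mathrm{b}}(\cC^{\heartsuit}) \to \cC$ is then essentially surjective because, again by boundedness, every object of $\cC$ is a finite extension of shifts of objects of $\cC^{\heartsuit}$ and every such combination is visibly in the image of $F$.
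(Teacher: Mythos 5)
Your proof is correct and follows essentially the same route as the paper's: take the heart decomposition of $w$, use the composite $w_{\le 0}\to w\to x$ to witness left speciality, and conclude via Keller's criterion together with the identification $\mathcal{D}^{\mathrm{b}}(\cC_{\ge 0})\simeq\cC$. The paper merely asserts that this composite works, whereas you carefully verify that $w_{\le 0}$ lies in the heart and that the fiber of the composite does too; those verifications are accurate.
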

\begin{proof}
The embedding $\cC^{\heartsuit} \to \cC_{\ge 0}$ is left special, since for any exact epimorphism 
\[
	x' \twoheadrightarrow x 
\]
with $x \in \cC^{\heart}, x' \in \cC_{\ge 0}$, we may consider the composite $x'_{\le 0} \to x' \to x$. 
On the other hand, we have $\mathcal{D}^\mathrm{b}(\cC_{\ge 0})\simeq \mathrm{SW}(\cC_{\ge 0}) \simeq \cC$ since $\cC_{\ge 0}$ 
generates $\cC$, so the result follows from \Cref{thm:Kellers_criterion}. 
\end{proof}

\begin{lem}\label{lem:wtheartforheartstr}	
Let $\cC$ be a bounded heart category. Then every object $c$ of the weight-heart in $\Ind(\cC)$ can be written as a retract 
of an object of the form $c_{\infty}=\colim_ic_i$ where $c_0=0$, and $c_i/c_{i-1}$ is a possibly infinite sum of objects in 
$\cC^{\heart}$.
\end{lem}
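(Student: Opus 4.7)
The plan is to realize $c$ as a direct summand of $c_\infty = \colim_i c_i$ of the stated form, by building $c_\infty$ together with a map $c_\infty \to c$ whose cofiber lies in $\Ind(\cC)_{w\ge 1}$. Once this is done, weight orthogonality between $c\in \Ind(\cC)_{w\le 0}$ and $\cof(c_\infty\to c) \in \Ind(\cC)_{w\ge 1}$ gives $\map(c, \cof(c_\infty \to c)) \in \Sp_{\ge 1}$, so the composite $c \to \cof(c_\infty\to c)$ is null; the identity of $c$ then lifts through $c_\infty\to c$, producing a section and exhibiting $c$ as a retract of $c_\infty$, exactly as in the weighted case of Example~\ref{exm:from_small_to_big_ws}.

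I construct the $c_i$ inductively. Set $c_0 = 0$. Given $\phi_i\colon c_i \to c$ with cofiber $C_i$, form
\[
S_{i+1} := \bigoplus_{y\in \cC^{\heartsuit},\, \alpha\colon y\to C_i} y
\]
with the tautological map $S_{i+1} \to C_i$, and define $c_{i+1}$ by the homotopy pullback
\[
\begin{tikzcd}
c_{i+1} \ar[r,"\phi_{i+1}"] \ar[d] & c \ar[d] \\
S_{i+1} \ar[r] & C_i.
\end{tikzcd}
\]
Unwinding the pullback gives a cofiber sequence $c_i \to c_{i+1} \to S_{i+1}$, so $c_{i+1}/c_i = S_{i+1}$ is an infinite sum of heart objects, and $C_{i+1} = \cof(S_{i+1} \to C_i)$, as desired for the inductive step.

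The main step is to verify $\cof(c_\infty \to c) = \colim_i C_i \in \Ind(\cC)_{w\ge 1}$. Since $\cC_{\le 0}$ is the extension-closure of $\bigcup_{n\ge 0} \Sigma^{-n}\cC^{\heartsuit}$, it suffices to show $\map(y, \colim_i C_i) \in \Sp_{\ge 0}$ for each $y \in \cC^{\heartsuit}$; compactness of $y$ in $\Ind(\cC)$ lets us exchange $\map(y,-)$ with the colimit, reducing the claim to $\colim_i \pi_k\map(y, C_i) = 0$ for all $k<0$. For $k=0$ this is automatic from the long exact sequence of $S_{i+1} \to C_i \to C_{i+1}$: the surjectivity of $\pi_0\map(y, S_{i+1}) \to \pi_0\map(y, C_i)$, built into the definition of $S_{i+1}$, forces the transition $\pi_0\map(y, C_i)\to\pi_0\map(y, C_{i+1})$ to be identically zero. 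The main obstacle is the case $k<0$: one needs an analogous surjectivity in negative degrees, which requires factoring each class $\Sigma^{|k|} y \to C_i$ through a heart summand of $S_{i+1}$. The expected tool is the left-speciality of $\cC^{\heartsuit} \subset \cC_{\ge 0}$ (Example~\ref{exm:special_subcategory} and Proposition~\ref{prop:speciality}(3)), combined with compactness to reduce targets in $\Ind(\cC)$ to targets in $\cC^{\mathrm{ic}}$, where the speciality applies directly. Enlarging the indexing set of $S_{i+1}$ to include a summand for each such factorization then forces every negative homotopy class to be captured and to die in the colimit, completing the proof.
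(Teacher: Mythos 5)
Your construction and your reduction of the splitting to showing $\cof(c_\infty \to c) \in \Ind(\cC)_{w\ge 1}$ are fine, and your $k=0$ argument works. But the $k<0$ case, which you yourself flag as ``the main obstacle,'' is a genuine gap, and the fix you sketch does not go through. Since a heart structure has no orthogonality axiom, $\map(y,y')$ for $y,y'\in\cC^{\heartsuit}$ is in general not connective (think $\mathrm{Ext}^{>0}$ in an abelian category), so your $S_{i+1}$ and hence your $c_\infty$ need not be weight connective, and $\pi_{k}\map(y,\colim_i C_i)\cong \colim_i\pi_{k-1}\map(y,c_i)$ for $k<0$ really can be nonzero. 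To kill such a class you would need to factor a map $y\to\Sigma^{-k}C_i$ as $y\to\Sigma u\to\Sigma^{-k}C_i$ with $u\in\cC^{\heartsuit}$; but Proposition~\ref{prop:speciality} only applies to targets in $\Sigma\cC_{\ge 0}$ (or positive shifts of heart objects), and $C_i=\cof(c_i\to c)$ is not presented as a filtered colimit of such objects, because $c$ is an \emph{arbitrary} object of the weight heart of $\Ind(\cC)$ --- knowing that such a $c$ lies in $\Ind(\cC_{\ge 0})$ is essentially Corollary~\ref{cor:weight_heart_relation}, which the paper deduces \emph{from} this lemma, so invoking it here would be circular. ``Reducing to $\cC^{\mathrm{ic}}$ by compactness'' does not help: speciality says nothing about maps into a general object of $\cC$.

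The paper's proof avoids this entirely by changing the construction rather than the verification. It takes $c_1=\bigoplus_{y\to c}y$ as you do, but then defines $c_{i+1}$ as the cofiber of $\bigoplus_{\Sigma^{-1}u\to c_i}\Sigma^{-1}u\to c_i$ (sum over \emph{all} maps from desuspended heart objects). This has two effects: $c_\infty$ is manifestly a filtered colimit of objects of $\cC^{\heart}$, so Proposition~\ref{prop:speciality}(3) \emph{does} apply and reduces any map $\Sigma^{-j}y\to c_\infty$ to the case $j=1$, which dies by construction at the next stage; hence $c_\infty$ lies in the weight heart. Then $d=\cof(c_\infty\to c)$ is automatically weight $\ge 0$, so all your $k<0$ classes vanish for free, and only $\pi_0\Map(x,d)$ must be killed --- which is done not by a surjectivity-in-the-colimit argument but by lifting $x\to d$ along the fiber sequence $c\to d\to\Sigma c_\infty$ (using $[x,\Sigma c_\infty]=0$) and then through $c_1\to c$, so that the map factors through the null composite $c_\infty\to c\to d$. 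If you want to salvage your version, you should incorporate the extra coning-off of $\Sigma^{-1}$-classes into the inductive step; as written, the proposal does not close.
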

\begin{proof}
Let $c$ be in the weight heart. Let $c_1$ be the direct sum of all homotopy classes of objects in $\cC^{\heart}$ 
equipped with a map to $c$. Given $c_i$, define $c_{i+1}$ to be the cofiber of the direct sum of all homotopy classes 
of objects in $\Sigma^{-1}\cC^{\heart}$ equipped with a map to $c_i$. We claim that $c_{\infty}$ is in the weight 
heart. To see this, we need to show that any map $x \to c_{\infty}$ with $x \in \Sigma^{-i}\cC^{\heart}$ for $i>0$ is 
null. Since $c_{\infty}$ is a filtered colimit of objects in $\cC^{\heart}$, 
it follows from Proposition~\ref{prop:speciality} (with $\cE = \cC^{\heart}$) and \Cref{cor:derivedcat_heartcat} that we can reduce to the case 
$i=1$. But now the map $x \to c_{\infty}$ factors through $c_n$ for some $n$, and is thus null, because it is zero after 
composition with $c_n \to c_{n+1}$ by construction. 
	
The map $c_1 \to c$ extends to $c_{\infty}$ since $c$ is in the weight heart, so the obstructions vanish. We claim that the 
cofiber is $\geq1$ in the weight structure. This claim finishes the proof, since it shows that the map $c_{\infty} \to c$ 
splits, so that $c$ is a summand of $c_{\infty}$. To see the claim, the cofiber $d$ is clearly $\geq0$ in the weight 
structure, so we just need to show that there are no nonzero homotopy classes of maps $x \to d$ for $x \in \cC^{\heart}$. 
If $f:x \to d$ is a map, it is null upon composing with the map $d \to \Sigma c_{\infty}$, so the map lifts to the fiber, 
$c$. But then by construction of $c_1$, the map lifts to $c_1$, so the original map is null, since it factors through the 
maps $c_1 \to c_{\infty} \to c \to d$, and the composite $c_{\infty} \to c \to d$ is null.
\end{proof}

\begin{cor}\label{cor:weight_heart_relation}
Let $\cC$ be a bounded heart category. 
Then the inclusion $\Ind(\cC)_{w\ge 0} \subset \Ind(\cC)_{t\ge 0}$ holds. 
\end{cor}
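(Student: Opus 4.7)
The plan is to prove the inclusion by reducing from all of $\Ind(\cC)_{w\geq 0}$ to the weight heart, and then applying Lemma~\ref{lem:wtheartforheartstr} to express weight heart objects as retracts of colimits of objects already visible to the $t$-structure.

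First I would observe that $\Ind(\cC)_{w\geq 0}$ is closed under small colimits. Indeed, by Remark~\ref{rmk:generated_ws} we have
\[
\Ind(\cC)_{w\geq 0} = \{x \in \Ind(\cC) : \map(y,x) \text{ is connective for all } y \in \cC_{\leq 0}\},
\]
and since every $y\in \cC_{\leq 0}$ is compact in $\Ind(\cC)$, the functor $\map(y,-)$ commutes with colimits; together with the fact that connective spectra are closed under colimits, this gives closure of $\Ind(\cC)_{w\geq 0}$ under colimits. Next, Construction~\ref{cnstr:t-str-and-weight-str} tells us the weight structure on $\Ind(\cC)$ is hypercomplete. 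Applying Proposition~\ref{prop:hearts_gen}, $\Ind(\cC)_{w\geq 0}$ is generated under small colimits by its weight heart $\Ind(\cC)^{w\heartsuit}$.

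Next I would verify that $\Ind(\cC)^{w\heartsuit} \subset \Ind(\cC)_{t\geq 0}$. By Lemma~\ref{lem:wtheartforheartstr}, every $c \in \Ind(\cC)^{w\heartsuit}$ is a retract of an object $c_\infty = \colim_i c_i$ with $c_0 = 0$ and each successive cofiber $c_i/c_{i-1}$ a possibly infinite direct sum of objects of $\cC^{\heartsuit}$. Since $\cC^{\heartsuit} \subset \cC_{\geq 0} \subset \Ind(\cC)_{t\geq 0}$, and $\Ind(\cC)_{t\geq 0}$ is the connective part of an accessible $t$-structure—hence closed under extensions, arbitrary small colimits, and retracts—each $c_i$ lies in $\Ind(\cC)_{t\geq 0}$, therefore so does $c_\infty$, and therefore so does $c$.

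Combining the two steps: every object of $\Ind(\cC)_{w\geq 0}$ is a small colimit of objects of $\Ind(\cC)^{w\heartsuit}$, which all lie in $\Ind(\cC)_{t\geq 0}$, and $\Ind(\cC)_{t\geq 0}$ is closed under small colimits. This yields the desired inclusion $\Ind(\cC)_{w\geq 0} \subset \Ind(\cC)_{t\geq 0}$. The only non-routine ingredient is Lemma~\ref{lem:wtheartforheartstr}, which has already been proved; everything else is formal closure arguments, so I do not anticipate a serious obstacle.
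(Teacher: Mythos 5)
Your proof is correct and follows essentially the same route as the paper: reduce to the weight heart via Proposition~\ref{prop:hearts_gen} (using hypercompleteness from Construction~\ref{cnstr:t-str-and-weight-str}), then place weight-heart objects in $\Ind(\cC)_{t\ge 0}$ via Lemma~\ref{lem:wtheartforheartstr}. The only difference is that you explicitly verify the closure of $\Ind(\cC)_{w\ge 0}$ under colimits needed to invoke Proposition~\ref{prop:hearts_gen}, which the paper leaves implicit.
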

\begin{proof}
By Proposition~\ref{prop:hearts_gen} any object of $\Ind(\cC)_{w\ge 0}$ is a colimit of objects of the weight-heart. 
Since $\Ind(\cC)_{t\ge 0} = \Ind(\cC_{\ge 0})$ is closed under colimits, it suffices to observe that objects of the 
weight-heart belong there, which follows from Lemma~\ref{lem:wtheartforheartstr}. 
\end{proof}

\begin{prop}\label{prop:Karoubi-completion}
Let $\cC$ be a bounded heart category, then the following holds:
\begin{enumerate}
\item $(\cC_{\le 0}^{\mathrm{ic}}, \cC_{\ge 0}^{\mathrm{ic}})$ defines a bounded heart structure on $\cC^{\mathrm{ic}}$,
\item the functor $\cC^{\heartsuit} \to (\cC^{\mathrm{ic}})^{\heartsuit}$ is the idempotent completion map.
\end{enumerate} 
%
%
%
%
\end{prop}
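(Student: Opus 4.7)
The plan is to verify the two axioms of Definition~\ref{dfn:heart-structure} for the pair $(\cC_{\le 0}^{\mathrm{ic}}, \cC_{\ge 0}^{\mathrm{ic}})$ on $\cC^{\mathrm{ic}}$, and then identify the heart using Corollary~\ref{cor:derivedcat_heartcat}. Closure under (de)suspension is immediate because $\Sigma$ and $\Omega$ are self-equivalences of $\cC^{\mathrm{ic}}$ and therefore preserve retracts. For extension closure of $\cC_{\le 0}^{\mathrm{ic}}$ (the argument for $\cC_{\ge 0}^{\mathrm{ic}}$ being dual), I would use a direct-sum trick: given a fiber sequence $a\to b\to c$ in $\cC^{\mathrm{ic}}$ with $a,c\in \cC_{\le 0}^{\mathrm{ic}}$, choose complements $a',c'\in \cC^{\mathrm{ic}}$ so that $\tilde a:=a\oplus a'$ and $\tilde c:=c\oplus c'$ both lie in $\cC_{\le 0}\subset \cC$. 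Summing the original fiber sequence with the trivial fiber sequences $a'\to a'\to 0$ and $0\to c'\to c'$ yields a fiber sequence $\tilde a \to b\oplus a' \oplus c' \to \tilde c$ in $\cC^{\mathrm{ic}}$ whose outer terms lie in $\cC_{\le 0}$. Since $\cC$ is closed under fiber sequences inside $\cC^{\mathrm{ic}}$ and $\cC_{\le 0}$ is extension closed in $\cC$, the middle term belongs to $\cC_{\le 0}$, so $b\in \cC_{\le 0}^{\mathrm{ic}}$.

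For the existence of decompositions and for part (2), I would invoke Corollary~\ref{cor:derivedcat_heartcat}, which yields $\cC\simeq \mathcal{D}^{\mathrm{b}}(\cC^{\heartsuit})$. Idempotent completing both sides gives $\cC^{\mathrm{ic}}\simeq \mathcal{D}^{\mathrm{b}}(\cC^{\heartsuit})^{\mathrm{ic}}$, and the key technical input is then the commutation
\[
\mathcal{D}^{\mathrm{b}}(\cE)^{\mathrm{ic}}\simeq \mathcal{D}^{\mathrm{b}}(\cE^{\mathrm{ic}})
\]
for any exact category $\cE$. This can be proved by combining the universal property of the stable hull from \cite{Klemenc_2022} (an analog for exact categories of the idempotent-completion statement in Theorem~\ref{thm:vova_heart}) with Keller's criterion (Theorem~\ref{thm:Kellers_criterion}) applied to the left special inclusion $\cE\hookrightarrow \cE^{\mathrm{ic}}$. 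Granted this, $\cC^{\mathrm{ic}}\simeq \mathcal{D}^{\mathrm{b}}((\cC^{\heartsuit})^{\mathrm{ic}})$, which by Example~\ref{exm:derived_category} is a bounded heart category with heart $(\cC^{\heartsuit})^{\mathrm{ic}}$ and bottom subcategory the extension closure in $\cC^{\mathrm{ic}}$ of $\bigcup_n \Sigma^{-n}(\cC^{\heartsuit})^{\mathrm{ic}}$. This produces decompositions for every $x\in \cC^{\mathrm{ic}}$ and identifies the heart as in (2).

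It remains to match the heart structure inherited from this $\mathcal{D}^{\mathrm{b}}$-identification with $(\cC_{\le 0}^{\mathrm{ic}}, \cC_{\ge 0}^{\mathrm{ic}})$. One inclusion is immediate from the closure properties already verified, since $(\cC^{\heartsuit})^{\mathrm{ic}}\subset \cC_{\le 0}^{\mathrm{ic}}$ and the latter is closed under desuspensions and extensions in $\cC^{\mathrm{ic}}$. For the other direction, any $x\in \cC_{\le 0}^{\mathrm{ic}}$ is a retract of some $y\in \cC_{\le 0}$, which by boundedness is a finite iterated extension of desuspensions of $\cC^{\heartsuit}$; under the identification $\cC^{\mathrm{ic}}\simeq \mathcal{D}^{\mathrm{b}}((\cC^{\heartsuit})^{\mathrm{ic}})$ the bottom subcategory is retract-closed inside $\cC^{\mathrm{ic}}$ (being the heart structure on an idempotent complete stable category built from an idempotent complete exact category), so $x$ lies in it. Boundedness of the heart structure on $\cC^{\mathrm{ic}}$ follows from boundedness of $\cC$, since every object of $\cC^{\mathrm{ic}}$ is a retract of an object of $\cC$. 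The main obstacle in this plan is establishing the commutation of $\mathcal{D}^{\mathrm{b}}$ with idempotent completion of exact categories; once this is in place the remaining verifications are routine.
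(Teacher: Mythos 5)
The paper does not prove this proposition internally; its ``proof'' is a citation to \cite[Proposition~2.28]{winges2025presentable}, so there is no in-paper argument to compare against. Your overall strategy --- verify the closure axioms directly, then obtain decompositions and the heart identification from $\cC^{\mathrm{ic}} \simeq \mathcal{D}^{\mathrm{b}}((\cC^{\heartsuit})^{\mathrm{ic}})$ --- is reasonable, and the direct-sum trick for extension closure is correct. But the argument has a gap precisely at the step you flag as the ``main obstacle,'' and it is worse than you suggest: Keller's criterion applied to the left special inclusion $\cE \hookrightarrow \cE^{\mathrm{ic}}$ only gives that $\mathcal{D}^{\mathrm{b}}(\cE) \to \mathcal{D}^{\mathrm{b}}(\cE^{\mathrm{ic}})$ is fully faithful with dense image, hence an equivalence $\mathcal{D}^{\mathrm{b}}(\cE)^{\mathrm{ic}} \simeq \mathcal{D}^{\mathrm{b}}(\cE^{\mathrm{ic}})^{\mathrm{ic}}$. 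To drop the outer idempotent completion on the right you need to know that the stable hull of an idempotent complete exact category is already idempotent complete (a Balmer--Schlichting-type theorem). This is not supplied by Theorem~\ref{thm:vova_heart}, which concerns additive categories and weight structures, and it cannot be taken for granted here because it is essentially equivalent in strength to what is being proved: if $\mathcal{D}^{\mathrm{b}}(\cE^{\mathrm{ic}})$ failed to be idempotent complete, your identification would only produce a heart structure on a proper dense subcategory of $\cC^{\mathrm{ic}}$, and the decomposition axiom for the remaining retracts is exactly the hard point of the proposition.

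A second, smaller issue: for part (2) you need $\cC_{\le 0}^{\mathrm{ic}} \cap \cC_{\ge 0}^{\mathrm{ic}} \subseteq (\cC^{\heartsuit})^{\mathrm{ic}}$, and your argument rests on the parenthetical claim that $\mathcal{D}^{\mathrm{b}}(\cE^{\mathrm{ic}})_{\le 0}$ is closed under retracts in $\cC^{\mathrm{ic}}$. Heart structures carry no retract-closure axiom (compare Definition~\ref{dfn:heart-structure} with Definition~\ref{dfn:weight}), and the coconnective part of the stable hull is defined as an extension closure of shifts of $\cE^{\mathrm{ic}}$, so this is a genuine claim requiring proof --- for instance by exhibiting $\mathcal{D}^{\mathrm{b}}(\cE^{\mathrm{ic}})_{\le 0}$ as the intersection of $\cC^{\mathrm{ic}}$ with a retract-closed subcategory of $\Ind(\cC)$, or via the orthogonality $\pi_0\Map(a,b)=0$ for $a \in \cC_{\ge 0}$ and $b \in \Omega\cC_{\le 0}$. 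On the positive side, for part (1) you do not actually need the two candidate pairs to coincide: since $(\cC_{\le 0}^{\mathrm{ic}}, \cC_{\ge 0}^{\mathrm{ic}})$ satisfies the closure axioms and contains a pair already satisfying the decomposition axiom, it inherits decompositions for free; only the identification of the heart in part (2) requires the finer comparison.
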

\begin{proof} 
See \cite[Proposition~2.28]{winges2025presentable}.
\end{proof}

\begin{exm}\label{exm:minusone}
Let $R$ be a $-1$-connective ring spectrum $R$. It follows from 
\cite[Proposition~2.1.3(1)]{noncommutative_localizations} and \Cref{prop:Karoubi-completion} that $\mathrm{Perf}(R)$ has a heart structure of cohomological dimension 1 such that $(\mathrm{Perf}(R))_{\le 0}$ and $(\mathrm{Perf}(R))_{\ge 0}$ being the retract closure of the extension closures of the 
subcategories 
$\{ \Sigma^{-n} R  \text{ for all } n\ge 0 \}$ and $\{ \Sigma^n R  \text{ for all } n\ge 0 \}$. 
\tqed \end{exm}

\begin{rmk}
	$\mathrm{Perf}(R)$ for an arbitrary $-n$-connective ring spectrum $R$ for $n>1$ may not come from a fcd heart category, or 
	even the weaker notion of a $c$-category to be introduced in the next section (see \Cref{thm:localizingsubcat}). Using 
	\Cref{thm:Kellers_criterion} and \Cref{prop:speciality}, it is possible to see that the subcategory $\cE$ generated under 
	extensions and retracts by $R$ forms the heart of a heart structure iff for any map $a \to \Sigma^nc$ with $a,c \in \cE$, 
	there is a factorization $a \to \Sigma b \to \Sigma^nc$ with $b \in \cE$.
\tqed \end{rmk}

\section{\texorpdfstring{$c$-structures}{c-structures}}\label{sec:cstr}

\begin{dfn}\label{dfn:preccategory}
Let $\cC$ be an idempotent complete small stable category with a choice of a generating subcategory 
$\cC_{\le 0}\subset \cC$, closed under finite limits, extensions and retracts. This gives rise to a hypercomplete compactly 
generated weight structure on $\Ind(\cC)$ in the sense of Definition~\ref{dfn:generation}. We say that $\cC_{\le 0}$ is a 
{\bf pre-$c$-structure} for $\cC$ if every object of $\cC$ is bounded in the weight structure. In this case we say that $\cC$ 
(with the choice of $\cC_{w\le 0}$) is a {\bf pre-$c$-category}. 

A morphism of pre-$c$-categories $(\cC, \cC_{\le 0}) \to (\cD, \cD_{\le 0})$ is an exact functor $\cC \to \cD$ that induces 
a weight exact functor $\Ind(\cC) \to \Ind(\cD)$. They form a category $\Cat^{\mathrm{prec}}$. 
We say that a pre-$c$-category $\cC$ is $\kappa$-small if the underlying stable category is $\kappa$-small. 
We denote the subcategory of $\kappa$-small pre-$c$-categories by $\Cat^{\mathrm{prec},\kappa}$.
\end{dfn}

\begin{lem}\label{lem:weight0_morphism}
Let $C$ be a weighted category and $f:x\to y \in C$ be a morphism. 
The following conditions are equivalent:
\begin{enumerate}
\item $f$ can be decomposed as $x \to x' \to y$ where $x'\in C_{w\ge k+1}$, 
\item there exists a weight decomposition of $x$ such that the composite $w_{\le k} x \to x \to y$ is zero,
\item for any $t \in C_{w\le k}$ and any map $t \to x$ the composite $t \to x \to y$ is zero.
\end{enumerate}
\end{lem}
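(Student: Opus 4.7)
The plan is to prove the cyclic chain of implications $(1) \Rightarrow (3) \Rightarrow (2) \Rightarrow (1)$, each of which is essentially a one-move argument built from the orthogonality axiom and the existence of weight truncations. Nothing here is particularly deep, so I expect no real obstacle; the main point is simply to be careful about the shifted orthogonality statement and the nonuniqueness of weight truncations.

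For $(1) \Rightarrow (3)$, suppose $f$ factors as $x \to x' \to y$ with $x' \in C_{w \ge k+1}$. Given any $t \in C_{w \le k}$ and map $t \to x$, the composite $t \to x \to x' \to y$ begins with a map $t \to x'$. Since $\Sigma^{-k} t \in C_{w\le 0}$ and $\Sigma^{-k-1}x' \in C_{w\ge 0}$, the orthogonality axiom (2) of \Cref{dfn:weight} gives that $\Sigma^{-1}\map(t,x') = \map(\Sigma^{-k}t, \Sigma^{-k-1}x')$ is connective, so $\pi_0\map(t,x') = 0$ and the map $t \to x'$ is null. Hence the composite $t \to x \to y$ is null.

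For $(3) \Rightarrow (2)$, pick any weight decomposition $w_{\le k}x \to x \to w_{\ge k+1}x$ (which exists by condition (3) of \Cref{dfn:weight} after the appropriate shift). Then $w_{\le k}x \in C_{w\le k}$ carries a map to $x$, so the assumption (3) immediately forces the composite $w_{\le k}x \to x \to y$ to vanish. For $(2) \Rightarrow (1)$, given a weight decomposition as in (2) with the composite null, the universal property of the cofiber yields a factorization of $f$ through $w_{\ge k+1}x \in C_{w \ge k+1}$; taking $x' := w_{\ge k+1}x$ provides the desired decomposition.

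All three implications use only the axioms of a weight structure, so no further assumptions beyond those already stated in \Cref{dfn:weight} are needed. The nonuniqueness of weight truncations is not an issue because $(3)$ is a universal statement over all $t \in C_{w\le k}$, while $(2)$ and $(1)$ are existence statements, and one particular choice of truncation is what mediates between them.
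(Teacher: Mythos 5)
Your proof is correct and uses the same two ingredients as the paper's: the (shifted) orthogonality axiom to kill maps from $C_{w\le k}$ into $C_{w\ge k+1}$, and the cofiber factorization $x \to w_{\ge k+1}x$ for $(2)\Rightarrow(1)$. The paper arranges the implications slightly differently (proving $(1)\Rightarrow(2),(3)$ directly and $(2)\Rightarrow(1)$, leaving $(3)\Rightarrow(2)$ implicit), but the argument is essentially identical.
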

\begin{proof}
(1) implies (2) and (3), since already the composites $t \to x \to x'$ and $w_{\le k}x \to x \to x'$ are zero by 
orthogonality. Furtheremore, (2) implies (1) since under the assumption, the map $x\to y$ factors through $w_{\ge k+1}x$. 
\end{proof}

\begin{dfn}[\cite{Bondarko2022}]\label{dfn:weight0_morphism}
Let $C$ be a weighted category and $f:x\to y \in C$ be any morphism. 
We say that $f$ {\bf kills weights $\le k$} if the 
equivalent conditions of Lemma~\ref{lem:weight0_morphism} are satisfied.
\end{dfn}

\begin{dfn}\label{dfn:c_structure}
Let $\cC$ be a pre-$c$-category and let $n$ be a natural number. 
We say that the pre-$c$-structure on $\cC$ is a $c$-structure of width $\le n$ if 
any $x\in \Ind(\cC)_{w\ge 0}$ can be presented as a filtered colimit of a diagram of $\cC$ such 
that for every map $x_{\alpha} \to x_{\alpha'}$ in the diagram, it can be postcomposed with a map $x_{\alpha'} \to x_{\alpha''}$ such that $x_{\alpha} \to x_{\alpha''}$ kill weights $\le -n-1$; 

The full subcategory of $\Cat^{prec}$ spanned by $c$-categories of width $\le n$ is denoted by $\Cat^{\mathrm{conn}}_n$. 
We denote the subcategory of $\kappa$-small $c$-categories by $\Cat^{\mathrm{conn},\kappa}_n$. 
\end{dfn}

\begin{exm}\label{exm:from_heart_to_c}
If $\cC$ is a bounded heart category of cohomological dimension $\le n$, then $(\cC, \cC_{\le 0})$ is a $c$-category. 
Indeed, we at least have an inclusion $\cC_{\ge 0} \subset \Ind(\cC)_{w\ge -n}$. In particular, $\cC_{\ge -k}\cap \cC_{\le k}$ 
is contained inside $\Ind(\cC)_{w\ge -n-k} \cap \Ind(\cC)_{w\le k}$, so at least this yields a pre-$c$-structure on $\cC$. 
Moreover, by Corollary~\ref{cor:weight_heart_relation} we have a series of inclusions
\[
\Ind(\cC)_{w\ge 0} \subset \Ind(\cC_{\ge 0}) \subset \Ind(\Ind(\cC)_{w\ge -n} \cap \cC)
\]
which shows that it is actually a $c$-structure of width $\le n$.
\tqed \end{exm}

\begin{rmk}\label{rmk:abelian_weight_heart}
Let $A$ be an abelian category. Then $\mathcal{D}^\mathrm{b}(A)$ admits a heart structure of 
Example~\ref{exm:derived_category} and in particular, there is an induced weight structure on 
$\Ind(\mathcal{D}^\mathrm{b}(A))$. Recall that an object $I \in \Ind(A)$ is injective if and only if  
$\mathrm{Ext}^1_{\Ind(A)}(x/u, I)$ is zero for any $x\in A$ and $u \hookrightarrow x$ a subobject in $\Ind(A)$. By 
Example~\ref{exm:special_subcategory} and Proposition~\ref{prop:speciality} any injective object also satisfies 
\[
	\map(x,I)
\]
is connective for any $x\in A$. In particular, any positively graded complex of injectives is connective. 

If we also assume that $A$ is noetherian, then also the opposite inclusion holds: any object of the weight heart is injective. 
Indeed, in this case both $u$ and $x/u$ belong in $A$, and so the condition $\mathrm{Ext}^1_{\Ind(-)}(x/u, I)$ is 
automatically satisfied for any $I \in \Ind(\mathcal{D}^\mathrm{b}(A))^{w\heart}$. 

From this we see that the class $\Ind(\mathcal{D}^\mathrm{b}(A))_{w\ge 0}$ is equal to the class of positively graded 
complexes of injectives, and in particular $\Ind(\mathcal{D}^\mathrm{b}(A))^{w\heart}$ is the class of injective objects of 
$\Ind(A)$. This is false in the non-noetherian case, since objects of the weight heart are closed under direct sums, while 
injectives are in general not (see \cite[Theorem~1.1]{Bass1962}).
\tqed \end{rmk}

\begin{wrn}\label{wrn:preserving_injectives}
The $c$-structure constructed in Example~\ref{exm:from_heart_to_c} does not recover the heart structure, even if we 
ignore potential retract problems: in fact, under the assumptions above $(\cC_{\le 0}, \cC \cap \Ind(\cC)_{w\ge -n})$ 
defines a heart structure on the same category (see Proposition~\ref{prop:cstr_heartstructure}), which is typically different 
from the original one but has the same underlying $c$-category. 

Furthermore, this assignment is not functorial: a heart exact functor $\cC \to \cD$ does not necessarily induce 
a weight exact functor $\Ind(\cC) \to \Ind(\cD)$. For instance, an exact functor of abelian categories $f:A \to B$ induces a 
heart exact functor $\mathcal{D}^\mathrm{b}(A) \to \mathcal{D}^\mathrm{b}(B)$. However, if we assume $A$ and $B$ to be 
noetherian, by Remark~\ref{rmk:abelian_weight_heart} the weight exactness of 
\[
\Ind(\mathcal{D}^\mathrm{b}(A)) \to \Ind(\mathcal{D}^\mathrm{b}(B))
\]
is equivalent to $f_!:\Ind(A) \to \Ind(B)$ preserving injectives. This condition neither implies nor follows from exactness. 
\end{wrn}

\begin{prop}\label{prop:cstr_heartstructure}
Suppose that $\cC$ is a $c$-category of width $n$ such that every object in $\Ind(\cC)^{w\heartsuit}$ is a filtered colimit of 
compact objects which are $[-n,0]$ in the weight structure. Then $\cC$ comes from a bounded heart category of cohomological 
dimension $\le n$ via the construction of \Cref{exm:from_heart_to_c}. More precisely, it arises from a heart structure on 
$\cC$ where $\cC_{\leq 0}$ is the compact objects which are $\leq 0$ in the weight structure, and $\cC_{\geq 0}$ is the 
compact objects which are $\geq-n$ in the weight structure.
\end{prop}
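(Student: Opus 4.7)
The plan is to define the candidate heart structure explicitly by $\cC_{\le 0} := \cC \cap \Ind(\cC)_{w\le 0}$ (which by the pre-$c$-category condition coincides with the given generating subcategory) and $\cC_{\ge 0} := \cC \cap \Ind(\cC)_{w\ge -n}$, then verify that this data defines a bounded heart category of cohomological dimension $\le n$ on $\cC$, and finally observe that feeding it into the construction of Example~\ref{exm:from_heart_to_c} reproduces the original $c$-structure. Closure of $\cC_{\le 0}$ under desuspensions and extensions, and of $\cC_{\ge 0}$ under suspensions and extensions, is immediate from the corresponding closure properties of the weight structure on $\Ind(\cC)$. Boundedness follows because every object of $\cC$ is weight-bounded in $\Ind(\cC)$ (part of the pre-$c$-structure data). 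For the cohomological dimension bound, if $x \in \cC_{\le 0}$ and $y \in \cC_{\ge 0}$, then $\Sigma^n y \in \Ind(\cC)_{w\ge 0}$, so $\map(x, \Sigma^n y)$ is connective by the orthogonality axiom and hence $\map(x, y)$ is $(-n)$-connective.

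The substance of the argument is producing, for each $x \in \cC$, a heart decomposition $x_{\le 0} \to x \to x_{\ge 1}$ with $x_{\le 0} \in \cC_{\le 0}$ and $x_{\ge 1} \in \Sigma \cC_{\ge 0} = \cC \cap \Ind(\cC)_{w\ge -n+1}$. I proceed by induction on the top weight $M$ of $x$, the case $M \le 0$ being trivial. The key reduction step is: given $x \in \cC$ with $M > 0$, to build a map $\phi : x \to z$ with $z \in \cC \cap \Ind(\cC)_{w\in[M-n,M]}$ and $\fib(\phi) \in \cC \cap \Ind(\cC)_{w\le M-1}$. Since $w_{\ge M} x$ lies in $\Ind(\cC)^{w\heartsuit}[M]$, the standing hypothesis presents it as a filtered colimit $\colim_\beta y_\beta[M]$ with $y_\beta \in \cC$ of weight in $[-n, 0]$. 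Compactness of $x$ in $\Ind(\cC)$ lets the canonical map $x \to w_{\ge M}x$ factor through some $z := y_{\beta_0}[M]$, furnishing $\phi$. Both $x$ and $z$ lie in $\Ind(\cC)_{w\le M}$, and a direct orthogonality computation shows that the cofiber of any map between two objects of $\Ind(\cC)_{w\le M}$ also lies in $\Ind(\cC)_{w\le M}$; thus $\mathrm{cofib}(\phi) \in \Ind(\cC)_{w\le M}$, hence $\fib(\phi) \in \Ind(\cC)_{w\le M-1}$, as required.

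Applying the inductive hypothesis to $\fib(\phi)$ yields a decomposition $x_{\le 0} \to \fib(\phi) \to y_{\ge 1}$ of the right form. By the octahedral axiom, the composite $x_{\le 0} \to x$ has cofiber fitting into a fiber sequence $y_{\ge 1} \to \mathrm{cofib}(x_{\le 0} \to x) \to z$, and since both $y_{\ge 1}$ and $z$ lie in $\Sigma \cC_{\ge 0}$ (for $z$, note that $M - n \ge -n+1$ as $M \ge 1$), extension-closure of $\Sigma \cC_{\ge 0} = \cC \cap \Ind(\cC)_{w\ge -n+1}$ provides the middle term also in $\Sigma \cC_{\ge 0}$, completing the decomposition. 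The main obstacle is precisely this inductive step: weight truncations in $\Ind(\cC)$ need not lie in $\cC$ at all -- this very failure is what $c$-structures were invented to handle -- so one cannot simply take $x_{\ge 1} := w_{\ge 1}x$. The compactness of $x$ together with the filtered-colimit hypothesis on $\Ind(\cC)^{w\heartsuit}$ is the exact mechanism that lets us replace the non-compact $w_{\ge M}x$ by the compact approximation $z \in \cC$ that still absorbs all of the top weight of $x$.

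Finally, applying Example~\ref{exm:from_heart_to_c} to the heart structure produced above yields a $c$-structure on $\cC$ whose distinguished subcategory is $\cC \cap \Ind(\cC)_{w\le 0}$, agreeing with the original; since a pre-$c$-structure is determined by the choice of $\cC_{\le 0}$ (Definition~\ref{dfn:preccategory}) and both $c$-structures have width $\le n$, this recovers the original $c$-structure exactly.
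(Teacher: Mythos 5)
Your overall strategy is the same as the paper's: reduce to producing, for a weight-bounded compact object, a single decomposition by peeling off the top weight, using compactness of $x$ together with the filtered-colimit hypothesis on the shifted weight heart to replace $w_{\ge M}x$ by a compact approximation $z$ of weight amplitude $[M-n,M]$. The splicing via the octahedral axiom at the end and the verification of the remaining axioms are fine. However, there is a genuine gap in the justification of the key step. You assert that ``the cofiber of any map between two objects of $\Ind(\cC)_{w\le M}$ also lies in $\Ind(\cC)_{w\le M}$'' and deduce $\fib(\phi)\in\Ind(\cC)_{w\le M-1}$ from it. That closure property is false: $\cC_{w\le M}$ is closed under finite \emph{limits}, extensions and retracts (Lemma~\ref{lem:closureofneg}), but not under cofibers. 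For the map $a\to 0$ with $a$ a nonzero object of the $M$-shifted weight heart, the cofiber is $\Sigma a$, which has weight exactly $M+1$; concretely, for $d\in\cC_{w\ge M}$ the long exact sequence only gives $\pi_{-1}\map(\cof(\phi),d)$ receiving a map from $\pi_0\map(x,d)$, which need not vanish. So as written, the claim that $\fib(\phi)$ is weight $\le M-1$ is unjustified, and this is exactly the point where all the work of the proposition sits.

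The conclusion is nevertheless true, but one must use the specific factorization $x\to z\to w_{\ge M}x$ rather than only the weight bounds on $x$ and $z$. This is what the paper does: fix the weight decomposition $w_{\le M-1}x\to x\to w_{\ge M}x$; by the octahedral axiom applied to $x\to z\to w_{\ge M}x$, the object $\fib(x\to z)$ is an extension of $w_{\le M-1}x$ (which is weight $\le M-1$) by $\Sigma^{-1}\fib(z\to w_{\ge M}x)$, and the latter is in turn an extension of $\Sigma^{-1}z$ (weight $\le M-1$) and $\Sigma^{-2}w_{\ge M}x$ (weight $\le M-2$). Extension-closure of $\Ind(\cC)_{w\le M-1}$ then gives $\fib(\phi)\in\Ind(\cC)_{w\le M-1}$, and the rest of your induction goes through unchanged. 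With this repair your argument coincides with the paper's proof (which states the reduction ``it is enough to treat $x$ of weight $\le 0$'' without spelling out the induction you make explicit). One further small caveat: your parenthetical claim that $\cC\cap\Ind(\cC)_{w\le 0}$ coincides with the chosen generating subcategory $\cC_{\le 0}$ is not automatic, only the inclusion $\cC_{\le 0}\subset\cC\cap\Ind(\cC)_{w\le 0}$ is; this does not affect the statement, since both subcategories generate the same weight structure on $\Ind(\cC)$, hence the same $c$-structure is recovered.
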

\begin{proof}
The only condition in the definition of a heart structure that is not immediate is the existence of decompositions. 
Since each object in $\cC$ is bounded, it is enough to show that for an object $x \in\cC$ that is $\leq0$, it admits a 
decomposition $x_{\leq -1} \to x \to x_{\geq0}$. Choose a weight decomposition $y \to x \to z$ so that $z$ is in the weight 
heart and $y$ is weight $-1$-coconnective. By our assumptions, $x \to z$ factors through some $x_{\geq0}$ which is compact and 
$[-n,0]$ in the weight structure. The fiber of $x \to x_{\geq0}$ is compact, so it suffices to show that it is $\leq -1$ in 
the weight structure. This fiber is an extension of $y$ by $\Sigma^{-1} \fib(x_{\geq0} \to z)$. But the latter is $\leq -1$ 
since it is an extension of $\Sigma^{-1} x_{\geq 0}$ and $\Sigma^{-2} z$.
\end{proof}

\subsection{Resolving \texorpdfstring{$c$-categories}{c-categories}}

A bounded weight structure gives rise to a $c$-structure of width $\le 0$ (see Example~\ref{exm:from_heart_to_c}). 
While there are lots of examples of $c$-structures that do not appear in this way, the main theorem of this section is that one 
can always present a category $\cC$ with a $c$-structure as a kernel of a localization functor from a bounded weighted 
category to a bounded heart category (of cohomological dimension smaller than the width of $\cC$). 
This allows one to eventually resolve $\cC$ by bounded weighted categories. 

\begin{rmk}\label{remark:characterization}
	We do not know (and suspect it is false) that being the kernel of a heart exact localization from a bounded weighted category to an fcd heart category characterizes $c$-categories. See \Cref{question:kerlocccat} and \Cref{question:width0}, and see \Cref{thm:weakimpliesstrong} for a characterization of such kernels.
\end{rmk}

\begin{cnstr}\label{cnstr:resolution}
Let $\cC$ be a pre-$c$-category that is $\kappa$-small for some uncountable regular cardinal $\kappa$. 
The way we set up the theory so far, using we get an embedding of $\cC$ into a bounded weighted category: 
\[
	\yo:\cC \to \Ind(\cC)^{\kappa,\mathrm{b}}
\]
and we may attempt to do the same as in Construction~\ref{cnstr:t-str-and-weight-str}. 

This embedding is natural in the sense that it induces a natural transformation\footnote{
The functor $\yo$ does not have to be weight exact, so we do not have a natural transformation of functors 
$\Cat^{\mathrm{prec},\kappa} \to \Cat^{\mathrm{prec}}$.} of functors
\[
\Cat^{\mathrm{prec},\kappa} \to \Cat^{\mathrm{st}}.
\] 
The localization sequence
\[
\cC \to \Ind(\cC)^{\kappa,\mathrm{b}} \to \Ind(\cC)^{\kappa,\mathrm{b}}/\cC =: \cC^{(1)}
\]
is also functorial in $\cC$, and the category on the right is naturally a bounded heart category in a way that makes the 
localization functor heart exact. 
\end{cnstr}

It turns out that if we restrict the above construction to $c$-categories, then $\cC^{(1)}$ is actually an fcd heart category, 
and its cohomological dimension is either smaller than the width of $\cC$, or zero.

\begin{thm}\label{thm:resolution}
Let $\kappa$ be a regular uncountable cardinal and $n$ be a natural number. Then the following statements are valid: 
\begin{enumerate}
\item
for a $\kappa$-small $c$-category $\cC$ of width $\le n$ the quotient $\cC^{(1)}$ is 
a heart structure of cohomological dimension $\le \mathrm{max}(n-1,0)$, and, in particular, a $c$-category;
\item 
for a morphism of $c$-categories $\cC \to \cD$, the functor $\cC^{(1)} \to \cD^{(1)}$ is a morphism of 
$c$-categories again. 
\end{enumerate}
\end{thm}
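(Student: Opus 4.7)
For part~(1), my plan is to verify the defining properties of an fcd heart category: existence of decompositions, boundedness, and cohomological dimension $\le \max(n-1, 0)$. Once done, Example~\ref{exm:from_heart_to_c} automatically promotes $\cC^{(1)}$ to a $c$-category. The first two properties follow from Example~\ref{exm:localizations} applied to the essentially surjective exact localization $F\colon \Ind(\cC)^{\kappa,\mathrm{b}} \to \cC^{(1)}$, using the bounded weight structure on $\Ind(\cC)^{\kappa,\mathrm{b}}$ from Theorem~\ref{thm:generated_ws}.

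The heart of the argument is the cohomological dimension bound. Given $x \in \cC^{(1)}_{\le 0}$ and $y \in \cC^{(1)}_{\ge 0}$, choose lifts $\tilde x \in \Ind(\cC)^{\kappa,\mathrm{b}}_{w\le 0}$ and $\tilde y \in \Ind(\cC)^{\kappa,\mathrm{b}}_{w\ge 0}$. First I establish a coconnectivity bound in $\Ind(\cC)^{\kappa,\mathrm{b}}$ itself: present $\tilde y = \colim y_\alpha$ via the width condition, arranged to be $\kappa$-filtered using $\kappa$-compactness of $\tilde y$. For any $j \ge n+1$, a map $\Sigma^j \tilde x \to \tilde y$ factors through some $y_\alpha$ by $\kappa$-compactness of $\tilde x$, and is annihilated by postcomposition with the kill-weights transition $y_\alpha \to y_{\alpha''}$, since $\Sigma^j \tilde x \in \Ind(\cC)_{w \le -n-1}$. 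Hence $\pi_k \map_{\Ind(\cC)^{\kappa,\mathrm{b}}}(\tilde x, \tilde y) = 0$ for $k > n$, and together with weight orthogonality this mapping spectrum is concentrated in degrees $[0, n]$. I then transfer this to $\cC^{(1)}$ via the Verdier calculus of fractions, namely $\map_{\cC^{(1)}}(x,y) = \colim \map(\tilde x, \tilde y')$ taken over maps $\tilde y \to \tilde y'$ with fiber in $\cC$. For each such $\tilde y \to \tilde y'$ with fiber $z \in \cC$, the fiber sequence $\map(\tilde x, z) \to \map(\tilde x, \tilde y) \to \map(\tilde x, \tilde y')$ relates the connectivity of $\map(\tilde x, \tilde y')$ to that of $\map(\tilde x, z)$; given a class in $\pi_k \map(\tilde x, \tilde y')$ with $k \le -n$, I construct a further extension $\tilde y' \to \tilde y''$ via a weight-truncation-type modification of $z$ inside $\cC$, killing the class and thereby gaining one degree of connectivity to arrive at the bound $-(n-1)$-connective.

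For part~(2), naturality of Construction~\ref{cnstr:resolution} produces the functor $\cC^{(1)} \to \cD^{(1)}$. To see it is a morphism of $c$-categories (relative to the $c$-structures from Example~\ref{exm:from_heart_to_c}), I observe that it is heart-exact: the heart subcategories $\cC^{(1)}_{\le 0}$ and $\cC^{(1)}_{\ge 0}$ are images of the weight subcategories of $\Ind(\cC)^{\kappa,\mathrm{b}}$, and the induced map $\Ind(\cC)^{\kappa,\mathrm{b}} \to \Ind(\cD)^{\kappa,\mathrm{b}}$ is weight-exact by the hypothesis that $\cC \to \cD$ is a morphism of $c$-categories. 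To upgrade heart-exactness to weight-exactness of the induced Ind-functor $\Ind(\cC^{(1)}) \to \Ind(\cD^{(1)})$, I combine Lemma~\ref{lem:wtheartforheartstr} (which describes the weight-heart of $\Ind(\cC^{(1)})$ as retracts of iterated sums of objects of $(\cC^{(1)})^{\heartsuit}$) with Proposition~\ref{prop:hearts_gen} (which shows $\Ind(\cC^{(1)})_{w \ge 0}$ is generated under colimits by the weight-heart); heart-exactness preserves all of these structures.

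The hardest step will be the second part of part~(1): extracting the sharper $-(n-1)$-connectivity in $\cC^{(1)}$ from the $[0, n]$ bound in $\Ind(\cC)^{\kappa,\mathrm{b}}$, gaining a degree of connectivity by exploiting the freedom to quotient by $\cC$. Constructing the appropriate further extensions $\tilde y' \to \tilde y''$ will require a careful combination of the weight boundedness of $z \in \cC$ with an iterative application of the width condition, and it is here that the precise geometry of the kill-weights factorizations enters in an essential way.
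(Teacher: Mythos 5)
Your proposal diverges from the paper's proof and both halves contain genuine gaps.

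For part (1), the intermediate claim that $\map_{\Ind(\cC)^{\kappa,\mathrm{b}}}(\tilde x,\tilde y)$ is concentrated in degrees $[0,n]$ is false: take $\cC=\mathrm{Perf}(R)$ for a connective ring $R$ of width $0$ and $\tilde x=\tilde y=R$, whose endomorphism spectrum is $R$ itself. The error comes from a sign slip --- with the paper's conventions it is $\Omega^{j}\tilde x$, not $\Sigma^{j}\tilde x$, that lies in $\Ind(\cC)_{w\le -n-1}$ for $j\ge n+1$, so your factorization argument only reproves $-n$-connectivity of $\map(\tilde x,\tilde y)$, which is weaker than what weight orthogonality already gives and contributes nothing. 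The actual content of the width hypothesis is that the comonad $\yo_!\yo^*$ on $\Ind(\Ind(\cC)^{\kappa,\mathrm{b}})$ sends weight connective objects to weight $-n$-connective ones (Lemma~\ref{lem:ccategoryequiv}); once this is isolated, the fiber sequence $\yo_!\yo^*y\to y\to L^*L_!y$ immediately yields that $L^*L_!y$ is $-\max(n-1,0)$-connective, and the cohomological dimension bound follows by adjunction, with the one degree of connectivity gained for free from the suspension in the cofiber. Your route instead defers exactly this gain to an unconstructed ``weight-truncation-type modification of $z$'' in the calculus of fractions, which you yourself flag as the hardest step; as written there is no argument there.

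For part (2), the inference from heart-exactness of $\cC^{(1)}\to\cD^{(1)}$ to weight-exactness of $\Ind(\cC^{(1)})\to\Ind(\cD^{(1)})$ is precisely the implication that Warning~\ref{wrn:preserving_injectives} says is false, and Section~\ref{sec:counterexamples} exhibits a heart exact but not weight exact functor. The subtlety is that the weight heart of $\Ind$ of a heart category is \emph{not} generated by the heart of the heart structure in any way a heart exact functor must respect: in Lemma~\ref{lem:wtheartforheartstr} the weight-connectivity of $c_\infty=\colim c_i$ depends on the particular attaching maps (for abelian categories this is the statement that the weight heart consists of injectives), and a heart exact functor need not preserve it. What actually makes $f^{(1)}_!$ weight exact is the horizontal right adjointability $\bar f_!L^*_\cC\simeq L^*_\cD f^{(1)}_!$ of the localization square (Lemma~\ref{lem:bcres}), combined with the facts that $L^*_\cC$ and $\bar f_!$ preserve weight connectivity and $L^*_\cD$ detects it; this uses that $f^{(1)}$ is induced by the weight exact $\bar f$, not merely that it is heart exact. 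This ingredient is absent from your proposal and cannot be replaced by the generation statements you cite.
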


Before its proof we will need several intermediate lemmas.

\begin{lem}\label{lem:ccategoryequiv}
Let $\cC$ be a pre-$c$-category of size smaller than an uncountable regular cardinal $\kappa$ and let $n$ be a natural number. Equip $\Ind(\cC)^{\kappa}$ with the weight structure from \Cref{thm:generated_ws}, and equip $\Ind(\Ind(\cC)^{\kappa,b})$ with the compactly generated weight structure generated by $\Ind(\cC)^{\kappa,b}_{\leq0}$.
Then the following conditions are equivalent:
\begin{enumerate}
\item $\cC$ is a $c$-category of width $\le n$; 
\item for any $\kappa$ bigger than the size of $\cC$ the functor 
$\yo_!\yo^*: \Ind(\cC)^{\kappa,\mathrm{b}} \to \Ind(\Ind(\cC)^{\kappa,\mathrm{b}})$ sends connective objects to weight 
$-n$-connective objects;
\item any $x\in \Ind(\cC)^{w\heartsuit}$ is a filtered colimit of objects of $\cC$ such 
that for every $x_\alpha$ in the diagram all maps out of $x_{\alpha}$ eventually kill weights $\le -n-1$; 
\item for any $\kappa$ bigger than the size of $\cC$ the functor 
$\yo_!\yo^*: \Ind(\cC)^{\kappa,\mathrm{b}} \to \Ind(\Ind(\cC)^{\kappa,\mathrm{b}})$ sends weight heart to weight 
$-n$-connective objects.
\end{enumerate}
\end{lem}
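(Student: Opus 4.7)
The plan is to close the cyclic chain of implications $(1) \Rightarrow (3) \Rightarrow (4) \Rightarrow (2) \Rightarrow (1)$. The step $(1) \Rightarrow (3)$ is immediate, since $\Ind(\cC)^{w\heartsuit} \subset \Ind(\cC)_{w\ge 0}$, so any good presentation furnished by (1) restricts to one as required in (3), the two diagram-level conditions being two phrasings of the same property.

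For $(3) \Rightarrow (4)$, I would unwind the adjunction to identify $\yo_!\yo^*(x)$ with the filtered colimit $\colim x_\alpha$ (from a presentation supplied by (3)) now computed in $\Ind(\Ind(\cC)^{\kappa,\mathrm{b}})$. To check this lies in weight $\ge -n$, it suffices to verify $[v, \yo_!\yo^*(x)] = 0$ on the generating set $v \in \Ind(\cC)^{\kappa,\mathrm{b}}_{w\le -n-1}$; compactness identifies this group with $\colim_\alpha [v, x_\alpha]$, which vanishes because every class $v \to x_\alpha$ dies after postcomposing with some $x_\alpha \to x_{\alpha''}$ killing weights $\le -n-1$, supplied by (3) and Lemma~\ref{lem:weight0_morphism}. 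For $(4) \Rightarrow (2)$, I would induct on the weight coamplitude $N$ of $x \in \Ind(\cC)^{\kappa,\mathrm{b}}_{w\ge 0}$: the functor $\yo_!\yo^*$ factors as $\Ind(\cC)^{\kappa,\mathrm{b}} \hookrightarrow \Ind(\cC) \xrightarrow{\yo_!} \Ind(\Ind(\cC)^{\kappa,\mathrm{b}})$ and is hence exact; the base $N=0$ is exactly (4), and for the inductive step a weight decomposition $x_{\le 0} \to x \to x_{\ge 1}$ with $x_{\le 0}$ in the heart and $\Sigma^{-1}x_{\ge 1}$ of strictly smaller coamplitude lets extension-closure of weight $\ge -n$ in $\Ind(\Ind(\cC)^{\kappa,\mathrm{b}})$ close the induction.

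The hardest step is $(2) \Rightarrow (1)$, where one must convert the abstract mapping-spectrum condition of (2) into the existence of an actual single map killing weights $\le -n-1$. For $x \in \Ind(\cC)_{w\ge 0}$, I would pick $\kappa$ bigger than the size of $\cC$ so that Theorem~\ref{thm:generated_ws} makes the weight structure restrict to $\Ind(\cC)^{\kappa}$, and consider the canonical filtered presentation $x = \colim_{\cC_{/x}} y$. Given a morphism $(y_1 \to x) \to (y_2 \to x)$ in $\cC_{/x}$, both $y_i$ are $\kappa$-compact and factor through some weight truncation $w_{\le k}x \in \Ind(\cC)^{\kappa,\mathrm{b}}_{w\ge 0}$. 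The key trick is to use $v := w_{\le -n-1}y_1 \in \Ind(\cC)^{\kappa,\mathrm{b}}_{w\le -n-1}$ as a universal weight-$\le -n-1$ witness: the natural class $v \to y_1$ has image zero in $[v, \yo_!\yo^*(w_{\le k}x)] = \colim_{\cC_{/w_{\le k}x}}[v, z]$ by (2) and compactness of $v$. This supplies a continuation $(y_1 \to w_{\le k}x) \to (y_3 \to w_{\le k}x)$ with $v \to y_1 \to y_3$ null, which by Lemma~\ref{lem:weight0_morphism} is equivalent to $y_1 \to y_3$ killing weights $\le -n-1$; filteredness of $\cC_{/w_{\le k}x}$ then produces a common continuation receiving $y_2$, and composition with $w_{\le k}x \to x$ places the result back inside $\cC_{/x}$.
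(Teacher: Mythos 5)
Your argument is essentially correct, but it takes a visibly different route from the paper: the paper's entire proof of this lemma is the single line ``this is a special case of Lemma~\ref{lem:eqcondnconnemb}'', deferring all the work to the appendix, where the equivalences are proved for an arbitrary fully faithful embedding $f:\cC\to\cD$ into a weighted category. Your cyclic chain $(1)\Rightarrow(3)\Rightarrow(4)\Rightarrow(2)\Rightarrow(1)$ unpacks, in the special case $f=\yo$, essentially the same ideas that appear there: $(3)\Rightarrow(4)$ is the appendix's ``(1) implies (2)'' (test against weight-coconnective compact generators and use that maps out of them factor through a finite stage of the colimit); $(4)\Rightarrow(2)$ is the appendix's d\'evissage from the heart to bounded connective objects, which you phrase as an induction on weight coamplitude; and your $(2)\Rightarrow(1)$ plays the role of the appendix's ``(2) implies (1)'', where the paper compares the presentation of $f_!f^*(d)$ by compact $-n$-connective objects (via Lemma~\ref{lem:wtstrind}) with the presentation of $f^*(d)$ by objects of $\cC$, while you instead test against the single witness $v=w_{\le -n-1}y_1$ and invoke Lemma~\ref{lem:weight0_morphism}; both are correct and of comparable difficulty. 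What your version buys is a self-contained proof that directly addresses the full condition (1) (all of $\Ind(\cC)_{w\ge 0}$, not only $\kappa$-compact bounded objects), a point the paper's ``special case'' citation silently elides.

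One step needs a sentence of repair. For a general $x\in\Ind(\cC)_{w\ge 0}$ the truncation $w_{\le k}x$ need not be $\kappa$-compact for the $\kappa$ you fixed at the outset (take $x$ to be a direct sum of $\kappa$ many heart objects, so that $w_{\le 0}x=x$), so the assertion $w_{\le k}x\in\Ind(\cC)^{\kappa,\mathrm{b}}$ is not automatic. This is harmless because hypothesis (2) is quantified over \emph{all} $\kappa$ larger than the size of $\cC$: for the given $x$ enlarge $\kappa$ so that $x$ --- and hence, by Theorem~\ref{thm:generated_ws}, its weight truncations --- is $\kappa$-compact, and run your argument with that $\kappa$; the conclusion of (1) does not mention $\kappa$, so nothing is lost. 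You should also record why $y_i\to x$ factors through some $w_{\le k}x$ at all, namely that $x\simeq\colim_k w_{\le k}x$ by hypercompleteness of the weight structure, exactly as in the proof of Proposition~\ref{prop:hearts_gen}.
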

\begin{proof}
This is a special case of Lemma~\ref{lem:eqcondnconnemb}. 
\end{proof}

\begin{lem}\label{lem:bcres}
Let $f:\cC \to \cD$ be a weight exact functor of $c$-categories. Then the associated square
\[
\begin{tikzcd}
\Ind(\Ind(\cC)^{\kappa,\mathrm{b}}) \arrow[r,"L_{\cC,!}"]\arrow[d, "{\bar{f}_!}"] & \Ind(\cC^{(1)})\arrow[d, "{f^{(1)}_!}"]\\
\Ind(\Ind(\cD)^{\kappa,\mathrm{b}}) \arrow[r, "L_{\cD,!}"] & \Ind(\cD^{(1)})
\end{tikzcd}
\]
is horizontally right adjointable, in the sense that $\bar{f}_! L^*_\cC \simeq L_\cD^* f^{(1)}_!$.
\end{lem}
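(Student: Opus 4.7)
The plan is to reduce the claimed right-adjointability to the (easier) right-adjointability of an auxiliary square involving the kernels of the Bousfield localizations, and then propagate via the canonical fiber sequence of a Bousfield localization.

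First, I would record that the square of left adjoints commutes. By construction, $\cC \to \Ind(\cC)^{\kappa,\mathrm{b}} \to \cC^{(1)}$ is a localization sequence of small stable categories; applying $\Ind$ yields a Bousfield localization $L_{\cC,!}$ of presentable stable categories with compactly generated kernel $\Ind(\cC)$ (embedded via $\cC \hookrightarrow \Ind(\cC)^{\kappa,\mathrm{b}}$), and similarly for $\cD$. Since $\bar{f}_!$ preserves colimits and carries $\cC$ into $\cD$, it sends $\Ind(\cC)$ into $\Ind(\cD)$, so the universal property of the Verdier quotient yields the commutativity $L_{\cD,!} \bar{f}_! \simeq f^{(1)}_! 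L_{\cC,!}$.

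Second, I would verify right-adjointability of the auxiliary square whose horizontal arrows are the kernel inclusions $i_\cC: \Ind(\cC) \hookrightarrow \Ind(\Ind(\cC)^{\kappa,\mathrm{b}})$ and $i_\cD$, with vertical arrows $f_!$ and $\bar{f}_!$. The right adjoints $i_\cC^R, i_\cD^R$ are restriction of presheaves, which preserve filtered colimits since $i_\cC, i_\cD$ preserve compact objects; combined with colimit-preservation of the left adjoints, both $f_! i_\cC^R$ and $i_\cD^R \bar{f}_!$ preserve filtered colimits. It then suffices to check the Beck--Chevalley equivalence $f_! i_\cC^R \simeq i_\cD^R \bar{f}_!$ on compact generators $X \in \Ind(\cC)^{\kappa,\mathrm{b}}$, where both sides unwind to $\bar{f}(X)$ viewed in $\Ind(\cD)$ under the canonical inclusion $\Ind(\cD)^{\kappa,\mathrm{b}} \subset \Ind(\cD)$.

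Third, I would compare the Bousfield localization fiber sequence $i_\cC i_\cC^R \to \mathrm{id} \to L_\cC^* L_{\cC,!}$ composed with $\bar{f}_!$ on the left against the analogous fiber sequence for $\cD$ composed with $\bar{f}_!$ on the right. The output of step two, together with the commutativity $\bar{f}_! i_\cC \simeq i_\cD f_!$, provides an equivalence of the first terms $\bar{f}_! i_\cC i_\cC^R \simeq i_\cD i_\cD^R \bar{f}_!$; standard naturality of the Beck--Chevalley transformation ensures this is compatible with the canonical maps to $\bar{f}_!$, whence the cofibers also agree: $\bar{f}_! L_\cC^* L_{\cC,!} \simeq L_\cD^* L_{\cD,!} \bar{f}_!$. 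Substituting $L_{\cD,!} \bar{f}_! \simeq f^{(1)}_! L_{\cC,!}$ and precomposing both sides with $L_\cC^*$ (using $L_{\cC,!} L_\cC^* \simeq \mathrm{id}$ from full faithfulness of $L_\cC^*$) then yields the desired $\bar{f}_! L_\cC^* \simeq L_\cD^* f^{(1)}_!$. The main obstacle is the compatibility claim in this third step: verifying that the Beck--Chevalley equivalence from step two, viewed as a natural transformation of functors into $\bar{f}_!$, is compatible with the canonical counits of the Bousfield fiber sequences. This is a standard diagram chase unwinding the unit-counit formula for the Beck--Chevalley map, but it is where some real care is required.
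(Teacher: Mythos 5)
Your proof is correct and is essentially the paper's argument in a more formal packaging: the paper reduces to generators $L_{\cC,!}(x)$ and identifies $L_\cC^*L_{\cC,!}(x)$ with $\cof(``\colim\text{''}\,x_\alpha \to x)$, which is exactly the cofiber of the counit $i_\cC i_\cC^R x \to x$ in your recollement fiber sequence, and the observation that $\bar f_!$ commutes with this formula is precisely your Beck--Chevalley equivalence for the kernel square checked on compacts. The only difference is organizational (you route through the auxiliary kernel square and the triangle-identity compatibility, where the paper just computes both sides of the original square on generators), so no further changes are needed.
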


\begin{proof}
To check right adjointability, it suffices to check it on generators of $\cC^{(1)}$, which is of the form $L_{\cC,!}(x)$
for $x \in \Ind(\cC)^{\kappa,\mathrm{b}}$. Viewing $x$ as a filtered colimit\footnote{here the quotes mean the colimit is taken in 
	$\Ind(\Ind(\cC)^{\kappa,\mathrm{b}})$ rather than $\Ind(\cC)^{\kappa,\mathrm{b}}$} 
\[
	``\colim\text{''} x_{\alpha}
\] of compact objects in $\cC$, 
$L_\cC^*L_{\cC,!}(x)$ is $\cof(``\colim\text{''} x_{\alpha} \to x)$, since this object is local and the fiber 
of the map from $x$ is generated under filtered colimits by objects of $\cC$. Thus, we have 
\[
\bar{f}_!L_\cC^*L_{\cC,!}(x) \simeq \cof(``\colim\text{''} \bar{f}_!(x_{\alpha}) \to \bar{f}_!(x)).
\]
Similarly, in $\Ind(\cD)^{\kappa,\mathrm{b}}$, the formula $\cof(``\colim\text{''} \bar{f}_!(x_{\alpha}) \to \bar{f}_!(x))$ computes 
$L_\cD^*L_{\cD,!}(\bar{f}_!(x))$, so we learn: 
\[
\bar{f}_!L_\cC^*L_{\cC,!}(x)  \simeq \cof(``\colim\text{''} f_!(x_{\alpha}) \to \bar{f}_!(x)) \simeq L_\cD^*L_{\cD,!} \bar{f}_!(x) \simeq L_\cD^* f^{(1)}_! L_!(x).
\]
\end{proof}

\subsubsection{Proof of {Theorem~\ref{thm:resolution}}}
Set $N= \mathrm{max}(n-1,0)$. 
To prove (1), it suffices to check that $\map(Lx,Ly)$ is $-N$-connective for any $x \in \Ind(\cC)^{\kappa,\mathrm{b}}_{\le 0}$ 
and $y \in \Ind(\cC)^{\kappa,\mathrm{b}}_{\ge 0}$. 
Taking the ind-completion of the localization sequence in Construction~\ref{cnstr:resolution} we get a recollement diagram 
\[
\begin{tikzcd}
\Ind(\cC) \arrow[r, "{\yo_!}",shift left] & \Ind(\Ind(\cC)^{\kappa,\mathrm{b}})\arrow[l,"{\yo^*}", shift left]\arrow[r, "{L_!}", shift left] & \Ind(\cC^{(1)}).\arrow[l, "{L^*}", shift left]
\end{tikzcd}
\]
The category in the middle admits a weight structure as described in Example~\ref{exm:from_small_to_big_ws}. 
By adjunction, to prove the claim it suffices to show that $L^*L_!y$ is $-N$-connective for 
$y \in \Ind(\cC)^{\kappa,\mathrm{b}}_{\ge 0}$.
We have a fiber sequence 
\[
\yo_!\yo^*y \to y \to L^*L_!y
\]
for any $y \in \Ind(\cC)^{\kappa,\mathrm{b}}$, so the result follows from the fact that $\yo_!\yo^*y$
is $-n$-connective for $y \in \Ind(\cC)^{\kappa,\mathrm{b}}_{\ge 0}$ (see Lemma~\ref{lem:ccategoryequiv}(1,2)). 

Now we prove (2). 
It suffices to show that the functor $f^{(1)}_!:\Ind(\cC^{(1)}) \to \Ind(\cD^{(1)})$ is weight exact. 
By construction of the weight structure on $\Ind(-)$ of a heart category (see Construction~\ref{cnstr:t-str-and-weight-str}), 
$f^{(1)}_!$ preserves the coconnective part of the weight structure. So it suffices to 
show that for $x \in \Ind(\cC^{(1)})_{w \ge 0}$ we have that $f^{(1)}_!(x)$ is also connective. 
Recall that we have a commutative diagram 
\[
\begin{tikzcd}
\Ind(\Ind(\cC)^{\kappa,\mathrm{b}}) \arrow[r,"L_{\cC,!}"]\arrow[d, "{\bar{f}_!}"] & \Ind(\cC^{(1)})\arrow[d, "{f^{(1)}_!}"]\\
\Ind(\Ind(\cD)^{\kappa,\mathrm{b}}) \arrow[r, "L_{\cD,!}"] & \Ind(\cD^{(1)})
\end{tikzcd}
\]
in $\PrL$. The right adjoint $L_{\cC}^* : \Ind(\cC^{(1)}) \to \Ind(\Ind(\cC)^{\kappa,\mathrm{b}})$ preserves weight 
connectivity, and $\bar{f}_!$ preserves weight connectivity since $\bar{f}$ is a weight exact functor of bounded weighted 
categories (see Remark~\ref{rmk:full_embedding}). So it suffices to show that the commutative diagram above is right 
adjointable, i.e. that the map 
\[
\bar{f}_! L^*_\cC \to L_\cD^* f^{(1)}_!
\]
is an equivalence. This follows from Lemma~\ref{lem:bcres}.\qed

\subsection{Exact complexes of stable categories}

Recall from \cite[Section~2.2]{christ2024complexesstableinftycategories} that a complex of stable categories is a sequence of 
stable categories $\cC_n$ with exact functors $d:\cC_n \to \cC_{n+1}$ satisfying $d^2 \simeq 0$. 

\begin{dfn}\label{dfn:exact_complex}
We say that a complex of stable categories 
\[
	\dots \to \cC_{n-1} \stackrel{d}\to \cC_n \stackrel{d}\to \cC_{n+1} \to \cdots
\]
is exact if for every $n$ the natural map 
\[
	\frac{\cC_n}{\mathrm{Im}(\cC_{n-1}\stackrel{d}\to \cC_n)} \to \mathrm{Ker}(d:\cC_n \to \cC_{n+1})
\]
becomes an equivalence after taking the idempotent completion. Here $\mathrm{Im}$ denotes the stable category generated by the image. 
\end{dfn}

\begin{exm}
A localization sequence of stable categories 
\[
0 \to \cC_0 \to \cC_1 \to \cC_2 \to 0
\]
may be viewed as an exact complex by setting $\cC_i = 0$ for $i\neq 0,1,2$. 
\tqed \end{exm}

\begin{lem}\label{lem:singlegenerator}
Let $\cC$ be a $\kappa$-small pre-$c$-category. Then there exists $\kappa'$ such that $\Ind(\cC)^{\kappa',\heart w}$ is generated by a single 
object under retracts.
\end{lem}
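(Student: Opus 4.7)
The plan is to construct a single generator $G$ as a direct sum of enough copies of each representative of the isomorphism classes in a smaller weight heart $\Ind(\cC)^{\kappa_0,w\heartsuit}$, and to show that every object of $\Ind(\cC)^{\kappa_0^+,w\heartsuit}$ is a retract of $G$ by adapting the surjectivity argument of \Cref{exm:from_small_to_big_ws}.

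First, I would choose an uncountable regular cardinal $\kappa_0$ large enough that two conditions hold: (i) by \Cref{thm:generated_ws} the weight structure on $\Ind(\cC)$ restricts to $\Ind(\cC)^{\kappa_0}$, and (ii) for every $y \in \Ind(\cC)^{\kappa_0}$ and $x \in \Ind(\cC)^{\kappa_0^+}$, the set $\pi_0\map(y,x)$ has cardinality at most $\kappa_0$. Both are standard cardinal-arithmetic arrangements given that $\cC$ is $\kappa$-small. Let $S$ be a set of representatives of the isomorphism classes in $\Ind(\cC)^{\kappa_0,w\heartsuit}$, so $|S|\leq\kappa_0$. Define $G := \bigoplus_{y\in S} y^{\oplus\kappa_0}$ and set $\kappa':=\kappa_0^+$. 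Since $G$ has size at most $\kappa_0$ it is $\kappa'$-compact, and since the weight heart is closed under direct sums, $G\in\Ind(\cC)^{\kappa',w\heartsuit}$.

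Given any $x\in\Ind(\cC)^{\kappa',w\heartsuit}$, I would form the universal evaluation map $\phi\colon G_0(x):=\bigoplus_{y\in S,\,f\in\pi_0\map(y,x)} y \to x$ and claim $\cof(\phi)\in\Ind(\cC)_{w\geq 1}$. By \Cref{prop:hearts_cogen} the set $S$ generates $\Ind(\cC)_{w\leq 0}$ under the operations of \Cref{lem:closureofneg}, so the shifted version of \Cref{rmk:generated_ws} reduces the claim to checking that $\map(z,\cof(\phi))$ is $1$-connective for every $z\in S$. For such a $z$ the map $\pi_0\map(z,G_0(x))\to\pi_0\map(z,x)$ is surjective by the construction of $G_0(x)$, while both sides are connective by orthogonality, yielding the claim. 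Orthogonality then forces $\pi_0\map(x,\cof(\phi))=0$, so $\phi$ admits a section and $x$ is a retract of $G_0(x)$.

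Finally, by condition (ii) we can write $G_0(x)\simeq\bigoplus_{y\in S} y^{\oplus n_y}$ with $n_y\leq\kappa_0$, so for each $y$ the summand $y^{\oplus n_y}$ is a literal direct summand of the $y^{\oplus\kappa_0}$ factor of $G$. Summing over $y$ realises $G_0(x)$, and hence $x$, as a retract of $G$. The main obstacle is the cardinal bookkeeping needed to arrange conditions (i) and (ii) simultaneously with a single $\kappa_0$; once that is done, everything else is a direct repackaging of \Cref{exm:from_small_to_big_ws} combined with \Cref{rmk:generated_ws}.
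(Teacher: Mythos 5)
Your overall strategy is the paper's: build the generator as a large direct sum of representatives of the compact weight heart, and exhibit every heart object as a retract via the universal evaluation map. The middle step — that $\cof(\phi)$ lies in $\Ind(\cC)_{w\geq 1}$ because $\pi_0\map(z,G_0(x))\to\pi_0\map(z,x)$ is surjective for generators $z$, so that $x$ splits off $G_0(x)$ — is correct, and is essentially a re-derivation of \Cref{exm:from_small_to_big_ws} and \Cref{lem:generation}(2) via \Cref{prop:hearts_cogen}, which is also what the paper's proof implicitly invokes.

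The gap is in the cardinal bookkeeping that you flag as "standard arrangements." Neither of your two hypotheses is achievable in general. First, the number of isomorphism classes in $\Ind(\cC)^{\kappa_0,w\heartsuit}$ need not be $\leq\kappa_0$: $\kappa_0$-compact objects are retracts of $\kappa_0$-small filtered colimits of objects of $\cC$, so the count is only bounded by roughly $2^{<\kappa_0}$ (already for countably presented modules over an ordinary ring one can have $2^{\aleph_0}>\aleph_1$ isomorphism classes). Second, for $y\in\Ind(\cC)^{\kappa_0}$ and $x\in\Ind(\cC)^{\kappa_0^+}$ the group $\pi_0\map(y,x)$ is computed as a $\kappa_0$-small limit of spectra of size $\leq\kappa_0$, so it is only bounded by $\kappa_0^{<\kappa_0}$. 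Arranging both conditions for a single regular $\kappa_0$ amounts to requiring $\kappa_0^{<\kappa_0}=\kappa_0$, and ZFC does not prove the existence of arbitrarily large regular cardinals with this property (it fails, e.g., at every successor cardinal in a model where GCH fails everywhere and there are no inaccessibles). The fix is to decouple $\kappa'$ from the cardinal used to cut out $S$, as the paper does: let $S$ be the isomorphism classes in $\Ind(\cC)^{\kappa,w\heartsuit}$, pick any cardinal $\mu\geq\max(|S|,\kappa)$, set $\kappa'=\mu^+$ and $G=\bigoplus_{y\in S}y^{\oplus\mu}$, which is then $\kappa'$-compact. For the last step, do not count homotopy classes at all: since $G_0(x)$ is the $\kappa'$-filtered colimit of its sub-coproducts indexed by subsets of size $\leq\mu$, the section $x\to G_0(x)$ factors through one of them by $\kappa'$-compactness of $x$ (exactly the manoeuvre in the proof of \Cref{lem:resolution_affinness}), and that sub-coproduct is a direct summand of $G$. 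With this replacement your argument goes through without any set-theoretic hypotheses.
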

\begin{proof}
By assumption, the weight structure on $\Ind(\cC)$ restricts to 
$\Ind(\cC)^{\kappa,\mathrm{b}}$. Suppose that there are $\leq \kappa'$ many isomorphism classes of objects in the heart, where WLOG we may assume $\kappa' \geq \kappa$, and that $\kappa'$ has cardinality the successor of $\kappa''$. Then 
take a $\kappa''$-fold direct sum of all the isomorphism classes of these objects, which is $\kappa'$-small. Then every object of the weight heart of $\Ind(\cC)^{\kappa',\mathrm{b}}$ is a retract of this.
\end{proof}

\begin{cor}\label{cor:resolution_full}
Let $n$ be a natural number and $\kappa$ be a regular uncountable cardinal. Set $N=\mathrm{max}(n,1)$. 
For all sequences of large enough (depending on $\kappa$) regular uncountable cardinals $\kappa_0,\dots,\kappa_{N-1}$, 
there exists
\begin{enumerate} 
\item 
a sequence of functors $(-)^{(i)}:\Cat^{\mathrm{conn},\kappa}_n \to \Cat^{\mathrm{conn},\kappa_i}_{N-i}$
for all $1\le i< N$,
\item 
a functor $(-)^{(N)}:\Cat^{\mathrm{conn},\kappa}_n \to \Cat^{\mathrm{bw}}$,
\item
an exact complex of stable categories 
\[
0 \to \cC \to \Ind(\cC)^{\kappa_0,\mathrm{b}} \to \Ind(\cC^{(1)})^{\kappa_1,\mathrm{b}} \to \dots \to \Ind(\cC^{(N-1)})^{\kappa_{n-1},\mathrm{b}} \to \cC^{(N)} \to 0
\]
functorial in $\cC$.
\end{enumerate}
Furthermore, each category in the sequence except possibly $\cC$ is a bounded weighted category with a single generator.
\end{cor}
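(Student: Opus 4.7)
The plan is to iterate the construction $(-)^{(1)}$ of \Cref{cnstr:resolution}: set $\cC^{(0)} := \cC$ and inductively $\cC^{(i+1)} := (\cC^{(i)})^{(1)}$, applied with respect to a cardinal $\kappa_i$. By \Cref{thm:resolution}(1) each $\cC^{(i)}$ is a $c$-category of width $\leq \max(n - i, 0)$, so after $N = \max(n,1)$ steps the category $\cC^{(N)}$ has width $0$. Such a $c$-category is automatically a bounded weighted category: by \Cref{thm:resolution}(1) applied one more time it comes from a bounded heart structure of cohomological dimension $0$, and the orthogonality condition of cohomological dimension $0$ is precisely axiom (2) of a weight structure, while the other axioms are immediate. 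Functoriality of each $(-)^{(i)}$ follows by iterating \Cref{thm:resolution}(2).

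The complex is built by splicing the localization sequences $\cC^{(i)} \hookrightarrow \Ind(\cC^{(i)})^{\kappa_i, \mathrm{b}} \to \cC^{(i+1)}$ via the Yoneda embeddings $\cC^{(i+1)} \hookrightarrow \Ind(\cC^{(i+1)})^{\kappa_{i+1}, \mathrm{b}}$. Exactness at an interior position $\Ind(\cC^{(i)})^{\kappa_i, \mathrm{b}}$ reduces to computing the kernel of the composite
\[
\Ind(\cC^{(i)})^{\kappa_i, \mathrm{b}} \to \cC^{(i+1)} \hookrightarrow \Ind(\cC^{(i+1)})^{\kappa_{i+1}, \mathrm{b}},
\]
which is the kernel of the localization, namely $\cC^{(i)}$; this is already idempotent complete and agrees with the essential image of the previous arrow since all the relevant embeddings are fully faithful. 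Exactness at $\cC$ is fully faithfulness of Yoneda, and at $\cC^{(N)}$ it is the essential surjectivity of the final localization $\Ind(\cC^{(N-1)})^{\kappa_{N-1},\mathrm{b}} \to \cC^{(N)}$.

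For the single-generator claim we invoke \Cref{lem:singlegenerator}: pick $\kappa_i$ inductively large enough that $\cC^{(i)}$ is $\kappa_i$-small, the weight structure on $\Ind(\cC^{(i)})$ restricts to $\Ind(\cC^{(i)})^{\kappa_i}$ (available by \Cref{thm:generated_ws}), and $\Ind(\cC^{(i)})^{\kappa_i, w\heartsuit}$ is generated under retracts by a single object. Then \Cref{rmk:generating_bdd_ws} produces a single generator for the bounded weighted category $\Ind(\cC^{(i)})^{\kappa_i, \mathrm{b}}$. For $\cC^{(N)}$, the essentially surjective heart-exact localization from $\Ind(\cC^{(N-1)})^{\kappa_{N-1}, \mathrm{b}}$ (cf.\ \Cref{exm:localizations}) carries a weight-heart generator into a generator of the weight heart of $\cC^{(N)}$.

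The main obstacle is the cardinal bookkeeping required to promote the assignments $(-)^{(i)}$ to genuine functors on $\Cat^{\mathrm{conn},\kappa}_n$ rather than mere pointwise constructions. Because the conditions imposed by \Cref{thm:generated_ws} and \Cref{lem:singlegenerator} on $\kappa_i$ depend only on the size of $\cC^{(i)}$, and this size is controlled by $\kappa$ and by the width bound $n$, one can fix a single increasing sequence $\kappa_0 \leq \cdots \leq \kappa_{N-1}$ that simultaneously works for every $\cC \in \Cat^{\mathrm{conn},\kappa}_n$; the morphism part is then inherited from \Cref{thm:resolution}(2) and naturality of Yoneda.
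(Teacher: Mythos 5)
Your proposal is correct and is essentially the paper's own (one-line) proof spelled out: inductive application of \Cref{thm:resolution} together with \Cref{lem:singlegenerator} for the single-generator claim, splicing the resulting localization sequences into an exact complex. One small caution on phrasing: the standalone assertion that a width-$0$ $c$-category ``is automatically a bounded weighted category'' is not something the paper knows (cf.\ \Cref{question:width0}); what makes your argument work is the justification you actually give, namely that \Cref{thm:resolution}(1) applied to $\cC^{(N-1)}$ (of width $\le 1$) exhibits $\cC^{(N)}$ as a bounded heart category of cohomological dimension $0$, which is indeed a bounded weighted category.
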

\begin{proof}
Follows via inductive application of Theorem~\ref{thm:resolution} and Lemma~\ref{lem:singlegenerator}.
\end{proof}


\section{Morphisms of \texorpdfstring{$c$-categories}{c-categories}}

\label{sec:morphisms}

\subsection{0-affine functors}

One of the main results of the previous section is the existence of a functorial resolution of a morphism of 
$c$-categories $\cC \to \cD$ by weight exact functors of the form 
\[
	\mathrm{Perf}(R) \to \mathrm{Perf}(S).
\]
for connective rings $R,S$. 
However, it may happen that this functor is not induced by a pullback along a map of rings $f:R\to S$. 
A simple example of this is taking the map of bounded weighted categories given by the inclusion of the thick subcategory in 
$\Mod(\ZZ)$ generated by $\ZZ$ into the thick subcategory generated by $\oplus_0^{\infty}\ZZ$. This is an inclusion of 
bounded weighted categories, but does not come from a map of connective rings, even after resolving once.

It is true, however, that for {\it 0-affine} maps the induced map after resolving comes from one of rings.

\begin{dfn}\label{dfn:affine}
We say that an exact functor of stable categories $f:\cC \to \cD$ is {\bf 0-affine}\footnote{the name is compatible with the notion of (weakly) 0-affine stacks in the sense of Gaitsgory \cite{gaitsgory2015sheaves}} if it generates the codomain under finite 
colimits, shifts and retracts. Equivalently, this means that the functor $f^*:\Ind(\cD) \to \Ind(\cC)$ is conservative.
\end{dfn}

\begin{lem}\label{lem:generation}
Let $(\cC,w)$ be a weighted category and let $S \subset \cC^{w\heartsuit}$ be a subset. 
\begin{enumerate}
\item Assume $w$ is bounded and $S$ generates $\cC$ by taking finite colimits, shifts and retracts. 
Then every object of $\cC^{w\heartsuit}$ is a retract of an object of $S$.
\item Assume $w$ is compactly generated and $S$ generates $\cC$ under small colimits and shifts. Then 
every object of $\cC^{w\heartsuit}$ is a retract of a small coproduct of objects of $S$. 
\end{enumerate}
\end{lem}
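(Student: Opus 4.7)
For part (1), the plan is to invoke \Cref{thm:vova_heart}, which presents $\cC^{\mathrm{ic}}$ as $((\cC^{w\heartsuit})^{\mathrm{ic}})^{\mathrm{fin}}$, the free stable category on the idempotent-completed heart, with $(-)^{w\heartsuit}$ a fully faithful right adjoint. Let $B \subset (\cC^{w\heartsuit})^{\mathrm{ic}}$ denote the additive subcategory generated by $S$ under finite direct sums and retracts. The inclusion $B \hookrightarrow (\cC^{w\heartsuit})^{\mathrm{ic}}$ extends, via the adjunction, to a weight-exact functor $B^{\mathrm{fin}} \to \cC^{\mathrm{ic}}$. Its essential image is a stable, idempotent-closed subcategory containing $S$ that is closed under finite colimits and shifts, so by the generation hypothesis it equals $\cC^{\mathrm{ic}}$. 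Since $B^{\mathrm{fin}}$ has heart $B$ (again by \Cref{thm:vova_heart}), we conclude $B = (\cC^{w\heartsuit})^{\mathrm{ic}}$, i.e., every object of $\cC^{w\heartsuit}$ is a retract of a finite direct sum of elements of $S$.

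For part (2), the plan is to form the tautological evaluation map
\[
e \colon z := \bigoplus_{s \in S,\; f \in \pi_0\map(s,x)} s \longrightarrow x,
\]
whose source lies in $\cC^{w\heartsuit}$ since the heart is closed under small coproducts in the compactly generated setting. The cofiber $q := \cof(e)$ lies in $\cC_{w\geq 0}$, and by construction $\pi_0\map(s,q) = 0$ for every $s \in S$. If I can show $q \in \cC_{w\geq 1}$, then by the orthogonality axiom the composite $x \to q$ is null, splitting $e$ and presenting $x$ as a retract of $z$.

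The main obstacle is promoting $\pi_0\map(s,q)=0$ from $s \in S$ to all $y \in \cC_{w\leq 0}$, which is the content of showing $S$ generates the weight structure $w$, not merely $\cC$ as a localizing subcategory. The class $\{y \in \cC : \pi_0\map(y,q)=0\}$ is closed under finite limits, extensions, retracts, small direct sums, and sequential filtered colimits whose cofibers remain in the class -- the last point via the same $\lim^1$-vanishing argument that appears in the proof of \Cref{lem:closureofneg}. So the task reduces to checking that $S$ generates $\cC_{w\leq 0}$ under precisely those operations. The hypothesis that $S$ generates $\cC$ as a localizing subcategory makes $w$ hypercomplete by \Cref{rmk:generated-hypercomplete}, and combined with $S \subset \cC^{w\heartsuit}$ this should let one identify $w$ with the weight structure generated by $S$: indeed, the latter has coconnective part $w'_{\leq 0} \subset w_{\leq 0}$ and connective part $w'_{\geq 0} \supset w_{\geq 0}$ by orthogonality, and hypercompleteness combined with \Cref{prop:hearts_cogen} applied to the compact heart should force equality, closing the argument.
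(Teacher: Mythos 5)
Your part (1) is a viable alternative to the paper's argument (the paper simply applies \Cref{rmk:generating_bdd_ws} to $S$ to produce a second bounded weight structure $w'$ with heart the retract-closure of $S$, and identifies $w'$ with $w$ by the orthogonality/uniqueness argument for nested weight structures). As written, though, your last inference is elided: knowing that the weight-exact functor $B^{\mathrm{fin}} \to \cC^{\mathrm{ic}}$ is essentially surjective up to retracts does not by itself identify $B$ with $(\cC^{w\heartsuit})^{\mathrm{ic}}$. You need either to check that this functor is fully faithful (so that it is an equivalence after idempotent completion, and then that a weight-exact equivalence matches the two weight structures --- itself the orthogonality argument), or to compare the image weight structure with $w$ directly as the paper does. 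This is routine to fill, so I would call it an omission rather than a gap.

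Part (2) has a genuine gap at exactly the point you flag as the main obstacle. The reduction --- split $e$ by showing $q=\cof(e)\in\cC_{w\geq1}$, and for that show that $S$ generates $\cC_{w\leq 0}$ under the operations of \Cref{lem:closureofneg} --- is sound, but your proof of the generation statement is circular. The two inclusions you record, $w'_{\leq 0}\subset w_{\leq 0}$ and $w'_{\geq 0}\supset w_{\geq 0}$, point the same way and are automatic from the definitions; to deduce $w_{\leq 0}\subset w'_{\leq 0}$ by orthogonality you would need the \emph{reverse} inclusion $w'_{\geq 0}\subset w_{\geq 0}$, which is equivalent to the statement being proved. \Cref{prop:hearts_cogen} does not close this loop: it says $w$ is generated by the full $\kappa$-compact heart, and passing from that to generation by $S$ is again the statement in question. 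Note also that \Cref{thm:generated_ws} cannot even be invoked to produce the weight structure $w'$ generated by $S$, since objects of $\cC^{w\heartsuit}$ are typically not compact. The paper supplies the missing input by citing Bondarko's construction (\cite[Proposition~2.2.1(1,3)]{Bondarko_2019}) of a weight structure on $\cC^{\kappa}$ whose heart is exactly the retracts of sums of objects of $S$ and whose two halves are \emph{both} contained in those of $w$; the existence of weight decompositions for that structure is the real content, after which the nested-weight-structures argument gives equality. Alternatively, your splitting strategy can be completed without this citation via the adjacent t-structure of \Cref{thm:adjacent_t}: $q$ is $t$-connective, and for every $s\in S$ the spectrum $\map(s,\tau_{\le 0}q)$ is connective, $0$-coconnective, and has $\pi_0$ a quotient of $\pi_0\map(s,q)=0$, hence vanishes; since $S$ generates $\cC$ as a localizing subcategory this forces $\tau_{\le 0}q=0$, i.e.\ $q\in\cC_{w\ge 1}$. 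Either way, some input beyond the orthogonality bookkeeping is required, and your write-up does not supply it.
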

\begin{proof}
1. By Remark~\ref{rmk:generating_bdd_ws} there is a unique weight structure $w'$ on $\cC$ whose heart is the retract-closure 
of $S$. We have inclusions $\cC_{w'\le 0} \subset \cC_{w\le 0}$ and $\cC_{w'\ge 0} \subset \cC_{w\ge 0}$, and by 
orthogonality, we also have the opposite inclusions, so every object of the heart of the original weight structure belongs to 
the retract-closure of $S$.

2. Choose $\kappa$ such that $S$ consists of $\kappa$-compact 
objects and that the weight structure restricts to $\cC^\kappa$. Applying 
\cite[Proposition~2.2.1(1,3)]{Bondarko_2019} we obtain a weight structure $w'$ on $\cC^\kappa$ whose 
heart consists of retracts of infinite sums of objects of $S$ and whose connective and coconnective parts are the smallest 
retract-closed extension-closed $\kappa$-coproduct closed subcategories of $\cC^\kappa$ containing non-negative 
(resp., non-positive) shifts of $S$. 
By construction, we have $\cC^{\kappa}_{w'\le 0} \subset \cC^{\kappa}_{w\le 0}$ and 
$\cC^{\kappa}_{w'\ge 0} \subset \cC^{\kappa}_{w\ge 0}$ and the opposite inclusions hold by orthogonality. Hence, the 
two weight structures coincide, and every $\kappa$-small object of the heart of the original weight structure is a retract 
of an infinite coproduct of objects of $S$. 

Since the above argument holds for any large enough $\kappa$, we see that 
every object of $\cC^{w\heartsuit}$ is a retract of a small coproduct of objects of $S$. 
\end{proof}

\begin{cor}\label{cor:conn_affine}
A morphism of $c$-categories $f:\cC \to \cD$ is 0-affine if and only if 
$f_!:\Ind(\cC)^{\mathrm{b}} \to \Ind(\cD)^{\mathrm{b}}$ is 0-affine.
\end{cor}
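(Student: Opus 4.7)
The plan is to prove the two implications separately, reducing each to the generation statements in Lemma~\ref{lem:generation}.

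For the direction $(\Leftarrow)$, the argument is by compactness. Assuming $f_!: \Ind(\cC)^{\mathrm{b}} \to \Ind(\cD)^{\mathrm{b}}$ is $0$-affine, every $d \in \cD \subset \Ind(\cD)^{\mathrm{b}}$ is a retract of a finite colimit of shifts of objects $f_!(y_i)$ with $y_i \in \Ind(\cC)^{\mathrm{b}}$. Each $y_i$ is a filtered colimit of objects of $\cC$, and since $f_!$ preserves colimits, commuting the finite colimit past the filtered one presents $d$ as a retract of a filtered colimit of finite colimits of shifts of objects of $f(\cC)$. Compactness of $d$ in $\Ind(\cD)$ then forces this retraction to factor through a single stage, exhibiting $d$ in the closure of $f(\cC)$ under finite colimits, shifts, and retracts, as required.

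For the forward direction $(\Rightarrow)$, the strategy is to describe $\Ind(\cD)^{w\heartsuit}$ explicitly in terms of $f_!$-images of heart objects. Since $f$ is a morphism of $c$-categories, $f_!$ is weight-exact, so $S := f_!(\Ind(\cC)^{w\heartsuit})$ lies inside $\Ind(\cD)^{w\heartsuit}$. The key intermediate claim is that $S$ generates $\Ind(\cD)$ under small colimits and shifts. To see this, I apply Proposition~\ref{prop:hearts_gen} (its hypothesis that $\Ind(\cC)_{w\ge 0}$ is closed under colimits holds because the weight structure is compactly generated, so mapping spectra from compact generators in $\cC_{\le 0}$ preserve colimits; hypercompleteness is built into the definition of a pre-$c$-structure) to conclude that $\Ind(\cC)^{w\heartsuit}$ generates $\Ind(\cC)_{w\ge 0}$ under small colimits. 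Because every object of $\cC$ is weight-bounded, $\cC$ is contained in the union of shifts of $\Ind(\cC)_{w\ge 0}$, and $\cC$ in turn generates $\Ind(\cC)$ under colimits; hence $\Ind(\cC)^{w\heartsuit}$ generates $\Ind(\cC)$ under colimits and shifts. Applying the colimit- and shift-preserving $f_!$ places $f_!(\Ind(\cC))$ in the colimit-shift closure of $S$, and the $0$-affineness of $f$ gives $f(\cC) \subset f_!(\Ind(\cC))$ generating $\cD$ and hence $\Ind(\cD)$ under finite colimits, shifts, and retracts; retracts are absorbed by sequential colimits in a presentable category, so $S$ generates $\Ind(\cD)$ under small colimits and shifts.

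Given this generation, Lemma~\ref{lem:generation}(2) presents each $h \in \Ind(\cD)^{w\heartsuit}$ as a retract of $\bigoplus_I f_!(y_i) = f_!(\bigoplus_I y_i)$ with $y_i \in \Ind(\cC)^{w\heartsuit}$. Since the weight-heart is closed under direct sums, $\bigoplus_I y_i$ lies in $\Ind(\cC)^{w\heartsuit} \subset \Ind(\cC)^{\mathrm{b}}$. Hence every heart object of $\Ind(\cD)^{\mathrm{b}}$ is a retract of some $f_!(y)$ with $y \in \Ind(\cC)^{\mathrm{b}}$, and the bounded weighted structure on $\Ind(\cD)^{\mathrm{b}}$ propagates this to generation of $\Ind(\cD)^{\mathrm{b}}$ under finite colimits, shifts, and retracts, yielding $0$-affineness of $f_!$. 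The step I expect to require the most care is the intermediate claim that $S$ generates all of $\Ind(\cD)$ under small colimits and shifts, not only its connective part; this propagation uses essentially every defining feature of a $c$-structure — hypercompleteness, compact generation, and boundedness of the objects of $\cC$ — whereas the remaining steps combine mechanically via Lemma~\ref{lem:generation}(2) and the structure of bounded weighted categories.
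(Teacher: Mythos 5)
Your proof is correct and follows essentially the same route as the paper's: both directions ultimately reduce, via Lemma~\ref{lem:generation} and the fact that the weight heart generates $\Ind(\cC)$ under small colimits and shifts, to the statement that every object of $\Ind(\cD)^{w\heartsuit}$ is a retract of $f_!$ of an object of $\Ind(\cC)^{w\heartsuit}$. The only cosmetic difference is that you handle the backward implication by a direct compactness argument rather than through Lemma~\ref{lem:generation}(1).
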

\begin{proof} 
A functor $f$ is $0$-affine if and only if $f_!$ generates the codomain under taking small colimits and shifts. 
Now by Lemma~\ref{lem:generation}(2) $f_!$ is 0-affine if and only if every object of $\Ind(\cD)^{w\heartsuit}$ is a 
retract of a small coproduct of objects of $f(\Ind(\cC)^{\heartsuit})$. Since $f(\Ind(\cC)^{\heartsuit})$ is already closed 
under small coproducts this happens if and only if every object of $\Ind(\cD)^{w\heartsuit}$ is a retract of an object of 
$f(\Ind(\cC)^{w\heartsuit})$. This is equivalent to $\Ind(\cC)^{\mathrm{b}} \to \Ind(\cD)^{\mathrm{b}}$  being 0-affine 
by Lemma~\ref{lem:generation}(1).
\end{proof}

\begin{rmk}\label{rmk:rings_induced}
Let $f:\cC \to \cD$ be a 0-affine weight exact functor between bounded weighted categories. 
Assuming $\cC$ is generated by a single object $G$ of the weight heart, $\cD$ is also generated by $f(G)$. 
In this case \cite[Theorem~2.1.23]{nilpotentextns} implies that we have a commutative diagram 
\[
\begin{tikzcd}
\cC \arrow[d,"f"]\arrow[r, "\simeq"] & \mathrm{Perf}(\mathrm{End}_\cC(G))\arrow[d]\\
\cD \arrow[r, "\simeq"] & \mathrm{Perf}(\mathrm{End}_\cD(f(G))) 
\end{tikzcd}
\]
whose horizontal maps are equivalences and the right vertical map is the base change along the map of connective ring spectra
$\mathrm{End}(G) \to \mathrm{End}(f(G)).$
\tqed \end{rmk}

Let $\cC$ be a $c$-category of size $< \kappa$. Recall from Construction~\ref{cnstr:resolution} 
that $\Ind(\cC)^\kappa$ admits a weight structure and the Yoneda embedding functors factor through 
$\cC \to \Ind(\cC)^{\kappa, \mathrm{b}}$.

\begin{lem}\label{lem:resolution_affinness}
Let $f:\cC\to \cD$ be a morphism in $\Cat^{\mathrm{conn}}_{n,\kappa}$. Then the following are equivalent:
\begin{enumerate}
\item $f$ is a 0-affine functor;
\item $\Ind(\cC)^{\kappa,\mathrm{b}} \to \Ind(\cD)^{\kappa,\mathrm{b}}$ is a 0-affine functor.
\end{enumerate}
\end{lem}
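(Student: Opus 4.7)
The plan is to reduce the statement to \Cref{cor:conn_affine} and to translate the two $0$-affineness conditions into statements about weight hearts via \Cref{lem:generation}. Concretely, by \Cref{cor:conn_affine} condition (1) is equivalent to $f_!:\Ind(\cC)^{\mathrm{b}}\to\Ind(\cD)^{\mathrm{b}}$ being $0$-affine, and by \Cref{lem:generation}(2) applied to the compactly generated weighted category $\Ind(\cD)$, this is in turn equivalent to every object of $\Ind(\cD)^{w\heartsuit}$ being a retract of $\bar{f}(x)$ for some $x\in\Ind(\cC)^{w\heartsuit}$ (using that $\Ind(\cC)^{w\heartsuit}$ is closed under small coproducts, so a small coproduct of images is still the image of a single weight-heart object). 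Similarly, by \Cref{lem:generation}(1) applied to the bounded weighted category $\Ind(\cD)^{\kappa,\mathrm{b}}$, condition (2) is equivalent to every object of $\Ind(\cD)^{\kappa,w\heartsuit}$ being a retract of some $\bar{f}(z)$ with $z\in\Ind(\cC)^{\kappa,w\heartsuit}$.

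For the direction (1)$\Rightarrow$(2), I would take $y\in\Ind(\cD)^{\kappa,w\heartsuit}$ and, by the above reformulation, write it as a retract of $\bar{f}(x)$ with $x\in\Ind(\cC)^{w\heartsuit}$. The key tool is \Cref{prop:hearts_cogen}, which (for $\kappa$ large enough that the weight structure restricts to $\Ind(\cC)^{\kappa}$) says that $\Ind(\cC)^{\kappa,w\heartsuit}$ generates $\Ind(\cC)$ under small colimits and shifts; combined with \Cref{lem:generation}(2), this expresses $x$ as a retract of a small coproduct $\bigoplus_{j\in I} x'_j$ with $x'_j\in\Ind(\cC)^{\kappa,w\heartsuit}$. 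Now since $y$ is $\kappa$-compact in $\Ind(\cD)$, the composite $y\to\bar{f}(x)\to\bigoplus_j\bar{f}(x'_j)$ factors through a sub-coproduct indexed by some $J\subset I$ with $|J|<\kappa$, producing a retraction of $y$ off of $\bar{f}(\bigoplus_{j\in J} x'_j)$, where $\bigoplus_{j\in J} x'_j$ is $\kappa$-small (as a $\kappa$-small coproduct of $\kappa$-compact objects) and hence lies in $\Ind(\cC)^{\kappa,w\heartsuit}$.

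For (2)$\Rightarrow$(1) the flow of arguments reverses. Given $y\in\Ind(\cD)^{w\heartsuit}$, I would apply \Cref{prop:hearts_cogen} and \Cref{lem:generation}(2) inside $\Ind(\cD)$ to realize $y$ as a retract of a small coproduct of objects $y_\alpha\in\Ind(\cD)^{\kappa,w\heartsuit}$; each $y_\alpha$ is a retract of some $\bar{f}(x_\alpha)$ with $x_\alpha\in\Ind(\cC)^{\kappa,w\heartsuit}$ by hypothesis, and the total coproduct $\bigoplus_\alpha x_\alpha$ lies in $\Ind(\cC)^{w\heartsuit}$, so $y$ is a retract of $\bar{f}(\bigoplus_\alpha x_\alpha)$. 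This is exactly the weight-heart criterion for $f_!$ to be $0$-affine, and another application of \Cref{cor:conn_affine} gives condition (1).

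The only place where some care is needed is the $\kappa$-compactness step in (1)$\Rightarrow$(2): I must be sure that a $\kappa$-compact object in $\Ind(\cD)$ remains $\kappa$-compact when viewed as an object receiving a map from a filtered colimit of $\kappa$-small weight-heart objects, and that the resulting finite piece is still in the $\kappa$-small weight heart. This hinges on the weight structure on $\Ind(\cC)$ restricting to $\Ind(\cC)^{\kappa}$, which is guaranteed by \Cref{thm:generated_ws} once $\kappa$ is chosen larger than the size of the generating set for $\cC$; so the statement holds for all sufficiently large $\kappa$, which is the intended meaning.
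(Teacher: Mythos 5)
Your proof is correct and follows essentially the same route as the paper's: both reduce to \Cref{cor:conn_affine}, use \Cref{lem:generation} and \Cref{prop:hearts_cogen} to rephrase $0$-affineness as surjectivity up to retracts on weight hearts, and then use $\kappa$-compactness to factor the retraction through a $\kappa$-small sub-coproduct of $\kappa$-compact heart objects. No gaps.
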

\begin{proof}
By Corollary~\ref{cor:conn_affine} the 0-affineness of $f$ is equivalent to 0-affineness of 
$\Ind(\cC)^{\mathrm{b}} \to \Ind(\cD)^{\mathrm{b}}$. 
By Lemma~\ref{lem:generation}(2) and Proposition~\ref{prop:hearts_cogen}  
$\Ind(\cC)^{\kappa,w\heartsuit}$ and $\Ind(\cD)^{\kappa,w\heartsuit}$ generate $\Ind(\cC)^{w\heartsuit}$ and 
$\Ind(\cC)^{w\heartsuit}$ under taking all small direct sums and retracts. In particular, $f:\cC \to \cD$ is 0-affine already when 
\[
\Ind(\cC)^{\kappa,w\heartsuit} \to \Ind(\cD)^{\kappa,w\heartsuit}
\]
is essentially surjective up to retracts. So we have (2) $\Rightarrow$ (1). 

To prove the converse, assume that $\Ind(\cC)^{w\heartsuit} \to \Ind(\cD)^{w\heartsuit}$ is surjective up to retracts. 
It suffices to show that $\Ind(\cC)^{\kappa,w\heartsuit} \to \Ind(\cD)^{\kappa,w\heartsuit}$ is surjective up to retracts. 
By assumption any object $x\in \Ind(\cD)^{\kappa,w\heartsuit}$ is a retract of 
$f(\bigoplus_{\alpha\in I} y_\alpha) \simeq \bigoplus_{\alpha\in I} f(y_\alpha)$ with 
$y_\alpha \in \Ind(\cC)^{\kappa, w\heartsuit}$. This means that the identity morphism $\id_x$ decomposes as 
\[
x \stackrel{i}\to \bigoplus_{\alpha\in I} f(y_\alpha) \stackrel{p}\to x
\]
But since $x$ is $\kappa$-compact, $i$ factors through a direct sum $\bigoplus_{\alpha\in I'} f(y_\alpha)$ with 
$I' \subset I$ of size less than $\kappa$. So we also have a decomposition 
\[
x \stackrel{i'}\to \bigoplus_{\alpha\in I'} f(y_\alpha) \hookrightarrow \bigoplus_{\alpha\in I} f(y_\alpha) \stackrel{p}\to x
\]
and $x$ is a retract of $\bigoplus_{\alpha\in I'} f(y_\alpha) \simeq f(\bigoplus_{\alpha\in I'} y_\alpha)$ which is the 
image of an object in $\Ind(\cC)^{\kappa,w\heartsuit}$. 
\end{proof}

\begin{cor}\label{cor:resolution_affine}
Let $f:\cC\to \cD$ be a 0-affine morphism in $\Cat^{\mathrm{conn}}_{n,\kappa}$. 
Then the functor $\cC^{(1)} \to \cD^{(1)}$ is also 0-affine. In particular, the resolution of 
Corollary~\ref{cor:resolution_full} is functorial in 0-affine maps. 
\end{cor}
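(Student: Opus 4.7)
The plan is to reduce the first claim to Lemma~\ref{lem:resolution_affinness} combined with the fact that the localization $L_\cD: \Ind(\cD)^{\kappa,\mathrm{b}} \to \cD^{(1)}$ is essentially surjective up to idempotent completion, and then iterate to obtain functoriality of the full resolution.

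First, since $f$ is 0-affine, Lemma~\ref{lem:resolution_affinness} gives that the induced weight exact functor $\bar{f} := f_!|_{\Ind(\cC)^{\kappa,\mathrm{b}}}: \Ind(\cC)^{\kappa,\mathrm{b}} \to \Ind(\cD)^{\kappa,\mathrm{b}}$ is 0-affine, i.e.\ its image generates $\Ind(\cD)^{\kappa,\mathrm{b}}$ under finite colimits, shifts, and retracts. Consider the commutative square
\[
\begin{tikzcd}
\Ind(\cC)^{\kappa,\mathrm{b}} \arrow[r, "\bar{f}"]\arrow[d, "L_\cC"] & \Ind(\cD)^{\kappa,\mathrm{b}}\arrow[d, "L_\cD"]\\
\cC^{(1)}\arrow[r, "f^{(1)}"] & \cD^{(1)}
\end{tikzcd}
\]
from Construction~\ref{cnstr:resolution}. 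Since $L_\cD$ is a Verdier quotient functor, it is essentially surjective up to idempotent completion, so it is in particular 0-affine. Thus the composite $L_\cD \circ \bar{f}$ is 0-affine, its image generating $\cD^{(1)}$ under finite colimits, shifts, and retracts. But $L_\cD \circ \bar{f} \simeq f^{(1)} \circ L_\cC$, so the image of $f^{(1)}$ contains (up to the generating operations) the image of $L_\cD \circ \bar{f}$, and thus $f^{(1)}$ is 0-affine as well.

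For the second assertion, by Theorem~\ref{thm:resolution}(1), the construction $(-)^{(1)}$ produces a $c$-category of width $\max(n-1,0)$ (respectively a bounded weighted category at the last step), and by Theorem~\ref{thm:resolution}(2) this is functorial. By Lemma~\ref{lem:resolution_affinness} applied again to $f^{(1)}$, which we have just shown is 0-affine, we see that $\bar{f^{(1)}}: \Ind(\cC^{(1)})^{\kappa_1,\mathrm{b}} \to \Ind(\cD^{(1)})^{\kappa_1,\mathrm{b}}$ is 0-affine, and the first part of the proof then shows that $f^{(2)}: \cC^{(2)} \to \cD^{(2)}$ is 0-affine. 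Iterating through the finitely many stages of the resolution in Corollary~\ref{cor:resolution_full} yields the functoriality of the resolution on the subcategory of 0-affine morphisms.

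I do not expect any real obstacle here: the crux is simply that localization functors are essentially surjective up to idempotent completion, and 0-affineness is preserved under composition, so the square above transports 0-affineness from the top row to the bottom row; Lemma~\ref{lem:resolution_affinness} lets this conclusion cascade through the induction.
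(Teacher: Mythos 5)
Your proof is correct and follows essentially the same route as the paper: apply Lemma~\ref{lem:resolution_affinness} to get 0-affineness of $\bar{f}$, compose with the (essentially surjective, hence 0-affine) quotient $L_\cD$, and use the commutativity $L_\cD\circ\bar{f}\simeq f^{(1)}\circ L_\cC$ to conclude that $f^{(1)}$ is 0-affine, then iterate for the full resolution. No issues.
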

\begin{proof}
By Lemma~\ref{lem:resolution_affinness} $\Ind(\cC)^{\kappa,\mathrm{b}} \to \Ind(\cD)^{\kappa,\mathrm{b}}$ is 0-affine. 
Hence the functor $\Ind(\cC)^{\kappa,\mathrm{b}} \to \cD^{(1)}$ is 0-affine and since 
$\Ind(\cC)^{\kappa,\mathrm{b}} \to \cC^{(1)}$ is surjective, so does the functor in question.
\end{proof}

\subsection{Nilpotent extensions and connective maps}
Recall that a map of connective rings $R \to S$ is a nilpotent extension if it is surjective on $\pi_0$ with nilpotent kernel. This notion naturally generalizes to any stably monoidal category endowed with a compatible t-structure, and in what follows we will introduce these notions in that generality.

\begin{dfn}\label{dfn:compatible_tstr}
Let $\cE$ be a stably monoidal category. We say that a t-structure $t$ on $\cE$ is compatible with the monoidal 
structure if the monoidal unit is connective, and the tensor product of connective objects is connective.  
\end{dfn}

\begin{dfn}\label{dfn:tstr_notions}
Assume $\cE$ is a stably monoidal category endowed with a compatible t-structure. Let $f:R \to R'$ be a map of connective algebras in $\cE$. 
\begin{enumerate}
\item 
We say that $f$ is a {\it $k$-connective map} if $\fib(R \to R')$ is $k$-connective in the t-structure.
\item
We say that $f$ is a {\it nilpotent extension} of order $\leq m$ if it is $0$-connective and the composite
\[
\fib(f)^{\otimes_Rm} \to R^{\otimes_Rm} \stackrel{\mathrm{mult}}\longrightarrow R \to \pi_0^{\heart}R
\]
is null. 
\end{enumerate}
\end{dfn}

Now let $\cC$ be a pre-$c$-category. The composition of functors endows the stable category  
$\End_{\PrL}(\Ind(\cC))$ with a presentably monoidal structure. Moreover, it also admits a natural t-structure, as follows.

\begin{cnstr}\label{cnstr:associated_tstr}
Consider the subcategory of $\End_{\Pr^L}(\Ind(\cC))$ functors that send weight connective objects to weight connective 
objects. This is a presentable subcategory closed under colimits and extensions, so by \cite[Proposition~1.4.4.11(1)]{HA} it extends to a t-structure which we call 
the \textit{associated $t$-structure} on $\End_{\Pr^L}\Ind(\cC)$. 
\end{cnstr}

\begin{rmk}
The associated t-structure is compatible with the monoidal structure in the sense of Definition~\ref{dfn:compatible_tstr}. 
\tqed \end{rmk}

To any functor $f:\cC\to \cD$ that induces a weight exact functor $f_!:\Ind(\cC) \to \Ind(\cD)$ one can associate the 
endofunctor $f^*f_! \in \End_{\PrL}(\Ind(\cC))$. It is a t-connective algebra object because both $f^*$ and $f_!$ 
preserve weight connectivity. 

\begin{dfn}\label{dfn:nilp_conn}
We say that a functor $f:\cC\to \cD$ of pre-$c$-categories that induces a weight exact functor 
$f_!:\Ind(\cC) \to \Ind(\cD)$ is a {\it $k$-connective} (resp. a {\it nilpotent extension}) if it is 0-affine and the associated algebra 
$f_!f^* \in \Alg(\End_{\Pr^L}(\Ind(\cC)))$ is $k$-connective (resp. a nilpotent extension). 
\end{dfn}

\begin{prop}\label{prop:weak_nilpotence}
Let $f:\cC \to \cD$ be an $0$-affine functor of pre-$c$-categories that induces a weight exact functor 
$f_!:\Ind(\cC) \to \Ind(\cD)$. 
\begin{enumerate}
\item
$f$ is $k$-connective for $k\in \NN$ if and only if for any $x,y \in \Ind(\cC)^{w\heartsuit}$ the map 
\[
\Map_{\Ind(\cC)^{w\heartsuit}}(x,y) \to \Map_{\Ind(\cD)^{w\heartsuit}}(f(x),f(y))
\]
is $k$-connective.
\item 
If $f$ is a nilpotent extension, then 
\[
\pi_0\Map_{\Ind(\cC)^{w\heartsuit}}(x,y) \to \pi_0\Map_{\Ind(\cD)^{w\heartsuit}}(f(x),f(y))
\]
is surjective and there exists $m\in \NN$ such that for any sequence of composable morphisms $g_1, \dots, g_m$ in $\cA$ with 
$f(g_i) \simeq 0$ we have that the composite $g_m\circ \dots \circ g_1 \simeq 0$. 
\end{enumerate}
\end{prop}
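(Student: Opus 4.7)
The plan is to work with the monad $M := f^* f_!$ on $\Ind(\cC)$ and its augmentation ideal $I := \fib(\id \to M) \in \End_{\PrL}(\Ind(\cC))$. Unwinding the associated $t$-structure (\Cref{cnstr:associated_tstr}), an endofunctor $G$ is $k$-connective iff $G(x) \in \Ind(\cC)_{w \ge k}$ for every $x \in \Ind(\cC)_{w \ge 0}$, equivalently iff $\map(y, G(x))$ is $k$-connective for every $y \in \Ind(\cC)_{w \le 0}$ and $x \in \Ind(\cC)_{w \ge 0}$ by orthogonality. The key adjunction identity
\[
\map(y, I(x)) \simeq \fib\bigl(\map(y, x) \to \map(f_! y, f_! x)\bigr)
\]
then translates $k$-connectivity of the left-hand side into that of the displayed map of mapping spectra, which for $x, y$ in the weight heart (where both sides are connective) coincides with $k$-connectivity of the map of mapping spaces in the statement.

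For the forward direction of (1), I would simply restrict the hypothesis to $x, y \in \Ind(\cC)^{w\heartsuit}$. For the reverse direction, I would reduce the $x$-variable to the weight heart using \Cref{prop:hearts_gen} (applicable because the weight structure on $\Ind(\cC)$ is hypercomplete by the pre-$c$-category axioms), combined with closure of $\Ind(\cC)_{w \ge k}$ under colimits; and reduce the $y$-variable to the weight heart and its negative shifts via \Cref{prop:hearts_cogen}. The property that $\map(y, I(x))$ is $k$-connective is closed under the operations on $y$ from \Cref{lem:closureofneg}: sums become products of spectra, finite limits become finite colimits of spectra, extensions and retracts behave as usual, and for the countable-filtered-colimit case, the $\lim^1$ argument from \Cref{lem:closureofneg} applies after taking $\map(-, I(x))$.

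For (2), surjectivity on $\pi_0$ is immediate from part (1) with $k = 0$. For the composition-killing statement, given composable $g_i : a_i \to a_{i+1}$ with $f(g_i) \simeq 0$, I would lift each via adjunction to $\tilde g_i : a_i \to I(a_{i+1})$, and iteratively apply $I$ to form
\[
a_1 \xrightarrow{\tilde g_1} I(a_2) \xrightarrow{I(\tilde g_2)} I^2(a_3) \to \cdots \to I^m(a_{m+1}),
\]
which, composed with the iterated natural transformation $I^m \to \id$ evaluated at $a_{m+1}$, recovers $g_m \circ \cdots \circ g_1$. The nilpotence assumption of order $m$ states that $I^{\circ m} \to \id$ factors through $\fib(\id \to \pi_0^{\heartsuit} \id) \in \End_{\ge 1}$; since $a_{m+1} \in \Ind(\cC)_{w \ge 0}$, the map $I^m(a_{m+1}) \to a_{m+1}$ therefore factors through an object in $\Ind(\cC)_{w \ge 1}$, and as $a_1 \in \Ind(\cC)^{w\heartsuit} \subset \Ind(\cC)_{w \le 0}$, the relevant mapping spectrum is $1$-connective, forcing the composite to vanish.

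The step I expect to be most delicate is the $y$-reduction in the reverse direction of (1): one must verify that $k$-connectivity of $\map(-, I(x))$ is closed under each of the operations of \Cref{lem:closureofneg}, with the countable filtered colimit case requiring a careful rerun of the $\lim^1$ argument in the mapping-spectrum setting. A secondary subtlety in (2) is producing the iterated transformation $I^{\circ m} \to \id$ compatibly with the tensor factorization in \Cref{dfn:tstr_notions}; here one uses that tensor product in $\End_{\PrL}(\Ind(\cC))$ over the monoidal unit $\id$ coincides with composition, so the iterated multiplication $I^{\otimes_{\id} m} \to \id$ in the definition of nilpotence agrees with the iterated composite of $I \to \id$.
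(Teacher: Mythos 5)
Your proposal is correct and follows essentially the same route as the paper's proof: both identify $k$-connectivity of $f$ with weight $k$-connectivity of $I=\fib(\id\to f^*f_!)$ on the weight heart via the adjunction $\map(y,f^*f_!x)\simeq\map(f_!y,f_!x)$, reduce to the heart in $x$ by colimit-generation (\Cref{prop:hearts_gen}) and in $y$ by cogeneration, and for (2) lift the composite to $x_0\to I^{\circ m}x_m$ and use that $I^{\circ m}\to\id$ factors through the $1$-connective functor $\fib(\id\to\pi_0^{\heartsuit}\id)$. Your explicit appeal to \Cref{prop:hearts_cogen} for the $y$-variable is a slightly more careful bookkeeping of a step the paper compresses, but the argument is the same.
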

\begin{proof}
Set $I:=\fib(\id_{\Ind(\cC)} \to f^*f_!)$. 

First we show part (1). By adjunction, $k$-connectivity is equivalent to saying that 
for any $x, y\in \Ind(\cC)^{w\heartsuit}$ the map 
\[
\map_{\Ind(\cC)}(y,x) \to \map_{\Ind(\cC)}(y,f^*f_!(x))
\]
is $k$-connective. By Proposition~\ref{prop:hearts_gen} this is also equivalent to weight $k$-connectivity $I(x)$ for any 
$x\in \Ind(\cC)^{w\heartsuit}$. Since $I$ 
commutes with colimits and since the colimit closure of $\Ind(\cC)^{w\heartsuit}$ in $\Ind(\cC)$ is $\Ind(\cC)_{w\ge 0}$ 
(Proposition~\ref{prop:hearts_gen}), the last claim is also 
equivalent to $I$ sending weight connective objects to weight $k$-connective ones.

To prove part (2), observe that the triviality of the composite $I^{\circ m} \to \id_{\Ind(\cC)} \to \pi_0(\id_{\Ind(\cC)})$ 
implies that $I^{\circ m}\to \id_{\Ind(\cC)}$ factors through a 1-connective object $F$. Now, take any composable sequence of  
morphisms between objects of the heart 
\[
x_0 \stackrel{g_1}\to x_1 \stackrel{g_2}\to \dots \stackrel{g_m}\to x_m
\]
$g_i$ with $f_!(g_i) \simeq 0$, The composite $g_m\circ \dots \circ g_1$ lifts to a map
\[
x_0 \to I^{\circ m} x_m,
\]
but the composite $x_0 \to I^{\circ m} x_m \to x_m$ is then zero, since it factors through $Fx_m \in \Ind(\cC)_{w\ge 1}$.
\end{proof}

\begin{exm}\label{exm:iso_homotopy}
It follows from \Cref{prop:weak_nilpotence} that the $0$-connectivity condition amounts to surjectivity of the map 
$\pi_0\Map_\cA(x,y) \to \pi_0\Map_\cB(f(x),f(y))$ for all $x,y\in \cA$. 
For $k$-connective $f$ and $k>0$ this map is an isomorphism. 
\tqed \end{exm}
\begin{exm}
A map of connective ring spectra $R \to S$ is $k$-connective if and only if the induced map 
$\mathrm{Perf}(R) \to \mathrm{Perf}(S)$ is $k$-connective.
\tqed \end{exm}

\begin{dfn}\label{dfn:weak_nilpotence}
We say that a functor $f:\cC\to \cD$ of pre-$c$-categories that induces a weight exact functor 
$f_!:\Ind(\cC) \to \Ind(\cD)$ is a {\it weak nilpotent extension} (of order $\le m$) if it is $0$-connective 
and there exists $m\in \NN$ such that for any sequence of composable morphisms 
\[
x_0 \stackrel{g_1}\to x_1 \stackrel{g_2}\to \dots \stackrel{g_m}\to x_m
\] 
in $\cC$ with $x_0 \in \cC_{w\le 0}, x_m \in \cC_{w\ge 0}$ and $f(g_i) \simeq 0$ we have that the composite $g_m\circ \dots \circ g_1 \simeq 0$.
\end{dfn}

\begin{rmk}
Proposition~\ref{prop:weak_nilpotence}(2) implies that a nilpotent extension of pre-$c$-categories $f:\cC \to \cD$ 
is a weak nilpotent extension. We do not know whether the converse holds in general, however, it is true if both $\cC$ 
and $\cD$ are bounded weighted categories: in this case we have an equivalence 
$\Ind(\cC) \simeq \Fun_{\Sigma}(\cC^{\heartsuit, \mathrm{op}}, \Sp),$ and triviality of the map 
\[
I^{\otimes m} \to \id_{\Ind(\cC)} \to \pi_0(\id_{\Ind(\cC)})
\]
can be checked objectwise. In this case, the notion of nilpotent extension here agrees with that of \cite[Definition 5.1.1]{nilpotentextns}.
\tqed \end{rmk}
\begin{rmk}\label{rmk:nil_idempotents}
For a weak nilpotent extension the maps of rings
\[
\pi_0\mathrm{End}_{\cC}(x) \to \pi_0\mathrm{End}_{\cD}(f(x))
\]
are nilpotent extensions of rings. Thus, idempotents lift along these maps and in particular, a nilpotent extension 
between idempotent complete categories is essentially surjective on the nose.

Furthermore, the converse is true: if $f$ is essentially surjective up to retracts and for any 
$x \in \cC^\heartsuit$ the map on 
endomorphism rings as above is a nilpotent extension, then $f$ is a weak nilpotent extension. In other words, 
it suffices to check condition (2) in the case when $x=y$ and condition (3) when all domains and codomains of $g_i$ 
are equal. To see this, observe that 
$\pi_0\Map_{\cC^\heartsuit}(x,y) \to \pi_0\Map_{\cD^\heartsuit}(f(x),f(y))$ is a retract of 
$\pi_0\Map_{\cC^\heartsuit}(x\oplus y,x\oplus y) \to \pi_0\Map_{\cD^\heartsuit}(f(x\oplus y),f(x\oplus y))$ and that the morphisms $g_i$ and their 
composite can be viewed in the endomorphism ring of the coproduct of all the domains and codomains of $g_i$'s.
\tqed \end{rmk}

\begin{lem}\label{lem:example_nilp_weight0}
Let $f:\cC\to \cD$ be a weak nilpotent extension of order $\le n$ of pre-$c$-categories. 
Set $I:=\fib(\id_{\Ind(\cC)} \to f^*f_!)$. 
Then for any $x\in \cC_{w\ge 0}$ the canonical map $I^{\circ n}x \to x$ kills weight $0$. 
Furthermore, for any $k\in \NN$ the map $I^{\circ nk} x \to x$ kills weights $\le k$. 
\end{lem}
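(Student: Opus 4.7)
The plan is to reduce the lemma to the weak nilpotent extension property of $\cC \to \cD$ by constructing, for each test object $t$, a sequence of $n$ composable morphisms in $\cC$ realizing the composite $t \to I^{\circ n}(x) \to x$. The first ingredient I would establish is that for any $y \in \Ind(\cC)$, the natural map $f_!(I(y) \to y)$ is nullhomotopic. This follows from the triangular identity: $f_!(I(y))$ is the fiber of the unit $f_!(y) \to f_!f^*f_!(y)$, which is split by the counit $\epsilon_{f_!(y)}$, so the fiber map acquires a canonical nullhomotopy.

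Next I verify condition (3) of Lemma~\ref{lem:weight0_morphism} for compact test objects $t \in \cC_{w\le 0}$. Given a map $t \to I^{\circ n}(x)$, consider the composite
\[
    t \to I^{\circ n}(x) \to I^{\circ (n-1)}(x) \to \cdots \to x,
\]
in which every arrow after the first is the natural transformation $I \to \id$ applied to $I^{\circ (k-1)}(x)$, hence $f_!$-null by the previous step. I inductively build compact objects $u_n := t, u_{n-1}, \ldots, u_1, u_0 := x$ in $\cC$ with compatible morphisms $g_k : u_k \to u_{k-1}$ satisfying $f(g_k) = 0$: at each stage the map $u_k \to I^{\circ (k-1)}(x)$ has $f_!$-null image and $u_k$ is compact, so it factors through some compact $z \in \cC$; using the compactness of $f_!(u_k)$ in $\Ind(\cD)$ and the filtered colimit presentation $f_! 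I^{\circ (k-1)}(x) \simeq \colim f_!(z_\alpha)$, I enlarge $z$ to $u_{k-1} \in \cC$ with $f(u_k \to u_{k-1}) = 0$ literally. Applying the weak nilpotent extension property of order $\le n$ to this chain (with $u_n = t \in \cC_{w\le 0}$ and $u_0 = x \in \cC_{w\ge 0}$) then yields that $t \to x$ is zero.

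To extend condition (3) from $\cC_{w\le 0}$ to all of $\Ind(\cC)_{w\le 0}$, I pass to condition (2) of Lemma~\ref{lem:weight0_morphism}: the canonical weight decomposition $u \to I^{\circ n}(x) \to v$ with $u \in \Ind(\cC)^{w\heartsuit}$ and $v \in \Ind(\cC)_{w\ge 1}$ reduces the task to showing that the composite $u \to x$ is null. Since every map $t \to I^{\circ n}(x)$ with $t \in \cC_{w\le 0}$ factors through $u$ up to $\pi_0$, the previous step shows $t \to u \to x$ is null for every such $t$; a density argument about $\Ind(\cC)^{w\heartsuit}$ (mirroring Lemma~\ref{lem:wtheartforheartstr} and exploiting the compact generation of the weight structure by $\cC_{w\le 0}$) then promotes this to $u \to x = 0$. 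For the ``furthermore'' statement, the analogous construction --- extended to targets $y \in \Ind(\cC)_{w\ge 0}$ and iterated $k$ times with appropriate weight shifts --- yields that $I^{\circ nk}(x) \to x$ kills weight $\le k$. The main obstacle I anticipate is this last step: a naive closure argument on the set of $t$ for which the composite vanishes fails at extensions, so one must route through condition (2) together with a density statement for the weight heart in the pre-$c$-category setting.
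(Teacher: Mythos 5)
Your first two steps are sound: the triangle identity does make $f_!(I(y)\to y)$ canonically null, and your compactness argument correctly converts a map $t \to I^{\circ n}x \to x$ with $t \in \cC_{w\le 0}$ into a chain of $n$ composable morphisms in $\cC$ that are literally killed by $f$, so the weak nilpotence hypothesis applies and the composite vanishes. The genuine gap is the final bootstrap. Condition (2) of Lemma~\ref{lem:weight0_morphism} asks for \emph{some} weight decomposition of $I^{\circ n}x$ whose coconnective part maps to zero in $x$; weight decompositions are neither unique nor functorial (see the warning in Definition~\ref{dfn:weight}), so there is no ``canonical'' one to test. If you pick an arbitrary decomposition $u \to I^{\circ n}x \to v$, then $u$ is a filtered colimit of compact objects $t_\alpha$ (not all of which need lie in $\cC_{w\le 0}$), and knowing that every composite $t_\alpha \to u \to x$ vanishes only shows that $u \to x$ is a phantom map: $\map(u,x)\simeq \lim_\alpha \map(t_\alpha,x)$ has a $\lim^1$-contribution to $\pi_0$, so the map need not be null even though $x$ is compact. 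For the same reason, the class of $t \in \Ind(\cC)_{w\le 0}$ for which all composites vanish is closed under sums and retracts but not under the extensions and filtered colimits needed to generate $\Ind(\cC)_{w\le 0}$ from $\cC_{w\le 0}$, so condition (3) cannot be verified on compact generators, and no density statement about the weight heart repairs this.

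The paper closes exactly this hole by \emph{constructing} the weight decomposition rather than testing one. Starting from $x_0 = w_{\le 0}x$, one inductively chooses $x_i \to Ix_{i-1}$ to be a coconnective weight truncation, so each $x_i$ lies in the weight heart and each cofiber $\cof(x_i \to Ix_{i-1})$ is weight $1$-connective. Since $I^{\circ n}$ preserves weight connectivity (using $0$-connectivity of $f$ via Proposition~\ref{prop:weak_nilpotence}), the composite $\beta\colon x_n \to Ix_{n-1} \to \cdots \to I^{\circ n}x_0 \to I^{\circ n}x$ has weight $1$-connective cofiber and therefore \emph{is} the coconnective half of a weight decomposition of $I^{\circ n}x$; moreover the composite $x_n \to x$ is identified with $x_n \to x_{n-1} \to \cdots \to x_0 \to x$, a single explicit composite of $n$ maps each factoring through an instance of $I(y)\to y$ and hence genuinely $f_!$-null. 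Feeding this one composite into Definition~\ref{dfn:weak_nilpotence} (your compact-factoring device from step two is the right way to do so when the intermediate objects are not compact) verifies condition (2) directly. The ``furthermore'' clause is then an induction on $k$ using naturality of $I^{\circ n}\to \id$ applied to $w_{\ge k}(I^{\circ n(k-1)}x)$, which you gesture at but which presupposes the base case in this decomposition-level form.
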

\begin{proof}
To show that $I^{\circ n}x \to x$ kills weight $0$, by Lemma~\ref{lem:weight0_morphism}, it suffices to 
construct a weight decomposition of $I^{\circ n}x$ such that the composite $w_{\le 0} I^{\circ n}x \to I^{\circ n}x \to x$ 
is zero. 
Fix any weight decomposition of $x$. By induction, we construct $x_0=w_{\le 0}x$ and $x_i\to Ix_{i-1}$ such that 
\[
x_i \to Ix_{i-1} \to \cof_i
\]
is a weight decomposition. In particular, $x_i$ belongs to the weight heart and $\cof_i \in \Ind(\cC)_{w\ge 1}$. 
The cofiber of the map $I^{\circ n}x_0 \to I^{\circ n}x$ also belongs to $\Ind(\cC)_{w\ge 1}$ since $I^{\circ n}$ 
preserves weight connectivity (Proposition~\ref{prop:weak_nilpotence}). 
Combining these two facts, we see that the cofiber of the composition
\[
\beta:x_n \to Ix_{n-1} \dots \to I^{\circ n}x_0 \to I^{\circ n}x
\]
is also in $\Ind(\cC)_{w\ge 1}$, so it gives a weight decomposition for $I^{\circ n}x$. Now the composite
$
x_n \stackrel{\beta}\to I^{\circ n}x \to x
$
is equivalent to the composite 
$
x_n \to x_{n-1} \to \dots \to x_0\to x.
$
By construction, each of the maps $x_i \to x_{i-1}$ is null after applying $f_!$, so by the condition in 
Definition~\ref{dfn:weak_nilpotence}, the whole composite is zero. 

We prove the furthermore part using induction on $k$. Assuming the claim is true for $k-1$, the map
$I^{\circ n(k-1)}x \to x$ decomposes as follows:
\[
I^{\circ n(k-1)}x \to w_{\ge k}(I^{\circ n(k-1)}x) \to x.
\]
By naturality, we have a commutative diagram
\[
\begin{tikzcd}
I^{\circ nk}x \arrow[r]\arrow[d] & I^{\circ n}(w_{\ge k}(I^{\circ n(k-1)}x))\arrow[d]\\
I^{\circ n(k-1)}x \arrow[r] & w_{\ge k}(I^{\circ n(k-1)}x)
\end{tikzcd}
\]
in which the right vertical map is the $k$-th suspension of a morphism killing weights $\le 0$. Hence, it kills weights $\le k$. Thus, the composite 
$I^{\circ nk} x \to w_{\ge k}(I^{\circ n(k-1)}x) \to x$ also kills weights $\le k$.
\end{proof}

\begin{cor}\label{cor:higher_nilpotence}
Fix the notations of Lemma~\ref{lem:example_nilp_weight0}. For any $k \in \NN$ and any sequence of composable morphisms 
\[
x_0 \stackrel{g_1}\to x_1 \stackrel{g_2}\to \dots \stackrel{g_{kn}}\to x_{kn}
\]
such that $f_!(g_i) \simeq 0$, $x_{kn}$ is connective and $x_{0}$ belongs to $\Ind(\cC)_{w\le k}$, 
the composition $g_{kn}\circ \dots \circ g_{1}$ is trivial.
\end{cor}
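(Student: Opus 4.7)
The plan is to reduce to the ``furthermore'' part of Lemma~\ref{lem:example_nilp_weight0} by exhibiting the composition $g_{kn} \circ \cdots \circ g_1$ as a factorization through the canonical map $I^{\circ kn} x_{kn} \to x_{kn}$, and then invoking condition~(3) of Lemma~\ref{lem:weight0_morphism}.

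First I would observe the basic lifting principle: for any morphism $g \colon x \to y$ in $\Ind(\cC)$ with $f_!(g) \simeq 0$, naturality of the unit $\id \to f^*f_!$ makes the composite $x \xrightarrow{g} y \to f^*f_!(y)$ null, so $g$ acquires a canonical lift $\tilde g \colon x \to Iy$ along the defining fiber sequence $Iy \to y \to f^*f_!(y)$. Applying this to each $g_i$ yields lifts $\tilde g_i \colon x_{i-1} \to Ix_i$.

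Next I would prove by induction on $m$ that the composition $g_m \circ \cdots \circ g_1$ factors as $x_0 \to I^{\circ m}x_m \to x_m$, where the second arrow is the canonical map obtained by iterating $I \to \id$. The inductive step combines the hypothesis $x_0 \to I^{\circ(m-1)} x_{m-1} \to x_{m-1}$ with the naturality square
\[
\begin{tikzcd}
I^{\circ(m-1)}x_{m-1} \arrow[r,"{I^{\circ(m-1)}\tilde g_m}"] \arrow[d] & I^{\circ m}x_m \arrow[d] \\
x_{m-1} \arrow[r,"g_m"] & x_m,
\end{tikzcd}
\]
obtained by applying $I^{\circ(m-1)}$ to $\tilde g_m$ and using that the canonical transformation $I^{\circ(m-1)} \Rightarrow \id$ is natural. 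Taking $m = kn$ gives the desired factorization.

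Finally, the furthermore part of Lemma~\ref{lem:example_nilp_weight0} (applied to $x_{kn}\in \Ind(\cC)_{w\ge 0}$) says that $I^{\circ kn}x_{kn} \to x_{kn}$ kills weights $\le k$; since $x_0 \in \Ind(\cC)_{w\le k}$, condition~(3) of Lemma~\ref{lem:weight0_morphism} forces the entire composition $g_{kn} \circ \cdots \circ g_1$ to vanish. The only delicate point is the bookkeeping of the iterated factorization through $I^{\circ m}x_m$ and checking that the naturality square recovers the correct composite; this is purely formal but must be tracked carefully to ensure that the canonical maps $I^{\circ m} \to I^{\circ(m-1)}$ line up with the lifts $\tilde g_i$.
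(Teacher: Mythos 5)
Your proof is correct and follows essentially the same route as the paper's: lift each $g_i$ to $\tilde g_i\colon x_{i-1}\to Ix_i$ using $f_!(g_i)\simeq 0$, factor the total composite through $I^{\circ kn}x_{kn}\to x_{kn}$, and conclude via the furthermore part of Lemma~\ref{lem:example_nilp_weight0} together with condition~(3) of Lemma~\ref{lem:weight0_morphism}. The only difference is that you spell out the inductive naturality argument that the paper leaves implicit.
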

\begin{proof}
By assumption, each $g_i$ decomposes as
\[
x_{i-1} \stackrel{\bar{g}_1}\to I(x_i) \to x_i,
\]
so the total composition is equivalent to $x_0 \to I^{\circ kn}x_{kn} \to x_{kn}$. This kills weights $\le k$ by 
the furthermore part of Lemma~\ref{lem:example_nilp_weight0}. Since $x_0\in \Ind(\cC)_{w\le k}$, the composite is zero.
\end{proof}

\begin{rmk}\label{rmk:nilpotent_kernel}
\Cref{cor:higher_nilpotence} shows that for a nilpotent extension $f:\cC \to \cD$ of pre-$c$-categories and any 
$x\in \Ind(\cC)_{w\ge 0} \cap \Ind(\cC)_{w\le k}$ the map 
\[
\pi_0\mathrm{End}_{\Ind(\cC)}(x) \to \pi_0\mathrm{End}_{\Ind(\cD)}(f_!x)
\]
has nilpotent kernel. Note that this may not be true without the bounds on weights. 
\tqed \end{rmk}

\begin{thm}\label{thm:resolution_connective}
Let $f:\cC \to \cD$ be a $k$-connective functor of $c$-categories of size $< \kappa$. 
Then the induced functors 
\[
\bar{f}:\Ind(\cC)^{\kappa,\mathrm{b}} \to \Ind(\cD)^{\kappa,\mathrm{b}}
\]
and 
\[
f^{(1)}:\cC^{(1)} \to \cD^{(1)}
\]
are also $k$-connective. 
\end{thm}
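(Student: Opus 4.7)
The plan is to establish both parts of Definition~\ref{dfn:nilp_conn} for $\bar{f}$ and $f^{(1)}$ in turn, using Proposition~\ref{prop:weak_nilpotence}(1) as the operative characterization of $k$-connectivity. The $\bar{f}$-case I expect to be nearly formal, since both source and target are bounded weighted categories; the $f^{(1)}$-case will then be transferred from $\bar{f}$ along the ind-extension of the localization $L : \Ind(\cC)^{\kappa,\mathrm{b}} \to \cC^{(1)}$, using the horizontally right adjointable square of Lemma~\ref{lem:bcres}.

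For $\bar{f}$, $0$-affineness is Lemma~\ref{lem:resolution_affinness}. By Example~\ref{exm:from_small_to_big_ws} every object of $\Ind(\Ind(\cC)^{\kappa,\mathrm{b}})^{w\heartsuit}$ is a retract of a direct sum of objects of $\Ind(\cC)^{\kappa,w\heartsuit}$, which embed into $\Ind(\cC)^{w\heartsuit}$. Using that objects of $\Ind(\cC)^{\kappa,\mathrm{b}}$ are compact in $\Ind(\Ind(\cC)^{\kappa,\mathrm{b}})$, I would decompose mapping spectra between such direct sums as products of direct sums of mapping spectra between heart objects of $\Ind(\cC)$, apply Proposition~\ref{prop:weak_nilpotence}(1) to $f$ on each factor, and conclude using that $k$-connectivity of maps of spectra is preserved by products, direct sums, and retracts.

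For $f^{(1)}$, $0$-affineness is Corollary~\ref{cor:resolution_affine}. Taking right adjoints of the horizontal arrows in Lemma~\ref{lem:bcres} produces the further identity $L_{\cC,!}\bar{f}^* \simeq f^{(1),*}L_{\cD,!}$, and combining it with Lemma~\ref{lem:bcres} itself gives
\[
	f^{(1),*}f^{(1)}_! y \simeq L_{\cC,!}\,\bar{f}^*\bar{f}_!\, L_{\cC}^* y
\]
for every $y \in \Ind(\cC^{(1)})$. Writing $I := \fib(\id \to \bar{f}^*\bar{f}_!)$ and $I^{(1)} := \fib(\id \to f^{(1),*}f^{(1)}_!)$, exactness of $L_{\cC,!}$ together with $L_{\cC,!} L_{\cC}^* \simeq \id$ then gives $I^{(1)}(y) \simeq L_{\cC,!}\, I(L_{\cC}^* y)$. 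Since $L_{\cC}^*$ preserves weight connectivity (as noted in the proof of Theorem~\ref{thm:resolution}), the $\bar{f}$-case will place $I(L_{\cC}^* y)$ in $\Ind(\Ind(\cC)^{\kappa,\mathrm{b}})_{w\geq k}$ whenever $y$ is weight connective.

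The main obstacle will then be to show that $L_{\cC,!}$ is weight exact, after which the proof concludes. Preservation of the coconnective part is easy: $L_{\cC,!}$ is colimit preserving and takes the generators of $\Ind(\Ind(\cC)^{\kappa,\mathrm{b}})_{w\leq 0}$ into $\cC^{(1)}_{\leq 0} \subseteq \Ind(\cC^{(1)})_{w\leq 0}$, and Lemma~\ref{lem:closureofneg} propagates the containment. For the connective direction, by adjunction it suffices to show $L_{\cC}^*(\cC^{(1)}_{\leq 0}) \subseteq \Ind(\Ind(\cC)^{\kappa,\mathrm{b}})_{w\leq 1}$; orthogonality shifted by one then gives the required connectivity of $\map(L_{\cC}^* z, \Sigma w)$. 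I would reduce to $z = L u$ with $u \in \Ind(\cC)^{\kappa,\mathrm{b}}_{w\leq 0}$ and use the localization fiber sequence $\yo_!\yo^* u \to u \to L_{\cC}^* L u$, in which $u$ is weight coconnective by construction and $\yo_!\yo^* u$ is weight coconnective since both $\yo_!$ and the inclusion $\Ind(\cC)^{\kappa,\mathrm{b}} \hookrightarrow \Ind(\cC)$ are weight exact on coconnective generators. The cofiber therefore lies in $w\leq 1$, and extension-closure of $w\leq 1$ propagates this through the entire extension-closure $\cC^{(1)}_{\leq 0}$.
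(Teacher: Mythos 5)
Your treatment of $\bar{f}$ is correct and is essentially the paper's argument (Proposition~\ref{prop:weak_nilpotence}(1) plus Lemma~\ref{lem:generation}, which supplies exactly the retract-of-sums description of the weight heart that you use). The gap is in the $f^{(1)}$ step: you reduce everything to the assertion that $L_{\cC,!}:\Ind(\Ind(\cC)^{\kappa,\mathrm{b}}) \to \Ind(\cC^{(1)})$ is weight exact, and that assertion fails in general once the width exceeds $1$. Indeed $L_{\cC,!}x$ is weight connective iff $L_{\cC}^*L_{\cC,!}x \simeq \cof(\yo_!\yo^*x \to x)$ is, and for $x$ weight connective the term $\yo_!\yo^*x$ is only guaranteed to be weight $-n$-connective, so the cofiber is only guaranteed to lie in $\Ind(\Ind(\cC)^{\kappa,\mathrm{b}})_{w\geq 1-n}$. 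Put differently, weight exactness of $L_{\cC,!}$ would force $\cC^{(1)}$ to have cohomological dimension $0$, i.e.\ to be a bounded weighted category after a single step, contradicting the fact that Theorem~\ref{thm:resolution}(1) only lowers the cohomological dimension to $n-1$ and the resolution genuinely needs $n$ steps. Your sketched proof of weight exactness also does not parse: $\map(z, L_{\cC,!}x)$ for $z \in \cC^{(1)}_{\leq 0}$ is not controlled by an adjunction against $L_{\cC}^*z$ (that is the \emph{right} adjoint of $L_{\cC,!}$), and the orthogonality estimate you invoke only yields $-n$-connectivity, not connectivity.

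The repair is small and is what the paper does: push through the right adjoint instead of the left one. Lemma~\ref{lem:bcres} gives $L_{\cC}^*\fib(y \to f^{(1),*}f^{(1)}_!y) \simeq \fib(L_{\cC}^*y \to \bar{f}^*\bar{f}_!L_{\cC}^*y)$, which is weight $k$-connective by the $\bar{f}$ case since $L_{\cC}^*$ preserves weight connectivity; and $L_{\cC}^*$ also \emph{detects} weight connectivity, because the weight structure on $\Ind(\cC^{(1)})$ is generated by the image of $\Ind(\cC)^{\kappa,\mathrm{b}}_{w\leq 0}$ and $\map(Lv,z)\simeq\map(v,L_{\cC}^*z)$. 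Equivalently, you could salvage your own formula $I^{(1)}(y)\simeq L_{\cC,!}I(L_{\cC}^*y)$ by noting that $I(L_{\cC}^*y)\simeq L_{\cC}^*I^{(1)}(y)$ lies in the essential image of the fully faithful functor $L_{\cC}^*$, and on that image $L_{\cC,!}$ does preserve and detect weight connectivity because $L_{\cC,!}L_{\cC}^*\simeq\id$; no global weight exactness of $L_{\cC,!}$ is required. (A minor further point: your identity $L_{\cC,!}\bar{f}^*\simeq f^{(1),*}L_{\cD,!}$ is not obtained by ``taking right adjoints of the horizontal arrows''---the right adjoint of $L_{\cC}^*$ is a further functor, not $L_{\cC,!}$---but the instance you actually use, on objects in the image of $L_{\cD}^*$, does follow from $L_{\cC}^*f^{(1),*}\simeq\bar{f}^*L_{\cD}^*$ together with $L_{\cC,!}L_{\cC}^*\simeq\id$.)
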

\begin{proof}
We already checked in Lemma~\ref{lem:resolution_affinness} and Corollary~\ref{cor:resolution_affine} that these maps are 
0-affine. By Proposition~\ref{prop:weak_nilpotence}(1) combined with Lemma~\ref{lem:generation} $\bar{f}$ is $k$-connective. 

To show the claim for $f^{(1)}$ it suffices to check the equivalent condition of Proposition~\ref{prop:weak_nilpotence}(1). 
Recall that we have a commutative diagram 
\[
\begin{tikzcd}
\Ind(\Ind(\cC)^{\kappa,\mathrm{b}}) \arrow[r,"L_{\cC,!}"]\arrow[d, "{\bar{f}_!}"] & \Ind(\cC^{(1)})\arrow[d, "{f^{(1)}_!}"]\\
\Ind(\Ind(\cD)^{\kappa,\mathrm{b}}) \arrow[r, "L_{\cD,!}"] & \Ind(\cD^{(1)})
\end{tikzcd}
\]
in $\PrL$ which is also horizontally right adjointable (see Lemma~\ref{lem:bcres}). 
The fact that $\fib(x \to f^{(1),*}f^{(1)}_!x)$ belongs to $\Ind(\cC^{(1)})_{w\ge k}$ for any $x\in \Ind(\cC^{(1)})_{w\ge 0}$ 
follows from the fact that 
\[
L^*_\cC\fib(x \to f^{(1),*}f^{(1)}_!x) \simeq \fib(L^*_\cC x \to L^*_\cC f^{(1),*}f^{(1)}_!x) \simeq \fib(L^*_\cC x \to \bar{f}^{*}\bar{f}_!L^*_\cD x)
\]
belongs to $\Ind(\Ind(\cD)^{\kappa,\mathrm{b}})_{w\ge k}$ and the fact that $L^*_\cC$ detects weight connectivity.
\end{proof}

\begin{thm}\label{thm:nilpotent_resolution}
Let $f:\cC \to \cD$ be a weak nilpotent extension of rank $\leq k$ of $c$-categories of width $\le n$ of size 
$< \kappa$. Set $N = \mathrm{max}(n,1)$. Then the induced functors 
\[
\bar{f}:\Ind(\cC)^{\kappa,\mathrm{b}} \to \Ind(\cD)^{\kappa,\mathrm{b}}
\]
and 
\[
f^{(1)}:\cC^{(1)} \to \cD^{(1)}
\]
are also weak nilpotent extensions of ranks $\leq k$ and $\leq kn$, respectively.
\end{thm}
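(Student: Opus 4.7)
Since 0-affineness of $\bar{f}$ and $f^{(1)}$ has already been established in Lemma~\ref{lem:resolution_affinness} and Corollary~\ref{cor:resolution_affine}, the task reduces to verifying the two rank bounds.

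For $\bar{f}$, I would take a composable sequence $x_0 \stackrel{g_1}{\to} \cdots \stackrel{g_k}{\to} x_k$ in $\Ind(\cC)^{\kappa,\mathrm{b}}$ with $x_0 \in (\Ind(\cC)^{\kappa,\mathrm{b}})_{w \le 0}$, $x_k \in (\Ind(\cC)^{\kappa,\mathrm{b}})_{w \ge 0}$, and every $\bar{f}(g_i) \simeq 0$. Since the weight structure on $\Ind(\cC)^{\kappa,\mathrm{b}}$ is the restriction of that on $\Ind(\cC)$, this sequence lives in $\Ind(\cC)$ with $x_0 \in \Ind(\cC)_{w \le 0} \subset \Ind(\cC)_{w \le 1}$, $x_k$ weight connective, and $f_!(g_i) \simeq 0$. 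Applying Corollary~\ref{cor:higher_nilpotence} to $f$ (of order $\le k$) with the corollary's parameter ``$k$'' equal to $1$ then gives that such a composite of $k$ morphisms vanishes.

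For $f^{(1)}$, set $I^{(1)} := \fib(\id \to f^{(1),*}f^{(1)}_!)$ and $\bar{I} := \fib(\id \to \bar{f}^*\bar{f}_!)$. Given a composable sequence $y_0 \to \cdots \to y_{kn}$ in $\cC^{(1)}$ with the required weight and vanishing conditions, the composite factors through the canonical map $\phi : (I^{(1)})^{\circ kn} y_{kn} \to y_{kn}$, so it suffices to show $\phi$ kills weights $\le 0$; condition (3) of Lemma~\ref{lem:weight0_morphism} then forces the composite to be zero. Using the commuting square from Lemma~\ref{lem:bcres} and passing to right adjoints of the vertical arrows, one gets $L_\cC^* f^{(1),*} \simeq \bar{f}^* L_\cD^*$ and hence $L_\cC^* I^{(1)} \simeq \bar{I} L_\cC^*$; iterating, $L_\cC^*\phi$ identifies with the canonical map $\bar{I}^{\circ kn} L_\cC^* y_{kn} \to L_\cC^* y_{kn}$. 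Since $\bar{f}$ is weak nilpotent of rank $\le k$ by the first part, Lemma~\ref{lem:example_nilp_weight0} applied with $k' = n$ says that $\bar{I}^{\circ kn} u \to u$ kills weights $\le n$ for every $u \in \Ind(\Ind(\cC)^{\kappa,\mathrm{b}})_{w \ge 0}$. Taking $u = L_\cC^* y_{kn}$ (weight connective because $L_\cC^*$ preserves weight connectivity, as shown in the proof of Theorem~\ref{thm:resolution}) yields a factorization of $L_\cC^*\phi$ through some $c \in \Ind(\Ind(\cC)^{\kappa,\mathrm{b}})_{w \ge n+1}$. Applying $L_\cC$ and using $L_\cC L_\cC^* \simeq \id$ then produces a factorization of $\phi$ itself through $L_\cC c$.

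To conclude we need $L_\cC c \in \Ind(\cC^{(1)})_{w \ge 1}$, which follows from the weight amplitude bound $[-(n-1), 0]$ on $L_\cC$. The coconnectivity preservation is immediate from the definition of the heart structure on $\cC^{(1)}$ via Example~\ref{exm:localizations}, while the connectivity lower bound follows from Example~\ref{exm:from_heart_to_c} together with the cohomological dimension bound $\le n-1$ on $\cC^{(1)}$ established in Theorem~\ref{thm:resolution}(1); hence $L_\cC c \in \Ind(\cC^{(1)})_{w \ge n+1-(n-1)} = \Ind(\cC^{(1)})_{w \ge 2}$. The main obstacle I anticipate is cleanly verifying this weight amplitude of $L_\cC$ — in particular the connectivity lower bound that translates the cohomological dimension of the heart category $\cC^{(1)}$ into a bound on how badly $L_\cC$ can shift weight connectivity. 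Once this and the right-adjointability identity $L_\cC^* I^{(1)} \simeq \bar{I} L_\cC^*$ are in hand, the remainder is a clean diagram chase mirroring the structure of the proof of Theorem~\ref{thm:resolution_connective}.
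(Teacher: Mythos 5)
Your proof is correct, and for the $f^{(1)}$ part it takes a genuinely different route from the paper's, though built from the same ingredients. The paper attacks the problem from the source: it lifts $x_0$ to an object $\bar{x}_0 \in \Ind(\Ind(\cC)^{\kappa,\mathrm{b}})_{w\le n}$ (using that $L_\cC^*L_{\cC,!}$ shifts weight connectivity by at most $n$ to split off the $w_{\ge n+1}$ part of a weight decomposition), moves the entire composite upstairs by adjunction, checks that $\bar{f}_!$ kills all the lifted maps, and invokes Corollary~\ref{cor:higher_nilpotence} for $\bar{f}$ with weight bound $n$. You instead attack from the target: you package the composite through the counit $\phi:(I^{(1)})^{\circ kn}y_{kn}\to y_{kn}$, compute $L_\cC^*\phi$ via Lemma~\ref{lem:bcres}, apply Lemma~\ref{lem:example_nilp_weight0} upstairs to factor it through a $w_{\ge n+1}$ object, and push the factorization back down using the weight amplitude $[-(n-1),0]$ of $L_{\cC,!}$. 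The width of the $c$-structure enters both arguments through essentially the same fact (the $-n$-shift of $\yo_!\yo^*$, equivalently of $L^*L_!$), just applied on opposite sides of the composite; your bookkeeping lands at $w\ge 2$, comfortably inside $w\ge 1$, so the numerics check out. A bonus of your write-up is that you actually argue the $\bar{f}$ case (via Corollary~\ref{cor:higher_nilpotence} applied to $f$ with weight parameter $1$, which correctly handles $\kappa$-compact rather than compact objects), whereas the paper only asserts it. Two small remarks: the identity $L_\cC^*f^{(1),*}\simeq \bar{f}^*L_\cD^*$ is just the right adjoint of the commutativity of the square and does not need the adjointability statement --- that is needed only for the second step $\bar{f}^*L_\cD^*f^{(1)}_!\simeq\bar{f}^*\bar{f}_!L_\cC^*$; and, like the paper, you implicitly apply Lemma~\ref{lem:example_nilp_weight0} and Corollary~\ref{cor:higher_nilpotence} to objects of $\Ind(\Ind(\cC)^{\kappa,\mathrm{b}})$ rather than of $\Ind(\cC)^{\kappa,\mathrm{b}}$, which is exactly how the paper uses them as well, so you are on equal footing there.
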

\begin{proof}
We already checked in Theorem~\ref{thm:resolution_connective} that these maps are 
$0$-connective. In this case $\bar{f}$ is a weak nilpotent extension of the same degree 
as $f$. It suffices to show the claim for $f^{(1)}$. Recall that we have a commutative diagram 
\[
\begin{tikzcd}
\Ind(\Ind(\cC)^{\kappa,\mathrm{b}}) \arrow[r,"L_{\cC,!}"]\arrow[d, "{\bar{f}_!}"] & \Ind(\cC^{(1)})\arrow[d, "{f^{(1)}_!}"]\\
\Ind(\Ind(\cD)^{\kappa,\mathrm{b}}) \arrow[r, "L_{\cD,!}"] & \Ind(\cD^{(1)})
\end{tikzcd}
\]
in $\PrL$ which is also horizontally right adjointable (see Lemma~\ref{lem:bcres}). 
Fix a sequence of composable morphisms 
\[
x_0 \stackrel{g_1}\to x_1 \stackrel{g_2}\to \dots \stackrel{g_{kn}}\to x_{kn}
\]
between objects of $\Ind(\cC^{(1)})^{w\heartsuit}$ such that $f^{(1)}_!(g_i) \simeq 0$. We want to show that the composite 
is $0$. We first prove the claim when $x_0$ is a retract of an object of the form $L_{\cC,!}(\bar{x}_0)$ where 
$\bar{x}_0 \in \Ind(\Ind(\cC)^{\kappa,\mathrm{b}})_{w\le k}$. 
By adjunction the composite is zero if and only if the composite 
\[
\bar{x}_0 \stackrel{\bar{g}_0}\to L^*_\cC x_1 \stackrel{L^*_{\cC}g_2}\to \dots \stackrel{L^*_{\cC}g_{kn}}\to L^*_\cC x_{kn}
\]
is zero. The right adjointability of the square 
\[
\begin{tikzcd}
\Ind(\Ind(\cC)^{\kappa,\mathrm{b}}) \arrow[r,"L_{\cC,!}"]\arrow[d, "{\bar{f}_!}"] & \Ind(\cC^{(1)})\arrow[d, "{f^{(1)}_!}"]\\
\Ind(\Ind(\cD)^{\kappa,\mathrm{b}}) \arrow[r, "L_{\cD,!}"] & \Ind(\cD^{(1)})
\end{tikzcd}
\]
ensures that the maps $\bar{f}_!(L^*_{\cC}g_i)\simeq L^*_{\cC}(f^{(1)}_!g_i) \simeq 0$. 
Furthermore, the map $\bar{f}_!(\bar{g}_0):\bar{f}_!\bar{x}_0 \to \bar{f}_!L^*_\cC x_1$ is zero: under the 
$(\bar{f}_!, \bar{f}^*)$-adjunction 
it corresponds to the map
\[
\bar{x}_0 \to \bar{f}^*\bar{f}_!L^*_{\cC} x_1 \simeq \bar{f}^*L_{\cD}^*f^{(1)}_! x_1 \simeq L_{\cC}^*f^{(1),*}f^{(1)}_! x_1
\]
which corresponds under the $(f^{(1)}_!L_{\cC,!}, L_{\cC}^*f^{(1),*})$-adjunction to the map $f^{(1)}_!(g_1):f^{(1)}_!L_{\cC,!}(\bar{x}_0) \to f^{(1)}_!(x_1)$ which is zero by assumption. 
Now the composite is zero by Corollary~\ref{cor:higher_nilpotence} and by assumption on $\bar{x}_0$.

Now we show that $x_0$ is also a retract of an object of the form $L_{\cC,!}(\bar{x}_0)$ for $\bar{x}_0 \in \Ind(\Ind(\cC)^{\kappa,\mathrm{b}})_{w\le k}$. 
Take any $\bar{x}'_0 \in \Ind(\Ind(\cC)^{\kappa,\mathrm{b}})$ with $L_{\cC,!}(\bar{x}'_0) \simeq x_0$. 
Consider a weight decomposition 
\[
w_{\le n} \bar{x}'_0 \to \bar{x}'_0 \to w_{\ge n+1} \bar{x}'_0.
\] 
Observe that $L_{\cC,!}(w_{\ge n+1} \bar{x}'_0)$ is 1-connective because $L_{\cC}^*L_{\cC,!}$ shifts weight connectivity by 
$-n$ and $L_{\cC}^*$ reflects weight connectivity. 
Since $L_{\cC,!}\bar{x}'_0$ is in the weight heart, the weight decomposition splits after applying $L_{\cC,!}$, 
and so $x_0$ is in fact a retract of $L_{\cC,!}(w_{\le n}\bar{x}'_0)$, so we may set $\bar{x}_0 = w_{\le n}\bar{x}'_0$.
\end{proof}

\begin{cor}\label{cor:resolution_nilpotent}
Let $f:\cC \to \cD$ be a $k$-connective map (resp. a weak nilpotent extension) of $c$-categories. 
Then for any choice of $\kappa_i$, the induced functors on the resolution of Corollary~\ref{cor:resolution_full} are 
$k$-connective maps (resp. weak nilpotent extensions) of bounded weighted categories. 
\end{cor}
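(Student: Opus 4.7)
The plan is to proceed by a direct induction on the depth $i$ of the resolution given by Corollary~\ref{cor:resolution_full}, applying one of the one-step results Theorem~\ref{thm:resolution_connective} (in the $k$-connective case) or Theorem~\ref{thm:nilpotent_resolution} (in the weak nilpotent case) at each stage. Both of these theorems already verify two things simultaneously: that the induced map on $\Ind(-)^{\kappa_i,\mathrm{b}}$ retains the connectivity or nilpotence property, and that the map $f^{(i+1)}:\cC^{(i+1)}\to \cD^{(i+1)}$ on the quotient remains a morphism of $c$-categories with the same property, of possibly adjusted rank. This is exactly the structure required to keep the induction going.

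More precisely, the base case is the given map $f:\cC\to\cD$ of $c$-categories of width $\le n$. Assuming inductively that $f^{(i)}:\cC^{(i)}\to\cD^{(i)}$ is $k$-connective (resp. a weak nilpotent extension) as a map of $c$-categories, we apply Theorem~\ref{thm:resolution_connective} (resp. Theorem~\ref{thm:nilpotent_resolution}) to conclude that the $i$-th horizontal map in the resolution, namely $\Ind(\cC^{(i)})^{\kappa_i,\mathrm{b}} \to \Ind(\cD^{(i)})^{\kappa_i,\mathrm{b}}$, has the required property, and that $f^{(i+1)}$ inherits it as a map of $c$-categories of strictly smaller width (by Theorem~\ref{thm:resolution}(1)). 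After at most $N=\max(n,1)$ steps, the width drops to $0$ and the process terminates in the bounded weighted categories $\cC^{(N)}\to \cD^{(N)}$; the property at this final stage is again read off from the previous theorem applied to $f^{(N-1)}$.

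The main bookkeeping issue, rather than a genuine obstacle, is that in the nilpotent case the rank may grow: Theorem~\ref{thm:nilpotent_resolution} passes from rank $\le k$ of a width-$n$ $c$-category to rank $\le kn$ at the next step. Since there are only $N$ steps and the widths drop strictly, the iterated rank remains finite (bounded, crudely, by $k\cdot n!$), so each map in the resolution is still a \emph{weak} nilpotent extension of some finite rank. The proof of each step already internalizes the key technical point, namely the horizontal right adjointability of the square
\[
\begin{tikzcd}
\Ind(\Ind(\cC^{(i)})^{\kappa_i,\mathrm{b}}) \arrow[r,"L_{\cC^{(i)},!}"]\arrow[d] & \Ind(\cC^{(i+1)})\arrow[d]\\
\Ind(\Ind(\cD^{(i)})^{\kappa_i,\mathrm{b}}) \arrow[r, "L_{\cD^{(i)},!}"] & \Ind(\cD^{(i+1)})
\end{tikzcd}
\]
supplied by Lemma~\ref{lem:bcres}. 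With that in hand, the inductive argument is essentially automatic: one simply reads the inductive hypothesis off the previous stage and feeds it into the appropriate one-step theorem.
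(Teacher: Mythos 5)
Your proof is correct and is essentially the argument the paper intends: the corollary is stated without proof precisely because it follows by iterating Theorem~\ref{thm:resolution_connective} (resp.\ Theorem~\ref{thm:nilpotent_resolution}) along the resolution of Corollary~\ref{cor:resolution_full}, with Theorem~\ref{thm:resolution} guaranteeing that each quotient is again a $c$-category so the induction can continue. Your rank bookkeeping in the nilpotent case also matches the paper's own accounting in Remark~\ref{rmk:degree of nilpotence}, which records the bound $\frac{n!}{i!}k$ in degree $i$.
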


\begin{rmk}\label{rmk:degree of nilpotence}
Consider a weak nilpotent extension $f:\cC\to \cD$ of rank $\leq k$ of $c$-categories of width $\le n$. 
The proof of Theorem~\ref{thm:nilpotent_resolution} shows that the ranks of nilpotence of 
functors in Corollary~\ref{cor:resolution_full} are bounded by $\frac{n!}{i!}k$ in degree $i$. 
In particular, the nilpotent extension of rings in the proof of Theorem~\ref{thm:truncating_invariants}(2) below is of degree 
$n!k$.
\tqed \end{rmk}

\subsection{Transfer and descent}

The reason for defining and studying the notion of a $c$-structure is the abundance of examples. 
In this subsection we discuss two ways of producing such examples:
\begin{enumerate} 
\item 
one can transfer (pushforward) $c$-structures along 0-affine functors of stable categories;
\item 
one can descend $c$-structures along the appropriately defined descendable morphisms.
\end{enumerate}

\begin{prop}[Transfer of connectivity]\label{prop:transfer_of_c}
Suppose that $\cC$ is a stable category such that $\Ind(\cC)$ has a compactly generated hypercomplete 
weight structure, and $f:\cC \to \cD$ is a functor such that 
\begin{enumerate}
\item $f$ is 0-affine,
\item $f^*f_!$ preserves weight connective objects. 
\end{enumerate}
Then there exists a unique hypercomplete compactly generated weight structure on $\cD$ such that $f_!$ is weight exact. 

Moreover, if the weight structure on $\Ind(\cC)$ is a $c$-structure of width $\le n$, then so is the induced weight 
structure on $\Ind(\cD)$.
\end{prop}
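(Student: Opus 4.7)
The plan is to construct the weight structure on $\Ind(\cD)$ by generation. Fix a set $S\subset\Ind(\cC)$ of compact generators for the hypercomplete compactly generated weight structure on $\Ind(\cC)$. Because $f^*$ preserves filtered colimits (it is restriction along $f$), its left adjoint $f_!$ preserves compact objects, so $f_!(S)$ is a set of compact objects of $\Ind(\cD)$. Applying Theorem~\ref{thm:generated_ws} to $f_!(S)$ produces a compactly generated weight structure $w'$ on $\Ind(\cD)$, which is the candidate.

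Next I verify hypercompleteness and weight exactness of $f_!$. For hypercompleteness, by Remark~\ref{rmk:generated-hypercomplete} it suffices to show $f_!(S)$ generates $\Ind(\cD)$ as a localizing subcategory. If $y$ is right orthogonal to this localizing subcategory, then for every $s\in S$ the adjunction identity $\map(f_!s, y) \simeq \map(s, f^*y)$ forces $f^*y$ to be right orthogonal to $S$; hypercompleteness of the weight structure on $\Ind(\cC)$ gives $f^*y=0$, and then conservativity of $f^*$ (that is, 0-affineness, Definition~\ref{dfn:affine}) gives $y=0$. For weight exactness of $f_!$, the coconnective direction is immediate because $\Ind(\cC)_{w\leq 0}$ is generated by $S$ under the operations of Lemma~\ref{lem:closureofneg}, all of which are preserved by the left adjoint $f_!$. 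For the connective direction, for $z\in\Ind(\cC)_{w\geq 0}$ and $s\in S$, the same adjunction gives $\map(f_!s, f_!z)\simeq\map(s, f^*f_!z)$; by hypothesis (2), $f^*f_!z\in\Ind(\cC)_{w\geq 0}$, so this mapping spectrum is connective, and orthogonality (Remark~\ref{rmk:generated_ws}) yields $f_!z\in\Ind(\cD)_{w'\geq 0}$. Uniqueness will follow from orthogonality: any other such weight structure $w''$ satisfies $f_!(S)\subset\Ind(\cD)_{w''\leq 0}$, and the orthogonal characterization then forces $\Ind(\cD)_{w''\geq 0}\subset\Ind(\cD)_{w'\geq 0}$, hence $\Ind(\cD)_{w'\leq 0}\subset\Ind(\cD)_{w''\leq 0}$; the reverse inclusion I expect to be the most delicate step, and I plan to handle it by exploiting that both weight structures are compactly generated and that, by 0-affineness, compact objects of $\Ind(\cD)$ are built from $f(\cC)$ under finite colimits, shifts, and retracts, which should pin down the coconnective parts.

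For the $c$-structure claim of width $\leq n$: first, every object of $\cD$ is bounded in $w'$, because 0-affineness builds $\cD$ from $f(\cC)$ using finite colimits, shifts, and retracts, while weight boundedness is stable under these operations and $f_!(x)$ is bounded for every $x\in\cC$ by weight exactness and the hypothesis that $\cC$ is a $c$-category. For the presentation condition in Definition~\ref{dfn:c_structure}, I would apply the equivalent formulation of Lemma~\ref{lem:ccategoryequiv}: given $y\in\Ind(\cD)^{w'\heartsuit}$, the adjunction together with orthogonality shows $f^*y\in\Ind(\cC)_{w\geq 0}$, and the width-$n$ hypothesis on $\cC$ then furnishes a filtered presentation $f^*y\simeq\colim x_\alpha$ with the eventual weight-killing property. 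The main obstacle will be transferring this presentation to $y$ itself, since the counit $f_!f^*y\to y$ is typically not an equivalence; I plan to resolve this by combining the $\cC$-side presentation with the filtered diagram of $\cD$-objects mapping to $y$ in $\Ind(\cD)$, using that $f_!$ preserves the kill-weights property (a consequence of Lemma~\ref{lem:weight0_morphism} and weight exactness of $f_!$) to propagate the eventual vanishing condition.
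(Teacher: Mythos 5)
Your construction of the weight structure on $\Ind(\cD)$, the verification of hypercompleteness via conservativity of $f^*$, and the check that $f_!$ is weight exact (coconnectivity by closure of the generating operations under $f_!$, connectivity by adjunction plus hypothesis (2)) all match the paper's argument. The problem is the ``moreover'' part, where you correctly identify the obstacle --- the counit $f_!f^*y \to y$ is not an equivalence --- but then leave it unresolved with only a vague plan to ``combine'' the two filtered diagrams. This is the one step of the proposition that actually requires an idea, and it is missing. There are two ways to supply it. The paper's route: by Proposition~\ref{prop:hearts_gen} (hypercompleteness) the heart generates $\Ind(\cC)$ under colimits and shifts, so by $0$-affineness $f_!(\Ind(\cC)^{w\heartsuit})$ generates $\Ind(\cD)$ under colimits and shifts, and Lemma~\ref{lem:generation}(2) then exhibits every $y\in\Ind(\cD)^{w\heartsuit}$ as a retract of $f_!(z)$ for a single $z\in\Ind(\cC)^{w\heartsuit}$; one pushes the good presentation of $z$ forward along the weight exact $f_!$. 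Alternatively, your own route can be completed: for $y$ in the weight heart, the cofiber $c$ of $f_!f^*y\to y$ satisfies that $\map(f_!s,c)$ is $1$-connective for every generator $s$, because $f^*\epsilon_y$ is split surjective by the triangle identity and $f^*f_!f^*y$ is weight connective by hypothesis (2); hence $c\in\Ind(\cD)_{w\ge 1}$, the counit splits, and $y$ \emph{is} a retract of $f_!f^*y$. Either way you land on a retract.

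That leads to a second point you do not address: the filtered-colimit presentation condition of Definition~\ref{dfn:c_structure} is not obviously stable under retracts, so exhibiting $f_!f^*y$ (or $f_!(z)$) with a good presentation does not immediately give one for its retract $y$. This is exactly why one should verify the $c$-structure through the retract-closed reformulations (2) or (4) of Lemma~\ref{lem:ccategoryequiv} rather than condition (1) directly; with that substitution both routes close. Finally, you explicitly defer the reverse inclusion in the uniqueness argument; to be fair, the paper only invokes uniqueness of the weight structure \emph{generated by} $f(\cC_{\le 0})$ (Remark~\ref{rmk:generated_ws}) rather than proving uniqueness among all weight structures making $f_!$ weight exact, so your hesitation there is reasonable, but as written your proof establishes neither version beyond the one inclusion.
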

\begin{proof}
Consider a set of compact generators $\cC_{\le 0} \subset \cC$ for the weight structure on $\Ind(\cC)$. $f(\cC_{\leq0})$ generates $\cD$ because $f$ is $0$-affine, so by Theorem~\ref{thm:generated_ws} $f(\cC_{\le 0}) \subset \cD$ generates a unique hypercomplete weight structure on 
$\Ind(\cD)$. We then automatically have that $f_!$ preserves weight coconnective objects. 
By construction an object $x\in \Ind(\cD)$ belongs to $\Ind(\cD)_{w\ge 0}$ if and only if 
\[
\map(f_!y,x) \simeq \map(y, f^*x) 
\]
is connective for any $y\in \cC_{\le 0}$. In other words, $x\in \Ind(\cD)_{w\ge 0}$ if and only if 
$f^*x \in \Ind(\cC)_{w\ge 0}$. The second assumption then implies that $f_!$ preserves weight connective objects. 

Now we show the moreover part of the claim. By Proposition~\ref{prop:hearts_gen} $\Ind(\cC)$ is generated by objects of the 
heart, so the image of the heart also generates the heart $\Ind(\cD)^{w\heartsuit}$ under taking retracts. 
Hence, any object of $\Ind(\cD)^{w\heartsuit}$ is a filtered colimit of objects $f_!(x_{\alpha})$ for 
$x_{\alpha}\in \cC$ such that for each $x_{\alpha}$ all maps out of $x_{\alpha}$ in the diagram eventually 
kill weights $\le -n$. Since weight exact functors preserve morphisms killing weights, we get a $c$-structure of width $\le n$ on 
$\cD$.
\end{proof}


\begin{cor}[Transfer of heart structure]\label{thm:transferofconn}
Let $\cC$ be a bounded heart category of cohomological dimension $n$ and 
let $f:\cC \to \cD$ be a functor satisfying the same assumptions as in Proposition~\ref{prop:transfer_of_c}. 

Assume also that $\cD$ is idempotent complete. 
Then there is a heart structure on $\cD$ such that the connective and coconnective objects are 
generated by the image of those in $\cC$, and such that the map $\cC \to \cD$ is weight exact.
\end{cor}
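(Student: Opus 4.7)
The strategy is to combine Proposition~\ref{prop:transfer_of_c} with Proposition~\ref{prop:cstr_heartstructure}. By Example~\ref{exm:from_heart_to_c}, the bounded heart category $\cC$ carries a canonical $c$-structure of width $\le n$ generated by $\cC_{\le 0}$, and the hypotheses on $f$ match those of Proposition~\ref{prop:transfer_of_c}. That proposition produces a transferred hypercomplete compactly generated weight structure on $\Ind(\cD)$ of width $\le n$, with $f_!$ weight exact, and with $\Ind(\cD)_{w\le 0}$ generated by $f(\cC_{\le 0})$ under the operations of Lemma~\ref{lem:closureofneg}.

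I would then apply Proposition~\ref{prop:cstr_heartstructure} to $\cD$ equipped with this $c$-structure. The only nontrivial hypothesis is that every object of $\Ind(\cD)^{w\heartsuit}$ is a filtered colimit of compact objects of $\Ind(\cD)$ lying in weights $[-n,0]$. Since $\cC^{\heartsuit} \subset \Ind(\cC)_{w \in [-n,0]}$ (a direct consequence of cohomological dimension $\le n$) and $f_!$ preserves both compact objects and the weight structure, the images $f(\cC^{\heartsuit}) \subset \cD$ are compact objects in weights $[-n,0]$ of $\Ind(\cD)$. Applying Lemma~\ref{lem:wtheartforheartstr} in $\cC$ and then $f_!$ shows that every object in $f_!(\Ind(\cC)^{w\heartsuit})$ is a retract of a filtered colimit of such compact $[-n,0]$ objects. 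Since $\Ind(\cD)^{w\heartsuit}$ is generated from $f_!(\Ind(\cC)^{w\heartsuit})$ under retracts and filtered colimits, by $0$-affineness of $f$ combined with Proposition~\ref{prop:hearts_gen}, every object of $\Ind(\cD)^{w\heartsuit}$ admits such a presentation.

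Proposition~\ref{prop:cstr_heartstructure} then supplies the desired bounded heart structure on $\cD$ of cohomological dimension $\le n$: $\cD_{\le 0}$ consists of compact objects of weight $\le 0$ and $\cD_{\ge 0}$ of compact objects of weight $\ge -n$. Heart exactness of $f$ is immediate from weight exactness of $f_!$, which gives $f(\cC_{\le 0}) \subset \cD_{\le 0}$ and $f(\cC_{\ge 0}) \subset \cD_{\ge 0}$. The generation claim follows from the construction of the transferred weight structure in Proposition~\ref{prop:transfer_of_c}, whose connective and coconnective parts (in the weight sense) are generated by $f(\cC_{\le 0})$ and $f(\cC_{\ge 0})$ respectively, so their intersections with $\cD$ form the retract and extension closures in question.

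I expect the main obstacle to be the density argument in the second paragraph: ensuring that arbitrary objects of $\Ind(\cD)^{w\heartsuit}$, rather than only those in the essential image of $f_!$, admit filtered colimit presentations by compact weight-$[-n,0]$ objects of $\cD$. This uses the compact generation of the weight heart together with idempotent completeness of $\cD$, which is needed to absorb the retracts coming from Lemma~\ref{lem:wtheartforheartstr} into genuine filtered diagrams of compacts.
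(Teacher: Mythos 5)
Your route through Proposition~\ref{prop:transfer_of_c} and Proposition~\ref{prop:cstr_heartstructure} is genuinely different from the paper's, but it does not prove the stated corollary. Proposition~\ref{prop:cstr_heartstructure} produces the \emph{saturated} heart structure, in which $\cD_{\ge 0}$ is \emph{all} compact objects of weight $\ge -n$ and $\cD_{\le 0}$ is all compact objects of weight $\le 0$. The corollary asserts something stronger: that the connective and coconnective parts are \emph{generated by the images} of $\cC_{\ge 0}$ and $\cC_{\le 0}$; equivalently, that the retract-closure $\cE$ of the extension-closure of $f(\cC^{\heartsuit})$ is itself the heart of a heart structure, i.e.\ $\mathcal{D}^{\mathrm{b}}(\cE) \to \cD$ is an equivalence up to idempotent completion. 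Your closing claim that the intersections of $\Ind(\cD)_{w\le 0}$ and $\Ind(\cD)_{w\ge -n}$ with $\cD$ ``form the retract and extension closures in question'' is exactly the point that needs proof, and it is false in general: Warning~\ref{wrn:preserving_injectives} records that saturation typically changes the heart structure, and Proposition~\ref{prop:transfer_of_c} only generates $\Ind(\cD)_{w\le 0}$ from $f(\cC_{\le 0})$ under the \emph{infinitary} operations of Lemma~\ref{lem:closureofneg}, with $\Ind(\cD)_{w\ge 0}$ then determined by orthogonality rather than by $f(\cC_{\ge 0})$.

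The paper's proof attacks precisely this missing point: it sets $\cE$ to be the retract-extension closure of $f(\cC^{\heartsuit})$ and shows that $\mathcal{D}^{\mathrm{b}}(\cE) \to \cD$ becomes an equivalence after idempotent completion, reducing via \cite[Proposition~2.28]{winges2025presentable} and a generation argument to proving that $\map(g(x),g(y)) \to \map(f(x),f(y))$ is an equivalence for $x,y$ in the weight heart of $\Ind(\cC)$; this uses the $-1$-connective comparison of \Cref{exm:minusone}, Lemma~\ref{lem:wtheartforheartstr}, and left speciality (\Cref{exm:special_subcategory}). Separately, your verification of the hypothesis of Proposition~\ref{prop:cstr_heartstructure} is only sketched: the filtration pieces produced by Lemma~\ref{lem:wtheartforheartstr} are \emph{infinite} sums of heart objects, hence not compact, so rewriting $f_!(c_\infty)$ and then an arbitrary retract of it as an honest filtered colimit (not a retract of one) of compact $[-n,0]$-objects requires an argument that you do not supply. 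Even granting all of that, your approach establishes only that $\cD$ carries \emph{some} fcd heart structure making $f$ heart exact, not the one asserted in the statement.
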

\begin{proof}
Set $\cE$ to be the retract-closure of the extension-closure of $f(\cC^\heart)$ in $\cD$. 
Our goal is to show that $\cE$ is the heart of a heart structure on $\cD$. 
Note that if it exists, it is automatically cohomological dimension $\leq n$. 
Indeed, since $\cC$ is cohomological dimension $\leq n$, this implies that any $x\in \cC^\heart$ belongs to $\Ind(\cC)_{w\ge -n}$. 
Since $f^*f$ preserves weight connectivity, for any $x,y \in \cC^\heart$ we have that 
\[
\map(fx,fy) = \map(x, f^*fy)
\]
is $-n$-connective. Now observe that the subcategory of pairs $(s,t) \in \cD^{\mathrm{op}} \times \cD$ for which 
$\map(s,t)$ is $-n$-connective is closed under extensions and retracts, so it in particular contains 
$\cE^{\mathrm{op}} \times \cE$.

Consider the commutative square
\[
\begin{tikzcd}
\cD^\mathrm{b}(\cC^\heart) \arrow[d,"g"]\arrow[r, "\simeq"] & \cC\arrow[d,"f"]\\
\cD^\mathrm{b}(\cE) \arrow[r, "u"] & \cD.
\end{tikzcd}
\]
By \cite[Proposition~2.28]{winges2025presentable} it suffices to show that $u$ is an equivalence up to idempotent completion, 
so it remains to prove that 
\[
u_!:\Ind(\cD^\mathrm{b}(\cE)) \to \Ind(\cD)
\] 
is an equivalence. The images of both $g$ and $f$ generate the corresponding codomains under 
finite colimits, loops and retracts. 
Hence the images of $g_!$ and $f_!$ generate $\Ind(\cD^\mathrm{b}(\cE))$ and $\Ind(\cD)$ 
under all colimits and loops. 
Since $\cC$ is $-n$-connective, every object of $\cC^\heart$ is a finite extension of objects of the weight-heart 
in $\Ind(\cC)$. Hence both $\Ind(\cD^\mathrm{b}(\cE))$ and $\Ind(\cD)$ are generated by the images of the objects of 
the weight-heart $\Ind(\cC)^{w\heart}$. It suffices to show that the map 
\[
u_{x,y}:\map(g(x),g(y)) \to \map(f(x),f(y))
\]
is an equivalence for any $x,y \in \Ind(\cC)^{w\heart}$.

By \Cref{exm:minusone} (applied to the case $\cA = \cD$, $\cE=\cE$) the map 
\[
\map(g(x),g(y)) \to \map(f(x), f(y))
\]
is an equivalence on $-1$-connective covers for $x,y \in \cC^\heartsuit$. 
This is also true for $x,y \in \Ind(\cC^\heartsuit)$ 
since $\tau_{\ge -1}:\Sp \to \Sp_{\ge -1}$ preserves limits and filtered colimits. 
By Lemma~\ref{lem:wtheartforheartstr} we have an inclusion of subcategories 
$\Ind(\cC)^{w\heart} \subset \Ind(\cC^{\heart})$, so by the above observation 
$u_{x,y}$ is an equivalence on $-1$-connective covers for $x,y \in \Ind(\cC)^{w\heart}$. 
Note that in this case 
\[
\map(f(x), f(y)) \simeq \map(x, f^*f(y))
\]
is connective, so it suffices to show that 
$\map(g(x), g(y))$ is connective. To see this we show that $g(y)$ is in the heart of the canonical weight structure 
on $\Ind(\cD^\mathrm{b}(\cE))$. The fact that $g(y)$ belongs to 
$\Ind(\cD^\mathrm{b}(\cE))_{w\le 0}$ is automatic, so it suffices to prove weight connectivity. 

Using the above observation again, observe that for any $x\in \cC^\heartsuit$ we have 
\[
\pi_{-1}\map(g(x), g(y)) \simeq \pi_{-1}\map(g(x), g(y))\simeq \pi_{-1}\map(f(x), f(y)) \simeq 0.
\]
Since the subcategory of $\cE$ spanned by objects $x' \in \cE$ for which $\pi_{-1}\map(x', g(y)) \simeq 0$ is 
closed under extensions and 
retracts, and since $\cE$ is generated by the image of $\cC^{\heartsuit}$ under extensions and retracts, we see that 
$\pi_{-1}\map(x', g(y)) \simeq 0$ for any $x' \in \cE$. 
By Example~\ref{exm:special_subcategory} we see that for any $n>1$, any $x' \in \cE$ and
any map $x' \to \Sigma^n g(y)$ factors as $x \to \Sigma^{n-1} y' \stackrel{\alpha}\to \Sigma^n g(y)$ for some 
$y' \in \cE$. Since $\pi_0\map(\Sigma^{n-1}y', \Sigma^ng(y)) \simeq \pi_{-1}\map(y', g(y)) \simeq 0$, we obtain that 
$\alpha$ as well as the 
map $x' \to \Sigma^n g(y)$ are trivial and so $g(y)$ is indeed weight connective in the canonical weight structure on 
$\Ind(\cD^\mathrm{b}(\cE))$. 
\end{proof}

\begin{ntn}
Let $f:\cC \to \cD$ be a functor between small stable categories, and consider the induced adjunction 
\[
f_!:\Ind(\cC) \to \Ind(\cD):f^*
\]
 We use $M_f \in \Alg(\End_{\Pr^L}\Ind(\cC))$ to denote the monad associated to this adjunction.
\end{ntn}

For any $x\in \cC$ we then have a comparison map
\[
\begin{tikzcd}
x \to \lim\limits_{\Delta} (M_fx \arrow[r,shift left]\arrow[r,shift right] & M_f^{\otimes 2}x \arrow[r,shift left=2]\arrow[r]\arrow[r,shift right = 2] &\dots)
\end{tikzcd}
\]
in $\Ind(\cC)$. We denote the cosimplicial object giving this limit by $M_f^{\otimes \bullet +1}(x)$, so that the right hand side can be written as $\Tot(M_f^{\otimes \bullet +1}(x))$. Consider the full subcategories 
$\Delta_{\le n} \subset \Delta$ of nonempty totally ordered sets of size $\le n$. In what follows we are interested in studying the 
partial totalizations
\[
\Tot^n(M_f^{\otimes \bullet +1}(x)):=\lim_{\bullet\in \Delta_{\le n}} M_f^{\otimes \bullet +1}(x)_i.
\]

\begin{prop}\label{prop:descendability}
Let $f:\cC \to \cD$ be a functor between small stable categories. Let $I_f \to \id_{\Ind(\cC)}$ be the fiber of the unit map $\id_{\Ind(\cC)} \to M_f$. For $m\geq0$, the following are equivalent: 
\begin{enumerate}
\item[(1)] There is an endofunctor $X$ of $\Ind(\cC)$ such that $\id_{\Ind(\cC)}$ is a retract of $X$, and such that $X$ admits a filtration $0 = X_0 \to \dots \to X_{m+1} = X$ with associated graded $X_i/X_{i-1} \cong f^*(Y_i)$ for some functors $Y_i:\Ind(\cC) \to \Ind(\cD)$.
\item[(2)] For any $x\in \Ind(\cC)$ the canonical map $x \to \Tot^m(M_f^{\otimes \bullet +1}(x))$ admits a functorial splitting.
\item[(3)] The map $I_f^{\otimes m+1} \to \id_C$ is null.
\end{enumerate}
\end{prop}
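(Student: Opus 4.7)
The plan is to establish $(1) \Leftrightarrow (3)$ directly by manipulating towers of endofunctors, and then to establish $(2) \Leftrightarrow (3)$ by identifying the fiber of the augmentation map in (2) with an iterated composition of $I_f$.

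For $(3) \Rightarrow (1)$, I will set $X = \cof(I_f^{\otimes m+1} \to \id)$, so that nullity of the defining map makes $\id$ a canonical retract of $X$. The required filtration on $X$ arises from the tower
\[
I_f^{\otimes m+1} \to I_f^{\otimes m} \to \cdots \to I_f \to \id,
\]
whose $k$-th cofiber is $M_f \circ I_f^{\otimes k} = f^* \circ (f_! \circ I_f^{\otimes k})$ for $k = 0, \ldots, m$, obtained by composing the defining fiber sequence $I_f \to \id \to M_f$ on the right with $I_f^{\otimes k}$. Taking cofibers relative to $I_f^{\otimes m+1}$ will produce the desired length-$(m+1)$ filtration on $X$ with graded pieces in the image of $f^*$, with $Y_i = f_! \circ I_f^{\otimes m+1-i}$.

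For $(1) \Rightarrow (3)$, the key lemma will be that the natural transformation $I_f \circ f^* \to f^*$ is null: this is a consequence of the triangle identity, since the unit $f^* \to M_f \circ f^* = f^* f_! f^*$ admits a natural retraction induced by $f^*$ applied to the counit $f_! f^* \to \id$, so its fiber vanishes. Given the section $s: \id \to X$, I will construct inductively natural transformations $\tau_k: I_f^{\otimes k} \to X_{m+1-k}$ for $0 \le k \le m+1$, starting from $\tau_0 = s$, such that composing $\tau_k$ with $X_{m+1-k} \hookrightarrow X$ yields $I_f^{\otimes k} \to \id \xrightarrow{s} X$. The inductive step will follow from observing that
\[
I_f^{\otimes k+1} \to I_f^{\otimes k} \xrightarrow{\tau_k} X_{m+1-k} \twoheadrightarrow X_{m+1-k}/X_{m-k} = f^*(Y_{m+1-k})
\]
agrees by naturality of $I_f \to \id$ with $I_f^{\otimes k+1} \to I_f \circ f^*(Y_{m+1-k}) \to f^*(Y_{m+1-k})$, which vanishes by the key lemma; hence the composition factors through $X_{m-k}$. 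At $k = m+1$ this produces $\tau_{m+1}: I_f^{\otimes m+1} \to X_0 = 0$, forcing $s \circ (I_f^{\otimes m+1} \to \id) = 0$, and composing with the retraction $r: X \to \id$ will give (3).

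For $(2) \Leftrightarrow (3)$, the central input is a natural identification of $\fib(x \to \Tot^m(M_f^{\otimes \bullet+1}(x)))$ with an iterated composition of $I_f$ applied to $x$ (up to the paper's convention on $\Tot^m$). I would prove this by induction on the totalization tower, whose fiber sequences $\Tot^{k+1} \to \Tot^k \to \Omega^{k-1} N^k$ involve normalized pieces of the cobar complex, by comparing to the Adams-style tower of the unit $\eta: \id \to M_f$ whose successive cofibers are $M_f \circ I_f^{\otimes k}$. Once this identification is in place, $x \to \Tot^m(M_f^{\otimes \bullet+1}(x))$ will admit a functorial splitting precisely when the corresponding map $I_f^{\otimes m+1} \to \id$ is nullhomotopic. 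The hard part will be this fiber identification, since it requires careful bookkeeping in the comparison between the normalized cobar tower and the iterated-fiber tower for $\eta$; by contrast, once $(1) \Leftrightarrow (3)$ is settled via the triangle-identity argument, the main remaining work in $(1) \Leftrightarrow (3)$ is just naturality.
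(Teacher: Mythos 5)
Your proposal is correct and follows essentially the same route as the paper: the equivalence of (2) and (3) via identifying the fiber of the partial totalization tower with the tower $I_f^{\otimes\bullet+1}$, the implication to (1) via the filtration with graded pieces $M_f\circ I_f^{\otimes k}=f^*(f_!\circ I_f^{\otimes k})$, and $(1)\Rightarrow(3)$ by inductively descending the filtration using the triangle identity (your lemma that $I_f\circ f^*\to f^*$ is null is exactly the adjunction fact the paper invokes for its dashed arrow $g_k$). The only substantive difference is that the paper outsources the comparison of the cobar tower with the iterated-fiber tower of the unit to \cite[Proposition~2.14]{mathew2017nilpotence}, whereas you propose to carry out that bookkeeping yourself.
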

\begin{proof}
	We first note that the tower of endofunctors $I_f^{\otimes \bullet+1}$ with transition maps given by the inclusion $I_f^{\otimes n}\otimes(I_f \to \id_C)$ can be identified with the fiber of the map\footnote{We use $M_f^{\otimes \bullet+1}$ to denote the cosimplicial object associated to the algebra $M_f$, with degeneracies given by the multiplication. $\Tot^n$ denotes the $n$th partial totalization.} $$x \to \Tot^{\bullet}(M_f^{\otimes \bullet+1})$$ by the proof of \cite[Proposition 2.14]{mathew2017nilpotence}\footnote{The cited proposition is for symmetric monoidal categories, but works equally well in the monoidal category of endofunctors of $\Ind(C)$, by replacing sets with linearly ordered sets in appropriate places.}. This shows that (3) is equivalent to (2). It also shows that (2) implies (1) since $$\Tot^m(M_f^{\otimes \bullet +1}) \to \Tot^{m-1}(M_f^{\otimes \bullet +1})\dots \to \Tot^0(M_f^{\otimes \bullet +1}) = M_f \to 0$$ gives a finite filtration on $\Tot^m(M_f^{\otimes \bullet +1})$ whose associated graded is $I^{\otimes n}_f\otimes M_f$, which is in the image of $f^*$ since $M_f$ is.
	
	We now prove (1) implies (3). Assume (1), so that it is enough to show that the composite $I_f^{\otimes m+1} \to \id \to X$ is null. By assumption, $X$ has an $m+1$-step filtration $0 = X_0 \to X_1 \to \dots \to X_{m+1} = X$, with $X_i/X_{i-1} = f^*(Y_i)$. By induction on $0\leq k\leq m+1$, we show that the composite $I^{\otimes k}_f \to \id \to X$ factors through $X_{m-k}$. For the inductive step, we assume the result for $k-1$. Producing the factorization is then equivalent to producing the map $g_k$ in the diagram making the square commute. Indeed, by taking vertical fibers, one then produces the desired factorization $h_k$.
	
	\[\begin{tikzcd}
		{I_f^{\otimes k-1}\otimes M_f} & {f^*(Y_{m-k+1})} \\
		{I_f^{\otimes k-1}} & {X_{m-k+1}} \\
		{I_f^{\otimes k}} & {X_{m-k}}
		\arrow["g_k",dashed, from=1-1, to=1-2]
		\arrow[from=2-1, to=1-1]
		\arrow[from=2-1, to=2-2]
		\arrow[from=2-2, to=1-2]
		\arrow[from=3-1, to=2-1]
		\arrow["h_k",dashed, from=3-1, to=3-2]
		\arrow[from=3-2, to=2-2]
	\end{tikzcd}\]
	
	The upper dashed line exists because of the adjunction.
\end{proof}

\begin{rmk}\label{rmk:phantom_descent}
	One may consider the pointwise versions of conditions (1) and (2), where the filtration and the splitting are only 
	required to exist objectwise and not functorially. These conditions are still equivalent to one another, but they are 
	strictly weaker than the existing conditions. See \Cref{exm:weaklydescendable} for an example. 
	However, for the proof of \Cref{thm:cdescent} even weaker condition suffices: that for every object $x$, there exists a 
	length $m+1$ filtration on an object $y$ with associated graded in the image of $f^*$ and a map $x \to y$ with phantom 
	fiber. 
\tqed
\end{rmk}

\begin{dfn}\label{dfn:descendability}
We say that a functor $f:\cC \to \cD$ be a functor between small stable categories is {\bf descendable} if it satisfies the 
equivalent conditions of Proposition~\ref{prop:descendability}. We say that $m$ is the exponent of $f$. 
\end{dfn}

\begin{rmk}\label{rmk:mathews_descendability}
Let $\cC$ be a rigid symmetric monoidal category and $R \in \CAlg(\cC)$. 
Then the functor 
\[
f:\cC \to \Mod_R(\cC)
\] 
is descendable if and only if $R$ is descendable in the sense of 
\cite[Definition~3.18]{mathew2016galois}. The fact that our definition implies Mathew's is 
\cite[Proposition~3.20]{mathew2016galois}. The other direction follows from the same proposition and the observation 
that there is a canonical equivalence of cosimplicial diagrams
\[
M_f^{\otimes \bullet +1}(x) \simeq x\otimes M_f^{\otimes \bullet +1}(\unit)
\]
coming from the projection formula $f^*(f_! x \otimes y) \simeq x \otimes f^*y$.
\tqed \end{rmk}

\begin{thm}[Descent of connectivity]\label{thm:cdescent}
Let $f:\cC \to \cD$ be a descendable functor between small stable categories of exponent $m$. 
Assume that $\cD$ admits a $c$-structure of width $\le n$ and $\Ind(\cC)$ admits a compactly generated weight structure such that 
$f_!$ 
preserves weight coconnective objects and sends weight connective objects to $-d$-connective objects for some 
$d \in \NN$. 
Then the weight structure satisfies the axioms for a $c$-structure of width $\le n+m(d+1)$ on $\cC$. 
\end{thm}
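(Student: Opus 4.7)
The plan is to verify $(i)$ that the given weight structure on $\Ind(\cC)$ makes $\cC$ into a pre-$c$-category and $(ii)$ that the resulting $c$-structure has width at most $n+m(d+1)$.

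For $(i)$, hypercompleteness of the weight structure should follow from hypercompleteness of the weight structure on $\Ind(\cD)$ (coming from the $c$-structure of width $\leq n$) via conservativity of $f^*$, which is implied by descendability. The main content is to check that every $x\in\cC$ is weight-bounded in $\Ind(\cC)$. By adjunction, the hypotheses on $f_!$ give that $f^*$ preserves weight-connectivity. Descendability (\Cref{prop:descendability}) then exhibits $x$ as a retract of an object $X(x)$ admitting a length-$(m+1)$ filtration whose associated graded pieces have the form $f^*(Y_i(x))$ for compact $Y_i(x)\in\Ind(\cD)$. Since $\cD$ is a $c$-category, each $Y_i(x)$ is weight-bounded in $\Ind(\cD)$; combined with the amplitude bound on $f_!$, one argues (using orthogonality and weight truncations) that each $f^*(Y_i(x))$ is weight-bounded in $\Ind(\cC)$, whence so is $X(x)$ and its retract $x$.

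For $(ii)$, I would apply the equivalent characterization of \Cref{lem:ccategoryequiv}$(2)$: it suffices to show, for sufficiently large $\kappa$, that $\yo_!\yo^*:\Ind(\cC)^{\kappa,\mathrm{b}}\to\Ind(\Ind(\cC)^{\kappa,\mathrm{b}})$ sends weight-connective objects to weight $-(n+m(d+1))$-connective objects. Apply descendability at the level of endofunctors of $\Ind(\cC)^{\kappa,\mathrm{b}}$: the identity is a retract of a functor $X$ admitting a length-$(m+1)$ filtration whose graded pieces factor through $f^*$. Since $\yo_!\yo^*$ is exact and preserves retracts, it suffices to bound the weight-connectivity of $\yo_!\yo^*\circ f^*$ on each graded piece. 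A base-change identity in the spirit of \Cref{lem:bcres} relates this to the analogous Yoneda construction on $\cD$, where the $c$-structure of width $\leq n$ gives a connectivity bound of $-n$. The amplitude $d$ of $f^*$ contributes $d$ per graded piece, and each of the $m$ non-terminal extensions in the filtration contributes an additional one-step shift, giving a total excess of $m(d+1)$ on top of the base $n$.

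The main obstacle will be setting up the precise base-change identity for $\yo_!\yo^*$ along $f^*$: one needs right-adjointability of a square analogous to that of \Cref{lem:bcres}, together with careful propagation of the weight amplitude of $f^*$ through the $(m+1)$-fold filtration. Once this comparison is established, the bookkeeping of contributions $d+1$ per non-terminal extension, with the terminal piece absorbed directly by the width-$n$ $c$-structure on $\cD$, yields the claimed bound $n+m(d+1)$.
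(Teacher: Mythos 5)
Your high-level inputs are the right ones (the descendability filtration, the width-$n$ structure on $\cD$, the amplitude $d$, and the count of $d+1$ per filtration step), but the engine of part $(ii)$ does not work as described, and this is where the real content of the theorem lies. You propose to filter the identity of $\Ind(\cC)$ by descendability, push the filtration through $\yo_!\yo^*$, and then transport each graded piece $\yo_{\cC,!}\circ f^*(-)$ to the analogous Yoneda construction on $\cD$ via a right-adjointability statement ``in the spirit of Lemma~\ref{lem:bcres}''. That comparison map $\yo_{\cC,!}f^* \to \bar f^*\yo_{\cD,!}$ is not an equivalence in general: $f^*$ does not preserve compact objects, so $\yo_{\cC,!}f^*(y)$ is a formal filtered colimit $``\colim\text{''}\,\yo_\cC(x_\alpha)$ taken in $\Ind(\Ind(\cC)^{\kappa,\mathrm{b}})$, and mapping spectra out of a non-compact $z\in\Ind(\cC)^{\kappa,\mathrm{b}}$ into it compute $\colim_\alpha\map(z,x_\alpha)$ rather than $\map(z,f^*y)$. (Lemma~\ref{lem:bcres} is a genuinely different square: its adjointability is checked on generators of the quotient and exploits that the fiber of $``\colim\text{''}x_\alpha\to x$ is generated by $\cC$; no analogous argument is available here.) Worse, the weight-connectivity of $\yo_{\cC,!}f^*(y)$ for $y\in\Ind(\cD)_{w\ge 0}$ depends precisely on whether $f^*(y)$ can be presented as a filtered colimit of compacts whose transition maps eventually kill low weights --- which is, up to bookkeeping, the statement being proven. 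So the reduction is circular. A similar circularity infects your part $(i)$: the graded pieces $Y_i(x)$ of the descendability filtration of $x\in\cC$ are of the form $f_!(I_f^{\circ j}x)$ and are not compact, so their weight-boundedness in $\Ind(\cD)$ does not follow from $\cD$ being a $c$-category.

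The paper's proof applies descendability on the opposite side. Working directly with the definition of width, one reduces to showing that an $(m+1)$-fold composite of transition maps, each of which kills weights $\le -n-1$ after applying $f_!$, kills weights $\le -n-m(d+1)-1$. The key move is Lemma~\ref{lem:adjacent_killing weights}: using the adjacent $t$-structure of Theorem~\ref{thm:adjacent_t}, ``kills weights $\le k$'' is equivalent to vanishing after composition with any map to an object $b\in\Ind(\cC)_{t\le k}$. Descendability is then applied to the \emph{test object} $b$, not to the identity functor: Lemma~\ref{lem:adjacent_descendable} (via Lemma~\ref{lem:descendable_t-structure}, which is where your ``$d$ per piece plus one per extension'' accounting is actually implemented by truncating the filtration pieces) exhibits $b$ as a retract of an $(m+1)$-step filtered object with graded pieces $f^*a_j$, $a_j\in\Ind(\cD)_{t\le -n-1}$. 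A decreasing induction then shows the composite factors through successively lower filtration steps, the inductive step being a one-line adjunction: the map $x\to f^*a_j$ is adjoint to a map $f_!x\to a_j$ that precomposes with a map killing weights $\le -n-1$ into a $(-n-1)$-coconnective target, hence is zero. If you want to salvage your approach, you would need to replace the Yoneda-level base change by this orthogonality-against-coconnectives argument; as written, the proposal is missing the idea that makes the descendability filtration usable.
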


We note that the above theorem is not applicable in the following situation, where we wonder whether it is anyways possible to descend the $c$-structure:
\begin{qst}
	Suppose that $R \to S$ is a descendable map of $\EE_{1}$-rings such that $S^{\otimes_{R}i}$ is connective for each $i\geq 1$. Then is the pre-$c$-structure on $\Mod(R)^{\omega}$ generated by $R$ a $c$-structure?
\end{qst}

We can use the above theorem to deduce the example from the introduction:

\begin{exm}\label{exm:introexample}
	Let $G$ be a finite cohomological dimension pro-$p$-group, fix $n\geq1$, and consider the map of rings $(\ZZ/p^n)^{hG} \to \FF_p^{hG}$. Here we interpret the homotopy fixed points as $\colim_{\alpha}(-)^{hG_{\alpha}}$ where $G = \lim G_{\alpha}$ for $G_{\alpha}$ finite.
	
	We first claim that the conditions of \Cref{thm:cdescent} are satisfied for the horizontal maps in the square\begin{center}
		\begin{tikzcd}
			\mathrm{Perf}((\ZZ/p^n)^{hG}) \ar[r]\ar[d,"f"] &\mathrm{Perf}(\ZZ/p^n) \ar[d]\\
			\mathrm{Perf}(\FF_p^{hG})	\ar[r] & \mathrm{Perf}(\FF_p)
		\end{tikzcd}
	\end{center}
	where $\Ind$ of all categories are equipped with the compactly generated weight structure generated by the unit. The 
	horizontal functors are descendable by the assumption that $G$ has finite cohomological dimension, so it suffices to see 
	that the functors $- \otimes_{R^{hG}}R$ for $R = \ZZ/p^n\ZZ,\FF_p$ preserve connective objects. To do this, it suffices to 
	prove that $R$ is connective in the standard $t$-structure on $R^{hG}$-modules generated by the unit. Since $\FF_p$ is 
	connective in the $t$-structure on $\ZZ/p^n$-modules, and $\ZZ/p^n$ is built out of extensions from $\FF_p$, it is enough 
	to prove this for $\FF_p$. As an $\FF_p^{hG}$-module, $\FF_p$ is the colimit of $\FF_p^{hU_{\alpha}}$ indexed over all 
	open normal subgroups $U_{\alpha}\subset G$, so it suffices to show each of these is connective. But this follows from the fact that the function representation $\FF_p^{G/U_{\alpha}}$ is a unipotent continuous $G$-representation.
	
	Next, we must show that the map $f$ is a nilpotent extension. It is clearly $0$-affine and preserves weight coconnective 
	objects, and also preserves weight connective objects since $\FF_p$ is connective in the $t$-structure on 
	$\ZZ/p^n$-modules. It follows that the weight heart of the source generates that of the target, so it is enough to show 
	that the map on weight hearts is surjective on morphisms with nilpotent kernel. 
	Given $M \in \Mod((\ZZ/p^n)^{hG})$ that is in the weight heart, the map 
	$\pi_0\End(M) \to \pi_0\Hom(M,M\otimes_{\ZZ/p^n}\FF_p) \simeq \End(M\otimes_{\ZZ/p^n}\FF_p)$ 
	can be identified with the base change of a ring along $\ZZ/p^n \to \FF_p$. This is always a nilpotent extension. 
\end{exm}

Recall from Theorem~\ref{thm:adjacent_t} that a compactly generated weighted category $\Ind(\cC)$ is endowed with the 
adjacent t-structure such that 
\[
\Ind(\cC)_{t\ge 0} = \Ind(\cC)_{w\ge 0}.
\] 
To prove Theorem~\ref{thm:cdescent} we need the following two lemmas about this t-structure.

\begin{lem}\label{lem:adjacent_killing weights}
Let $\cC$ be a small stable category such that $\Ind(\cC)$ is endowed with a compactly generated weighted structure. 
Then a morphism $x\to y \in \cC$ kills weights $\le k$ if and only if for any $b \in \Ind(\cC)_{t\le k}$ and any map 
$y \to b$, the composite 
\[
x \to y \to b
\]
is $0$.
\end{lem}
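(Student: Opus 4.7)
My plan is to deduce the lemma directly from the adjacent $t$-structure on $\Ind(\cC)$: the hypothesis $\Ind(\cC)_{t\ge 0} = \Ind(\cC)_{w\ge 0}$ (Theorem~\ref{thm:adjacent_t}) means the statement is really an instance of the standard orthogonality between $\Ind(\cC)_{t\ge k+1}$ and $\Ind(\cC)_{t\le k}$, after using the characterization of morphisms killing weights given by Lemma~\ref{lem:weight0_morphism}.

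For the forward direction, I would assume $x \to y$ kills weights $\le k$ and invoke Lemma~\ref{lem:weight0_morphism}(1) inside the weighted category $\Ind(\cC)$ to factor it as $x \to x' \to y$ with $x' \in \Ind(\cC)_{w\ge k+1} = \Ind(\cC)_{t\ge k+1}$. Given any $b \in \Ind(\cC)_{t\le k}$ and any map $y \to b$, the composite $x' \to y \to b$ is classified by an element of $\pi_0\map(x',b)$, which vanishes by $t$-structure orthogonality. Hence the composite $x \to y \to b$ is null.

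For the converse, I would take the $t$-truncation triangle $\tau_{t\ge k+1}y \to y \to \tau_{t\le k}y$ in $\Ind(\cC)$. The map $y \to \tau_{t\le k}y$ is a specific instance of a map to an object of $\Ind(\cC)_{t\le k}$, so by hypothesis $x \to y \to \tau_{t\le k}y$ vanishes; this yields a lift $x \to \tau_{t\ge k+1}y$. Since $\tau_{t\ge k+1}y \in \Ind(\cC)_{t\ge k+1} = \Ind(\cC)_{w\ge k+1}$, we have factored $x \to y$ through $\Ind(\cC)_{w\ge k+1}$, and Lemma~\ref{lem:weight0_morphism}(1) then says that $x \to y$ kills weights $\le k$.

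There is no real obstacle here: the substance of the lemma is exactly the identification $\Ind(\cC)_{t\ge 0} = \Ind(\cC)_{w\ge 0}$ combined with the characterization of weight-killing in Lemma~\ref{lem:weight0_morphism}. The only minor point to verify is that one may apply Lemma~\ref{lem:weight0_morphism} to morphisms in $\cC$ viewed as morphisms in the ambient weighted category $\Ind(\cC)$, where the factoring object $x'$ is allowed to lie in all of $\Ind(\cC)_{w\ge k+1}$ rather than just $\cC$; this is automatic since the lemma is stated for any weighted category.
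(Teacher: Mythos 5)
Your proof is correct and follows essentially the same route as the paper: both directions hinge on the identification $\Ind(\cC)_{t\ge k+1} = \Ind(\cC)_{w\ge k+1}$ from the adjacent $t$-structure, with the forward implication using $t$-orthogonality and the converse specializing the hypothesis to $b = \tau_{t\le k}y$ to produce the lift through $\tau_{t\ge k+1}y$. No issues.
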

\begin{proof}
One direction is easy: if $x\to y$ factors through an object of $\Ind(\cC)_{w\ge k+1} = \Ind(\cC)_{t\ge k+1}$, then the 
composite $x\to y \to b$ is trivial by the orthogonality axiom for t-structures. 
Conversely, assume that for any $b \in \Ind(\cC)_{t\le k}$ and any map $y \to b$, the composite $x \to y \to b$ is trivial. 
In particular, it is true for $b=t_{\le k}y$, Hence the map $x\to y$ factors through $t_{\ge k+1} y \in \Ind(\cC)_{w\ge k+1}$.
\end{proof}

\begin{lem}\label{lem:descendable_t-structure}
Let $\cC$, $\cD$ be stable categories with t-structure and $f:\cD \to \cC$ be a functor of t-amplitude $[-a,b]$, i.e. 
$f(\cD_{t\ge 0}) \subset \cC_{t\ge -a}$ and $f(\cD_{t\le 0}) \subset \cC_{t\le b}$. Let 
\[
	0=y_{-1} \to y_0 \to \dots \to y_m
\]
be a finite filtration in $\cC$ with an identification of associated graded $y_i/y_{i-1}$ with $f(x_i)$ for some $x_i \in \cD$. Then there exists a natural map of filtered objects 
\[
\begin{tikzcd}
	0=y_{-1} \arrow[r] &y_{0}\arrow[d] \arrow[r] & \dots \arrow[r] & y_m\arrow[d]\\
	0=\bar{y}_{-1} \arrow[r] &\bar{y}_{0}\arrow[r] & \dots \arrow[r] & \bar{y}_m
\end{tikzcd}
\]
such that $\bar{y}_i/\bar{y}_{i-1} \simeq f(\bar{x}_i)$ with $\bar{x}_i \simeq \tau_{\le a + i(a+b+1)} x_i$. 
In this case the fiber is in $\cC_{t\ge 1}$. 
\end{lem}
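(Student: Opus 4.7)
The plan is to construct $\bar{y}_i$ and the map $y_i \to \bar{y}_i$ inductively on $i$, using $t$-orthogonality to factor the connecting maps of the filtration through the truncations $\bar{x}_i$ in an essentially unique way. Throughout, I will track the inductive $t$-bound $\bar{y}_i \in \cC_{t \le (i+1)(a+b+1)-1}$, which is precisely what makes the subsequent Hom-vanishings work.

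For the base case, set $\bar{y}_{-1}=0$ and $\bar{y}_0 = f(\bar{x}_0)$, with $y_0 \simeq f(x_0) \to f(\bar{x}_0) = \bar{y}_0$ the image of the truncation map $x_0 \to \tau_{\le a}x_0$. By the $t$-amplitude of $f$, the fiber $f(\tau_{\ge a+1}x_0)$ lies in $\cC_{t\ge 1}$ and $\bar{y}_0 \in \cC_{t\le a+b}$, matching the advertised bounds.

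For the inductive step, assume the construction is made through stage $i-1$. Let $\alpha_i \colon f(x_i) \to \Sigma y_{i-1}$ be the connecting map classifying the cofiber sequence $y_{i-1}\to y_i\to f(x_i)$. I claim the composite $f(x_i) \stackrel{\alpha_i}\to \Sigma y_{i-1} \to \Sigma \bar{y}_{i-1}$ factors uniquely through $f(\bar{x}_i)$. Indeed, the fiber of $f(x_i) \to f(\bar{x}_i)$ is $f(\tau_{\ge a+i(a+b+1)+1}x_i) \in \cC_{t\ge i(a+b+1)+1}$ by $t$-amplitude, while inductively $\Sigma\bar{y}_{i-1} \in \cC_{t\le i(a+b+1)-2}$. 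The resulting gap in $t$-levels yields, via $t$-orthogonality, the vanishing of both
\[
\pi_0\Map(f(\tau_{\ge a+i(a+b+1)+1}x_i), \Sigma\bar{y}_{i-1}) \quad\text{and}\quad \pi_0\Map(\Sigma f(\tau_{\ge a+i(a+b+1)+1}x_i), \Sigma\bar{y}_{i-1}),
\]
so the factorization exists and is essentially unique. Define $\bar{y}_i$ as the cofiber of the resulting map $\Sigma^{-1}f(\bar{x}_i) \to \bar{y}_{i-1}$, yielding a cofiber sequence $\bar{y}_{i-1}\to \bar{y}_i \to f(\bar{x}_i)$ and, by construction, a compatible map $y_i \to \bar{y}_i$. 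The $t$-bound on $\bar{y}_i$ is preserved under this extension since $f(\bar{x}_i) \in \cC_{t\le a+i(a+b+1)+b} = \cC_{t\le (i+1)(a+b+1)-1}$, and the octahedral axiom shows that the fiber of $y_i \to \bar{y}_i$ is an extension of the fibers of $y_{i-1}\to \bar{y}_{i-1}$ (in $\cC_{t\ge 1}$ by induction) and $f(x_i)\to f(\bar{x}_i)$ (in $\cC_{t\ge i(a+b+1)+1} \subseteq \cC_{t\ge 1}$), hence lies in $\cC_{t\ge 1}$.

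The main subtle point I expect is naturality: the inductive construction must assemble into a functor on the appropriate category of filtered objects with specified associated graded data. This will follow from the essential uniqueness of the factorization at each step — the space of valid factorizations is contractible, so a morphism of input data produces a unique compatible morphism of outputs, coherently through the induction. The choice of truncation level $a+i(a+b+1)$ is precisely calibrated so that the extra ``$+1$'' per step provides the Hom-vanishing gap which powers both the existence of the factorization and its contractible ambiguity.
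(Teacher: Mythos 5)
Your proof is correct and takes essentially the same route as the paper's: an inductive construction in which the classifying map of each graded piece is factored through the truncation $f(\bar{x}_i)$ using the orthogonality gap, the paper merely packaging the reduction to $\Sigma\bar{y}_{i-1}$ via a pushout $y_i\sqcup_{y_{i-1}}\bar{y}_{i-1}$ rather than composing connecting maps directly. One small arithmetic slip: $\Sigma\bar{y}_{i-1}$ lies in $\cC_{t\le i(a+b+1)}$, not $\cC_{t\le i(a+b+1)-2}$ (suspension raises the coconnectivity bound), but since the fiber $f(\tau_{\ge a+i(a+b+1)+1}x_i)$ is in $\cC_{t\ge i(a+b+1)+1}$ the gap of one degree still gives exactly the Hom-vanishing you need.
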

\begin{proof}
We construct the new filtration by induction on $m$. 
Assume the statement proved for all filtrations of length $\le m-1$. 
In particular, we have constructed the desired maps $y_i \to \bar{y}_i$ for $i\le m-1$. Define $y'_m$ to be the pushout 
$y_m \times_{y_{m-1}} \bar{y}_{m-1}$. By construction, we have $y'_m/\bar{y}_{m-1} \simeq y_m/y_{m-1}$. 
Consider the fiber sequence 
\[
	\bar{y}_{m-1} \to y'_m \to f(x_m).
\]
Since $\bar{y}_{m-1}$ is an extension of objects of $\cC_{t \le (i+1)(a+b+1)}$ for $0\le i\le m-1$, it belongs to 
$\cC_{t \le m(a+b+1)}$. Now, because $f(\tau_{\ge a + m(a+b+1) +1})$ is in $\cC_{\ge m(a+b+1) +1}$, the map $f(x_m) \to \Sigma y_{m-1}$ factors through $f(\tau_{\le a + m(a+b+1)})$. This allows us to define $\bar{y}_m$ to be the fiber of that map. 
\end{proof}

\begin{lem}\label{lem:adjacent_descendable}
Fix the assumptions of Theorem~\ref{thm:cdescent}. 
The following statements are valid:
\begin{enumerate}
\item any object $y\in \Ind(\cC)_{t\le 0}$ is a retract of an object $y_{m}$ that admits a finite filtration $0=y_{-1} \to y_0 \to y_1 \dots \to y_m$ where $y_i/y_{i-1} \cong f^*x_i$ where $x_i \in \Ind(\cD)_{t\le i(d+1)}$, where $t$ stands for the adjacent t-structure;
\item any object $y\in \Ind(\cC)_{t\ge 0}$ is a retract of an object $y_m$ that admits a finite filtration $0 = y_{-1} \to y_0 \to \dots \to y_m$ where $y_i/y_{i-1} \cong f^*(x_i)$ where $x_i \in \Ind(\cD)_{t\geq -(d+1)(m-i+1)+1}$.
\end{enumerate}
\end{lem}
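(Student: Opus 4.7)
The strategy is to combine the descendability hypothesis with a truncation of the resulting filtration. The essential preliminary input is the $t$-amplitude of $f^*:\Ind(\cD)\to\Ind(\cC)$ with respect to the adjacent $t$-structures: the adjunction $\map(x,f^*v)\simeq\map(f_!x,v)$ together with $f_!(w_{\le 0})\subset w_{\le 0}$ gives $f^*(t_{\ge 0})\subset t_{\ge 0}$, while the adjoint statement of $f_!(t_{\ge 0})\subset t_{\ge -d}$ reads $f^*(t_{\le 0})\subset t_{\le d}$. Thus $f^*$ has $t$-amplitude $[0,d]$, i.e.\ $a=0$ and $b=d$ in the notation of \Cref{lem:descendable_t-structure}.

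For part (1), let $y\in\Ind(\cC)_{t\le 0}$. \Cref{prop:descendability} presents $y$ as a retract of some $y_m$ equipped with a filtration $0=y_{-1}\to\cdots\to y_m$ and graded pieces $y_i/y_{i-1}\simeq f^*(x_i)$. Applying \Cref{lem:descendable_t-structure} with $a=0,b=d$ produces a refined filtration $\bar y_0\to\cdots\to\bar y_m$ with graded pieces $f^*(\tau_{\le i(d+1)}x_i)$ and a natural map $y_m\to\bar y_m$ whose fiber lies in $\Ind(\cC)_{t\ge 1}$. Since that fiber pairs trivially with $y\in t_{\le 0}$, the retraction $y_m\to y$ lifts uniquely to $\bar y_m\to y$, and the composite $y\to y_m\to\bar y_m$ is a section, exhibiting $y$ as a retract of $\bar y_m$.

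For part (2), the construction is dual but must be done by hand, since \Cref{lem:descendable_t-structure} only truncates graded pieces from above. Starting from the descendability filtration on $y_m$, I will inductively build maps $\bar y_i\to y_i$ with $\cof(\bar y_i\to y_i)\in t_{\le -1}$ and $\bar y_i/\bar y_{i-1}\simeq f^*(\tau_{\ge k_i}x_i)$, where $k_i:=-(d+1)(m-i+1)+1$. The inductive step first forms the pullback $\tilde y_{i+1}:=y_{i+1}\times_{f^*(x_{i+1})}f^*(\tau_{\ge k_{i+1}}x_{i+1})$ to refine the top piece, then the pushout $\bar y_{i+1}:=\tilde y_{i+1}\cup_{y_i}\bar y_i$ to splice in the previously built $\bar y_i$. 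A direct cofiber computation shows that $\cof(\tilde y_{i+1}\to y_{i+1})\simeq f^*(\tau_{\le k_{i+1}-1}x_{i+1})\in t_{\le k_{i+1}-1+d}$, and that $\cof(\bar y_{i+1}\to y_{i+1})$ is an extension of $\cof(\bar y_i\to y_i)$ by this piece. The choice $k_i=-(d+1)(m-i+1)+1$ satisfies $k_i-1+d\le -1$ for all $0\le i\le m$, with equality at $i=m$, so every cofiber stays in $t_{\le -1}$. Finally, $\cof(\bar y_m\to y_m)\in t_{\le -1}$ pairs trivially with $y\in t_{\ge 0}$, so the inclusion $y\to y_m$ lifts uniquely to $y\to\bar y_m$, realizing $y$ as a retract of $\bar y_m$.

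The main technical point is calibrating the truncation parameters in part (2): the accumulated cofiber at stage $i$ inherits a piece inflated by the $t$-amplitude $d$ of $f^*$, and balancing this constraint across the $m+1$ stages while achieving the prescribed $t$-connectivity of the graded pieces forces $k_i=-(d+1)(m-i+1)+1$ as the tight choice, explaining the precise form of the bound. Functoriality in $y$ is preserved throughout, since every step uses only functorial truncations and (co)limit constructions applied to the functorial descendability data of \Cref{prop:descendability}.
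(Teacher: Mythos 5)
Your proposal follows essentially the same route as the paper: establish that $f^*$ has $t$-amplitude $[0,d]$ for the adjacent $t$-structures, apply Lemma~\ref{lem:descendable_t-structure} for part (1), dualize for part (2), and observe that the retract of $y$ passes through because the (co)fiber of $y_m \to \bar y_m$ lies on the wrong side of the $t$-structure; the paper simply cites ``the lemma and its dual'' where you spell the dual out, and your truncation parameters $k_i=-(d+1)(m-i+1)+1$ agree with what the dualization produces. The one step that needs repair is the splicing in part (2): the pushout $\tilde y_{i+1}\cup_{y_i}\bar y_i$ does not typecheck, since $\bar y_i$ maps \emph{into} $y_i$ rather than receiving a map from it. The correct move is to take the cofiber $q_{i+1}$ of the composite $\bar y_i \to y_i \to y_{i+1}$, lift $f^*(\tau_{\ge k_{i+1}}x_{i+1}) \to f^*(x_{i+1})$ along $q_{i+1} \to f^*(x_{i+1})$ (the obstruction lands in maps from $f^*(\tau_{\ge k_{i+1}}x_{i+1}) \in t_{\ge k_{i+1}}$ to $\Sigma\cof(\bar y_i \to y_i)$, which vanish precisely because of your estimates $k_{i+1} \ge k_j + d + 1$ for $j \le i$), and define $\bar y_{i+1}$ as the fiber of $y_{i+1} \to \cof\bigl(f^*(\tau_{\ge k_{i+1}}x_{i+1}) \to q_{i+1}\bigr)$. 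With that correction the argument goes through.
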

\begin{proof}
Observe that $f^*:\Ind(\cD) \to \Ind(\cC)$ preserves t-connectivity and sends t-coconnective objects to $d$ t-coconnective 
objects. 
To show that it preserves t-connectivity, observe that for any $x\in \Ind(\cD)_{t\ge 0} =\Ind(\cD)_{w\ge 0}$ and any 
$a\in \Ind(\cC)_{w\le 0}$ the spectrum 
\[
\map(a,f^*(x)) \simeq \map(f_!(a),x)
\]
is connective, since $f_!$ preserves weight coconnectivity. Similarly, for $x\in \Ind(\cD)_{t\le 0}$ and 
$a \in \Ind(\cC)_{t\ge 0}=\Ind(\cC)_{w\ge 0}$ we have
\[
\pi_0\map(a,f^*(\Omega^d x)) \simeq \pi_0\map(f_!(\Sigma^d a),x) \simeq 0 
\]
since $f_!$ 
sends weight $d$-connective objects to weight connective objects. 

Now both statements follow from \Cref{lem:descendable_t-structure} applied to $f^*$ and its dual. 
Note that for a fiber sequence 
\[
	x \to y_m \to \bar{y}_m
\]
with $x$ in $\Ind(\cC)_{t\ge 1}$ the retraction diagram $y\to y_m \to y$ induces a retraction diagram $y \to \bar{y}_m \to y$.
\end{proof}

\subsubsection{Proof of {Theorem~\ref{thm:cdescent}}}
Consider $x \in \Ind(\cC)_{w\ge 0}$. Choose any filtered diagram $I \to \cC$ such that 
\[
\colim_I x_i \simeq x.
\] 
It suffices to show that for any $i\in I$ there exists $i\to N \in I$ such that $x_i \to x_{N}$ kills weights 
$\le -n - m(d + 1) -1$. 

Since $f_!x \in \Ind(\cD)_{w\ge 0}$ and $\cD$ is a $c$-category of width $\le n$, there exists a map $i\to t(i) \in I$ such that 
$f(x_i) \to f(x_{t(i)})$ kills weights $\le -n - 1$. This works for any $i\in I$, so we iterate this procedure $m$ times and 
set $N = t^{\circ m+1}(i)$. 
Now we will show that the composite  
\begin{equation}\label{eq:composite}
x_i \to x_{t(i)} \to x_{t(t(i))} \to \dots \to x_{t^{\circ m+1}(i)}
\end{equation}
kills weights $\le -n - m(d + 1) -1$. 
By Lemma~\ref{lem:adjacent_killing weights} it suffices to show that for any 
$b\in \Ind(\cC)_{t\le -n - m(d + 1) - 1}$  
and any map $x_{t^{\circ m}(i)} \to b$, the composite $x_i \to x_{t^{\circ m}(i)} \to b$  
is trivial.

By Lemma~\ref{lem:adjacent_descendable}(1),  $b$ is a retract of an object $b'$ that admits an $m+1$-step filtration $F_jb'$ 
for $1\le j \le m+1$ whose graded pieces are of the form 
$f^*a_j$ for $a_j \in \Ind(\cD)_{t\le -n-1}$. We set $F_0b' = 0$. 
It suffices to prove via decreasing induction on $0\le j\le m+1$ that the composite $x_i \to x_{t^{\circ m+1}(i)} \to b'$ 
factorizes as
\[
x_i \to x_{t^{\circ j}(i)} \to F_jb' \to b'.
\]
For $j=0$ this would show that the composite (\ref{eq:composite}) kills weights $\le -n - m(d + 1) -1$. 
For $j=m+1$ the statement is tautological. Now if we already have a factorization for a given $j$, consider the commutative 
diagram 
\[
\begin{tikzcd}
x_{t^{\circ j-1}(i)} \arrow[r]& x_{t^{\circ j}(i)}\arrow[d]\\
F_{j-1}b' \arrow[r] & F_jb' \arrow[r] & f^*a_j.
\end{tikzcd}
\]
The composite $x_{t^{\circ j-1}(i)} \to x_{t^{\circ j}(i)} \to f^*a_j$ is $0$, since 
its adjoint is the composite $f_!x_{t^{\circ j-1}(i)} \to f_!x_{t^{\circ j}(i)} \to a_j$ where the first map kills weights
$\le -n -1$ by construction. The bottom row is a fiber sequence, so there exists a way to complete the commutative diagram above:
\[
\begin{tikzcd}
x_{t^{\circ j-1}(i)}\arrow[d,dotted] \arrow[r]& x_{t^{\circ j}(i)}\arrow[d]\\
F_{j-1}b' \arrow[r] & F_jb'.
\end{tikzcd}
\]
This finishes the proof of the inductive step and the theorem. \qed

We conclude the section with a similar theorem for descending heart structures from bounded weighted categories:

\begin{thm}[Adams type descent of heart structures]\label{thm:cdescentheart}
	Let $f:\cC \to \cD$ be a descendable map of exponent $\leq n$ where $\cD$ is a bounded weighted category, such that $f^*f_!$ 
	sends objects of $f^{-1}(\cD^{\heart})$ to filtered colimits of objects in $f^{-1}(\cD^{\heart})$, and 
	$f^{-1}(\cD^{\heart})$ generates $\cC$. 
	
	Then $f^{-1}\cD^{\heart}$ is the heart of a heart structure of cohomological 
	dimension $n$ on $\cD$, and $f_!$ is weight exact.
\end{thm}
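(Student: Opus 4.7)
Set $\mathcal{H} := f^{-1}(\cD^{w\heart})$ and propose the heart structure on $\cC$ in which $\cC_{\le 0}$ (resp.\ $\cC_{\ge 0}$) is the extension closure in $\cC$ of $\bigcup_{k\ge 0}\Sigma^{-k}\mathcal{H}$ (resp.\ $\bigcup_{k\ge 0}\Sigma^{k}\mathcal{H}$). Weight exactness of $f_!$ is then immediate because $f_!(\mathcal{H})\subset\cD^{w\heart}$; closure under the required shifts and extensions is built into the definition. The real content is (i) the existence of a decomposition for every $x\in\cC$, and (ii) the cohomological-dimension bound $\le n$.

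\textbf{Decompositions.} For $x\in\cC$ I would invoke \Cref{prop:descendability}(1): since $f$ has exponent $\le n$, there is an endofunctor $X$ of $\Ind(\cC)$ with $\id_{\Ind(\cC)}\hookrightarrow X$ a retract, and $X$ carries a length-$(n+1)$ filtration $0=X_0\to X_1\to\dots\to X_{n+1}=X$ whose associated graded pieces have the form $f^*Y_i$ for functors $Y_i:\Ind(\cC)\to\Ind(\cD)$. In particular $x$ is a retract of $X(x)$, and $X(x)$ is an iterated extension of $f^*Y_i(x)\in\Ind(\cC)$. Each $Y_i(x)\in\Ind(\cD)$ admits a weight decomposition because $\cD$ is bounded weighted; pulling these back along $f^*$ and splicing them inductively along the $X$-filtration, in the spirit of \Cref{lem:descendable_t-structure}, yields a two-step decomposition of $X(x)$ whose pieces lie in the extension closures of shifts of $f^*(\cD^{w\heart})$. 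Taking the retract onto $x$ then gives the required decomposition $x_{\le 0}\to x\to x_{\ge 1}$.

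\textbf{Cohomological dimension.} For $x,y\in\mathcal{H}$ and $k>n$, I would compute $\map_\cC(x,\Sigma^k y)$ via the splitting $y\hookrightarrow\Tot^n(M_f^{\otimes\bullet+1}y)$ from \Cref{prop:descendability}(2). The induced $(n+1)$-step filtration on $\Tot^n(M_f^{\otimes\bullet+1}y)$ yields, after applying $\map(x,-)$ and adjunction, a tower whose $i$-th graded piece has the form $\Omega^i\map_{\Ind(\cD)}\!\bigl(f_!x,(f_!f^*)^i f_!y\bigr)$. Each such mapping spectrum is connective: $f_!x\in\cD^{w\heart}$, and induction on $i$ using the hypothesis that $f^*f_!(\mathcal{H})$ is a filtered colimit of objects of $\mathcal{H}$ shows $(f_!f^*)^i f_!y\in\Ind(\cD)^{w\heart}$, so connectivity follows from the weight orthogonality axiom. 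The $n+1$ contributions then force $\pi_0\map(x,\Sigma^k y)=0$ for $k>n$, giving the bound.

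\textbf{Main obstacle.} The principal difficulty is the bookkeeping in the decomposition step: verifying that the pieces of the assembled decomposition of $X(x)$, and their retract to $x$, genuinely lie in $\cC$ rather than merely in $\Ind(\cC)$. This is exactly what the two hypotheses are designed to arrange: the condition that $f^*f_!(\mathcal{H})$ is a filtered colimit of objects of $\mathcal{H}$, combined with the generation of $\cC$ by $\mathcal{H}$, controls each $f^*Y_i(x)$ through the monadic cosimplicial resolution as a filtered colimit of $\mathcal{H}$-pieces, so a splitting of the cosimplicial comparison map can be chosen whose retract onto $x$ stays inside $\cC$.
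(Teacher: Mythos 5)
There is a genuine gap, and it sits exactly where you locate your ``main obstacle'': the existence of decompositions with pieces in $\cC$ itself. Your construction takes weight decompositions of $Y_i(x)$ in $\Ind(\cD)$ and pulls them back along $f^*$; the resulting truncations are objects of $\Ind(\cC)$ (images of the \emph{right adjoint} $f^*$, which is not the same thing as $f^{-1}(\cD^{w\heartsuit})$), and nothing in the proposal brings them back into the small category $\cC$, let alone into the extension closures of shifts of $\mathcal H$. The closing sentence --- that the two hypotheses ``arrange'' a splitting whose retract stays inside $\cC$ --- is a restatement of what needs to be proved, not an argument. There is a second unaddressed problem in the same step: even granting a decomposition of $X(x)$ with pieces in $\cC_{\le 0}$ and $\Sigma\cC_{\ge 0}$, the retract $x$ of $X(x)$ does not automatically inherit one; decompositions are not closed under retracts without an idempotent-completion argument of the type in \Cref{prop:Karoubi-completion}, and $\cC$ is a fixed category here, not a Karoubi envelope you are free to enlarge.

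The paper's proof avoids constructing decompositions by hand altogether and runs through a different mechanism: it shows that the exact functor $\mathcal{D}^{\mathrm{b}}(f^{-1}\cD^{\heartsuit})\to\cC$ is an equivalence, after which the heart structure exists for free by \Cref{exm:derived_category}. Generation gives essential surjectivity, and fully faithfulness is Keller's criterion (\Cref{thm:Kellers_criterion}) applied to the inclusion $f^{-1}\cD^{\heartsuit}\subset f^{-1}(\cD_{\ge 0})$, whose left speciality is checked via \Cref{prop:speciality}(3). The key technical input --- and the step your proposal has no analogue of --- is that for $x\in f^{-1}\cD^{\heartsuit}$ the cofiber of the unit $x\to f^*f_!x$ lies in $\Ind(f^{-1}\cD^{\heartsuit})$: this uses compactness of $x$ against the filtered-colimit hypothesis together with the closure of $\cD^{\heartsuit}$ under fibers of retractions. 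With that in hand, a map $x\to\Sigma^{k}y$ is factored through $\Sigma^{k-1}$ of that cofiber and then through a compact piece of it, which is precisely the speciality condition. Your cohomological-dimension sketch via the partial totalization is in the right spirit (modulo the incorrect claim that $(f_!f^*)^i f_!y$ lies in the weight \emph{heart} of $\Ind(\cD)$ --- filtered colimits of heart objects are only weight connective, which is what the orthogonality argument actually needs), but it too silently relies on the unproved fact that $\cof(y\to f^*f_!y)$, and its iterates, lie in $\Ind(\mathcal H)$. I would recommend reorganizing the argument around Keller's criterion rather than trying to patch the direct construction.
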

\begin{proof}
	$f^{-1}D^{\heart}$ is an extension closed subcategory of $\cC$, and hence admits a natural exact structure. Let $\cC'$ be $\mathcal{D}^\mathrm{b}(f^{-1}\cD^{\heart})$.
	We have a commutative diagram
	
	\begin{center}
		\begin{tikzcd}

			\mathcal{D}^\mathrm{b}(f^{-1}\cD^{\heart})\ar[r,"D^b(f)"]\ar[d,"{f'}"] & \ar[d]D^b(\cD^{\heart})\\
			
						\cC \ar[r,"f"] &\cD
		\end{tikzcd}
	\end{center}
	where the right vertical map is an equivalence. It suffices to show that the map $f'$ is an equivalence. By assumption $f'$ generates $\cC$, so we will show that it is fully faithful.
	
	We first observe that for $x \in f^{-1}(\cD^{\heart})$ the cofiber sequence $x \to f^*f_!x \to \cof$ is a filtered colimit 
	of cofiber sequences of objects in the heart and in particular $\cof$ belongs to $\Ind(f^{-1}(\cD^{\heart}))$. 
	Indeed, we can write $f^*f_!x$ as a filtered colimit of objects in 
	$f^{-1}\cD^{\heart}$ by assumption, and by compactness of $x$, $x$ maps to a cofinal system of such objects. We claim that 
	the cofiber of each such map $x \to x'$ is in $f^{-1}\cD^{\heart}$. The map $f_!x\to f_!x'$ admits a 
	retraction given by the map $f_!x' \to f_!f^*f_!x \to f_!x$ using the counit of the adjunction. Since $\cD^{\heart}$ 
	is closed under fibers of retractions in $\cD$ (see \cite[Theorem~1.6]{saunier-winges}), the cofiber of $f_!x \to f_!x'$ 
	is in $\cD^{\heart}$. 

	By assumption $f^{-1}\cD^{\heart}$ generates $\cC$, so by the Keller's criterion \Cref{thm:Kellers_criterion} it suffices to show that the embedding $f^{-1}\cD^{\heart} \to f^{-1}(\cD_{\ge 0})$ is left special (note that 
	$\mathcal{D}^\mathrm{b}(f^{-1}(\cD_{\ge 0})) \simeq \mathrm{SW}(f^{-1}(\cD_{\ge 0})) \simeq \cD$). 
	We use Proposition~\ref{prop:speciality}(3) to check specialty. 
	Consider $x,y\in f^{-1}\cD^{\heart}$, $n\ge 2$ and a map $x \stackrel{\alpha}\to \Sigma^n y$. Using decreasing induction 
	on $n$, it suffices to show that $\alpha$ factorizes as 
	\[
		x \to \Sigma^{n-1} \tilde{y} \to \Sigma^n y.
	\]
	for some $\tilde{y} \in f^{-1}\cD^{\heart}$. 
	Using the fiber sequence $y \to f^*f_!y \to \cof$ and the fact that $\map_\cC(x,f^*f_!y) \simeq \map(f_!x,f_!y)$ is 
	connective, we obtain a map $x \to \Sigma^{n-1} \cof$ such that the composite $x \to \Sigma^{n-1} \cof \to y$ is $\alpha$. 
	Since $x$ is compact and since $\Sigma^{n-1} \cof \in \Ind(\Sigma^{n-1} f^{-1}\cD^{\heart})$, we may further factorize the map as
	$x \to \Sigma^{n-1}\tilde{y} \to \Sigma^{n-1}\cof$ for some $\tilde{y} \in f^{-1}\cD^{\heart}$ which finishes the proof.
\end{proof}

%
%

\section{Localizing invariants of \texorpdfstring{$c$-categories}{c-categories}}\label{sec:localizing_invariants_of_c_categories}

Theorem~\ref{thm:resolution} and its interpretation in Corollary~\ref{cor:resolution_full} become especially powerful in the 
context of localizing invariants. 
In this section we demonstrate some immediate applications of the theory of $c$-categories in this context. 
We start by recalling the basic definitions following \cite{BGT,Hoyois_2017,Land_2019}. 

\begin{dfn}\label{dfn:localizing_invariant}
A functor $F:\Cat^\mathrm{st} \to \cE$ valued in a stable category $\cE$ is said to be a {\bf localizing invariant} if 
for any localization sequence $\cA \to \cB \to \cC$ of small stable categories the sequence 
\[
F(\cA) \to F(\cB) \to F(\cC)
\]
in $\cE$ is a fiber sequence.
\end{dfn}

\begin{exm}
Topological Hochschild homology, (non-connective) algebraic $\K$-theory, topological cyclic homology are examples of localizing invariants.
\tqed \end{exm}

\begin{prop}\label{prop:motivic_resolution}
Let $\cC$ be a $c$-category of width $\le n$ and let $E$ be a localizing invariant. 
Then for $N = \mathrm{max}(n,1)$ there exist equivalences 
\[
\Sigma^{N} E(\cC) \simeq E(\mathrm{Perf}(R_\cC))
\]
for a connective ring spectrum $R_\cC$ constructed as the endomorphism spectrum of a generator of an object of the weight 
heart of $\cC^{(N)}$ (see Corollary~\ref{cor:resolution_full}). 
Moreover, for a 0-affine functor  $\cC \to \cD$ of $c$-categories of width $\le n$ we have a commutative diagram
\[
\begin{tikzcd}
\Sigma^N E(\cC) \arrow[d,"f"]\arrow[r, "\simeq"] & E(\mathrm{Perf}(R_\cC))\arrow[d]\\
\Sigma^N E(\cD) \arrow[r, "\simeq"] & E(\mathrm{Perf}(R_\cD)).
\end{tikzcd}
\]
where the right horizontal map is the base change along a map of rings $R_\cC \to R_\cD$.
\end{prop}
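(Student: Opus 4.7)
The plan is to apply $E$ to the functorial exact complex
\[
0\to\cC=\cA_0\to\cA_1\to\cdots\to\cA_N\to\cA_{N+1}=\cC^{(N)}\to 0
\]
from Corollary~\ref{cor:resolution_full} (with $\cA_i=\Ind(\cC^{(i-1)})^{\kappa_{i-1},\mathrm{b}}$ for $1\le i\le N$) and collapse the resulting telescope using vanishing of $E$ on the interior terms. By exactness, after idempotent completion we extract localization sequences linking the terms, and applying $E$ produces fiber sequences whose connecting maps compose into an iterated comparison of $E(\cC)$ with $E(\cC^{(N)})$. Since the first map $\cC\hookrightarrow\cA_1$ is fully faithful and $\cA_{N+2}=0$, we obtain $\Sigma^{N}E(\cC)\simeq E(\cC^{(N)})$, provided $E(\cA_i)=0$ for $1\le i\le N$.

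The key step is proving this vanishing on each interior $\cA_i$ via an Eilenberg swindle. For uncountable $\kappa_{i-1}$, the countable-coproduct functor $X\mapsto\bigoplus_\NN X$ is a well-defined endofunctor of $\cA_i=\Ind(\cC^{(i-1)})^{\kappa_{i-1},\mathrm{b}}$: weight-coconnectivity is preserved under arbitrary coproducts by Lemma~\ref{lem:closureofneg}, weight-connectivity is preserved under small coproducts in a compactly generated weight structure (using compactness of the generators so that $\Map(-,-)$ out of them distributes over coproducts), and $\kappa_{i-1}$-compactness is preserved under countable sums. The natural equivalence $\mathrm{id}\oplus\bigoplus_\NN\cong\bigoplus_\NN$ then exhibits $\cA_i$ as flasque, forcing $E(\cA_i)=0$ for any localizing invariant. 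Combined with Lemma~\ref{lem:singlegenerator}, which supplies a single heart generator $G$ of $\cC^{(N)}$, and Theorem~\ref{thm:vova_heart}, which identifies $\cC^{(N)}\simeq\mathrm{Perf}(R_\cC)$ for $R_\cC:=\mathrm{End}_{\cC^{(N)}}(G)$ connective, this gives the first claim.

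For the functoriality in $0$-affine maps, Corollary~\ref{cor:resolution_affine} ensures that the resolution is natural in $0$-affine morphisms, so $f:\cC\to\cD$ produces a morphism of exact complexes and hence a compatible map of the iterated fiber sequences, giving the desired square. The induced map on the right edge $\cC^{(N)}\to\cD^{(N)}$ is a $0$-affine weight-exact functor between bounded weighted categories each with a single generator, and Remark~\ref{rmk:rings_induced} identifies it precisely with the base change $\mathrm{Perf}(R_\cC)\to\mathrm{Perf}(R_\cD)$ along the induced map on endomorphism rings. I expect the main obstacle to be the flasqueness verification in the middle step: one must carefully track that weight-boundedness propagates through countable coproducts inside the $\kappa_{i-1}$-compact subcategory, which uses both closure properties of the weight truncation classes and the interaction between $\kappa$-compactness and coproducts. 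Once that bookkeeping is in place, the remainder is a mechanical unwinding of the already-functorial resolution.
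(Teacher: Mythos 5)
Your proposal is correct and follows essentially the same route as the paper: the paper likewise kills the middle terms of the resolution by an Eilenberg swindle on $\Ind(\cC^{(i)})^{\kappa_i,\mathrm{b}}$, collapses the resulting localization sequences (phrased as an induction on width rather than all at once), and identifies $\cC^{(N)}$ with $\mathrm{Perf}(R_\cC)$ before invoking Lemma~\ref{lem:resolution_affinness} and Remark~\ref{rmk:rings_induced} for the $0$-affine functoriality. The only minor quibble is that the identification $\cC^{(N)}\simeq\mathrm{Perf}(\mathrm{End}(G))$ is more directly the Schwede--Shipley theorem as packaged in Remark~\ref{rmk:rings_induced} than Theorem~\ref{thm:vova_heart}, but since you also cite that remark the argument is complete.
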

\begin{proof}
We are going to employ the localization sequences provided by Theorem~\ref{thm:resolution}. 
First observe that $\Ind^\kappa(\cC)$ has infinite sums, so by the Eilenberg swindle, any localizing invariant evaluates to $0$ on it. 
Now we prove the claim using induction on $n$. If $\cC$ is a $c$-category of width $\le 0$ or $\le 1$, 
pick $\kappa$ bigger than the size of $\cC$. Then $\Ind(\cC)^{\kappa, \mathrm{b}}$ is generated by a single object 
which we may also assume to be the object of the weight heart (by taking the sum of graded pieces of its weight 
filtration). Using this along with Theorem~\ref{thm:resolution} we 
see that $(\Ind(\cC)^{\kappa, \mathrm{b}}/\cC)^\mathrm{ic}$ is also generated by a single object in the weight heart. 
Since this generator belongs to the weight heart, its endomorphism ring spectrum $R$ is connective. Theorem of Schwede and 
Shipley (see \cite[Theorem~7.1.2.1]{HA}, \cite[Theorem~3.1.1]{Schwede2003}) implies that in this case we have an 
equivalence 
\[
(\Ind(\cC)^{\kappa, \mathrm{b}}/\cC)^\mathrm{ic} \simeq \mathrm{Perf}(R).
\]
Using the fact that $E$ is localizing we obtain an equivalence 
\[
\Sigma E(\cC) \simeq E(\mathrm{Perf}(R)).
\]
To show the induction step we again use the localization sequence 
\[
\cC \to \Ind^\kappa(\cC) \to (\Ind^\kappa(\cC)/\cC)^{\mathrm{ic}},
\]
Theorem~\ref{thm:resolution} and Proposition~\ref{prop:Karoubi-completion}. In this case they yield an equivalence 
$\Sigma E(\cC) \simeq E(\cC_1)$ for $\cC_1$ a $c$-category of width $\le n-1$. 

The construction above is functorial, so the moreover part follows from Lemma~\ref{lem:resolution_affinness} and 
Remark~\ref{rmk:rings_induced}.
\end{proof}

\begin{dfn}
Let $E:\Cat^{\mathrm{st}} \to \cE$ be a localizing invariant. 

\begin{enumerate}
\item When $\cE$ admits a t-structure, we say that $E$ is {\bf connected} if for a $k$-connective map of 
connective ring spectra $R\to S$ for $k\geq 1$, the induced map $E(\mathrm{Perf}(R))\to E(\mathrm{Perf}(S))$ is $k+1$-connective.

\item We say that $E$ is {\bf truncating} (see {\cite{Land_2019}}\label{dfn:truncating_invariant}) if for any ring spectrum 
$R$ such that the induced map $E(\mathrm{Perf}(R))\to E(\mathrm{Perf}(\pi_0(R)))$ is an equivalence.
\end{enumerate}
\end{dfn}

\begin{thm}\label{thm:truncating_invariants}
Let $f:\cC \to \cD$ be a $k$-connective functor of $c$-categories of width $\le n$ and let $E$ be a 
localizing invariant. Set $N=\mathrm{max}(n,1)$. 
\begin{enumerate}
\item If $E$ is a connected invariant and $f$ is $k$-connective then the map $E(\cC) \to E(\cD)$
is $k-N+1$-connective.
\item  If $E$ is a truncating invariant and $f$ is a nilpotent extension, then the map 
$E(\cC) \to E(\cD)$ is an equivalence.
\end{enumerate}
\end{thm}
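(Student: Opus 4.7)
The plan is to deduce both parts from Proposition~\ref{prop:motivic_resolution}, which reduces the problem to a corresponding assertion about the map of connective ring spectra $R_\cC \to R_\cD$ at the end of the categorical resolution. Since any $k$-connective functor or nilpotent extension is $0$-affine by definition, that proposition applies to $f$ and produces a commutative square identifying $\Sigma^N f_*: \Sigma^N E(\cC) \to \Sigma^N E(\cD)$ with $E(\mathrm{Perf}(R_\cC)) \to E(\mathrm{Perf}(R_\cD))$, where the right-hand map is induced by base change along a map of connective ring spectra $R_\cC \to R_\cD$ (each obtained as endomorphism ring of a generator in the weight heart of $\cC^{(N)}$, resp.\ $\cD^{(N)}$). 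Thus the task is to transfer the hypothesis on $f$ to a hypothesis on $R_\cC \to R_\cD$ and then apply standard results for connective ring spectra.

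For part~(1), iterated application of Theorem~\ref{thm:resolution_connective} (equivalently, Corollary~\ref{cor:resolution_nilpotent}) shows that each step of the resolution preserves $k$-connectivity, so the induced weight-exact functor on the final pair of bounded weighted categories $\cC^{(N)} \simeq \mathrm{Perf}(R_\cC) \to \mathrm{Perf}(R_\cD) \simeq \cD^{(N)}$ is $k$-connective. By the example after Example~\ref{exm:iso_homotopy} equating $k$-connectivity for functors and the corresponding ring maps, $R_\cC \to R_\cD$ itself is $k$-connective. Connectedness of $E$ then yields that $E(\mathrm{Perf}(R_\cC)) \to E(\mathrm{Perf}(R_\cD))$ is $(k+1)$-connective, and desuspending by $N$ gives the claimed $(k-N+1)$-connectivity of $E(\cC) \to E(\cD)$.

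For part~(2), Corollary~\ref{cor:resolution_nilpotent} combined with Remark~\ref{rmk:degree of nilpotence} shows that the induced functor $\cC^{(N)} \to \cD^{(N)}$ is a weak nilpotent extension of bounded degree between bounded weighted categories. As explained in the remark after Definition~\ref{dfn:weak_nilpotence}, for bounded weighted categories the notions of weak nilpotent extension and of nilpotent extension coincide; under the identification $\cC^{(N)} \simeq \mathrm{Perf}(R_\cC)$, $\cD^{(N)} \simeq \mathrm{Perf}(R_\cD)$ and the functorial identification from Remark~\ref{rmk:rings_induced}, this translates via the monadic description to $R_\cC \to R_\cD$ being a nilpotent extension of connective ring spectra. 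Applying the Dundas--Goodwillie--McCarthy/Land--Tamme theorem for truncating invariants then yields $E(\mathrm{Perf}(R_\cC)) \simeq E(\mathrm{Perf}(R_\cD))$, and hence $E(\cC) \simeq E(\cD)$ after desuspending by $N$.

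The main subtlety to verify is the translation in part~(2) from the abstract notion of nilpotent extension (defined in terms of the associated $t$-structure on endofunctor algebras, Definition~\ref{dfn:nilp_conn}) to the classical ring-theoretic notion. This requires identifying the associated $t$-structure on $\mathrm{End}(\mathrm{Mod}(R))$ with the standard $t$-structure on $R$-bimodules under the Schwede--Shipley equivalence, so that the nilpotency of $f_!f^*$ as an algebra becomes precisely the nilpotency of $S$ (as an $R$-algebra relative to $R$) in the standard sense. This is essentially bookkeeping, facilitated by Remark~\ref{rmk:nil_idempotents} which handles the $\pi_0$-level statement, and by the observation that $0$-connectivity of $f$ gives $\pi_0$-surjectivity of $R_\cC \to R_\cD$.
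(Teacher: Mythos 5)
Your proposal is correct and follows essentially the same route as the paper: both reduce via Proposition~\ref{prop:motivic_resolution} to the induced map of connective ring spectra $R_\cC \to R_\cD$, use Corollary~\ref{cor:resolution_nilpotent} (resp.\ Theorem~\ref{thm:nilpotent_resolution}) to propagate $k$-connectivity (resp.\ the nilpotent-extension property) through the resolution, and conclude from the connectedness (resp.\ truncating) hypothesis on $E$. The "translation subtlety" you flag in part~(2) is handled in the paper exactly as you suggest, by observing that a (weak) nilpotent extension of bounded weighted categories induces a $\pi_0$-surjection of endomorphism rings with nilpotent kernel.
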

\begin{proof}
By Proposition~\ref{prop:motivic_resolution} we have a commutative diagram 
\[
\begin{tikzcd}
\Sigma^N E(\cC) \arrow[d,"f"]\arrow[r, "\simeq"] & E(\mathrm{Perf}(R_\cC))\arrow[d]\\
\Sigma^N E(\cD) \arrow[r, "\simeq"] & E(\mathrm{Perf}(R_\cD))
\end{tikzcd}
\]
whose horizontal maps are equivalences. Moreover, the map $\mathrm{Perf}(R_\cC) \to \mathrm{Perf}(R_\cC)$ is by construction 
equivalent to the $N$-th map in the resolution of Corollary~\ref{cor:resolution_full}: $f^{(N)}:\cC^{(N)} \to \cD^{(N)}$. 

Under the assumptions of (1), $f^{(N)}$ is a $k$-connective functor of weighted categories by 
Corollary~\ref{cor:resolution_nilpotent}, so in particular the maps $R_\cC \to S_\cC$ are $k$-connective. 
Thus the vertical maps in the commutative diagram above are $k+1$-connective. Desuspending the left vertical map, we 
prove the first claim. 

Under the assumptions of (2), by Theorem~\ref{thm:nilpotent_resolution} $f^{(N)}$ is a nilpotent extension of weighted 
categories, so in particular it induces a a surjection $\pi_0R_\cC \to \pi_0R_\cD$ with nilpotent kernel. Thus, the vertical 
maps in the diagram are also equivalences. 
\end{proof}

\begin{cor}\label{cor:width_n_thh}
For any $c$-category $\cC$ of width $\le n$ the topological Hochschild homology of $\cC$ is $-n$-connective. 
\end{cor}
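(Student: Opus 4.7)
The plan is to specialize \Cref{prop:motivic_resolution} to the localizing invariant $E = \mathrm{THH}$. That proposition provides, for $N = \max(n,1)$, an equivalence
\[
\Sigma^N \mathrm{THH}(\cC) \simeq \mathrm{THH}(\mathrm{Perf}(R_\cC))
\]
where $R_\cC$ is a connective ring spectrum (obtained as the endomorphism ring of a single heart generator of $\cC^{(N)}$). By Morita invariance $\mathrm{THH}(\mathrm{Perf}(R_\cC)) \simeq \mathrm{THH}(R_\cC)$, and since $\mathrm{THH}$ of a connective ring spectrum is connective (its cyclic bar construction has $k$-simplices $R_\cC^{\otimes(k+1)}$ all lying in $\Sp_{\ge 0}$, and geometric realization preserves connectivity), this forces $\Sigma^N \mathrm{THH}(\cC)$ to be connective. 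Desuspending, I obtain $\mathrm{THH}(\cC) \in \Sp_{\ge -N}$.

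For $n \ge 1$, $N = n$ and the bound $\Sp_{\ge -n}$ is exactly the assertion. For $n = 0$, the above only gives $\Sp_{\ge -1}$, missing the claim by one degree. In this boundary case, I would separately observe that a $c$-category of width $\le 0$ is essentially a bounded weighted category (the $c$-axiom says every weight-connective object of $\Ind(\cC)$ is a filtered colimit of diagrams whose maps factor through weight-connective objects of $\cC$), so \Cref{thm:vova_heart} identifies $\cC^{\mathrm{ic}}$ with $\cA^{\mathrm{fin}}$ for the additive category $\cA = \cC^{w\heartsuit}$. Morita invariance of $\mathrm{THH}$ together with connectivity of $\mathrm{THH}$ of an additive category (again immediate from the cyclic bar description, whose simplices live in connective spectra) then yields connectivity of $\mathrm{THH}(\cC)$.

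The main step is really just invoking \Cref{prop:motivic_resolution}; essentially no work is needed beyond recognizing what the resolution of \Cref{cor:resolution_full} buys us and that $\mathrm{THH}$ of a connective ring is connective. The only mild subtlety is the $n = 0$ boundary, where the resolution of \Cref{cor:resolution_full} still takes $N = 1$ step and introduces an extra desuspension that has to be corrected by the direct additive-category argument.
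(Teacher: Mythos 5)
Your main step is exactly the paper's proof: the paper's argument for this corollary is the single sentence that $\THH$ of connective ring spectra is connective, applied through \Cref{prop:motivic_resolution} (equivalently, through the resolution of \Cref{cor:resolution_full} and the Eilenberg swindle). For $n\ge 1$ your write-up is correct and complete, and Morita invariance plus connectivity of the cyclic bar construction is the right justification for the input fact.

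However, your patch for the boundary case $n=0$ has a genuine gap. You assert that a $c$-category of width $\le 0$ ``is essentially a bounded weighted category,'' but this does not follow from \Cref{dfn:c_structure}. The width-$\le 0$ condition says that every object of $\Ind(\cC)_{w\ge 0}$ is a filtered colimit of objects of $\cC$ along maps that can be postcomposed so as to kill weights $\le -1$; by \Cref{lem:weight0_morphism} such a composite factors through an object of $\Ind(\cC)^{\kappa,\mathrm{b}}_{w\ge 0}$, \emph{not} through a weight-connective object of $\cC$ itself, so you cannot conclude that the weight structure on $\Ind(\cC)$ restricts to the compact objects. Indeed, whether it does is precisely \Cref{question:width0}, which the authors pose as open and expect to have a \emph{negative} answer; so your reduction of the $n=0$ case to \Cref{thm:vova_heart} is assuming an unproven (and suspected-false) statement. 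To be fair to you, the paper's own one-line proof is no better here: running \Cref{prop:motivic_resolution} with $N=\max(n,1)=1$ only yields $-1$-connectivity of $\THH(\cC)$ when $n=0$, and the text does not visibly close that gap either. But as written, your argument establishes $\THH(\cC)\in\Sp_{\ge -\max(n,1)}$, not $\Sp_{\ge -n}$, and the additive-category detour you propose for $n=0$ does not repair this without first answering \Cref{question:width0} affirmatively.
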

\begin{proof}
This follows from the fact that $\mathrm{THH}$ of connective ring spectra is connective. 
\end{proof}

\begin{cor}[DGM]\label{cor:DGM}
Let $\cC \to \cD$ be a nilpotent extension of $c$-categories. Then the commutative square
\[
	\begin{tikzcd}
		\K(\cC) \arrow[r]\arrow[d] & \TC(\cC)\arrow[d]\\
		\K(\cD) \arrow[r] & \TC(\cD)
	\end{tikzcd}
\]
is a pullback square.
\end{cor}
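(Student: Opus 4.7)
The plan is to invoke Theorem~\ref{thm:truncating_invariants}(2) applied to the localizing invariant $\K^{\inv} := \fib(\K \to \TC)$. The classical Dundas--Goodwillie--McCarthy theorem (as stated in the introduction, and in the reformulation of Land--Tamme) asserts precisely that $\K^{\inv}$ is a truncating invariant on the category of small stable categories: for any connective ring spectrum $R$, the map $\K^{\inv}(\mathrm{Perf}(R)) \to \K^{\inv}(\mathrm{Perf}(\pi_0 R))$ is an equivalence. This is exactly the hypothesis needed to apply part (2) of Theorem~\ref{thm:truncating_invariants}.

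Given a nilpotent extension $f : \cC \to \cD$ of $c$-categories, Theorem~\ref{thm:truncating_invariants}(2) then yields that the induced map
\[
\K^{\inv}(\cC) \longrightarrow \K^{\inv}(\cD)
\]
is an equivalence of spectra. Unwinding the definition $\K^{\inv} = \fib(\K \to \TC)$, this is equivalent to the statement that the natural map on vertical fibers in the square
\[
\begin{tikzcd}
\K(\cC) \arrow[r]\arrow[d] & \TC(\cC)\arrow[d]\\
\K(\cD) \arrow[r] & \TC(\cD)
\end{tikzcd}
\]
is an equivalence, which is exactly the assertion that the square is a pullback.

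The only genuine content is therefore the observation that $\K^{\inv}$ is a truncating localizing invariant, which is a black-box input from the literature; everything else has already been done in Theorem~\ref{thm:truncating_invariants}. There is no real obstacle: the work in the proof of Theorem~\ref{thm:truncating_invariants}(2), which reduces a nilpotent extension of $c$-categories to a nilpotent extension of connective ring spectra via the resolution of Corollary~\ref{cor:resolution_full} and Corollary~\ref{cor:resolution_nilpotent}, is precisely what allows the classical DGM statement for connective rings to propagate to this much broader setting.
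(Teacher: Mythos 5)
Your proof is correct and is exactly the paper's argument: apply Theorem~\ref{thm:truncating_invariants}(2) to the truncating invariant $\K^{\inv}=\fib(\K\to\TC)$ (truncating by the classical Dundas--Goodwillie--McCarthy theorem) and note that an equivalence on vertical fibers is the same as the square being a pullback. Nothing further is needed.
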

\begin{proof}
Apply Theorem~\ref{thm:truncating_invariants}(2) to $\fib(\K(-) \to \mathrm{TC}(-))$, which is a truncating invariant by \cite{dundas2012local}.
\end{proof}

\begin{cor}\label{cor:kthyconnectivity}
Let $\cC \to \cD$ be a $k$-connective map between $c$-categories of width $\le n$. 
Then the map 
\[
	\K_i(\cC) \to \K_i(\cD)
\]
is an isomorphism for $i < -n + k + 1$.
\end{cor}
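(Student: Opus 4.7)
The plan is to apply Theorem~\ref{thm:truncating_invariants}(1) to the localizing invariant $E = \K$, after recording the classical fact that algebraic $\K$-theory is a \emph{connected} invariant in the sense defined just above that theorem: for a $k$-connective map $R \to S$ of connective ring spectra with $k \geq 1$, the induced map $\K(R) \to \K(S)$ is $(k+1)$-connective. One way to verify this is to factor $R \to S$ through $\pi_0 R \cong \pi_0 S$, apply the Dundas--Goodwillie--McCarthy theorem to the nilpotent extension $R \to \pi_0 R$ to reduce to checking $(k+1)$-connectivity on $\mathrm{TC}$, and then combine with the corresponding connectivity estimate for $\mathrm{TC}$ applied to $\pi_0 R \to S$.

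With this input, Theorem~\ref{thm:truncating_invariants}(1) immediately yields that $\K(\cC) \to \K(\cD)$ is $(k - N + 1)$-connective with $N = \max(n,1)$. For $n \geq 1$, this is exactly the assertion that $\K_i(\cC) \to \K_i(\cD)$ is an isomorphism for $i < -n + k + 1$.

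The edge case $n = 0$ is not quite covered by Theorem~\ref{thm:truncating_invariants}(1) as stated (the bound $N = \max(0,1) = 1$ is one too large), and must be treated separately. Here one bypasses the resolution entirely: a $c$-category of width $\leq 0$ is, up to idempotent completion, a bounded weighted category with a single generator $G$ by Lemma~\ref{lem:singlegenerator}, hence equivalent to $\mathrm{Perf}(R_\cC)$ with $R_\cC = \End_\cC(G)$ by the Schwede--Shipley theorem, and similarly $\cD \simeq \mathrm{Perf}(R_\cD)$. By Remark~\ref{rmk:rings_induced}, $f$ then corresponds to a $k$-connective map of connective ring spectra $R_\cC \to R_\cD$, and the connectedness of $\K$ directly gives the isomorphism on $\K_i$ for $i < k + 1$, which matches the claimed bound when $n = 0$.

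The main obstacle is external: the connectedness of $\K$-theory, a classical but nontrivial fact. The rest is a direct application of the machinery developed in this paper, together with the Schwede--Shipley identification already used in the proof of Proposition~\ref{prop:motivic_resolution}.
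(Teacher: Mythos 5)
Your main line is exactly the paper's proof: deduce the statement from Theorem~\ref{thm:truncating_invariants}(1) together with the fact that $\K$-theory is a connected localizing invariant. The paper simply cites \cite[Lemma 2.4]{Land_2019} for connectedness rather than sketching the DGM/TC reduction, but that is only a presentational difference. For $n \geq 1$ your deduction is complete and coincides with the intended argument.

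Your treatment of the $n = 0$ edge case, however, has a genuine gap. You are right to notice that Theorem~\ref{thm:truncating_invariants}(1) with $N = \max(0,1) = 1$ only yields a $k$-connective map on $\K$, hence isomorphisms for $i < k$ rather than the claimed $i < k+1$ (the paper's one-line proof glosses over this as well). But the fix you propose is not available: the assertion that a $c$-category of width $\leq 0$ is, up to idempotent completion, a bounded weighted category is precisely the content of Question~\ref{question:width0}, which the paper leaves open and which the authors explicitly expect to have a \emph{negative} answer. Only the converse direction is established (a bounded weighted category carries a width-$\leq 0$ $c$-structure, Example~\ref{exm:from_heart_to_c}). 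Moreover, Lemma~\ref{lem:singlegenerator} produces a single generator for the weight heart of $\Ind(\cC)^{\kappa'}$, not of $\cC$ itself; in Proposition~\ref{prop:motivic_resolution} the Schwede--Shipley identification is applied to the quotient $(\Ind(\cC)^{\kappa,\mathrm{b}}/\cC)^{\mathrm{ic}}$, never to $\cC$. So you cannot identify $\cC$ with $\mathrm{Perf}(R_\cC)$ the way you do. The honest conclusion obtainable from the paper's machinery at $n = 0$ is an isomorphism on $\K_i$ for $i < k$ (and a surjection on $\K_k$), i.e.\ the same bound as for $n = 1$; if the stronger bound at $n=0$ is really needed, it requires an argument not present in the paper.
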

\begin{proof}
Follows from Theorem~\ref{thm:truncating_invariants}(1), and the fact that $K$-theory is a connected localizing invariant \cite[Lemma 2.4]{Land_2019}.
\end{proof}

\begin{rmk}
Corollary~\ref{cor:width_n_thh} implies in particular that there are not as many weakly approximable triangulated 
categories. Take $R = \mathrm{C}^\star(X,\mathbb{Z})$ for a simply connected space $X$. 
In this case Loday showed an equivalence
\[
\mathrm{THH}(R) \simeq \mathrm{C}^*(LX, \mathbb{Z})
\]
where $LX$ denotes the space of unpointed loops in $X$. Whenever $LX$ has nontrivial cohomology in infinitely many degrees 
(for instance, when $X=S^2$), this spectrum is not $n$-connective for any $n$. Hence, $\mathrm{Perf}(R)$ does not admit a 
$c$-structure. 

Since weakly approximable categories admit $c$-structures by \Cref{cor:approximable_ccategory}, we learn that $\mathrm{Perf}(R)$ is not weakly approximable, answering the question formulated before Remark 3.3 in \cite{canonaco2024}. 
\tqed \end{rmk}

\section{Applications to algebraic stacks}\label{sec:stacks}

For an affine scheme $X$ the category $\mathrm{Perf}(X)$ admits a bounded weight structure 
(Example~\ref{exm:weight_conn_ring}). More generally, this also works for $X$ being an affine quotient stack 
$[\mathrm{Spec} R/G]$ for a linearly reductive group $G$ (\cite[Theorem~2.2.21]{nilpotentextns}). Using this, 
Elmanto and the second author showed an equivariant version of DGM for such stacks. However, for general schemes and stacks 
there is no bounded weight structure on $\mathrm{Perf}(X)$. 
Nevertheless, in this section we show that for many stacks $\mathrm{Perf}(-)$ admit a natural $c$-structure 
(Theorem~\ref{thm:stacky_cstr_2}). 
Moreover, the nilpotent extensions of stacks induce nilpotent extensions of $c$-categories 
(Theorem~\ref{thm:morphisms_of_stacks}), which allows us to show DGM and similar statements for such stacks. 

We define $\mathrm{Perf}(X)_{\le 0}$ to be the subcategory of perfect complexes of non-positive tor-amplitude.  
For $X$ of finite cohomological dimension this is a pre-$c$-structure on $\mathrm{Perf}(X)$, 
and we can show that it is a $c$-structure under stronger assumptions. These assumptions are satisfied in many natural 
examples (see Example~\ref{exm:connective_perfect_generation}).

\subsection{Generality on quasi-geometric stacks}\label{subsec:stacks}

Given a functor $\mathcal{F}:\mathrm{CAlg}^{\mathrm{cn}} \to \Spc$, one may define the category 
$\mathrm{QCoh}(\mathcal{F})$ via right Kan-extending $\mathrm{QCoh}$ along 
\[
\Spec:\mathrm{CAlg}^{\mathrm{cn}} \to \Fun(\mathrm{CAlg}^{\mathrm{cn}}, \Spc)^\mathrm{op}.
\] 
Similarly, one may define the subcategories $\mathrm{QCoh}(X)^{\mathrm{cn}}$ and $\mathrm{Perf}(X)$. 
When $X$ is a {\it quasi-geometric stack}, these categories contain a lot of information about $X$. 
For instance, applying a localizing invariant to $\mathrm{Perf}(X)$, one obtains an interesting cohomology theory of 
$X$---this way $\K$-theory and Hochschild homology of an algebraic stack can be constructed. The goal of this section is to 
apply the abstract results of previous sections to obtain some new results about $\mathrm{Perf}(X)$ and its localizing 
invariants. 


\subsubsection{Morphisms of quasi-geometric stacks}\label{subsubsec:stacks}
For a natural transformation of functors from $\mathrm{CAlg}^{\mathrm{cn}}$ to $\Spc$ 
\[
f:\mathcal{G} \to \mathcal{F}
\] 
one can make sense of schematic properties such as being a closed immersion, an open immersion, affine, quasi-affine, locally 
of finite presentation, or faithfully flat, by imposing that the given property is 
satisfied for the pullback along any map $\Spec R \to \mathcal{F}$. More precisely, we say that $f$
is representable if in the pullback diagram
\[
\begin{tikzcd}
X \arrow[r]\arrow[d] & \mathcal{G}\arrow[d]\\
\Spec R \arrow[r] & \mathcal{F}
\end{tikzcd}
\]
$X$ is representable by a qcqs algebraic space; we say that $f$ satisfies a 
property P if $f$ is representable and the map of algebraic spaces $X \to \Spec R$ satisfies the property $P$. 

\begin{dfn}\label{dfn:stk}
A functor $\mathcal{F}:\mathrm{CAlg}^{\mathrm{cn}} \to \Spc$ is called an {\bf quasi-geometric stack} if it is a sheaf in the 
fpqc topology, its diagonal $\mathcal{F} \to \mathcal{F}\times \mathcal{F}$ is quasi-affine, and there exists an 
fppf\footnote{representable and faithfully flat locally of finite presentation.} map $\Spec R \to \mathcal{F}$. A map of 
quasi-geometric stacks is a natural transformation of functors. We say that $\mathcal{F}$ is a geometric stack, if the 
diagonal is furthermore affine. 

We say that a quasi-geometric stack is {\bf classical} if it admits an fppf cover by an affine classical scheme. 
\end{dfn}

\begin{wrn}
In the definition of a quasi-geometric stack from \cite{SAG} the map $\Spec R \to \mathcal{F}$ is only required to be
representable and faithfully flat.
\end{wrn}

\begin{rmk}\label{rmk:limit_descroption}
The fppf map $\Spec R \to \mathcal{F}$ for a quasi-geometric stack in particular gives a surjection in the fpqc topology. 
\tqed \end{rmk}

\begin{rmk}
For a quasi-geometric stack $X$ a faithfully flat map $\Spec R \to X$ is automatically quasi-affine. 
\tqed \end{rmk}

\begin{rmk}\label{rmk:stk_tstruct}
For $X$ a quasi-geometric stack $\mathrm{QCoh}(X)$ is a presentable category equivalent to the stabilization of the prestable 
category $\mathrm{QCoh}(X)^{\mathrm{cn}}$. In particular, there is a canonical t-structure on $\mathrm{QCoh}(X)$  such that 
for an fppf map $\Spec R \to X$, the induced functor  $\mathrm{QCoh}(X) \to  \mathrm{QCoh}(\Spec R)$ is t-exact 
(\cite[Corollary~9.1.3.2]{SAG}). This t-structure is right and left complete and is compatible with filtered colimits. 
Furthermore, $\mathrm{QCoh}(X)$ admits a presentable symmetric monoidal structure such that 
the tensor product of connective objects is connective and such that the pullback functor is strongly monoidal 
(see \cite[Section~6.2.6]{SAG}).
\tqed \end{rmk}

\begin{rmk}\label{rmk:descent_qcoh}
The category $\mathrm{QCoh}(X)$ satisfies fpqc descent \cite[Proposition~6.2.3.1]{SAG}. 
In particular, for an fppf map $U \stackrel{p}\to X$ we have 
\[
\mathrm{QCoh}(X) \simeq \lim_{i\in \Delta} \mathrm{QCoh}(U^{\otimes_X i}).
\]
denote the projection maps $U^{\otimes_X i} \to X$ by $p_n$. 
Specializing further, given any quasi-coherent sheaves $F,G \in \mathrm{QCoh}(X)$, the have an equivalence 
\[
\map_{\mathrm{QCoh}(X)}(F,G) \simeq \lim_{i\in \Delta} \map_{\mathrm{QCoh}(U^{\otimes_X i})}(p_n^*F,p_n^*G).
\]
In general, one may consider the category $\mathrm{Flat}_{/X}$ of flat morphisms over $X$, and view any quasi-coherent 
sheaf $F$ as an {\it fpqc sheaf} of spectra on $X$ via setting $F^{\mathrm{fpqc}}(V/X) := \map(\mathcal{O}_V,F)$. 
Denoting the category of fpqc-sheaves on $X$ by $\mathrm{Sh}_{\mathrm{fpqc}}(X; \Sp)$, this yields a functor 
\[
	(-)^{\mathrm{fpqc}}:\mathrm{QCoh}(X) \to \mathrm{Sh}_{\mathrm{fpqc}}(\mathrm{Flat}_{/X}; \Sp).
\]
\tqed \end{rmk}

By construction, a map of quasi-geometric stacks $f:Y \to X$ induces an adjunction 
\[
\begin{tikzcd}
\mathrm{QCoh}(X) \arrow[r, shift left, "f^*"] & \mathrm{QCoh}(Y).\arrow[l, shift left, "f_*"]
\end{tikzcd}
\]
In the next proposition we show various important compatibility properties of these functors, the monoidal structure on 
$\mathrm{QCoh}(-)$ and the t-structure of Remark~\ref{rmk:stk_tstruct}. 

\begin{prop}\label{prop:basechange}
The following statements are valid.
\begin{enumerate}
\item Let $f:Y \to X$ be a representable map of quasi-geometric stacks. Then for any $x\in \mathrm{QCoh}(X)$ and 
$y\in \mathrm{QCoh}(Y)$ the canonical map $y \otimes f^*x \to f^*(y \otimes f_*x)$ is an equivalence.
\item Given a pullback diagram of quasi-geometric stacks
\[
\begin{tikzcd}
Y' \arrow[r,"{\bar{g}}"]\arrow[d, "{\bar{f}}"] & Y\arrow[d, "f"]\\
X' \arrow[r, "g"] & X
\end{tikzcd}
\] 
with $f:Y \to X$ representable, the square 
\[
\begin{tikzcd}
\mathrm{QCoh}(X) \arrow[d,"g^*"]\arrow[r, "f^*"] & \mathrm{QCoh}(Y)\arrow[d, "{\bar{g}}^*"]\\
\mathrm{QCoh}(X')\arrow[r,"{\bar{f}}^*"] & \mathrm{QCoh}(Y')
\end{tikzcd}
\]
is horizontally right adjointable. In other words, the canonical map $g^*f_* \to \bar{g}^*\bar{f}_*$ is an equivalence.
\item For a representable map $f:Y \to X$ of quasi-geometric stacks there exists an integer $d$ such that the pushforward map 
$f_*$ sends $\mathrm{QCoh}(Y)^{\mathrm{cn}}$ to $\Omega^d\mathrm{QCoh}(X)^{\mathrm{cn}}$. Moreover, if $f$ is affine, we may 
choose $d=0$, so $f_*$ is right t-exact. 
\end{enumerate}
\end{prop}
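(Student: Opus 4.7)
The plan is to establish each part by fpqc descent, reducing to the case where the base stack is affine and $Y$ becomes a qcqs algebraic space over it, where all three statements are classical. I would prove (2) first, since base change is the engine that lets one verify (1) and (3) after pulling back along an fpqc cover.

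For (2), pick an fppf cover $p:\Spec A \to X$ guaranteed by Definition~\ref{dfn:stk}, and let $q:\Spec B \to X'$ be an fppf cover factoring through the quasi-affine $g^{-1}(\Spec A) = \Spec A \times_X X'$. By Remark~\ref{rmk:descent_qcoh}, both $\mathrm{QCoh}(X)$ and $\mathrm{QCoh}(X')$ are the totalizations of the associated Čech nerves, so it suffices to verify that the base change transformation $g^*f_* \to \bar{f}_*\bar{g}^*$ becomes an equivalence after applying $q_n^*$ for each $n\geq 0$. Each such level reduces the question to base change for a representable morphism of qcqs algebraic spaces with affine target, which is a classical fact (further reducible to an étale cover of the algebraic space by affines).

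With (2) in hand, (1) follows analogously: the canonical projection formula map can be checked after restriction along $p:\Spec A \to X$ and its Čech nerve, reducing to the projection formula for a qcqs algebraic space over an affine scheme. This in turn reduces by étale descent on $Y \times_X \Spec A$ to the fully affine case, where it is immediate from the definition of $f_*$ as restriction of scalars.

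For (3), pull back along $p$ to $\bar{f}:Y\times_X \Spec A \to \Spec A$ and recall from Remark~\ref{rmk:stk_tstruct} that $p^*$ is $t$-exact and that the $t$-structure on $\mathrm{QCoh}(X)$ is detected fpqc-locally. By (2), $p^*f_*y \simeq \bar{f}_*\bar{p}^*y$, and $\bar{p}^*y$ is connective whenever $y$ is. Hence it suffices to show that the pushforward from a qcqs algebraic space to an affine scheme has bounded cohomological amplitude, which follows from Čech cohomology computations along a finite affine atlas of $Y\times_X \Spec A$. When $f$ is affine, so is the pullback and $\bar{f}_*$ is $t$-exact, yielding $d=0$. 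I expect the main obstacle to be in (3): one must show the bound $d$ is intrinsic to $f$ rather than an artifact of the cover chosen. This is resolved by the fact that any two fppf covers admit a common refinement and that the amplitude bound for a representable morphism of algebraic spaces is stable under flat base change.
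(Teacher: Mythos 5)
Your proposal is correct and, for part (3), is essentially identical to the paper's argument: pull back along an fppf cover $p:\Spec A\to X$, use part (2) and the $t$-exactness of $p^*$ to reduce to the bounded cohomological amplitude of pushforward from a qcqs algebraic space over an affine (with $d=0$ in the affine case). For parts (1) and (2) the paper simply cites \cite[Corollary~6.3.4.3, Proposition~6.3.4.1]{SAG}, and your descent sketch --- flat base change to reduce to an affine base, then an \'etale atlas of the resulting algebraic space --- is precisely the standard proof of those cited results, so there is no substantive difference in route.
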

\begin{proof}
The first two claims are \cite[Corollary~6.3.4.3]{SAG} and \cite[Proposition~6.3.4.1]{SAG} (see also 
\cite[Proposition~A.1.5]{Halpern_Leistner_2023}). 

To prove the third claim, consider an fppf cover $g:U \to X$ where $U$ is an affine scheme. 
In the pullback square
\[
\begin{tikzcd}
Y' \arrow[r,"{\bar{f}}"]\arrow[d,"{\bar{g}}"] & U\arrow[d, "g"]\\
Y\arrow[r,"f"] & X
\end{tikzcd}
\]
$Y'$ is quasi-compact, so the map $\bar{f}_*$ sends connective objects to $-d$-connective 
objects for some $d$. Moreover, if $f$ is affine, we may choose $d=0$. 

Now to show that $f_*x$ is $-d$-connective for $x$ connective it suffices to show that $g^*f_*x$ is connective. 
By the base change formula proved before we have $g^*f_*x \simeq \bar{f}_*\bar{g}^*x$. 
We know that the latter is $-d$-connective thanks to the above observation about $\bar{f}_*$ and t-exactness 
of the pullback to a flat cover (see Remark~\ref{rmk:stk_tstruct}).
\end{proof}

\begin{lem}\label{lem:qaff_tstr}
Let $Y$ be a quasi-affine scheme. Then $\mathrm{QCoh}(Y)^{\mathrm{cn}}$ is equal to the minimal subcategory containing 
$\mathcal{O}_Y$ and closed under colimits, extensions, and retracts. 
\end{lem}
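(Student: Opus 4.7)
The plan is to reduce everything to the affine case by using that a quasi-affine scheme is, essentially by definition, a quasi-compact open subscheme of an affine one.

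First, I would fix a quasi-compact open immersion $j: Y \hookrightarrow \Spec R$ (for instance, taking $R = \Gamma(Y, \mathcal{O}_Y)$). Since $j$ is flat, the pullback functor $j^*: \mathrm{Mod}_R = \mathrm{QCoh}(\Spec R) \to \mathrm{QCoh}(Y)$ is t-exact, and since $j$ is a quasi-compact open immersion, $j^*$ is a smashing Bousfield localization, hence essentially surjective. In particular, $j^* R \simeq \mathcal{O}_Y$, and $j^*$ preserves colimits, extensions, and retracts (being a left adjoint that is also exact).

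Next, given $F \in \mathrm{QCoh}(Y)^{\mathrm{cn}}$, I would lift it via essential surjectivity to some $M \in \mathrm{Mod}_R$ with $j^*M \simeq F$, and replace $M$ by its connective cover $\tau_{\ge 0}M$; using t-exactness of $j^*$, this still satisfies $j^*\tau_{\ge 0}M \simeq F$. So we reduce to showing that any connective $R$-module lies in the subcategory of $\mathrm{Mod}_R^{\mathrm{cn}}$ generated by $R$ under colimits and extensions (retracts are not even needed at this stage). This is a standard argument: given a connective $M$, the map
\[
\bigoplus_{\phi \in \pi_0 \Map(R, M)} R \longrightarrow M
\]
is $\pi_0$-surjective with connective fiber, so iterating produces a tower of extensions built from free connective modules whose filtered colimit is $M$ (equivalently, one can invoke the bar resolution $M \simeq |B(R, R, M)|$).

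Applying $j^*$ to the construction in $\mathrm{Mod}_R^{\mathrm{cn}}$ exhibits $F$ as built from $\mathcal{O}_Y$ using colimits and extensions inside $\mathrm{QCoh}(Y)^{\mathrm{cn}}$, proving the nontrivial inclusion; the reverse inclusion is immediate since $\mathcal{O}_Y$ is connective and $\mathrm{QCoh}(Y)^{\mathrm{cn}}$ is closed under colimits, extensions, and retracts (the latter by Remark~\ref{rmk:stk_tstruct}). The main potential obstacle is verifying that the pullback along a quasi-compact open immersion of derived schemes is essentially surjective onto $\mathrm{QCoh}(Y)$; this is a known descent/localization property for quasi-affine derived schemes (cf.\ \cite{SAG}), and I would cite it rather than reprove it.
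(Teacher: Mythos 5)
Your proof is correct and follows essentially the same route as the paper's: both reduce to the affine case via a quasi-compact open immersion $j\colon Y\hookrightarrow\Spec R$, using that $j^*$ is a t-exact essentially surjective localization to lift a connective sheaf to a connective $R$-module (your replacement of a lift $M$ by $\tau_{\ge0}M$ is exactly the content of the paper's computation $\bar t_{\le-1}y=i^*t_{\le-1}x=0$ comparing the two t-structures). Your write-up additionally spells out the cell-attachment argument for the affine case, which the paper simply asserts; just note that concluding $\colim P_i\simeq M$ there uses left separatedness of the t-structure on $\mathrm{Mod}_R$.
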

\begin{proof}
The claim is true for affine schemes. 
Find an open immersion $i:Y \to X$ such that $X$ is an affine scheme. Now the functor 
$i^*:\mathrm{QCoh}(X) \to \mathrm{QCoh}(Y)$ is a localization and t-exact. 
Consider a new t-structure on $\mathrm{QCoh}(Y)$ such that $\mathrm{QCoh}(Y)_{\bar{t} \ge 0}$ is generated by 
$i^*\mathcal{O}_X = \mathcal{O}_Y$. It suffices to show that $\bar{t} = t$. 
We now have inclusions
\[
\mathrm{QCoh}(Y)_{\bar{t} \ge 0} \subset \mathrm{QCoh}(Y)_{t \ge 0} \quad \quad \quad \quad \mathrm{QCoh}(Y)_{t \le 0} \subset \mathrm{QCoh}(Y)_{\bar{t} \le 0}.
\]
Observe that $i^*$ is also t-exact with respect to $\bar{t}$. 
Now for any $y\in \mathrm{QCoh}(Y)^{\mathrm{cn}}$ take $x$ with $i^*x = y$. 
We see that $\bar{t}_{\le -1}y = i^*t_{\le -1}x = t_{\le -1} y =0$, hence $y\in \mathrm{QCoh}(Y)_{\bar{t} \ge 0}$.
\end{proof}

\begin{dfn}\label{dfn:cohomological_dimension}
For a representable morphism $f:Y \to X$ the cohomological dimension $\mathrm{cd}(f)$ is defined to be the least non-negative 
integer $d$ such that the property $(3)$ of Proposition~\ref{prop:basechange} holds.

The proof of the proposition shows, $cd(f)$ is bounded from above by the coherent cohomological dimension of the qcqs 
algebraic space $Y'$. If $f$ is 
quasi-affine, then $Y'$ is a quasi-affine scheme and Lemma~\ref{lem:qaff_tstr} gives: $\mathrm{cd}(Y')$ is the greatest 
integer $i$ 
such that the group $\pi_{-i}\mathrm{R}\Gamma(Y',\mathcal{O}_{Y'})$ is non-trivial.
\end{dfn}

\subsubsection{Compact generation on stacks}

\begin{dfn}
Let $X$ be a quasi-geometric stack.

\begin{enumerate}
\item We say that $X$ is {\bf of finite cohomological dimension} or {\bf fcd} if the functor
\[
	\mathrm{R}\Gamma(X,-) : \mathrm{QCoh}(X) \to \Sp
\]
sends $\mathrm{QCoh}(X)^{\mathrm{cn}}$ to $\Sp_{\ge -d}$ for some $d\in \NN$.
\item We say that $X$ satisfies {\bf perfect generation} if it is concentrated and $\mathrm{QCoh}(X)$ is compactly generated. 
It follows that we have an equivalence $\Ind(\mathrm{Perf}(X)) \simeq \mathrm{QCoh}(X)$.
\end{enumerate}
\end{dfn}

\begin{rmk}
For a quasi-geometric stack $X$ the following are equivalent (see \cite[Remark~4.6]{Hall_2017} for the case of classical 
stacks; the general case is the same):
\begin{enumerate}
\item 
all perfect complexes in $\mathrm{QCoh}(X)$ are compact; 
\item
$\mathcal{O}_X$ is a compact object;
\item 
$X$ is of finite cohomological dimension.
\end{enumerate}
\tqed \end{rmk}

\begin{ntn}\label{ntn:dual}
Perfect complexes in $\mathrm{QCoh}(X)$ form the subcategory of dualizable objects. 
We denote by $P^\vee$ the dual of $P \in \mathrm{Perf}(X)$. 
\end{ntn}

\begin{exm}\label{exm:char_0}
It follows from \cite[Theorem~0.3.2]{Drinfeld2013} that any quasi-geometric stack with affine stabilizers in characteristic 
$0$ is fcd. It still remains an open question whether there are examples of fcd stacks that do no satisfy perfect 
generation (\cite[Remark~9.4.0.1]{SAG}). 
\tqed \end{exm}

\begin{exm}\label{exm:char_p}
An ultimate example of quasi-geometric stack that does not satisfy perfect generation 
is given by $X=B\mathbb{G}_a$ in prime characteristic. In this case the only compact objects in $\mathrm{QCoh}(X)$ are the 
zero objects (\cite[Proposition~3.1]{Hall_2018}). 
In \cite[Theorem~1.12]{Alper2022ArtinAF} the following conditions were proved to be equivalent in characteristic $p$ 
under the assumption that $X$ is a classical quasi-geometric stack and has affine diagonal:
\begin{enumerate}
\item $X$ is fcd;
\item $X$ satisfies perfect generation;
\item the stabilizers at closed points of $X$ are linearly reductive;
\item the stabilizers at all points of $X$ are linearly reductive.
\end{enumerate} 
\tqed \end{exm}

\begin{dfn}\label{dfn:resolution_prop}
We say that a quasi-geometric stack $X$ satisfies the {\bf resolution property} if for any $n\in\NN$ and any 
$F \in \mathrm{QCoh}(X)_{t\le n}$, there exist vector bundles $\cE_i$ and a family of maps $\cE_i \to F$ 
such that the induced map $\bigoplus_i \pi_0(\cE_i) \to \pi_0(F)$ is an epimorphism. 
\end{dfn}

\begin{exm}\label{exm:BG}
For any separated regular noetherian scheme $S$ of Krull dimension $\le 1$ and any affine group scheme $G$ over $S$ the stack 
$B_SG$ satisfies the resolution property (\cite[Lemma~2.4]{Thomason1987}). 
\tqed \end{exm}

\begin{exm}\label{exm:VmodG}
If $f:Y \to X$ is a quasi-affine map of quasi-geometric stacks and $X$ satisfies the resolution property, then 
$Y$ also satisfies the resolution property (\cite[Proposition~9.3.3.8]{SAG}). 
For a quasi-affine scheme $V$ endowed with an action of an affine group scheme $G$ over $S$, 
the map $[V/G] \to B_SG$ is quasi-affine. In particular, employing Example~\ref{exm:BG} we see that the quotient 
stack $[V/G]$ satisfies the resolution property at least if $S$ is separated regular noetherian of Krull dimension $\le 1$.
\tqed \end{exm}

\begin{rmk}\label{rmk:perf_comp}
On an fcd stack $X$ a sheaf $F\in \mathrm{QCoh}(X)$ is a perfect complex if and only if it is compact 
(see \cite[Proposition~9.1.5.3]{SAG}). In particular, any vector bundle on an fcd stack is perfect. 
\tqed \end{rmk}

\begin{exm}\label{exm:resprop_cg}
Any fcd stack satisfying the resolution property satisfies perfect generation (see \cite[Proposition~9.3.3.7]{SAG}). 
\tqed \end{exm}

\begin{exm}\label{exm:descent_cg}
It follows from \cite[Theorem~C]{Hall_2017} that any classical quasi-geometric stack $X$ satisfies perfect generation as 
soon as it admits a representable separated quasi-finite and faithfully flat map locally of finite presentation 
$X' \to X$ such that $X'$ is fcd and satisfies the resolution property. 
\tqed \end{exm}

\begin{dfn}
We say that a quasi-geometric stack $X$ satisfies {\bf connective perfect generation} if it is fcd and the prestable 
category $\mathrm{QCoh}(X)^{\mathrm{cn}}$ is compactly generated by connective perfect complexes. 
\end{dfn}

\begin{exm}\label{exm:connective_perfect_generation}
While in general we do not have a good description of a general class of stacks satisfying connective perfect generation, 
this property is satisfied in many cases listed below
\begin{enumerate}
\item Any qcqs spectral algebraic space satisfies connective perfect generation (\cite[Proposition~9.6.1.2]{SAG}).
\item Any finite cohomological dimension stack satisfying the resolution property also satisfies connective perfect 
generation (\cite[Proposition~9.3.3.7]{SAG}). 
For instance, in characteristic 0 this holds for any quotient stack $[V/G]$ for $V$ a quasi-projective scheme and any 
affine $G$ (see \Cref{exm:VmodG}). 
\item A stronger property of approximation by compact complexes introduced in \cite{Lipman2007} and proved for all 
qcqs schemes. In \cite[Theorems~A,B]{hall2025compactapproximationdescentalgebraic} it was also shown for a large class of 
classical stacks: those having quasi-finite and separated diagonal, Deligne-Mumford stacks in characteristic 0, and those 
admitting a good moduli space.
\item Let $X$ be a noetherian quasi-geometric stack $X$ that satisfies perfect generation and that is additionally 
regular, i.e. it admits an fppf cover by a regular scheme $\pi:\Spec R \to X$ 
(cf. \cite[Definition~2.15(ii)]{sosnilo2021regularity}). Then $X$ also satisfies connective perfect generation. 
To see this, first observe that for any $P \in \mathrm{Perf}(X)$ its truncations $\tau_{\ge 0}P$ are also perfect 
complexes (since locally they are). Consequently, the t-structure on $\mathrm{QCoh}(X)$ restricts to a t-structure on 
$\mathrm{Perf}(X)$. Now by \cite[Corollary~C.6.3.3]{SAG} it suffices to show that for any non-zero 
$F\in \mathrm{QCoh}^\mathrm{cn}$, there is a non-zero map $P \to F$ for $P$ a connective perfect complex. 
Since $X$ satisfies perfect generation, there is a filtered diagram $P_{(-)}:I \to \mathrm{Perf}(X)$ with 
\[
\colim_I P_i \simeq F.
\]
Now since the t-structure is compatible with filtered colimits and since $\tau_{\le -1} F = 0$, we also have an 
equivalence 
\[
\colim_I \tau_{\ge 0}P_i \simeq F
\]
proving connective perfect generation.
\end{enumerate}

We expect that one has connective perfect generation in the generality of Example~\ref{exm:descent_cg} and also that 
the classicality assumption can be dropped there.
\tqed \end{exm}

\subsection{The local \texorpdfstring{$c$-structure}{c-structure} for stacks}

\begin{cnstr}\label{cnstr:weight_structure_onstacks}
Let $X$ be a quasi-geometric stack satisfying perfect generation. Denote by $\mathrm{Perf}(X)_{\le 0}$ the subcategory of 
perfect complexes of Tor-amplitude $\le 0$. This generates a weight structure on $\mathrm{QCoh}(X)$ called the {\bf local 
weight structure}.
\end{cnstr}

Since $X$ is of finite cohomological dimension, we have the inclusion
\[
	\mathrm{Perf}(X) \subset \bigcup\limits_{n \in \NN} \mathrm{QCoh}(X)_{w\ge -n},
\]
so in particular $\mathrm{Perf}(X)_{\le 0}$ gives a pre-$c$-category structure for $\mathrm{Perf}(X)$. 
Our goal is to show that in certain situations $\mathrm{Perf}(X)_{\le 0} \subset \mathrm{Perf}(X)$ is a $c$-structure. 
If $X$ is an affine scheme, we already know that it is, and in general we want to apply Theorem~\ref{thm:cdescent} to 
descend this $c$-structure along the pullback of an fppf cover $\Spec R \to X$. 
For this we need to check two things: that $\mathrm{Perf}(X) \to \mathrm{Perf}(R)$ is descendable in the sense of 
Definition~\ref{dfn:descendability} and that the weight connectivity assumption is satisfied. The first one is 
given by Proposition~\ref{prop:descendability_stacks} below and the second one by 
Proposition~\ref{prop:stacky_cstructure}.


\begin{prop}\label{prop:descendability_stacks}
Let $f:Y \to X$ be a quasi-affine fppf map of quasi-geometric stacks satisfying perfect generation. 
Then the functor 
\[
f^*: \mathrm{Perf}(X) \to \mathrm{Perf}(Y) 
\]
is descendable of exponent $\mathrm{cd}(Y)+3$.
\end{prop}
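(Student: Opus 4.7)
The plan is to reduce to Mathew's descent framework and then to a Čech-style convergence estimate. By the projection formula (\Cref{prop:basechange}(1)) and \Cref{rmk:mathews_descendability}, descendability of $f^*:\mathrm{Perf}(X)\to\mathrm{Perf}(Y)$ is equivalent to descendability in Mathew's sense of the $\EE_\infty$-algebra $A := f_*\mathcal{O}_Y$ in $\mathrm{QCoh}(X)$; by \Cref{prop:descendability}(3), this means showing that the natural map $I^{\otimes m+1}\to\mathcal{O}_X$ is null in $\mathrm{QCoh}(X)$ for $m = \mathrm{cd}(Y)+3$, where $I := \fib(\mathcal{O}_X\to A)$.

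My approach would be to use the equivalent criterion \Cref{prop:descendability}(2): produce a functorial splitting of $\mathcal{O}_X\to\Tot^m(A^{\otimes_{\mathcal{O}_X}\bullet+1})$. Faithful flatness of $f$ together with fpqc descent identifies $\mathcal{O}_X$ with the full totalization of the Amitsur cobar complex, reducing the problem to controlling the convergence of the tower of partial totalizations $\{\Tot^n\}$. Iteratively applying the projection formula and base change (\Cref{prop:basechange}(1,2)), each $A^{\otimes_{\mathcal{O}_X}n+1}$ identifies with the pushforward $(f_n)_*\mathcal{O}$ along the $(n+1)$-fold fibered structure map $f_n:Y\times_X\cdots\times_X Y\to X$, which is again quasi-affine fppf, and whose coconnective t-amplitude is controlled via \Cref{prop:basechange}(3) and \Cref{dfn:cohomological_dimension} in terms of $\mathrm{cd}(Y)$.

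The chief obstacle is extracting the specific exponent $\mathrm{cd}(Y)+3$ from these amplitude bounds. I would analyze the Bousfield--Kan spectral sequence $E_2^{p,q}=\pi^p(\pi_q A^{\otimes_{\mathcal{O}_X}\bullet+1})\Rightarrow\pi_{q-p}\mathcal{O}_X$: the amplitude estimates confine the contributions to $\pi_0\mathcal{O}_X$ to a bounded region in $q$ controlled by $\mathrm{cd}(Y)$, and standard Čech-type convergence then forces $\Tot^m\to\mathcal{O}_X$ to be a split epi once $m=\mathrm{cd}(Y)+3$. The three extra steps beyond $\mathrm{cd}(Y)$ absorb (i) the shift between $I$ and the cofiber $\cof(\mathcal{O}_X\to A)$, (ii) the off-by-one in translating between totalization indices and descendability exponents in \Cref{prop:descendability}, and (iii) the edge correction in the convergence of the descent spectral sequence. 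Making the amplitude bookkeeping rigorous in the stacky setting---in particular ensuring uniform enough control on $(f_n)_*\mathcal{O}_{Y\times_X\cdots\times_X Y}$ to drive the convergence at the right stage---is the principal technical content.
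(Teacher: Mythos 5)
Your first reduction matches the paper's: by quasi-affineness and the projection formula, descendability of $f^*$ is equivalent to descendability of the algebra $A=f_*\mathcal{O}_Y$ in Mathew's sense, i.e.\ to nullity of $K^{\otimes m}\to\mathcal{O}_X$ for $K=\fib(\mathcal{O}_X\to A)$ and some finite $m$. The gap is in the second half. Faithfully flat descent tells you that $\mathcal{O}_X\to\Tot(A^{\otimes\bullet+1})$ is an equivalence, and the coconnectivity bounds from \Cref{prop:basechange}(3) confine the $E_1$-page of the Bousfield--Kan spectral sequence to internal degrees $q\ge-\mathrm{cd}$; but neither fact implies that some finite partial totalization $\Tot^m$ splits off $\mathcal{O}_X$. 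Descendability is strictly stronger than being an fpqc cover: it requires the tower $\{\fib(\mathcal{O}_X\to\Tot^n)\}\simeq\{K^{\otimes n+1}\}$ to be pro-zero, which is a nilpotence statement, not a convergence statement. A vanishing region in $q$ (which is all your amplitude estimates give) does nothing here --- for a faithfully flat map of discrete rings the entire $E_1$-page sits in $q=0$ and descendability can still fail. What would suffice is a horizontal vanishing line in the cosimplicial degree at a finite page, and your bookkeeping does not produce one; so the assertion that ``standard \v{C}ech-type convergence forces $\Tot^m\to\mathcal{O}_X$ to be a split epi'' is exactly the unproved point.

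The actual source of nilpotence in the paper is an input you have not used: finite presentation of $f$. Locally on a flat chart $p:\Spec R\to X$, faithful flatness makes $R\to p^*A$ a $\pi_0$-injection after any flat base change, so $p^*\Sigma K$ has tor-amplitude $\le 0$; finite presentation makes it countably presented; Lazard's theorem together with \Cref{lem:closureofneg} then places $p^*(\Sigma K)^{\otimes k}$ in $\Mod(R)_{w\le 1}$ --- weight $\le 1$ uniformly in $k$, not $\le k$. Undoing the $k$ suspensions shows $\map(p^*K^{\otimes k},R)$ is $(k-1)$-connective, so the connectivity grows linearly in $k$: this is the step that converts flatness into nilpotence. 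The remainder of the proof (and the origin of the ``$+3$'') is the globalization: writing $K^{\otimes k}$ as a countable filtered colimit of perfect complexes and trading one countable limit plus the $\mathrm{cd}$ cohomological degrees of $\mathrm{R}\Gamma(X,-)$ against the local connectivity gain to conclude that $\map(K^{\otimes k},\mathcal{O}_X)$ is $1$-connective, hence $K^{\otimes k}\to\mathcal{O}_X$ is null, for $k=\mathrm{cd}+3$. Without the Lazard/countable-presentation mechanism your argument cannot distinguish a descendable cover from a general faithfully flat one, so it cannot close.
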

\begin{proof}
The fact is proved in \cite[Theorem~7.1]{hall2022remarks} for classical algebraic stacks. Although the argument is pretty 
similar to the one in there (specifically, to the ``proof of (4)'' on page 19), we include it here for the reader's 
convenience. 

Under the perfect generation assumption, we have equivalences
\[
\Ind(\mathrm{Perf}(Y)) \simeq \mathrm{QCoh}(Y) \quad\quad  \Ind(\mathrm{Perf}(X)) \simeq \mathrm{QCoh}(X).
\]
Since $f$ is quasi-affine, by \cite[Proposition~6.3.4.6]{SAG} $f^*$ is equivalent to 
\[
\mathrm{QCoh}(X) \to \Mod_{f_*\mathcal{O}_Y}(\mathrm{QCoh}(X)). 
\]
It now suffices to show that the algebra $f_*\mathcal{O}_Y$ is descendable in the sense of Mathew 
(Remark~\ref{rmk:mathews_descendability}). Consider the fiber sequence 
\[
K \to \mathcal{O}_X \to f_*\mathcal{O}_Y.
\]
By \cite[Lemma~11.20]{Bhatt2016} it suffices to show that there exists $m$ such that 
\[
K^{\otimes m} \to \mathcal{O}_X
\]
is null-homotopic. 

Consider a flat map $p:\Spec R \to X$. 
Since $p^*f_*\mathcal{O}_Y$ is a finite limit of flat $R$-algebras, we see that 
$p^*f_*\mathcal{O}_Y$ has tor-amplitude $\le 0$. This implies in particular that 
$p^*\Sigma K$ has tor-amplitude $\le 1$, but it in fact has tor-amplitude $\le 0$: 
indeed, since $f$ is faithfully flat, the map 
\[
R \otimes M \to p^*f_*\mathcal{O}_Y \otimes M
\]
is a $\pi_0$-inclusion for any $M \in \Mod(R)^{\heartsuit_t}$. This also implies that $p^*(\Sigma K)^{\otimes k}$ has tor-amplitude $\le 0$ for any positive integer $k$. 
Since $f$ is finitely presented, $p^*f_*\mathcal{O}_Y$, $p^*\Sigma K$ and $p^*(\Sigma K)^{\otimes k}$  
are countably presented modules. By Lazard theorem, we have that $p^*(\Sigma K)^{\otimes k}$ is equivalent to a countable 
colimit $\colim P_i$ with $P_i \in \mathrm{Perf}(R)_{w\le 0}$. Such a countable colimit itself belongs to 
$\mathrm{Mod}(R)_{w\le 1}$. 
This means that $\map(p^*(\Sigma K)^{\otimes k}, p^*\mathcal{O}_X)\simeq \Sigma^k\map(p^*K^{\otimes k}, p^*\mathcal{O}_X)$ is 
$k-1$-connective. Our goal is to show that $\map(K^{\otimes k}, \mathcal{O}_X)$ is also $1$-connective, so the map 
$K^{\otimes k} \to \mathcal{O}_X$ is forced to be zero. We choose $k = 3 + \mathrm{cd}(X)$. 

Using countable presentability of $K^{\otimes k}$ and perfect generation of $X$, 
choose a countable sequence $P_{(-)}:I \to \mathrm{Perf}(X)$ such that $\colim_I P_i \simeq K^{\otimes k}$, so that 
we have an equivalence 
\[
\tau_{\le 0}\map(K^{\otimes k}, \mathcal{O}_X) \simeq \tau_{\le 0}\lim_I \map(P_i, \mathcal{O}_X).
\]
Since $I$ is countable, $\lim$ sends sequences of $2$-connective objects to $1$-connective objects we have an equivalence
\[
\tau_{\le 0}\lim_I \map(P_i, \mathcal{O}_X) \simeq \tau_{\le 0}\lim_I \tau_{\le 1}\map(P_i, \mathcal{O}_X).
\]
We can further rewrite it as 
\begin{gather*}
\tau_{\le 0}\lim_I \tau_{\le 1}\mathrm{R}\Gamma(X,(P_i^\vee)^\mathrm{fpqc}) \simeq \tau_{\le 0}\lim_I \tau_{\le 1}\mathrm{R}\Gamma(X,\tau_{\le 1 + \mathrm{cd}(X)}(P_i^\vee)^\mathrm{fpqc}) \\
\simeq \tau_{\le 0}\lim_I \mathrm{R}\Gamma(X,\tau_{\le 1 + \mathrm{cd}(X)}(P_i^\vee)^\mathrm{fpqc})
\simeq \tau_{\le 0}\mathrm{R}\Gamma(X,\lim_I \tau_{\le 1 + \mathrm{cd}(X)}(P_i^\vee)^\mathrm{fpqc})
\end{gather*}
in the notation of Remark~\ref{rmk:descent_qcoh} and Notation~\ref{ntn:dual}
\footnote{
note that $\lim_I \tau_{\le 1 + \mathrm{cd}(X)}(P_i^\vee)^\mathrm{fpqc}$ is computed in fpqc sheaves and not in 
quasi-coherent sheaves, because the equivalence $\map(K^{\otimes k}, \mathcal{O}_X) \simeq \lim_I \map(P_i, \mathcal{O}_X)$ 
only extends to an equivalence in fpqc sheaves.
}, since 
$\mathrm{R}\Gamma(X,\tau_{\ge 2 + \mathrm{cd}(X)}F)$ is $2$-connective for any quasi-coherent sheaf $F$. 
Now it suffices to show that sheaf $\lim_I \tau_{\le 1 + \mathrm{cd}(X)}(P_i^\vee)^\mathrm{fpqc}$ vanishes. For this 
we just need to check that locally over a flat morphism $p:V=\mathrm{Spec} R \to X$. 
\[
\mathrm{R}\Gamma(V,\lim_I \tau_{\le 1 + \mathrm{cd}(X)}(P_i^\vee)^\mathrm{fpqc}) \simeq 
\lim_I \tau_{\le 1 + \mathrm{cd}(X)}\map(p^*P_i,\mathcal{O}_V).
\]
We have a fiber sequence 
\[
\lim_I \tau_{\ge 2 + \mathrm{cd}(X)}\map(p^*P_i,\mathcal{O}_V) \to \lim_I \map(p^*P_i,\mathcal{O}_V) \to 
\lim_I \tau_{\le 1 + \mathrm{cd}(X)}\map(p^*P_i,\mathcal{O}_V) 
\]
where the middle term is equivalent to $\map(p^*(K^{\otimes k}),\mathcal{O}_V)$ and so is $2+\mathrm{cd}(X)$-connective by 
the observation in the first part of the proof. On the other hand, the left term in the sequence is 
$1 + \mathrm{cd}(X)$-connective. Hence the right term is both $2 + \mathrm{cd}(X)$-connective and 
$1 + \mathrm{cd}(X)$-truncated, and is thus zero.
\end{proof}

\begin{lem}\label{lem:description_positivew}
Let $X$ be a quasi-geometric stack satisfying perfect generation. The connective part of the weight structure on 
$\mathrm{QCoh}(X)$ admits the following description:
\[
\mathrm{QCoh}(X)_{w\ge 0} = \{ x \in \mathrm{QCoh}(X) | \text{for any } y\in\mathrm{Perf}(X)_{\ge 0} \map(\mathcal{O}_X, y \otimes x) \text{ is connective } \}.
\]
Here $\mathrm{Perf}(X)_{\ge 0}$ denotes the subcategory of perfect complexes of Tor-amplitude $\ge 0$.
\end{lem}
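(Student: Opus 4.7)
The plan is to characterize $\mathrm{QCoh}(X)_{w\ge 0}$ via the orthogonality axiom for the generated local weight structure and then translate the condition using duality on perfect complexes. By Remark~\ref{rmk:generated_ws} applied to the generating set $\mathrm{Perf}(X)_{\le 0}$, we have
\[
\mathrm{QCoh}(X)_{w\ge 0} = \{ x \in \mathrm{QCoh}(X) : \map(y,x) \text{ is connective for all } y \in \mathrm{Perf}(X)_{\le 0}\}.
\]
So it suffices to show that this family of connectivity conditions coincides with the one in the statement.

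Every perfect complex $y$ is dualizable in the symmetric monoidal category $\mathrm{QCoh}(X)$ (Remark~\ref{rmk:stk_tstruct}), so one has a natural equivalence $\map(y,x) \simeq \map(\mathcal{O}_X, y^\vee \otimes x)$. The key fact I then need is that the duality $(-)^\vee$ on $\mathrm{Perf}(X)$ reverses Tor-amplitudes: a perfect complex $y$ has Tor-amplitude $\le 0$ if and only if $y^\vee$ has Tor-amplitude $\ge 0$. Combined with $(y^\vee)^\vee \simeq y$, the assignment $y\mapsto y^\vee$ is a bijection between $\mathrm{Perf}(X)_{\le 0}$ and $\mathrm{Perf}(X)_{\ge 0}$, and substituting $z = y^\vee$ in the orthogonality description above yields exactly the statement.

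The one nontrivial input is the Tor-amplitude flip under duality. I would verify it by reducing to the affine case: by Remark~\ref{rmk:stk_tstruct}, the pullback along an fppf cover $\Spec R \to X$ is t-exact and symmetric monoidal, and it commutes with $(-)^\vee$, so Tor-amplitude of $y$ and of $y^\vee$ can both be tested locally. Over $\Spec R$, a perfect complex has Tor-amplitude in $[a,b]$ if and only if it is represented by a bounded complex of finitely generated projective $R$-modules concentrated in degrees $[a,b]$, and the $R$-linear dual of such a complex is concentrated in degrees $[-b,-a]$; in particular Tor-amplitude $\le 0$ is exchanged with Tor-amplitude $\ge 0$. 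With this in hand, the proof is essentially a one-line manipulation, so I do not expect any serious obstacle; the only thing to be a bit careful about is ensuring that the tensor-hom identification $\map(y,x)\simeq \map(\mathcal{O}_X,y^\vee \otimes x)$ is used with the correct variance, which is automatic from the definition of dualizability.
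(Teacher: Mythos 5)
Your proposal is correct and follows essentially the same route as the paper: both use the orthogonality description of the generated weight structure, the dualizability identification $\map(y,x)\simeq\map(\mathcal{O}_X,y^\vee\otimes x)$, and the fact that $(\mathrm{Perf}(X)_{\le 0})^\vee=\mathrm{Perf}(X)_{\ge 0}$. The paper states the Tor-amplitude flip without proof, so your extra verification via fppf-local reduction to the affine case is just a more careful spelling out of the same argument.
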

\begin{proof}
For an object $x\in \mathrm{QCoh}(X)$ to be weight connective means that for all $y\in \mathrm{Perf}_{\le 0}$ the spectrum
\[
\map(y,x)
\] 
is connective. By duality and since $(\mathrm{Perf}(X)_{\le 0})^\vee = \mathrm{Perf}(X)_{\ge 0}$, this is equivalent to 
connectivity of the spectrum 
\[
\map(\mathcal{O}_X, y\otimes x)
\]
for all $y\in \mathrm{Perf}(X)_{\ge 0}$. 
\end{proof}

\begin{cor}\label{cor:alternative_description_positivew}
Let $X$ be a quasi-geometric stack satisfying connective perfect generation. Then the connective part of the weight 
structure on $\mathrm{QCoh}(X)$ admits the following description:
\[
\mathrm{QCoh}(X)_{w\ge 0} = \{ x \in \mathrm{QCoh}(X) | \text{for any } y\in\mathrm{QCoh}(X)_{\ge 0} \map(\mathcal{O}_X, y \otimes x) \text{ is connective. } \}
\]
\end{cor}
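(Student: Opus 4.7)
The plan is to use \Cref{lem:description_positivew} together with the fact that, under connective perfect generation, $\mathrm{QCoh}(X)_{\ge 0}$ is generated under filtered colimits by connective perfect complexes. The inclusion $\supset$ of the corollary follows from \Cref{lem:description_positivew} once one verifies that $\mathrm{Perf}(X)_{\ge 0} \subset \mathrm{QCoh}(X)_{\ge 0}$: a perfect complex of Tor-amplitude $\ge 0$ becomes connective after tensoring with any heart object, and applying this to a heart object with full support (e.g.\ $\pi_0 \mathcal{O}_X$) forces the complex itself to be connective.

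For the nontrivial inclusion $\subset$, I would fix $x \in \mathrm{QCoh}(X)_{w\ge 0}$ and consider the full subcategory
\[
\mathcal{S}_x := \{y \in \mathrm{QCoh}(X) \mid \map(\mathcal{O}_X, y \otimes x) \text{ is connective}\}.
\]
Because $X$ is fcd, $\mathcal{O}_X$ is compact in $\mathrm{QCoh}(X)$, and since $\otimes$ preserves colimits in each variable separately, $\mathcal{S}_x$ is closed under filtered colimits; closure under extensions is automatic since connective spectra are. By \Cref{lem:description_positivew}, $\mathrm{Perf}(X)_{\ge 0} \subset \mathcal{S}_x$.

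The key step is the observation that $\mathrm{Perf}(X)^{\mathrm{cn}} \subset \mathrm{Perf}(X)_{\ge 0}$: for $P$ connective perfect and $M \in \mathrm{QCoh}(X)^{\heartsuit}$, the tensor product $P \otimes M$ is connective by the compatibility of the $t$-structure with the symmetric monoidal structure on $\mathrm{QCoh}(X)$ (\Cref{rmk:stk_tstruct}), which is exactly the defining condition for Tor-amplitude $\ge 0$. Hence $\mathrm{Perf}(X)^{\mathrm{cn}} \subset \mathcal{S}_x$. The connective perfect generation hypothesis then writes an arbitrary $y \in \mathrm{QCoh}(X)_{\ge 0}$ as a filtered colimit of connective perfect complexes, so $y \in \mathcal{S}_x$ by the colimit-closure of $\mathcal{S}_x$, completing the proof. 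I do not anticipate a serious obstacle here: all ingredients are already in the paper and the argument is a direct combination of the preceding lemma, connective perfect generation, and $t$-monoidal compatibility.
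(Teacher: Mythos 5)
Your proof is correct and takes essentially the same route as the paper's: \Cref{lem:description_positivew}, compactness of $\mathcal{O}_X$ together with colimit-preservation of $\otimes$ to pass from $\mathrm{Perf}(X)_{\ge 0}$ to its closure under filtered colimits, and connective perfect generation to identify that closure with $\mathrm{QCoh}(X)_{\ge 0}$. The only difference is that you make explicit the identification of Tor-amplitude $\ge 0$ perfect complexes with $t$-connective perfect complexes (in both directions), which the paper's two-line proof leaves implicit in the final step.
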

\begin{proof}
Since $\mathcal{O}_X$ is compact and the tensor product commutes with colimits, the condition of connectivity of 
$\map(\mathcal{O}_X, y \otimes x)$ for all $y\in \mathrm{Perf}_{\ge 0}$ automatically implies (and is equivalent to) the 
connectivity of $\map(\mathcal{O}_X, y \otimes x)$ for all $y\in \Ind(\mathrm{Perf}_{\ge 0})$. If $X$ satisfies connective 
compact generation this is exactly the condition in the statement.
\end{proof}

\begin{prop}\label{prop:stacky_cstructure}
Let $X$ be a quasi-geometric stack satisfying perfect generation. 
Let $f:Y \to X$ be a quasi-affine morphism such that additionally 
$f_*$ sends $\mathrm{Perf}(Y)_{\ge 0}$ to $\Ind(\mathrm{Perf}(X)_{\ge -d})$ for some $d \in \mathbb{N}$. Then then the base change functor 
\[
f^*:\mathrm{QCoh}(X) \to \mathrm{QCoh}(Y)
\]
sends weight connective objects to weight $-d$-connective objects. In particular, if $d=0$ then then $f^*$ is weight exact. 

The additional assumption is automatic if $X$ satisfies connective perfect generation, and in this case one can choose 
$d=0$ whenever $f$ is affine. 
\end{prop}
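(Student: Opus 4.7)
The plan is to use \Cref{lem:description_positivew} to characterize weight connectivity in terms of tensor-mapping against $\mathcal{O}_X$, after which the projection formula transports the computation from $Y$ to $X$, and the hypothesis on $f_*$ combined with the compactness of $\mathcal{O}_X$ finishes the job. The additional statement about connective perfect generation will then follow from the cohomological dimension bound in \Cref{prop:basechange}(3).

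Concretely, fix $x \in \mathrm{QCoh}(X)_{w\ge 0}$. By \Cref{lem:description_positivew} applied to $Y$, to show $f^*x \in \mathrm{QCoh}(Y)_{w\ge -d}$ it suffices to show that $\map_Y(\mathcal{O}_Y, z \otimes f^*x)$ is $-d$-connective for every $z \in \mathrm{Perf}(Y)_{\ge 0}$. By the $(f^*, f_*)$-adjunction together with the projection formula (\Cref{prop:basechange}(1), applicable because $f$ is quasi-affine hence representable), this mapping spectrum is equivalent to $\map_X(\mathcal{O}_X, f_*(z) \otimes x)$. By hypothesis $f_*(z)$ may be written as a filtered colimit $\colim_{i \in I} P_i$ with $P_i \in \mathrm{Perf}(X)_{\ge -d}$. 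Since $X$ is fcd (as part of perfect generation), $\mathcal{O}_X$ is compact in $\mathrm{QCoh}(X)$ and $-\otimes x$ is cocontinuous, so the mapping spectrum becomes $\colim_i \map_X(\mathcal{O}_X, P_i \otimes x)$. Each $\Sigma^d P_i$ lies in $\mathrm{Perf}(X)_{\ge 0}$, so by \Cref{lem:description_positivew} each term of the colimit is $-d$-connective, and a filtered colimit of $-d$-connective spectra is $-d$-connective. When $d=0$, weight coconnective preservation of $f^*$ is automatic: $f^*$ is strongly monoidal, so it preserves tor-amplitude $\le 0$ and carries $\mathrm{Perf}(X)_{\le 0}$ into $\mathrm{Perf}(Y)_{\le 0}$, and since $f^*$ is exact and cocontinuous it preserves the closure operations of \Cref{lem:closureofneg} that generate $\mathrm{QCoh}(X)_{w\le 0}$ from this class.

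For the final sentence, assume $X$ satisfies connective perfect generation. Then every object of $\mathrm{QCoh}(X)^{\mathrm{cn}}$ is a filtered colimit of connective perfect complexes, and shifting this shows that every object of $\mathrm{QCoh}(X)_{\ge -d}$ lies in $\Ind(\mathrm{Perf}(X)_{\ge -d})$. \Cref{prop:basechange}(3) tells us that $f_*$ sends $\mathrm{QCoh}(Y)^{\mathrm{cn}}$ into $\mathrm{QCoh}(X)_{\ge -d}$ for some $d$ bounded by the cohomological dimension of $f$, and a fortiori it sends $\mathrm{Perf}(Y)_{\ge 0}$ into this subcategory, which sits inside $\Ind(\mathrm{Perf}(X)_{\ge -d})$. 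When $f$ is affine, the same proposition permits the choice $d = 0$. The main obstacle in the whole argument is essentially bookkeeping: orchestrating \Cref{lem:description_positivew}, the projection formula, and the compactness of $\mathcal{O}_X$ in the right order. No genuinely new input beyond these tools is required.
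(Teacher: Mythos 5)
Your proof is correct and follows essentially the same route as the paper's: reduce via Lemma~\ref{lem:description_positivew} to connectivity of $\map(\mathcal{O}_Y, y\otimes f^*x)$, identify this with $\map(\mathcal{O}_X, f_*y\otimes x)$ by the projection formula of Proposition~\ref{prop:basechange}(1), and conclude from the hypothesis $f_*y\in\Ind(\mathrm{Perf}(X)_{\ge -d})$ together with compactness of $\mathcal{O}_X$, with the final claim coming from Proposition~\ref{prop:basechange}(3). The only cosmetic differences are that you phrase the key identification via the $(f^*,f_*)$-adjunction rather than via $(\pi_Y)_*=(\pi_X)_*f_*$, and that you spell out the filtered-colimit step and the preservation of weight coconnectivity, both of which the paper leaves implicit.
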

\begin{proof}
We use the description of the weight connective objects from Lemma~\ref{lem:description_positivew}. 
In particular, it yields that for any $x \in \mathrm{QCoh}(X)_{w\ge 0}$ and any $t \in \Ind(\mathrm{Perf}(X)_{\ge -d})$ 
\[
\map(\mathcal{O}_X,t \otimes x)
\]
is $-d$-connective. What we need to show is that for any 
$y\in \mathrm{Perf}(Y)_{\ge 0}$ the spectrum
\[
\map(\mathcal{O}_Y, y \otimes f^*x)
\]
is connective. Consider the commutative diagram 
\[
\begin{tikzcd}
Y \arrow[r,"f"]\arrow[dr,"\pi_Y"] & X\arrow[d,"\pi_X"]\\
& \Spec(\SP).
\end{tikzcd}
\]
Using the description of the global sections as the pushforward along the structure map and the projection formula, 
we have equivalences:
\[
(\pi_X)_*(f_*y \otimes x) \simeq (\pi_X)_*f_*(y \otimes f^*x) \simeq (\pi_Y)_*(y \otimes f^*x) \simeq \map(\mathcal{O}_Y, y \otimes f^*x).
\]
It suffices to show that 
\[
\map(\mathcal{O}_X, f_*y \otimes x) \simeq (\pi_X)_*(f_*y \otimes x)
\]
is $-d$-connective. This follows since $f_*y \in \Ind(\mathrm{Perf}(X)_{\ge -d})$. 

Note that there exists $d$ (which can be chosen to be $0$ if $f$ is affine) such that $f_*y$ is $-d$-connective for 
$y \in \mathrm{QCoh}(Y)^{\mathrm{cn}}$ by Proposition~\ref{prop:basechange}(3), so if $X$ satisfies connective compact 
generation then the extra assumption is redundant.
\end{proof}


\begin{thm}\label{thm:stacky_cstr_2}
Let $X$ be a quasi-geometric stack that satisfies connective perfect generation. 
Then 
$\mathrm{Perf}(X)$ with its pre-$c$-structure of \Cref{cnstr:weight_structure_onstacks} is a $c$-category. 
\end{thm}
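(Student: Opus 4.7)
The plan is to apply the descent theorem for $c$-structures, Theorem~\ref{thm:cdescent}, along an affine fppf cover of $X$. The overall strategy is to reduce from $\mathrm{Perf}(X)$ to $\mathrm{Perf}(R)$ for a connective ring $R$, where we already have a $c$-structure (in fact a bounded weight structure) by Example~\ref{exm:weight_conn_ring}.

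First I choose an affine fppf cover $\pi \colon \Spec R \to X$, which exists by the definition of a quasi-geometric stack, and I note it is automatically quasi-affine. I then form the pullback functor $f = \pi^{*} \colon \mathrm{Perf}(X) \to \mathrm{Perf}(R)$. Under the perfect generation assumption there is a canonical identification $\Ind(\mathrm{Perf}(X)) \simeq \mathrm{QCoh}(X)$ and similarly for $\mathrm{Perf}(R)$, so $f_{!}$ is identified with the usual pullback of quasi-coherent sheaves. Since $f$ sends tor-amplitude $\leq 0$ perfect complexes to tor-amplitude $\leq 0$ perfect complexes, it is a morphism of pre-$c$-categories in the sense of Definition~\ref{dfn:preccategory}.

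Next I check the three hypotheses of Theorem~\ref{thm:cdescent}. The target $\mathrm{Perf}(R)$ carries the bounded weight structure from Example~\ref{exm:weight_conn_ring}, which is a $c$-structure of width $n = 0$ by Example~\ref{exm:from_heart_to_c}. By Proposition~\ref{prop:descendability_stacks}, the functor $f^{*}$ is descendable with some exponent $m = \mathrm{cd}(\Spec R) + 3 = 3$. For the weight-exactness hypothesis, Proposition~\ref{prop:stacky_cstructure}, combined with the fact that $\pi$ is affine and $X$ satisfies connective perfect generation, shows that $f_{!}$ is weight exact, i.e.\ preserves both weight coconnectivity and weight connectivity, so the constant $d$ of Theorem~\ref{thm:cdescent} can be taken to be $0$.

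Plugging these values into Theorem~\ref{thm:cdescent} gives a $c$-structure on $\mathrm{Perf}(X)$ of width at most $n + m(d+1) = 3$. The main thing to be careful about is the bookkeeping of hypotheses rather than any deep argument: one must confirm that the weight structure on $\Ind(\mathrm{Perf}(X)) = \mathrm{QCoh}(X)$ produced in Construction~\ref{cnstr:weight_structure_onstacks} is genuinely the compactly generated weight structure on a pre-$c$-category (which follows from finite cohomological dimension plus the identification $\Ind(\mathrm{Perf}(X)) \simeq \mathrm{QCoh}(X)$), and that Proposition~\ref{prop:stacky_cstructure} indeed applies to the affine morphism $\pi$ with $d=0$. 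Granting these, the conclusion is immediate from Theorem~\ref{thm:cdescent}.
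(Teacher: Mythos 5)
Your proposal follows the same route as the paper: pull back along an fppf cover $\Spec R \to X$, verify the hypotheses of Theorem~\ref{thm:cdescent} using Proposition~\ref{prop:descendability_stacks} and Proposition~\ref{prop:stacky_cstructure}, and descend the $c$-structure from the bounded weighted category $\mathrm{Perf}(R)$.

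There is, however, one incorrect step in your bookkeeping: you assert that $\pi\colon \Spec R \to X$ is an \emph{affine} morphism and conclude $d=0$. For a quasi-geometric stack the diagonal is only required to be quasi-affine, so $\pi$ is in general only a quasi-affine morphism (it is affine precisely when $X$ is geometric, i.e.\ has affine diagonal). Consequently Proposition~\ref{prop:stacky_cstructure} does not give weight exactness of $\pi^*$; it only gives that $\pi^*$ sends weight connective objects to weight $-d$-connective objects with $d=\mathrm{cd}(\pi)$, which may be positive. This does not break the proof of the theorem as stated, since Theorem~\ref{thm:cdescent} applies for any finite $d$ and the theorem makes no claim about the width; but your asserted width bound of $3$ is not justified (the paper's proof records width $\le m(d+1)$ with $m$ the descendability exponent and $d=\mathrm{cd}(\pi)$, cf.\ Remark~\ref{rmk:width_stk_ccat}). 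Relatedly, your computation $m=\mathrm{cd}(\Spec R)+3=3$ takes the statement of Proposition~\ref{prop:descendability_stacks} literally, whereas its proof actually produces the exponent $\mathrm{cd}(X)+3$; again this only affects the width estimate, not the existence of the $c$-structure.
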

\begin{proof}
Consider an fppf cover by an affine scheme $\Spec R \to X$. 
Since $X$ has quasi-affine diagonal, this map is quasi-affine. 
Now the base change functor 
\[
\Ind(\mathrm{Perf}(X)) \to \Ind(\mathrm{Perf}(\Spec R))
\]
preserves weight coconnectivity and by Proposition~\ref{prop:stacky_cstructure} sends weight connective objects to 
$-d$-connective objects where $d$ is the cohomological dimension of $f$. 
Additionally, it is descendable of exponent $m = \mathrm{cd}(X) + 3$ by 
\Cref{prop:descendability_stacks} (see also \cite[Theorem~7.1]{hall2022remarks}). 
Since $\mathrm{Perf}(\Spec R)$ is a bounded weighted category (Example~\ref{exm:weight_conn_ring}), 
it is also a $c$-category of width $\le 0$, so we can apply Theorem~\ref{thm:cdescent}.
\end{proof}

\begin{rmk}\label{rmk:width_stk_ccat}
Pick an fppf cover $f:\Spec R \to X$. The $c$-structure on $\mathrm{Perf}(X)$ in Theorem~\ref{thm:stacky_cstr_2} is of width 
$\le m(d+1)$, where $d$ is the cohomological dimension of $f$ and $m$ is the descendability exponent of $f$.
\tqed \end{rmk}

Our next goal is to compare the locally defined properties of morphisms with the respective notions for $c$-categories. 
We say that an affine morphism $f:Y\to X$ of stacks is $k$-connective or a nilpotent extension if it remains $k$-connective 
after pulling back along any fppf cover $\Spec R \to X$ (see the explanation in the beginning of 
Subsection~\ref{subsec:stacks}).

\begin{lem}\label{lem:tensor_qcoh}
Let $X$ be a quasi-geometric stack satisfying connective perfect generation. 
For any $x \in \mathrm{QCoh}(X)^{\mathrm{cn}}$ the functor 
\[
x\otimes -: \mathrm{QCoh}(X) \to \mathrm{QCoh}(X)
\]
preserves weight connective objects.
\end{lem}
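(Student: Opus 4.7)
The plan is to deduce this almost immediately from the characterization of weight connective objects in \Cref{cor:alternative_description_positivew}, which is available because $X$ satisfies connective perfect generation.

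First, I would fix $x \in \mathrm{QCoh}(X)^{\mathrm{cn}}$ and $y \in \mathrm{QCoh}(X)_{w\geq 0}$, and unpack what needs to be proved: by \Cref{cor:alternative_description_positivew}, the weight connectivity of $x \otimes y$ amounts to showing that for every $z \in \mathrm{QCoh}(X)_{\geq 0}$, the mapping spectrum
\[
\map(\mathcal{O}_X, z \otimes (x \otimes y))
\]
is connective.

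Then I would use the associativity of the symmetric monoidal structure to rewrite this as $\map(\mathcal{O}_X, (z \otimes x) \otimes y)$, and invoke the compatibility of the t-structure with the monoidal structure on $\mathrm{QCoh}(X)$ (\Cref{rmk:stk_tstruct}): since $z$ and $x$ are both connective, so is $z \otimes x$. Applying the characterization of \Cref{cor:alternative_description_positivew} once more, now to $y$ against the connective object $z \otimes x$, yields the required connectivity.

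There is no real obstacle here: the statement is essentially a formal consequence of the clean description of the connective part of the local weight structure available under connective perfect generation, together with the fact that $\mathrm{QCoh}(X)_{\geq 0}$ is closed under tensor products with connective objects. The only thing one must be careful about is that this uses \Cref{cor:alternative_description_positivew} rather than merely \Cref{lem:description_positivew}; the weaker statement of \Cref{lem:description_positivew} would force $z \otimes x$ to lie in $\mathrm{Perf}(X)_{\geq 0}$, which need not hold, so connective perfect generation is genuinely used.
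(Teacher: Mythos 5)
Your argument is correct and is exactly the paper's proof: the paper simply states that the lemma "follows immediately from the description of Corollary~\ref{cor:alternative_description_positivew}," and your write-up supplies precisely the intended unwinding (test $x\otimes y$ against $z\in\mathrm{QCoh}(X)_{\ge 0}$, absorb $z\otimes x$ into a single connective test object using compatibility of the t-structure with the tensor product, and apply the corollary to $y$). Your closing remark about why the $\mathrm{QCoh}$-level description is needed rather than \Cref{lem:description_positivew} is also exactly the point of assuming connective perfect generation.
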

\begin{proof}
This follows immediately from the description of Corollary~\ref{cor:alternative_description_positivew}.
\end{proof}


\begin{thm}\label{thm:morphisms_of_stacks}
Let $f:Y \to X$ be a quasi-affine morphism of quasi-geometric stacks that satisfy connective perfect generation. 
\begin{enumerate}
\item If $f$ is affine, then $f^*: \mathrm{Perf}(X) \to \mathrm{Perf}(Y)$ is a 0-affine morphism of $c$-categories.
\item If $f$ is $k$-connective and affine for some integer $k$, then $f^*: \mathrm{Perf}(X) \to \mathrm{Perf}(Y)$ is a 
$k$-connective functor of $c$-categories.
\item If $f$ is a nilpotent extension, then $f^*: \mathrm{Perf}(X) \to \mathrm{Perf}(Y)$ is a weak nilpotent extension of 
$c$-categories.
\end{enumerate}
\end{thm}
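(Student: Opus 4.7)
The plan is to translate each assertion into the adjunction $f^* \dashv f_*$ on quasi-coherent sheaves. Since $f$ is quasi-affine, \cite[Proposition~6.3.4.6]{SAG} gives an identification $\mathrm{QCoh}(Y) \simeq \mathrm{Mod}_A(\mathrm{QCoh}(X))$ for $A = f_*\mathcal{O}_Y$, under which $f_*$ becomes the forgetful functor and (by the projection formula, Proposition~\ref{prop:basechange}(1)) the monad $f_*f^*$ is $(-)\otimes_{\mathcal{O}_X} A$. Letting $K := \fib(\mathcal{O}_X \to A)$, the fiber $I$ of the unit $\id \to f_*f^*$ is $(-)\otimes K$, and $I^{\circ m} \simeq (-)\otimes K^{\otimes m}$. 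In all three parts, the claim reduces to analyzing this monad on the weight structure constructed in \Cref{cnstr:weight_structure_onstacks}.

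For (1), Proposition~\ref{prop:stacky_cstructure} applied to the affine (hence quasi-affine with $d=0$) morphism $f$ shows that $f^*:\mathrm{QCoh}(X)\to\mathrm{QCoh}(Y)$ is weight-exact, so $f^*:\mathrm{Perf}(X)\to\mathrm{Perf}(Y)$ is a morphism of $c$-categories in the sense of Definition~\ref{dfn:preccategory}. Conservativity of $f_*$ is immediate from the module identification above, which gives $0$-affineness.

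For (2), I verify the equivalent condition of Proposition~\ref{prop:weak_nilpotence}(1) (the case $k<0$ is automatic from (1)). Affineness and $k$-connectivity of $f$ give that $K$ is $k$-connective in the $t$-structure on $\mathrm{QCoh}(X)$, hence $K \simeq \Sigma^k K'$ with $K'$ connective. By Lemma~\ref{lem:tensor_qcoh}, tensoring with $K'$ preserves weight-connectivity, so $(-)\otimes K$ sends weight-connective objects to weight $k$-connective ones. Thus for $x,y$ in the weight heart the fiber $\Map_{\mathrm{QCoh}(X)}(x, y\otimes K)$ of $\Map(x,y) \to \Map(f^*x, f^*y)$ is $k$-connective, as required.

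For (3), $0$-connectivity follows from (2) with $k=0$, so by Definition~\ref{dfn:weak_nilpotence} it remains to verify the composition-vanishing criterion. The nilpotent extension hypothesis on $f$, translated to $\mathrm{QCoh}(X)$, provides an integer $m$ such that $K^{\otimes m}\to \mathcal{O}_X$ factors through a $1$-connective object $F$ in the $t$-structure. Given a composable chain $x_0 \xrightarrow{g_1}\cdots\xrightarrow{g_m} x_m$ in $\mathrm{Perf}(X)$ with $x_0$ weight-coconnective, $x_m$ weight-connective, and $f^*(g_i)\simeq 0$, each $g_i$ lifts via adjunction to a map $x_{i-1}\to I(x_i)$, so the composite factors through $I^{\circ m}(x_m) = x_m\otimes K^{\otimes m} \to x_m$ and hence through $x_m\otimes F$. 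Writing $F \simeq \Sigma F''$ with $F''$ connective, Lemma~\ref{lem:tensor_qcoh} shows $x_m\otimes F$ is weight $1$-connective, so the map from the weight-coconnective $x_0$ is null. The main obstacle is the globalization step at the start of this paragraph: the stack-theoretic nilpotent extension hypothesis is imposed via pullback to affine charts, and one must check that this forces the desired factorization $K^{\otimes m}\to F$ for the global algebra $A$. This is an fpqc descent argument, relying on $t$-exactness of pullbacks along faithfully flat covers (Remark~\ref{rmk:stk_tstruct}) together with the sheaf property of $\mathrm{QCoh}$ (Remark~\ref{rmk:descent_qcoh}) to promote the pointwise nilpotence to the required global factorization.
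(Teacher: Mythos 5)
Parts (1) and (2) of your proposal are correct and essentially coincide with the paper's argument: (1) is Proposition~\ref{prop:stacky_cstructure} plus the evident conservativity of the forgetful functor $\mathrm{Mod}_{f_*\mathcal{O}_Y}(\mathrm{QCoh}(X))\to\mathrm{QCoh}(X)$, and (2) is exactly the observation that $k$-connectivity of $K=\fib(\mathcal{O}_X\to f_*\mathcal{O}_Y)$ can be checked fppf-locally, combined with Lemma~\ref{lem:tensor_qcoh}.

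Part (3) has a genuine gap at precisely the step you flag as "an fpqc descent argument." The nilpotence hypothesis on $f$ is imposed only after pullback along an fppf cover $\pi:\Spec R\to X$, so what you know is that $\pi^*\bigl(K^{\otimes m}\to\mathcal{O}_X\to\pi_0^{\heartsuit}\mathcal{O}_X\bigr)$ is null. Nullhomotopy of a map does \emph{not} descend along fpqc covers: $\map_X(K^{\otimes m},\pi_0^{\heartsuit}\mathcal{O}_X)$ is the totalization of the mapping spectra over the \v{C}ech nerve, and a class in its $\pi_0$ can die on the cover while surviving globally because of contributions from $\pi_1$ of the higher cosimplicial terms. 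So $t$-exactness of $\pi^*$ and the sheaf property of $\mathrm{QCoh}$ do not yield the global factorization of $K^{\otimes m}\to\mathcal{O}_X$ through $\tau_{\geq1}\mathcal{O}_X$, and your argument does not close.

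This is exactly why the paper's proof of (3) is structured differently: it never globalizes the nilpotence of $K$. Instead it runs the vanishing argument upstairs on the cover, where Corollary~\ref{cor:higher_nilpotence} applies to the affine pullback $\bar f: Y\times_X U\to U$, and then uses the descendability of $\pi$ (Proposition~\ref{prop:descendability_stacks}) via Lemma~\ref{lem:adjacent_descendable}(2) to present the weight-connective target as a retract of an object with an $(m+1)$-step filtration whose graded pieces are pulled back from $U$; a decreasing induction over that filtration chains together $m+1$ local vanishing statements. The cost is visible in the conclusion: the nilpotence rank becomes $k(md+m+d)(m+1)$, where $m$ is the descendability exponent and $d$ the cohomological dimension of the cover, rather than the rank $m$ your argument would produce. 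If your globalization step were a routine descent verification, the rank jump discussed in the paper's final subsection would not occur; that discrepancy is a good internal check that the step cannot be taken for granted.
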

\begin{proof}
Claim $(1)$ is the statement of Proposition~\ref{prop:stacky_cstructure}. 

To prove the other two claims choose an fppf cover $\pi:U=\Spec R \to X$ and consider the diagram 
\begin{equation}\label{eq:adjointable}
\begin{tikzcd}
\mathrm{QCoh}(X) \arrow[r,"\pi^*"]\arrow[d, "f^*"] & \mathrm{QCoh}(U)\arrow[d,"\bar{f}^*"]\\
\mathrm{QCoh}(Y) \arrow[r, "\bar{\pi}^*"] & \mathrm{QCoh}(Y \times_X U).
\end{tikzcd}
\end{equation}
By Proposition~\ref{prop:basechange}(2) the square is vertically right adjointable, so in particular we have an equivalence
\[
\pi^*\mathcal{O}_Y \simeq \mathcal{O}_{Y \times_X U}
\]
of quasi-coherent sheaves on $U$. 
In particular, since connectivity in $\mathrm{QCoh}(X)$ can be checked locally, the map of $\mathrm{QCoh}(X)$-algebras 
\[
\mathcal{O}_X \to \mathcal{O}_Y
\] 
is $k$-connective if $f$ is. 
From this and Lemma~\ref{lem:tensor_qcoh} it follows that the endofunctor 
\[
\mathcal{O}_Y\otimes -: \mathrm{QCoh}(X) \to \mathrm{QCoh}(X)
\]
is a $k$-connective algebra in the associated t-structure (see Construction~\ref{cnstr:associated_tstr}). 

We will now prove assertion (3). 
Note that if the map $Y \to X$ is a nilpotent extension of rank $\le k$, then by the above argument the functor 
$\mathrm{Perf}(X) \to \mathrm{Perf}(Y)$ is at least $0$-connective. Denote by $m$ the exponent of descendability of $\pi$ and 
by $d$ the cohomological dimension of $\pi$. 
We are going to show that it is a nilpotent extension of rank $\le k(md+m+d)(m+1)$. For the ease of notation set $r=md+m+d$. Consider a composable sequence of morphisms
\begin{equation}\label{eq:composite2}
x_0 \stackrel{g_1}\to x_1 \stackrel{g_2}\to \dots \stackrel{g_{kr(m+1)}}\to x_{kr(m+1)}
\end{equation}
such that $x_0 \in \mathrm{QCoh}(X)_{w\le 0}$, $x_{kr(m+1)} \in \mathrm{QCoh}(X)_{w\ge 0}$ and $f^*(g_i) \simeq 0$ for all $i$. It suffices to show that the composite is $0$. 
Recall from Lemma~\ref{lem:adjacent_descendable}(2) that $x_{kr(m+1)}$ is a summand of an object $z$ admitting an 
$m+1$-step filtration $F_jz$ with $F_0z=0$ $1\le j\le m+1$ 
whose graded pieces are of the form $\pi^*y_j$ with 
$y_j \in \mathrm{Qcoh}(U)_{w\ge -md-m +(d+1)(j-1) -d} \subset \mathrm{Qcoh}(U)_{w\ge -r}$. We have 
$F_0x_{kr(m+1)} = 0$. 
Using decreasing induction on $j$ we will show that the composite $x_{krj} \to x_{kr(m+1)}$ in the sequence 
(\ref{eq:composite2}) factors as   
\[
x_{krj}\to F_jx_{kr(m+1)} \to x_{kr(m+1)}. 
\]
The case $j=0$ then yields the desired claim. The base of the induction is tautological. 
For the inductive step assume that we know the claim for a given $j$, consider the commutative diagram
\[
\begin{tikzcd}
x_{kr(j-1)} \arrow[r, "g_{kr(j-1)+1}"] & x_{kr(j-1)+1} \arrow[r,"g_{kr(j-1)+2}"] & \dots\arrow[r, "g_{krj}"] & x_{krj}\arrow[d]\\
&& F_{j-1}x_{kr(m+1)}\arrow[r] & F_jx_{kr(m+1)}\arrow[r] & \pi^*y_j.
\end{tikzcd}
\]
Our goal is to construct a map $x_{kr(j-1)} \to F_{j-1}x_{kr(m+1)}$ making the diagram commutative. 
The bottom row is a fiber sequence, so it suffices to show that the composite 
\[
x_{kr(j-1)} \stackrel{g_{kr(j-1)+1}}\to x_{kr(j-1)+1} \stackrel{g_{kr(j-1)+2}}\to \dots \to x_{krj}\to \pi^*y_j
\]
is trivial. By adjunction it suffices to show that the composite of $kr+1$ maps
\[
\pi^*x_{kr(j-1)} \stackrel{\pi^*g_{kr(j-1)+1}}\to \pi^*x_{kr(j-1)+1} \stackrel{\pi^*g_{kr(j-1)+2}}\to \dots \stackrel{\pi^*g_{krj}}\to \pi^*x_{krj}\to y_j
\]
is trivial. Since the square (\ref{eq:adjointable}) is commutative we obtain  that $\bar{f}^*(\pi^*g_i) \simeq 0$ for any 
$kr(j-1)+1 \le i < krj$. Taking the composite of the last two maps in the sequence, we obtain a composable 
sequence of $kr$ morphisms satisfying the assumptions of Corollary~\ref{cor:higher_nilpotence} (up to a shift of the 
sequence; note that $y_j \in \mathrm{QCoh}(U)_{w\ge -r}$ and $\pi^*x_{kr(j-1)} \in \mathrm{QCoh}(U)_{w\le 0}$). 
Hence, the composite is $0$ as desired. 
\end{proof}

\begin{rmk}\label{rmk:general_nilpotence_descent}
One might be tempted to ask whether Theorem~\ref{thm:morphisms_of_stacks}(3) holds in the generality of arbitrary $\Ind$-right 
adjointable diagrams of $c$-categories
\[
\begin{tikzcd}
	\cC \arrow[r, "{\pi}"]\arrow[d,"f"] & \cU \arrow[d,"{\bar{f}}"] \\
	\cD \arrow[r, "{\bar{\pi}}"] & \cV.
\end{tikzcd}
\]
where $\pi$ and $\bar{\pi}$ are descendable functors satisfying the assumptions of \Cref{thm:cdescent} and 
$\bar{f}$ is a nilpotent extension. Indeed, the proof of the nilpotence condition works in this generality, however, 
the proof of $0$-connectivity uses the specific properties of the local weight structure on stacks. 
In particular, in a similar situation from \Cref{exm:introexample} we have to employ a slightly different strategy to 
prove that $\mathrm{Perf}((\ZZ/p^n)^{hG}) \to \mathrm{Perf}((\FF_p)^{hG})$ is a nilpotent extension. 
\tqed \end{rmk}

\begin{rmk}\label{rmk:fpqc_covers_stacks}
Theorem~\ref{thm:stacky_cstr_2} remains valid if one only assumes the map 
$\Spec R \to \mathcal{F}$ to be representable and faithfully flat in Definition~\ref{dfn:stk}. 
We briefly sketch the argument here. As discussed in \Cref{rmk:phantom_descent}, 
\Cref{thm:cdescent} holds under a weaker assumption that $f$ is a {\it phantom} descendable functor 
(see \Cref{dfn:phantom_descendable}). On the other hand, any faithfully flat map of rings is phantom descendable, so arguing as in 
\Cref{prop:descendability_stacks} one can show that any faithfully flat quasi-affine pullback of stacks 
is descendable. The rest of the argument goes through in exactly the same way. Similarly, the proofs of 
\Cref{thm:morphisms_of_stacks}(1,2) generalize to this setting. 

On the contrary, the proof of \Cref{thm:morphisms_of_stacks}(3) does not work in this generality and we do not know whether 
the statement is true. 
\end{rmk}

\subsection{Localizing invariants of algebraic stacks}

We are now in a good position to apply the results of Section~\ref{sec:localizing_invariants_of_c_categories} to 
the $c$-categories constructed in Theorem~\ref{thm:stacky_cstr_2}. 

For instance, an immediate consequence of Corollary~\ref{cor:width_n_thh} is the following. 

\begin{cor}\label{cor:thh_bound}
Let $X$ be a quasi-geometric stack satisfying connective perfect generation. 
Then $\mathrm{THH}(X)$ is bounded below.
\end{cor}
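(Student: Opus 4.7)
The plan is to obtain this as an immediate consequence of the machinery developed in the previous sections, specifically by combining \Cref{thm:stacky_cstr_2} with \Cref{cor:width_n_thh}.

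First, I would invoke \Cref{thm:stacky_cstr_2} to endow $\mathrm{Perf}(X)$ with its local $c$-structure, which applies since $X$ satisfies connective perfect generation by hypothesis. Next, I would fix an fppf cover $\pi:\Spec R \to X$ by an affine scheme (which exists by the definition of a quasi-geometric stack) and appeal to \Cref{rmk:width_stk_ccat}: the width of the $c$-structure on $\mathrm{Perf}(X)$ is bounded by $n := m(d+1)$, where $d = \mathrm{cd}(\pi)$ is the cohomological dimension of $\pi$ and $m = \mathrm{cd}(X) + 3$ is the descendability exponent of $\pi^*$ provided by \Cref{prop:descendability_stacks}. Both of these are finite natural numbers since $X$ (and hence $\pi$) is of finite cohomological dimension.

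Finally, since $\mathrm{THH}$ is a localizing invariant and $\mathrm{Perf}(X)$ is a $c$-category of width $\le n$, \Cref{cor:width_n_thh} yields that $\mathrm{THH}(X) = \mathrm{THH}(\mathrm{Perf}(X))$ is $-n$-connective, hence bounded below. There is no real obstacle here: the statement is a direct corollary of the structural results already established, and the only content is unpacking the definitions so that $\mathrm{THH}(X)$ is interpreted as $\mathrm{THH}$ of the stable category $\mathrm{Perf}(X)$ equipped with the local $c$-structure.
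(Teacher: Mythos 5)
Your proof is correct and is exactly the route the paper takes: the corollary is stated there as an immediate consequence of \Cref{thm:stacky_cstr_2} combined with \Cref{cor:width_n_thh}. Your explicit bound $n = m(d+1)$ via \Cref{rmk:width_stk_ccat} and \Cref{prop:descendability_stacks} just makes precise the finiteness of the width that the paper leaves implicit.
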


Theorem~\ref{thm:truncating_invariants} together with Theorem~\ref{thm:morphisms_of_stacks}(3) imply the following 

\begin{cor}\label{cor:dgm_general}
Let $f:Y \to X$ be a nilpotent extension of quasi-geometric stacks satisfying connective perfect generation and $E$ a truncating localizing 
invariant. Then the map 
\[
E(X) \stackrel{f^*}\to E(Y)
\]
is an equivalence.
\end{cor}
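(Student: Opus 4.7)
The plan is to deduce the corollary as a direct combination of the categorical results developed earlier in the paper and the stack-theoretic input of the preceding subsection. The invariant $E(X)$ is understood as $E(\mathrm{Perf}(X))$, and the target will be to reduce the statement to \Cref{thm:truncating_invariants}(2), which applies to nilpotent extensions of $c$-categories.

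First, I would invoke \Cref{thm:stacky_cstr_2} to equip both $\mathrm{Perf}(X)$ and $\mathrm{Perf}(Y)$ with their canonical $c$-structures arising from the local weight structure of \Cref{cnstr:weight_structure_onstacks}; this is legal because both $X$ and $Y$ satisfy connective perfect generation by hypothesis. Since $f \colon Y \to X$ is a nilpotent extension of stacks in the sense of Subsection~\ref{subsec:stacks}, it is in particular affine (and hence quasi-affine), so the hypotheses of \Cref{thm:morphisms_of_stacks}(3) are satisfied. Applying that theorem yields that the induced pullback functor
\[
f^* \colon \mathrm{Perf}(X) \longrightarrow \mathrm{Perf}(Y)
\]
is a weak nilpotent extension of $c$-categories.

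With this in hand, \Cref{thm:truncating_invariants}(2) applies verbatim: a truncating localizing invariant $E$ sends weak nilpotent extensions of $c$-categories to equivalences, since the resolution of \Cref{cor:resolution_full} is functorial in such maps (by \Cref{thm:nilpotent_resolution} and \Cref{cor:resolution_nilpotent}), the endpoint is a nilpotent extension of connective ring spectra, and on the connective ring level a truncating invariant is insensitive to nilpotent extensions by definition. This gives the desired equivalence $E(X) \simeq E(Y)$.

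The only step requiring a moment of attention is verifying that the input hypotheses of \Cref{thm:morphisms_of_stacks}(3) are genuinely met, i.e. that ``nilpotent extension of stacks'' in the sense of this paper triggers ``weak nilpotent extension of $c$-categories'' in the sense of \Cref{dfn:weak_nilpotence}; but this is precisely the content of that theorem, so no further obstruction arises and the corollary follows immediately.
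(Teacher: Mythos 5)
Your proof is correct and follows exactly the route the paper takes: the corollary is stated as an immediate consequence of \Cref{thm:truncating_invariants} combined with \Cref{thm:morphisms_of_stacks}(3), which is precisely your chain of deductions. Your extra remark that \Cref{thm:truncating_invariants}(2) really applies to \emph{weak} nilpotent extensions (since its proof only invokes \Cref{thm:nilpotent_resolution}) is a point the paper glosses over, and it is needed here because \Cref{thm:morphisms_of_stacks}(3) only produces a weak nilpotent extension — so you have, if anything, been slightly more careful than the source.
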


In particular, applying the previous corollary to $E$ being the fiber of the cyclotomic trace
\[
	\K^{\mathrm{inv}}(-):= \fib(\K(-) \to \TC(-)) 
\]
we obtain the following variant of the DGM theorem for stacks. 

\begin{cor}\label{cor:dgm}
Let $f:Y \to X$ be a nilpotent extension of quasi-geometric stacks satisfying connective perfect generation. Then 
the square
\[
\begin{tikzcd}
\K(X)\arrow[r]\arrow[d, "{f^*}"] & \TC(X)\arrow[d, "{f^*}"]\\
\K(Y)\arrow[r] & \TC(Y)
\end{tikzcd}
\]
is a pullback square.
\end{cor}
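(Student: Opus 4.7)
The plan is to deduce this immediately from the preceding Corollary~\ref{cor:dgm_general}, applied to the localizing invariant $\K^{\mathrm{inv}}(-) = \fib(\K(-) \to \TC(-))$. The square in question is a pullback square in spectra precisely when the induced map on vertical fibers
\[
\K^{\mathrm{inv}}(X) = \fib(\K(X) \to \TC(X)) \longrightarrow \fib(\K(Y) \to \TC(Y)) = \K^{\mathrm{inv}}(Y)
\]
is an equivalence, so it suffices to show that $\K^{\mathrm{inv}}$ sends $f^*:\mathrm{Perf}(X) \to \mathrm{Perf}(Y)$ to an equivalence.

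First I would invoke Corollary~\ref{cor:dgm_general}: since $f:Y \to X$ is a nilpotent extension of quasi-geometric stacks satisfying connective perfect generation, any truncating localizing invariant $E$ satisfies $E(f^*):E(X) \to E(Y)$ is an equivalence. It therefore remains only to verify that $\K^{\mathrm{inv}}$ is a truncating localizing invariant. As a fiber of localizing invariants, $\K^{\mathrm{inv}}$ is itself localizing. The fact that it is truncating is precisely the content of the classical Dundas--Goodwillie--McCarthy theorem (cf.\ \cite{dundas2012local}), which was already cited in the proof of Corollary~\ref{cor:DGM}: for a surjection of connective ring spectra $R \to \pi_0R$ with nilpotent kernel (and more generally for any nilpotent extension of connective rings), the map $\K^{\mathrm{inv}}(R) \to \K^{\mathrm{inv}}(\pi_0R)$ is an equivalence.

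Combining these two inputs, the map $\K^{\mathrm{inv}}(X) \to \K^{\mathrm{inv}}(Y)$ is an equivalence, which by the fiber sequence interpretation above is equivalent to the displayed square being a pullback. There is no substantial obstacle here: the whole content has been packaged into Corollary~\ref{cor:dgm_general} and the earlier machinery (Theorem~\ref{thm:morphisms_of_stacks}(3), Theorem~\ref{thm:stacky_cstr_2}, and Theorem~\ref{thm:truncating_invariants}(2)) that translates nilpotent extensions of stacks to nilpotent extensions of $c$-categories and uses the truncating property. This final step is a one-line application, and the main work—establishing that $\mathrm{Perf}(X) \to \mathrm{Perf}(Y)$ is a nilpotent extension of $c$-categories, and that nilpotent extensions of $c$-categories are invisible to truncating invariants—has already been carried out.
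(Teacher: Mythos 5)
Your proposal is correct and is exactly the paper's argument: the corollary is obtained by applying Corollary~\ref{cor:dgm_general} to the truncating localizing invariant $\K^{\mathrm{inv}}(-)=\fib(\K(-)\to\TC(-))$, whose truncating property is the classical Dundas--Goodwillie--McCarthy theorem. Nothing further is needed.
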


We may also apply Corollary~\ref{cor:dgm_general} to $E=\mathrm{KH}$ and $E=\mathrm{HP}$ to get the following. 
\begin{cor}\label{cor:HP}
The functors $\mathrm{HP}(-)$ and $\mathrm{KH}(-)$ are nilinvariant on quasi-geometric stacks satisfying connective perfect generation. 
\end{cor}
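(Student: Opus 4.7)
The plan is to deduce the corollary as an immediate application of Corollary~\ref{cor:dgm_general} to the invariants $E = \mathrm{KH}$ and $E = \mathrm{HP}$. All the categorical input has already been prepared in the preceding results: given a nilpotent extension $f \colon Y \to X$ of quasi-geometric stacks satisfying connective perfect generation, Theorem~\ref{thm:stacky_cstr_2} equips $\mathrm{Perf}(X)$ and $\mathrm{Perf}(Y)$ with natural $c$-structures, and Theorem~\ref{thm:morphisms_of_stacks}(3) promotes the pullback $f^* \colon \mathrm{Perf}(X) \to \mathrm{Perf}(Y)$ to a weak nilpotent extension of these $c$-categories. Corollary~\ref{cor:dgm_general} then asserts that $E(f^*)$ is an equivalence for any truncating localizing invariant $E$, which is exactly the content of nilinvariance.

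The only remaining ingredient is the verification that $\mathrm{KH}$ and $\mathrm{HP}$ are truncating in the sense of Definition~\ref{dfn:truncating_invariant}. Both facts are established in Land--Tamme \cite{Land_2019}, the very reference that introduces the notion of a truncating invariant; so the proof reduces to invoking those results and feeding them into Corollary~\ref{cor:dgm_general}. Consequently there is essentially no obstacle: all of the structural work has already been carried out, and the corollary follows in one line. The most one might want to add is a parenthetical remark clarifying coefficient conventions for $\mathrm{HP}$ (where truncation is classically established over $\mathbb{Q}$ via Cortiñas--Haesemeyer--Schlichting--Weibel and packaged in the Land--Tamme framework), but this does not affect the formulation of the statement.
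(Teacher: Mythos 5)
Your proposal matches the paper's argument exactly: the corollary is obtained by applying Corollary~\ref{cor:dgm_general} to $E=\mathrm{KH}$ and $E=\mathrm{HP}$, with the truncating property of these invariants supplied by \cite{Land_2019}. Nothing further is needed.
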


\addtocontents{toc}{\protect\setcounter{tocdepth}{1}}
\section{Counterexamples}\label{sec:counterexamples}
The goal of this section is to provide some examples and questions about the boundaries of some of the notions studied in this paper. To highlight some of the most interesting examples:

\begin{enumerate}
	\item In \Cref{cvsheart}, we give an example of a $c$-category whose weight structure doesn't come from a heart category.
	\item In \Cref{exm:weaklydescendable}, we give an example of a functor that satisfies the conditions of descendability 
	object-wise, but isn't descendable.
	\item In \Cref{thm:localizingsubcat}, we show that the localizing motive of the ring $\QQ^{S^2}$ is not in the (stable) 
	localizing subcategory generated by motives of connective rings, and that the DGM theorem does not hold for 
	the map $\QQ^{S^2}[x_2] \to \QQ^{S^2}$.
\end{enumerate}

We begin by giving an example of a nilpotent extension of heart categories that is an isomorphism for all truncating 
invariants, but is not covered by Theorem \ref{thm:truncating_invariants}. 

\begin{exm}\label{exm:truncatingnotcovered}
	 Considering $\FF_p$ as a module over $\FF_p^{hC_p}$, we can form the trivial square zero extension $\Mod(\FF_p^{hC_p} \oplus \FF_p) \to \Mod(\FF_p^{hC_p})$. To see that this map is an equivalence on truncating invariants, we have a map of localization sequences
	 
	 \begin{center}
	 	\begin{tikzcd}
	 			 \ar[d]	\Mod(\FF_p[C_p]\oplus \FF_p[C_p]^{\oplus p})\ar[r]&\ar[d] \Mod(\FF_p^{hC_p}\oplus \FF_p )\ar[r]&\ar[d]\Mod(\FF_p^{tC_p})\\
	 	\Mod(\FF_p[C_p])	\ar[r] &\Mod(\FF_p^{hC_p}) \ar[r] & \Mod(\FF_p^{tC_p})

	 	\end{tikzcd}
	 \end{center}
	 
	 Since the left vertical map is a square-zero extension and the right vertical map is an equivalence, the middle vertical map is an isomorphism on all truncating invariants.
\tqed \end{exm}

It may be interesting to try to extend Theorem \ref{thm:truncating_invariants} to include examples such as that in 
\Cref{exm:truncatingnotcovered}. We note that Dotto \cite{dotto2018dundas} proves a DGM theorem for certain trivial 
square-zero extensions of exact categories. We wonder whether this is reflective of there being a stronger notion of 
truncating invariant which most examples of truncating satisfy which include these extensions of exact categories, or whether 
these maps are isomorphisms on all truncating invariants.

\subsection{Localizations of bounded weighted categories}


Some subtlety in the theory of $c$-categories comes from the fact that examples of $c$-categories in practice are a lot more special than generic examples.

Since $c$-categories arise as kernels of localizations between cconnective categories, wild behaviour of $c$-categories can arise from unusual examples of localizations.

Recall that for any connective ring spectrum and any set $S$ of morphisms between finitely generated projective modules over 
it one may define the localization $R \to R[S^{-1}]$ which universally inverts those morphisms, and $R[S^{-1}]$ is again going 
to be a connective ring spectrum. More generally, this works in the context of bounded weighted categories: given a 
subcategory of a bounded weighted category $\cD \subset \cC$ generated under finite limits and colimits by a set of objects of 
weight amplitude $[0,1]$, the localization $\cC/\cD$ admits a weight structure, such that the functor $\cC \to \cC/\cD$ is weight 
exact. 
One may wonder if the converse holds. If it did, studying $c$-categories (of width $\le 1$) would be reduced to studying such 
subcategories. The next example shows that it can fail: we construct a weight exact localization, whose kernel is 
not generated by objects of weight amplitude $[0,n-1]$.


\begin{exm}\label{exm:infinitecoordaxes}
Let us use $\Mod^{\ZZ^n}(-)$ to denote the category of $\ZZ^n$-graded modules over a $\ZZ^n$-graded ring spectrum. 
A slight generalization of Example~\ref{exm:weight_conn_ring} shows that $\Mod^{\ZZ^n}(A)^\omega$ is naturally a bounded 
weighted category when $A$ is a connective ring spectrum with the heart generated by graded shifts of $A$ as a module over 
itself. 

	Let $k$ be a field, and consider the $\ZZ^n$-graded ring $R_n = k[x_1,\dots,x_n]/(x_ix_j,i\neq j)$, where $k$ is a field and $x_i$ has grading generating the $i$-th coordinate. 
	By localizing away from the tensor ideal generated by $\otimes_{1}^n\cof(x_i)$, we obtain $\Mod^{\ZZ^n}(\prod_1^nk[x_i^{\pm1}])$. The ring $\prod_1^nk[x_i^{\pm1}]$ is connective, so we obtain a weight exact localization functor 
	\[
	L:\Mod^{\ZZ^n}(R_n)^\omega \to \Mod^{\ZZ^n}(\prod_1^nk[x_i^{\pm1}])^\omega.
	\]
	We claim that any connective object $M$ in the kernel of this localization with weight amplitude $[0,n-1]$ vanishes. This means in particular that $L$ is a weight exact localization of categories with bounded weight structure such that the kernel is not generated by objects of weight amplitude $[0,n-1]$. By taking a direct sum of these localizations over all $n\geq0$, we obtain such a localization where the kernel is not generated by objects of uniformly bounded weight amplitude.
	
	We now justify our claim. Because the ring $R_n$ is an iterated pullback, $\Mod^{\ZZ^n}(R_n)^{\omega}$ is a weight exact fully faithful subcategory of the iterated pullback category $$P_n=\Mod^{\ZZ^n}(k[x_1])^{\omega}\times_{\Mod^{\ZZ^n}(k)^{\omega}}\Mod^{\ZZ^n}(k[x_2])^{\omega}\times\dots \times_{\Mod^{\ZZ^n}(k)^{\omega}}\Mod^{\ZZ^n}(k[x_n])^{\omega}.$$
	Let $M$ be a connective nonzero object in $P_n$ that is $x_i$-torsion for each $i$. Let us use $M'$ to denote the image of $M$ in $\Mod^{\ZZ^n}(k)$, which is nonzero because $M$ is nonzero $x_i$-complete for each $i$. To finish the proof, it is enough to show that $\pi_kM'\neq0$ for some $k\geq n$.
	
	An $x_i$-torsion object of $\Mod^{\ZZ^n}(k[x_i])^{\omega}$ can be written as a finite direct sum of $k[x_i]/x_i^k$ for some $k$, shifted into some multigrading. It follows that for any nonzero homogeneous element $z \in \pi_*M'$, and any $1 \leq i \leq n$, $z$ is either the target of a $x_i$-Bockstein differential, from a class that lifts to a nonzero class in $\pi_{*+1}M'$, or the source of a  $x_i$-Bockstein differential to a class that lifts to a nonzero class in $\pi_{*+1}M'$.
	
	Starting with a nonzero homogeneous element $z_0 \in \pi_*M'$, we follow an algorithm which at each step takes in a nonzero homogeneous class $z_i\in\pi_*M'$ and outputs a nonzero homogeneous class $z_{i+1} \in \pi_{*\pm1}M'$ if $*<n$. We will show that the algorithm terminates by outputting a nonzero homogeneous element in $\pi_kM'$ for $k\geq n$, which will complete the proof.
	
	At each step of the algorithm, if our class $z_i$ is in $\pi_jM'$, for $j\geq n$, the algorithm terminates. Otherwise, we can assume that $0\leq j < n$. If $z_i$ is the source of a $x_{j+1}$-Bockstein differential, $z_{i+1}$ be a representative of the differential, and otherwise let it be a representative of the target of the differential.
	
	
	We can see this algorithm terminates as follows: we lexicographically order the multidegrees in $\ZZ^n$, and note that whenever the homotopy degree of the $z_{i+1}$ is higher than that of $z_{i}$, the multigrading with respect to this ordering is higher than for all $z_j$ for $j<i$. Thus the algorithm must terminate since there are only finitely many nonzero multigradings for which the homotopy groups of $M'$.
\tqed \end{exm}

Another fact about localizations of commutative connective $\EE_1$-rings is that they are flat. 
In the absence of commutativity, this fails. Many examples were constructed in \cite{NeemanRanickiSchofield2004}. Without 
claiming originality, we will also give a simple example below.

\begin{exm}\label{exm:notflat}
	Here we give an example of a localization of connective $\EE_1$-rings that is not flat.
	Let $k$ be a field, and consider the symmetric monoidal $k$-linear category $\cC$ which is defined as the pullback
	
	\begin{center}
		\begin{tikzcd}
			\cC	\ar[r]\ar[d] &\Mod(k[x_2]) \ar[d]\\
		\Mod(k[x_{2}^{-1}])	\ar[r] & \Mod(k[x_2^{\pm1}] \cong k[x_{-2}^{\pm1}]).
		\end{tikzcd}
	\end{center}
	$\cC$ should be thought of as `$\mathrm{Perf}(\PP^1_{\pm2})$', where $\PP^1_{\pm2}$ is like $\PP^1$, but built out of an affine line in topological degree $2$. 
	There is an invertible object $\cO(n) \in \cC$ that is given by the triple 
	\[
		(k[x_2],x_{-2}^{-n}k[x_{-2}],k[x_2^{\pm1}]]\cong x_{-2}^{-n} k[x^{\pm}_{-2}]),
	\]
	where the isomorphism sends $x_2$ to $x_{-2}^{-1}$. There is a natural map $y:\cO \to \cO(1)$ which is given by the inclusion $k[x_{-2}] \to x_{-2}^{-1}k[x_{-2}]$, and a map $x:\Sigma^2\cO \to \cO(1)$ which is given by multiplication by $x_2$ and $x_{-2}^{-1}$. One can check that $\cO$ and $\cO(1)$ generate the category, that $\End(\cO\oplus \cO(1))$ is the matrix ring 
	
	\begin{center}
	\[
		\begin{pmatrix}
			k & k \oplus \Sigma^2 k\\
			0 & k
		\end{pmatrix}
	\]
	\end{center}
	We can consider the localization away from $\cof(y)$. In this localization, $\cO$ is sent to the colimit 
	$\cO \to \cO(1) \to \cO(2) \to\dots$, where each map is given by multiplication by $y$. This colimit is the triple $
	(k[x_2],k[x_{-2}^{\pm1}], \cong)$, whose endomorphism ring is easily seen to be $k[x_2]$. It follows that the localization 
	of $R$ away from $\cof(y)$ is $M_2(k[x_2])$, which is connective, but clearly not flat over $R$.

	In particular, if we consider the kernel $K$ of the localization $\cC \to \Mod(k[x_2])$, with inclusion $i:K \to \cC$, 
	then the functor $ii^R$ doesn't send weight coconnective objects to weight bounded above objects.
\tqed
\end{exm}
%

We finally pose two questions, which we both expect to have negative answers to, but do not have examples.

\begin{qst}\label{question:kerlocccat}
	Does there exist a compactly generated kernel of a localization of bounded weighted categories that doesn't admit a $c$-structure?
\end{qst}

\begin{qst}\label{question:width0}
	Does the weight structure on $\Ind$ of a $c$-category of width 0 restrict to compact objects?
\end{qst}

\subsection{Comparison with fcd heart categories}

Another notion related to that of a $c$-category, is that of an fcd heart category. One may hope to build our whole theory using 
them. However, it turns out that their many drawbacks in such approach: 
\begin{itemize}
	\item many examples naturally arise as $c$-categories, but not heart categories (see \Cref{thm:stacky_cstr_2});
	\item the notion of morphisms of heart categories are not compatible with resolutions of Theorem~\ref{cor:resolution_full} (see \Cref{wrn:preserving_injectives}).
\end{itemize}

We can use \Cref{exm:infinitecoordaxes} to write a $c$-category whose weight structure does not come from a heart structure. 

\begin{exm}\label{cvsheart}
	Let $K_n$ be the kernel of the localization $L$ from \ref{exm:infinitecoordaxes}. We claim that $K_n$ has a $c$-structure 
	of width $1$, so that the infinite sum $\oplus_n K_n$ also does. To see this, we claim the pre-$c$-structure generated by 
	the dual of $\otimes_{1}^n\cof(x_i)$ is such a $c$-structure. By construction, an object $M \in K_n$ of the weight 
	structure is connective iff $M\otimes \otimes_{1}^n\cof(x_i)$ is connective.
	
	Since the dual of $\otimes_{1}^n\cof(x_i)$ is coconnective in the weight structure on $K_n$, it follows that the right 
	adjoint $i^*$ of the inclusion $i_{!}:\Ind(K_n) \to \Mod^{\ZZ^n}(R_n)$ preserves connective objects. It is enough to show 
	that $i^*$ applied to connective objects generates the connective objects of $\Ind(K_n)$, since $i_!i^*$ sends connective 
	objects to $-1$-connective objects, which shows that the pre-$c$-structure is a $c$-structure of width $1$.
	To show this, we note that the double right adjoint $i_*$ is fully faithful with image the $(x_1,\dots,x_n)$-complete 
	objects, and connectivity on such objects can be checked after tensoring with $\otimes_{1}^n\cof(x_i)$. It follows that it 
	sends connective objects to connective objects: for a connective object $M \in \Ind(K_n)$ we can compute $i_*M$ as the 
	limit $\lim_{(i_1,\dots, i_n) \in \ZZ^n} M/(x_1^{i_n},\dots, x_n^{i_n})$, and we know that $M/x_i^n$ to be connective in 
	$\Mod^{\ZZ^n}(R_n)$ (in particular, the system is also Mittag-Leffler on $\pi_0$). Then the result follows since $i^*i_*=\id_{\Ind(K_n)}$, and both functors preserve connectivity. 
	
	Finally, we claim that the weight structure on $\oplus_nK_n$ does not come from a fcd heart structure. Indeed, if there 
	was, then the compact objects would be generated by objects that are $[-m,0]$ bounded in the weight structure for some 
	$m$, but this is not true by \Cref{exm:infinitecoordaxes}.
\tqed
\end{exm}


 Given any fcd heart category, we may saturate it using \Cref{prop:cstr_heartstructure} and get a different 
heart structure on the same category. One may hope that exact functors between such saturated heart categories induce weight 
exact functors, but this also turns out to be false.

\begin{exm}
	We give here an example of a heart exact but not weight exact functor between heart categories coming from 
	\Cref{prop:cstr_heartstructure}. Let $k$ be a field, and consider the map of rings 
	$k\oplus \Sigma^{-1}k\epsilon_1 \to k\oplus \Sigma^{-1}(k\epsilon_1 \oplus k\epsilon_2)$, i.e the map of trivial 
	square-zero extensions in degree $-1$. The standard $t$-structure on the categories of modules over these rings restricts 
	to compact objects and is bounded by \cite[Proposition 3.9]{kcoconn}, and Postnikov towers split since the category is 
	cohomological dimension $1$. The heart of the $t$-structure is generated under extensions from the unit. From this, it is 
	easy to see that the heart structures arising from \Cref{exm:minusone} have the property that the nonnegative objects are 
	exactly the weight $\geq-1$ compact objects, so that it arises from \Cref{prop:cstr_heartstructure}.
	
	The weight hearts are generated under extensions by the unit, and so the base change functor is heart exact. It is not 
	weight exact, since the augmentation $k$ as a $k\oplus \Sigma^{-1}k\epsilon_1$-module is in the weight heart, but its base 
	change to $k\oplus \Sigma^{-1}(k\epsilon_1 \oplus k\epsilon_2)$ has $\epsilon_2$ acting nontrivially on the element $1$, 
	so it is not weight connective.
\tqed
\end{exm}

\subsection{Weakly phantom descendable functors}

The notion of a descendable functor in Definition~\ref{dfn:descendability} can be weakened in several different ways. 
One version is to require the filtrations in \Cref{prop:descendability} to be defined objectwise, but not functorially, and 
another is to require the filtrations to exist up to phantom extensions. Finally, we can require the filtrations to exist 
non-functorially and up to phantom extensions.

\begin{dfn}\label{dfn:phantom_descendable}
Let $f:\cC \to \cD$ be a functor between small stable categories. 
\begin{enumerate}
	\item We say that $f$ is {\bf weakly descendable} if for every object $x$, there exists a 
	length $m+1$ filtration on an object $y$ with associated graded in the image of $f^*$ such that $x$ is a retract of $y$.
	\item We say that $f$ is {\bf phantom descendable} if
	there is an endofunctor $X$ of $\Ind(\cC)$ that admits a length $m+1$ filtration with associated graded of the form 
	$f^*(Y_i)$ for some functors $Y_i:\Ind(\cC) \to \Ind(\cD)$ and such that there is a map $X \to Y$ in 
	$\End_{\PrL}(\Ind(\cC))$ with phantom fiber. 
	\item We say that $f$ is {\bf weakly phantom descendable} if for every object $x$, there exists a 
	length $m+1$ filtration on an object $y$ with associated graded in the image of $f^*$ and a map $x \to y$ with phantom 
	fiber. 
\end{enumerate}
\end{dfn}

A descendable functor satisfies all of the above conditions, and both weak descendability and phantom descendability imply 
weak phantom descendability. An example of a phantom descendable map which is not descendable was constructed in 
\cite[Theorem~D]{aoki2024cohomologylocallyprofinitesets}.

\begin{exm}\label{exm:weaklydescendable}
	We give an example of a weakly descendable functor with $m=0$ which is not descendable for any $m\geq0$.
	
	Consider the functor $\Mod(\FF_p) \to \Mod(\End_{\mathbb{\mathrm{BP}}}\FF_p)$ coming from the connective cover ring map, where $\mathrm{BP}$ is the Brown--Petersen spectrum. This functor has the property that every object is a retract of the image of the right adjoint, but we claim it is not descendable in the sense of \Cref{dfn:descendability}. To see this, we note that $\End_{\mathrm{BP}}\FF_p$ is the colimit of $\End_{\mathrm{BP}\langle n \rangle}\FF_p$ as $n \to \infty$, so it suffices to show that the exponent of nilpotence of the maps $\Mod(\FF_p) \to \Mod(\End_{\mathrm{BP}\langle n \rangle}\FF_p)$ tends to $\infty$.  $\Mod(\End_{\mathrm{BP}\langle n \rangle}\FF_p)$ is in the thick subcategory of $\FF_p$-bimodules generated by $\FF_p$, and so are its tensor powers. Thus we can think of these as living in the category of $\mathrm{THC}(\FF_p)$-modules, and $\End_{\mathrm{BP}\langle n \rangle}\FF_p$ becomes the object $\Hom_{\mathrm{Bimod}_{\FF_p}}(\FF_p,\End_{\mathrm{BP}\langle n \rangle}\FF_p)$. $\mathrm{THC}(\FF_p)$ is a divided power algebra over $\FF_p$ on a class $x$ in degree $-2$, and this object is the tensor product of the cofiber of $x^{(p^i)}$ for $0 \leq i\leq n$. In particular, the ideal of $\pi_*\mathrm{THC}(\FF_p)$ that acts by $0$ on this module has exponent of nilpotence $p^{n+1}$, so at least $p^{n+1}$ such modules are required to build the unit, which implies the claim. 
\tqed 
\end{exm}

As noted in Remark~\ref{rmk:phantom_descent} our main descent result (Theorem~\ref{thm:cdescent}) holds for weakly phantom 
descendable maps. However, we do not know much about other properties of these weak notions. 

\begin{qst}
	Does there exist a weakly descendable (resp. weakly phantom descendable) functor of small stable categories such that the map on opposite categories is not weakly descendable (resp. weakly phantom descendable)? 
\end{qst}

\subsection{Motives generated by \texorpdfstring{$c$-categories}{c-categories}}
We have already seen that not all small stable categories admit a $c$-structure (see \Cref{cor:width_n_thh}). However, if we are 
only interested in studying finitary localizing invariants we could hope to use our results for any small stable category 
$\cC$ whose localizing motives belongs to the full subcategory of $\Mloc$ generated by connective ring spectra. 
Here we give an example to show that it is not possible. 

Recall \cite{BGT} that $\Mloc$ is the universal category with a filtered colimit preserving localizing invariant from $\Cat^{\mathrm{perf}}$ that is denoted $\Uloc$. For a ring $R$, we will sometimes write $R$ instead of $\Uloc(R)$ or $\mathrm{Perf}(R)$, and freely use the fact that $\Uloc(-\otimes R)$ is a localizing invariant \cite[Lemma 3.12]{Land_2019}.

\begin{thm}[Efimov]\label{thm:localizingsubcat}
	$\Uloc(\QQ^{S^2})$ is not in the localizing subcategory of $\Mloc$ generated by $\Uloc(R)$ for connective rings $R$. Moreover, the fiber of the map $\K^{\inv}(\Uloc(\QQ^{S^2}[x_2]) \to \Uloc(\QQ^{S^2}))$ doesn't vanish.
\end{thm}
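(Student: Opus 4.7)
The plan is to reduce both claims to a single nonvanishing statement about the fiber of $\K^{\inv}(\QQ^{S^2}[x_2]) \to \K^{\inv}(\QQ^{S^2})$, and to establish this via localization and an explicit rational computation. As $\K^{\inv} := \fib(\K \to \TC)$ is a truncating invariant (a restatement of \cite{dundas2012local}), it vanishes on $\mathrm{Perf}(R)$ for every connective ring spectrum $R$ and, being localizing, on the full localizing subcategory of $\Mloc$ these generate. Thus a nonvanishing fiber forces at least one of $\Uloc(\QQ^{S^2})$, $\Uloc(\QQ^{S^2}[x_2])$ out of this subcategory; in fact, the computation below will yield $\K^{\inv}(\QQ^{S^2}) \neq 0$ directly, giving the first claim.

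To simplify the left-hand side I would use the localization sequence obtained by inverting $x_2$,
\[
\mathrm{Perf}_{x_2\text{-nil}}(\QQ^{S^2}[x_2]) \to \mathrm{Perf}(\QQ^{S^2}[x_2]) \to \mathrm{Perf}(\QQ^{S^2}[x_2^{\pm1}]),
\]
whose key feature is that $\QQ^{S^2}[x_2^{\pm1}]$ is the localization at $x_2$ of the connective commutative $\QQ$-algebra $\QQ[x_2, \alpha]/(\alpha^2)$ with $|x_2|=2,\,|\alpha|=0$, via the identification $\alpha = x_2\epsilon$. Hence $\Uloc(\QQ^{S^2}[x_2^{\pm1}])$ lies in the localizing subcategory generated by connective rings, so $\K^{\inv}$ vanishes on it. The torsion subcategory is generated by the $\QQ^{S^2}[x_2]$-module $\QQ^{S^2} = \QQ^{S^2}[x_2]/x_2$, whose endomorphism algebra is the Koszul-dual exterior algebra $\QQ^{S^2}[\xi]/(\xi^2)$ with $|\xi|=-3$ (computed from the resolution $\Sigma^2\QQ^{S^2}[x_2]\xrightarrow{x_2}\QQ^{S^2}[x_2]\to\QQ^{S^2}$), yielding
\[
\K^{\inv}(\QQ^{S^2}[x_2]) \simeq \K^{\inv}(\QQ^{S^2}[\xi]/(\xi^2)).
\]

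The remaining task is to show that the induced map $\K^{\inv}(\QQ^{S^2}[\xi]/(\xi^2)) \to \K^{\inv}(\QQ^{S^2})$ is not an equivalence, by explicit rational computation. Using $\QQ^{S^2} \simeq \QQ^{\otimes S^2}$, Loday's formula gives $\THH(\QQ^{S^2}/\QQ) \simeq \QQ^{LS^2}$ with $H^*(LS^2;\QQ)$ fully explicit, Künneth handles the square-zero extension $\QQ^{S^2}[\xi]/(\xi^2)$, rationally $\TC$ is identified with $\HC^-$, and rational $\K$-theory of $\EE_\infty$-algebras admits an Adams-weight decomposition into cyclic-homology summands. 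The main obstacle is that both rings are genuinely non-connective, so Goodwillie's identification of rational $\K$-theory with $\HC^-$ for connective rings does not apply directly; Efimov's framework of continuous $\K$-theory of dualizable presentable stable categories is what enables the computation in this range, producing an explicit nontrivial class that detects the nonvanishing of the fiber.
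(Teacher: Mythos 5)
There is a genuine gap at the very first step, and it undermines both halves of your argument. You assert that since $\K^{\inv}$ is truncating and localizing, it vanishes on the full localizing subcategory of $\Mloc$ generated by the $\Uloc(R)$ for $R$ connective. But that subcategory is closed under \emph{filtered colimits}, and $\K^{\inv}$ is not a finitary invariant: $\TC$ does not commute with filtered colimits, so vanishing on connective rings does not propagate to the localizing subcategory they generate. (It does propagate to the \emph{thick} subcategory, but that is a much weaker and easier statement, as the paper notes.) This is exactly the problem the paper's Proposition~\ref{prop:Gfunctor} is designed to solve: one replaces $\K^{\inv}$ by the relative functor $G(C)=\fib\bigl(\K^{\inv}(C\otimes\QQ[x_2])\to \K^{\inv}(C\otimes\QQ)\bigr)$, which \emph{is} colimit-preserving because after writing $\K^{\inv}$ in terms of $\K$, $\THH_{h\mathbb{T}}$ and $\TP$, the non-finitary $\TP$-contributions cancel between $C\otimes\SP[x_2]$ and $C$ by rational $\AA^1$-invariance of $\TP$. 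Without this (or some equivalent device), a nonvanishing fiber of $\K^{\inv}(\QQ^{S^2}[x_2])\to\K^{\inv}(\QQ^{S^2})$ tells you nothing about membership in the localizing subcategory, and even $\K^{\inv}(\QQ^{S^2})\neq 0$ would not give the first claim. The same flawed inference reappears when you conclude that $\K^{\inv}$ vanishes on $\QQ^{S^2}[x_2^{\pm 1}]$ because it lies in the localizing subcategory generated by connective rings.

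Separately, the "moreover" part is not actually proved: your final paragraph lists the ingredients (Loday's formula, K\"unneth, $\TC=\HC^-$ rationally, Adams weights) but then defers the essential difficulty --- that the rings are non-connective, so Goodwillie's theorem does not apply --- to an unspecified invocation of Efimov's continuous $\K$-theory, without exhibiting a nonzero class. The paper instead performs a chain of concrete reductions: the pullback square expressing $\Uloc(\QQ^{S^2})$ in terms of $\QQ$ and $\QQ[x_{-1}]$, localization sequences isolating $\QQ\langle\epsilon_0\rangle$ and $\QQ[x_{-1}^{\pm}]$, and $\AA^1$-invariance of $\K$ of connective rings, ending in a standard cyclic homology computation where the class $dt_0\,d\epsilon_{-2}\in\pi_0\TC(\QQ^{S^2}[t_0])$ is shown to survive to the total fiber. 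Your Koszul-duality reduction of the $x_2$-torsion subcategory is a reasonable alternative first move, but as it stands the proposal neither establishes the finitariness needed for the first claim nor completes the computation needed for the second.
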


\begin{rmk}
	The following part of the above result about $\Uloc(\QQ^{S^2})$ not being in the localizing subcategory generated by motives of connective rings was first proven by Efimov (to appear). We present our own proof here which is similar in spirit to Efimov's proof.
\tqed \end{rmk}

\begin{rmk}
	It is much easier to prove the statement that $\Uloc(\QQ^{S^2})$ is not in the thick subcategory of $\Mloc$ generated by connective rings: this follows from the fact that $\THH(\QQ^{S^2}) = \QQ^{LS^2}$ is not bounded below.
\tqed \end{rmk}

Our strategy to prove \Cref{thm:localizingsubcat} is to construct a colimit preserving functor $\Mloc \to \Sp$ that vanishes on $\Uloc(R)$ for $R$ connective, but not on $\QQ^{S^2}$. 

\begin{prop}\label{prop:Gfunctor}
	Let $F = \K^{\mathrm{inv}}(-\otimes \QQ)$. Then $G=\fib F(C\otimes \SP[x_2]) \to F(C)$ for $C \in \Mloc$ is a functor $\Mloc \to \Sp$ that preserves colimits and vanishes on $R$ a connective $\EE_1$-ring.
\end{prop}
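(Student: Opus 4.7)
The plan is to verify the two claims separately: colimit preservation of $G$, and vanishing on connective $\EE_1$-rings.

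For colimit preservation, observe that since $\K$ and $\TC$ are both finitary localizing invariants, so is $\K^{\inv} = \fib(\K \to \TC)$. By the universal property of $\Mloc$, $\K^{\inv}$ factors through $\Mloc$ as a colimit-preserving functor $\Mloc \to \Sp$. The endofunctors $-\otimes \QQ$ and $-\otimes \SP[x_2]$ of $\Mloc$ preserve colimits, since $\Mloc$ carries a symmetric monoidal structure whose tensor product is separately colimit-preserving. Hence both $F$ and $F(-\otimes \SP[x_2])$ are colimit-preserving as functors $\Mloc \to \Sp$, and $G$, being the fiber of a natural transformation between them, inherits colimit-preservation from the fact that finite limits commute with colimits in stable presentable categories.

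For vanishing on a connective $\EE_1$-ring $R$, I compute the argument in $\Mloc$ explicitly. Since $\Uloc$ is symmetric monoidal on its domain, the motive $\Uloc(R) \otimes \Uloc(\SP[x_2])$ is identified with $\Uloc(R \otimes_\SP \SP[x_2]) = \Uloc(R[x_2])$, where $R[x_2]$ is the free $\EE_1$-$R$-algebra on a generator in degree $2$. Under these identifications, the map $F(\Uloc(R)\otimes \Uloc(\SP[x_2])) \to F(\Uloc(R))$ becomes $\K^{\inv}(R[x_2]\otimes \QQ) \to \K^{\inv}(R\otimes \QQ)$, which after commuting tensor products is induced by the $\EE_1$-ring map $(R\otimes \QQ)[x_2] \to R\otimes \QQ$ sending $x_2$ to $0$. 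Both rings here are connective (since $R\otimes \QQ$ is), and the map is $2$-connective, so in particular induces an isomorphism on $\pi_0$ and is thus a nilpotent extension of connective $\EE_1$-rings in the sense of the introduction.

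By the classical Dundas--Goodwillie--McCarthy theorem, $\K^{\inv}$ takes nilpotent extensions of connective $\EE_1$-rings to equivalences, so the map above is an equivalence and $G(R) = 0$. There is no real obstacle here: the essential content is that the positive degree of $x_2$ makes the augmentation $\pi_0$-trivial, and that compatibility of the tensor product on $\Mloc$ with tensor products of ring spectra reduces the vanishing statement to an instance of DGM for connective rings.
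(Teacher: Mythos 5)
Your argument for the vanishing on connective rings is fine (and essentially the paper's, which just invokes that $\K^{\inv}$ is truncating and that $\pi_0(R[x_2]\otimes\QQ)\cong\pi_0(R\otimes\QQ)$; your detour through DGM for the $2$-connective augmentation $(R\otimes\QQ)[x_2]\to R\otimes\QQ$ is a correct instance of the same fact). The colimit-preservation half, however, has a genuine gap.

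The claim that ``$\K$ and $\TC$ are both finitary localizing invariants, so $\K^{\inv}$ is too'' is false: $\TC$ is \emph{not} finitary. It is built from $\TC^-$ and $\TP$, i.e.\ from homotopy fixed points and Tate constructions for the circle action on $\THH$, and these infinite limits do not commute with filtered colimits. Indeed, the entire point of this proposition --- and the reason it is not a triviality --- is that $F=\K^{\inv}(-\otimes\QQ)$ itself does \emph{not} preserve filtered colimits, but the \emph{difference} $G$ does. Your argument proves too much: if it were correct, $F$ alone would descend to a colimit-preserving functor on $\Mloc$, and then $\Uloc(\QQ^{S^2})$ could not be distinguished from the localizing subcategory generated by connective rings in the way Theorem~\ref{thm:localizingsubcat} requires.

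The correct route (the paper's) is to isolate exactly where finitariness fails and show it cancels in the fiber. Rationally $\TC$ agrees with $\TC^-$, so one has the cofiber sequences
\[
\K^{\mathrm{inv}}(-\otimes \QQ) \to \K(-\otimes \QQ) \to \TC^-(-\otimes \QQ), \qquad
\THH(-\otimes \QQ)_{h\mathbb{T}} \to \TC^{-}(-\otimes \QQ) \to \TP(-\otimes \QQ).
\]
Here $\K(-\otimes\QQ)$ and $\THH(-\otimes\QQ)_{h\mathbb{T}}$ are finitary (the latter because a homotopy orbit spectrum is a colimit), so the failure of $G$ to commute with a filtered colimit is measured by the total fiber of the square comparing $\colim\TP(C_\alpha\otimes\QQ[x_2])\to\TP(\colim C_\alpha\otimes\QQ[x_2])$ with the corresponding maps for $\QQ$. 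The vertical maps of that square are equivalences because $\TP$ is $\AA^1$-invariant rationally and $\QQ[x_2]\to\QQ$ is an $\AA^1$-equivalence, so the non-finitary $\TP$ contributions cancel. You need some version of this cancellation; without it the colimit-preservation claim is unsupported.
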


\begin{proof}
	It follows from the fact that $\K^{\inv}$ is a truncating invariant that $G$ vanishes on $\Uloc(R)$ for $R$ connective. So it remains to show that it preserves filtered colimits. Let $C_{\alpha}$ be a filtered diagram in $\Mloc$, so that we need to show that $$\fib(\colim G(C_{\alpha})\to G(\colim C_{\alpha}))=0$$
	
	Note that the tensor product in $\Mloc$ commutes with filtered colimits, as does $\K(-\otimes \QQ)$ and $\THH(-\otimes \QQ)_{h\mathbb{T}}$. Then since there are cofiber sequences\footnote{We use here that when the input ring is rational, $\TC$ agrees with $\TC^{-}$.} 
	$$\K^{\mathrm{inv}}(-\otimes \QQ) \to \K(-\otimes \QQ) \to \TC^-(-\otimes \QQ)$$
	$$\THH(-\otimes \QQ)_{h\mathbb{T}} \to \TC^{-}(-\otimes \QQ) \to \TP(-\otimes \QQ),$$
	Since the fiber of the assembly map of an exact functor is exact as the functor varies, we have that the fiber of interest is the desuspension of the total fiber of the square
	
	\begin{center}
		\begin{tikzcd}
			{\colim \TP(C_{\alpha}\otimes \QQ[x_2])} \ar[r]\ar[d] &\TP(\colim C_{\alpha}\otimes \QQ[x_2]) \ar[d]\\
		\colim \TP(C_{\alpha}\otimes \QQ)	\ar[r] & \TP(\colim C_{\alpha}\otimes \QQ).
		\end{tikzcd}
	\end{center}
	But the vertical maps are now equivalences since $\TP$ is $\AA^1$-invariant rationally, and the map $\QQ[x_2] \to \QQ$ is an $\AA^1$-equivalence.
%
\end{proof}

We can now proceed with the proof of \Cref{thm:localizingsubcat}:

\begin{proof}[Proof of \Cref{thm:localizingsubcat}]
By \Cref{prop:Gfunctor}, it suffices to show that $G(\QQ^{S^2}) \neq 0$. 
By applying \cite[Example 4.9]{kcoconn} we have a pullback square 
	\begin{equation}
		\begin{tikzcd}\label{eqn:pullbacksquare}
			\Uloc(\QQ^{S^2}) \ar[r]\ar[d] & \Uloc(\QQ) \ar[d]\\
		\Uloc(\QQ)	\ar[r] & \Uloc(\QQ[x_{-1}]),
		\end{tikzcd}
	\end{equation} 
so since $G$ vanishes on connective rings, it suffices to check that $G(\QQ[x_{-1}]) \neq 0$.	
	
%
	
Now there is a localization sequence 
\begin{equation}\label{eqn:locseq}
	\Uloc(\QQ\langle \epsilon_{0}\rangle) \to \Uloc(\QQ[x_{-1}]) \to \Uloc(\QQ[x^{\pm}_{-1}]).
\end{equation}
where $\QQ\langle \epsilon_{0}\rangle$ is the trivial square-zero extension of $\QQ$. 
Since $\QQ\langle\epsilon_0\rangle$ is connective, it is enough to show that $G(\QQ[x_{-1}^{\pm1}])\neq 0$, i.e that 
\[
	\K^{\mathrm{inv}}(\QQ[x_{-1}^{\pm}][t_2]) \to \K^{\mathrm{inv}}(\QQ[x_{-1}^{\pm}])
\] 
is not an equivalence. 
Note that this map is equivalent to the map 
$\K^{\mathrm{inv}}(\QQ[x_{-1}^{\pm}][t_0]) \to \K^{\mathrm{inv}}(\QQ[x_{-1}^{\pm}])$, 
by replacing $t_2$ with $t_0=t_2x_{-1}^2$.
Using the localization sequence (\ref{eqn:locseq}) again, it is enough to show that the total fiber of the square below is not 
zero:
\begin{center}
	\begin{tikzcd}
		\K^{\mathrm{inv}}(\QQ\langle \epsilon_0\rangle[t_0])\ar[r]\ar[d] & \K^{\mathrm{inv}}(\QQ[x_{-1}][t_0])\ar[d]\\
		\K^{\mathrm{inv}}(\QQ\langle \epsilon_0\rangle)\ar[r] & \K^{\mathrm{inv}}(\QQ[x_{-1}]).
	\end{tikzcd}
\end{center}
Since $\QQ\langle \epsilon_0\rangle[t_0]$ and $\QQ\langle \epsilon_0\rangle$ are connective and $\K^{\inv}$ is truncating, this is isomorphic to the total fiber of the square
\begin{center}
	\begin{tikzcd}
		\K^{\mathrm{inv}}(\QQ[t_0])\ar[r]\ar[d] & \K^{\mathrm{inv}}(\QQ[x_{-1}][t_0])\ar[d]\\
		\K^{\mathrm{inv}}(\QQ)\ar[r] & \K^{\mathrm{inv}}(\QQ[x_{-1}]).
	\end{tikzcd}
\end{center}	
For the associated square on $\K$-theory (instead of $\K^{\mathrm{inv}}$), the vertical maps are isomorphisms by 
$\AA^1$-invariance (see \cite[Proposition 5.1]{kcoconn}).
Thus it is enough to see that the total fiber of the square
\begin{center}
	\begin{tikzcd}
		\TC(\QQ[t_0])\ar[r]\ar[d] & \TC(\QQ[x_{-1}][t_0])\ar[d]\\
		\TC(\QQ)\ar[r] & \TC(\QQ[x_{-1}]) 
	\end{tikzcd}
\end{center}
does not vanish.
	
Note that the composite functor $$\Mod(\QQ) \to \Mod(\QQ\langle \epsilon_{0}\rangle) \to  \Mod(\QQ[x_{-1}])$$ is the cofiber 
of the natural endomorphism $x_{-1}$ on the functor induced by the ring map $\QQ \to \QQ[x_{-1}]$, so it is twice this map at 
the level of $\Uloc$. Since we work rationally, we may replace the map with the one coming form the map of rings. 
Using (\ref{eqn:pullbacksquare}) again, it is enough to show that the square
	
	\begin{center}
		\begin{tikzcd}
			\TC(\QQ^{S^2}[t_0])\ar[r]\ar[d] & \TC(\QQ[t_0])\ar[d]\\
			\TC(\QQ^{S^2})\ar[r] & \TC(\QQ) 
		\end{tikzcd}
	\end{center}
	is not a pullback square. Now this is a standard calculation in cyclic homology. For example, if $\epsilon_{-2}$ is the class in $\pi_{-2}$ of $\QQ^{S^2}$, then the class $dt_0d\epsilon_{-2} \in \pi_0\TC(\QQ^{S^2}[t_0])$ lifts to the total fiber of the square.
\end{proof}

\subsection{Jumps of nilpotence ranks}
In the theory of nilpotent extensions of $c$-categories some unintuitive jumps of the nilpotence rank happen under the 
constructions we perform. For instance, in Theorem~\ref{thm:nilpotent_resolution}(3) the nilpotence rank of a functor jumps 
from $k$ to $kn$ where $n$ is the width of the $c$-category. 

Furthermore, in the proof of Theorem~\ref{thm:morphisms_of_stacks}(3) we showed that for 
a nilpotent extension of rank $\le k$ of quasi-geometric stacks satisfying connective perfect generation $f:Y \to X$ induces 
a nilpotent extension of $c$-categories 
\[
f^*:\mathrm{Perf}(X) \to \mathrm{Perf}(Y)
\]
of rank $\le k(md+m+d)(m+1)$, where $d=cd(\pi)$ and 
$m$ is the descendability exponent of $\pi$ for an fppf cover $\pi:\Spec R \to X$. 
Combining this with Remark~\ref{rmk:degree of nilpotence} and we see that motivically the functor $f^*$ is equivalent 
to a (desuspension) of a nilpotent extension of ring spectra of rank not exceeding a very unwieldy number:
\[
(m(d+1))!k(md+m+d)(m+1).
\]
For motivic applications the precise bounds in these results are not important: all localizing invariants are 
nilinvariant as long as they are invariant under nilpotent extensions of rank $1$ (see \cite[Corollary~3.5]{Land_2019}). 
However, in order to better understand these constructions, it would be nice to understand whether our nilpotence 
estimates are optimal. 

\begin{qst}
Can the bounds of nilpotence rank in Theorem~\ref{thm:nilpotent_resolution}(3) and Theorem~\ref{thm:morphisms_of_stacks}(3) 
be improved? Can the optimal bounds be realized in examples?
\end{qst}

\addtocontents{toc}{\protect\setcounter{tocdepth}{2}}
\appendix
\section{Weakly approximable categories}
In \cite{neeman2018triangulated} the notion of a weakly approximable category was introduced to prove certain   
representability theorems. Many categories of geometric origin were shown to be weakly approximable, which gave rise to many 
interesting consequences. In particular, in \cite{neeman2024boundedtstructurescategoryperfect} a proof of 
the conjecture \cite[Conjecture~1.5]{antieau2018ktheoretic} on non-existence of t-structures about 
perfect complexes on singular schemes was given. 
In this section we discuss the relation between weak approximability and the existence of c-structures on the subcategory of 
compact objects.

We note that similar observations to ones in this section have been made in Saunier's PhD thesis 
\cite[Section~11.3.2]{Saunier2025PhD}. In particular, Proposition~\ref{prop:approx_heart_str} is essentially the same as 
\cite[Theorem~11.15]{Saunier2025PhD}. 

\begin{ntn}
Assume $\cC$ is a stable category with small coproducts. 
Let $S$ be a collection of objects of $\cC$ and let $n, m \in \ZZ \cup \{-\infty, +\infty\}$. 

We denote by $\langle S \rangle^{[n,m]}$ the smallest full subcategory of $\cC$ containing the set 
\[
\bigcup\limits_{n\le i\le m} \Sigma^i S
\]
and closed under extensions and retracts. We also denote by $\overline{\langle S \rangle}^{[n,m]}$ the smallest full 
subcategory of $\cC$ containing the set 
\[
\bigcup\limits_{n\le i\le m} \Sigma^i S
\]
and closed under extensions, small coproducts, and retracts.
\end{ntn}

\begin{dfn}\label{dfn:approximable}
Let $\cC$ be a stable presentable category endowed with a t-structure and assume $\cC$ is generated by a compact 
object $G$. 
We say that $\cC$ is {\bf weakly approximable} if there exists an integer $A$ such that the following two conditions are 
satisfied:
\begin{enumerate}
\item[(i)] $G$ belongs to $\cC_{t\ge -A}$ and $\pi_0\Map(G,y) \simeq 0$ for any $y \in\cC_{t\le A}$;
\item[(ii)] For every object $F \in \cC_{t\ge 0}$ there exists a fiber sequence 
\[
E \to F \to D
\]
such that $E \in \overline{\langle G \rangle}^{[-A,A]}$ and $D \in \cC_{t\ge 1}$.
\end{enumerate}
\end{dfn}

\begin{wrn}
Most papers on the subject of weakly approximable categories use the cohomological convention for t-structures. In that 
convention $\cC_{t\ge i}$ becomes $\cC^{t\le -i}$, and $\cC_{t\le i}$ becomes $\cC^{t\ge -i}$. 
The existence of these two different notational schemes is one of the main sources of mistakes for many researchers in the 
field (including the second author), 
so we decided to stick with the homological convention throughout the paper, which is 
more commonly used in homotopy theory.
\end{wrn}

\begin{rmk}
Definition~\ref{dfn:approximable} is originally formulated in the setting of arbitrary triangulated categories 
(that do not have to be homotopy categories of stable categories).
\tqed \end{rmk}

\begin{rmk}\label{rmk:connect_vanish_appr}
The vanishing $\pi_0\Map(G,y) \simeq 0$ in Definition~\ref{dfn:approximable} is equivalent to saying that the 
mapping spectrum $\map(G,y)$ is 1-connective for any $y \in\cC_{t\ge A}$.
\tqed \end{rmk}

\begin{prop}\label{prop:properties_approximability}
Let $\cC$ be a weakly approximable stable presentably category, and let $G,A$ be as in \Cref{dfn:approximable}.
Then the following statements are valid:
\begin{enumerate}
\item For any $F\in \cC_{t\ge 0} \cap \cC^\omega$ there exists a fiber sequence 
$E \to F \to D$
such that $E \in \langle G \rangle^{[-A,A]}$ and $D \in \cC_{t\ge 1}$;
\item for any integer $N$ and any $F\in \cC_{t\ge -N+1}$ there exists a fiber sequence 
$E \to F \to D$
such that $E \in \overline{\langle G \rangle}^{[-A-N,A]}$ and $D \in \cC_{t\ge 1}$;
\item for any integer $N$ and any $F\in \cC_{t\ge -N+1}\cap \cC^\omega$ there exists a fiber sequence 
$E \to F \to D$ 
such that $E \in \langle G \rangle^{[-A-N,A]}$ and $D \in \cC_{t\ge 1}$;
\item for any compact generator $H \in \cC$ conditions $(i)$ and $(ii)$ of Definition~\ref{dfn:approximable} are satisfied 
for $H$ in place of $G$, upon increasing $A$ if necessary.
\end{enumerate}
\end{prop}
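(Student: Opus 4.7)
My plan is to handle the four parts in increasing order of difficulty: (2) is a routine iteration of axiom (ii); (1) is the main technical step; (3) combines the two; and (4) relies on the interchangeability of compact generators within the thick closure.

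For part (2), I would iterate axiom (ii) $N$ times. Given $F\in \cC_{t\ge -N+1}$, shifting produces $\Sigma^{N-1}F\in \cC_{t\ge 0}$; axiom (ii) then yields, after desuspending, a fiber sequence $E_1\to F\to D_1$ with $E_1\in \overline{\langle G\rangle}^{[-A-N+1,A-N+1]}$ and $D_1\in \cC_{t\ge -N+2}$. Iterating the same procedure on $D_1$, the connectivity of the cofiber increases by one at each step, so after $N$ iterations the last cofiber lies in $\cC_{t\ge 1}$. The composite fiber is built by extensions from objects in $\overline{\langle G\rangle}^{[-A-N+k,A-N+k]}$ for $k=1,\dots,N$, whose ranges union to $[-A-N+1,A]\subseteq[-A-N,A]$.

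Part (1) is the main obstacle. Starting from the fiber sequence $E_1\to F\to D_1$ given by (ii), I would write $E_1=\colim_\alpha E_{1,\alpha}$ as a filtered colimit of finite approximations $E_{1,\alpha}\in\langle G\rangle^{[-A,A]}$, obtained by expressing the infinite coproducts appearing in the construction of $E_1$ as filtered colimits of their finite sub-coproducts and tracking this presentation through the finite extensions and retracts. The cofibers $D_{1,\alpha}:=\cof(E_{1,\alpha}\to F)$ are compact since $F$ and $E_{1,\alpha}$ are, and $D_1=\colim_\alpha D_{1,\alpha}\in\cC_{t\ge 1}$. The hard part will be to locate a single $\alpha$ with $D_{1,\alpha}\in\cC_{t\ge 1}$: this should follow from a diagonal argument using that compact objects in a weakly approximable category are $t$-bounded, so $D_{1,\alpha}$ has only finitely many potentially nonzero groups $\pi_k D_{1,\alpha}$ for $k\le 0$, combined with $\pi_k D_1=\colim_\alpha\pi_k D_{1,\alpha}=0$ to kill each such obstruction at a sufficiently large finite stage.

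Part (3) then follows by iterating part (1). For compact $F\in\cC_{t\ge -N+1}$, the shifted version of (1) produces a compact cofiber $D_1$ (as the cofiber of a map between compact objects), to which (1) can be applied again; iterating $N$ times as in (2) yields the desired finite approximation in $\langle G\rangle^{[-A-N,A]}$.

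For part (4), both $G$ and $H$ are compact generators of the compactly generated category $\cC$, so the thick subcategories they generate both equal $\cC^\omega$. In particular there exist integers $p,q$ with $H\in\langle G\rangle^{[-p,p]}$ and $G\in\langle H\rangle^{[-q,q]}$. Condition (i) for $H$ then holds with constant $A+p$, by expressing $H$ as a finite extension of $\Sigma^iG$ with $|i|\le p$ and using the long exact sequence applied to $\mathrm{Map}(-,y)$; condition (ii) for $H$ holds with constant $A+q$, since the approximation $E\in\overline{\langle G\rangle}^{[-A,A]}$ from (ii) for $G$ automatically lies in $\overline{\langle H\rangle}^{[-A-q,A+q]}$. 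Enlarging $A$ to $A+\max(p,q)$ gives a single constant that works for both conditions.
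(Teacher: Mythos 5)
Your parts (2) and (3) follow the paper's own argument exactly (iterate the approximation property $N$ times, desuspending at each step; the union of the ranges is $[-A-N+1,A]$), and your part (4) is essentially the standard argument that the paper outsources to \cite[Proposition~2.6]{neeman2018triangulated}: two compact generators each lie in a finite thick range of the other, and both conditions transport along such containments. Those three parts are fine.

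The problem is part (1), which is exactly the step the paper does \emph{not} prove itself but cites as \cite[Lemma~2.12]{neeman2018triangulated}. Your replacement argument has a genuine gap at the ``diagonal argument.'' You write $E_1=\colim_\alpha E_{1,\alpha}$ with $E_{1,\alpha}\in\langle G\rangle^{[-A,A]}$, set $D_{1,\alpha}=\cof(E_{1,\alpha}\to F)$, and want a single $\alpha$ with $D_{1,\alpha}\in\cC_{t\ge 1}$ from the vanishing of $\pi_kD_1=\colim_\alpha\pi_kD_{1,\alpha}$ in the finitely many relevant degrees $k\le 0$. But a filtered colimit of abelian groups can vanish without any term vanishing (take $\ZZ\to\ZZ\to\cdots$ with zero transition maps): the colimit being zero only says each class dies \emph{eventually}, while new nonzero classes can keep appearing at every stage, so there need not be any $\alpha$ at which all the obstruction groups are simultaneously zero, even with only finitely many degrees to control. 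A secondary issue is that Definition~\ref{dfn:approximable} does not require the $t$-structure to be compatible with filtered colimits, so the identification $\pi_kD_1\cong\colim_\alpha\pi_kD_{1,\alpha}$ is itself unjustified for a general weakly approximable category (only the adjacent $t$-structure of Theorem~\ref{thm:adjacent_t} is known to have this compatibility, and the given $t$-structure is merely equivalent to it).

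Since (3) is deduced from (1), the gap propagates there as well. To repair part (1) you should either cite Neeman's lemma, as the paper does, or reproduce its proof, which is not a single compactness/colimit interchange: it is an induction on the number of extensions needed to build $E_1$ out of coproducts of $\Sigma^iG$, $-A\le i\le A$, replacing one infinite coproduct at a time by a finite sub-coproduct using compactness of a map \emph{into} that coproduct, arranged so that the cofiber stays in $\cC_{t\ge1}$ at every stage rather than being tested for connectivity only at the end.
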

\begin{proof}
	Assertion~$(1)$ is \cite[Lemma~2.12]{neeman2018triangulated}. 
	
	We now prove assertion $(2)$, and $(3)$ is proved via the same argument. Using condition $(2)$ of 
	Definition~\ref{dfn:approximable} construct a sequence 
	\[
	F=D_0 \to D_1 \to \dots \to D_{N}
	\]
	of maps such that $D_i \in \cC_{t\ge -N + i + 1}$ and the fiber of $D_i \to D_{i+1}$ is in 
$\overline{\langle G \rangle}^{[-A-N+i,A-N+i]}$. The fiber of the composite is an extension of the fibers, so it belongs 
to $\overline{\langle G \rangle}^{[-A-N,A]}$. 

Assertion~$(4)$ is \cite[Proposition~2.6]{neeman2018triangulated}.
\end{proof}

\begin{prop}\label{prop:approx_heart_str}
	Let $\cC$ be a weakly approximable category. Then $\cC^\omega$ admits a bounded heart structure with 
	\[
	\cC^\omega_{\le 0} = \langle G \rangle^{[-\infty,A]}\quad\quad \text{ and } \quad\quad\cC^\omega_{\ge 0} = \cC_{t\ge 0} \cap \cC^\omega.
	\]
	where $G,A$ are as in \Cref{dfn:approximable}.
\end{prop}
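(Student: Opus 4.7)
The plan is to directly verify the two axioms of a bounded heart structure from Definition~\ref{dfn:heart-structure} for the proposed subcategories $\cC^\omega_{\le 0} = \langle G \rangle^{[-\infty,A]}$ and $\cC^\omega_{\ge 0} = \cC_{t\ge 0} \cap \cC^\omega$, and then check boundedness.

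First I would dispatch the closure properties in Definition~\ref{dfn:heart-structure}(1), which should be essentially formal. The subcategory $\langle G \rangle^{[-\infty,A]}$ is closed under extensions and retracts by its very definition, and closed under desuspensions because $\Sigma^{-1}\langle G \rangle^{[-\infty,A]} = \langle G \rangle^{[-\infty,A-1]} \subset \langle G \rangle^{[-\infty,A]}$. On the other side, $\cC_{t\ge 0} \cap \cC^\omega$ is closed under extensions and suspensions since each of $\cC_{t\ge 0}$ and $\cC^\omega$ is.

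The main step will be the existence of decompositions, Definition~\ref{dfn:heart-structure}(2). Given $x \in \cC^\omega$, I would use the Schwede--Shipley theorem (\cite[Theorem~7.1.2.1]{HA}) to observe that $\cC^\omega$ is the thick subcategory of $\cC$ generated by $G$, so $x \in \langle G \rangle^{[-B,B']}$ for some integers $B, B'$. Combined with condition~(i) of Definition~\ref{dfn:approximable}, which gives $G \in \cC_{t\ge -A}$, this forces $x \in \cC_{t\ge -A-B}$. The crucial input is then Proposition~\ref{prop:properties_approximability}(3) applied with $N=A+B+1$: it produces a fiber sequence $E \to x \to D$ with $E \in \langle G \rangle^{[-A-N,A]} \subset \cC^\omega_{\le 0}$ and $D \in \cC_{t\ge 1}$. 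Since $x$ and $E$ are compact, so is $D$, yielding $D \in \cC_{t\ge 1} \cap \cC^\omega = \cC^\omega_{\ge 1} = \Sigma \cC^\omega_{\ge 0}$, which is exactly the desired decomposition.

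Finally, boundedness follows for free from the same observation: the inclusion $x \in \langle G \rangle^{[-B,B']}$ immediately gives $x \in \cC^\omega_{\le B'-A}$ via the formula $\cC^\omega_{\le n} = \langle G \rangle^{[-\infty,A+n]}$, while t-boundedness below gives $x \in \cC^\omega_{\ge -A-B}$. The only place real work enters is the decomposition step, and the main obstacle there is precisely the need to stay inside $\cC^\omega$: the naive approximation axiom (ii) of Definition~\ref{dfn:approximable} only produces $E$ in the (possibly infinite-coproduct) closure $\overline{\langle G \rangle}^{[-A,A]}$, which need not be compact. Overcoming this is exactly what the compact refinement Proposition~\ref{prop:properties_approximability}(3) accomplishes, and its use is the heart of the argument.
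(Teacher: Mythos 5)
Your proof is correct and follows essentially the same route as the paper's: both reduce the existence of decompositions to the compact refinement Proposition~\ref{prop:properties_approximability}(3) after observing that every compact object is $t$-bounded below (via generation by $G$ and condition~(i)), and both get boundedness from the same observation. You simply spell out the closure axioms and the bookkeeping with $N=A+B+1$ more explicitly than the paper does.
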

\begin{proof}
	The formulas in the statement give extension closed subcategories of $\cC^{\omega}$. Condition~$(i)$ of 
	Definition~\ref{dfn:approximable}, together with the fact that $G$ is a generator, implies that any $x\in \cC^{\omega}$ 
	also belongs to $\bigcup\limits_n \cC^{\omega}_{\ge -n} \cap \cC^{\omega}_{\le n}$, so it suffices to prove that there 
	are heart decompositions. 
	Since any $x \in \cC^\omega$ belongs to $\cC_{t\ge N}$ for some $N$ this follows from 
	Proposition~\ref{prop:properties_approximability}(3).
\end{proof}

\begin{cor}\label{cor:approximable_ccategory}
	Let $\cC$ be a weakly approximable category. Then for any choice of $G,A$ as in \Cref{dfn:approximable}, the 
	heart structure constructed in Proposition~\ref{prop:approx_heart_str} on $\cC^\omega$ is fcd. In particular, $\cC^\omega$ is 
	a $c$-structure of width $\le 2A$, where $A$ is as in Definition~\ref{dfn:approximable}.
\end{cor}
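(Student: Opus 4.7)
First I would reduce Corollary~\ref{cor:approximable_ccategory} to Example~\ref{exm:from_heart_to_c}, which promotes a bounded heart category of cohomological dimension $\le n$ to a $c$-category of width $\le n$. Since Proposition~\ref{prop:approx_heart_str} already supplies the bounded heart structure on $\cC^{\omega}$, with $\cC^{\omega}_{\le 0} = \langle G \rangle^{[-\infty, A]}$ and $\cC^{\omega}_{\ge 0} = \cC_{t \ge 0} \cap \cC^{\omega}$, the entire task is to bound its cohomological dimension by $2A$.

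Next I would unwind the fcd condition: for $x \in \cC^{\omega}_{\le 0}$ and $y \in \cC^{\omega}_{\ge 0}$, I need $\map(x,y) \in \Sp_{\ge -2A}$. Using that $\langle G \rangle^{[-\infty, A]}$ is the extension- and retract-closure of $\{\Sigma^i G : i \le A\}$ and that the property of $\map(-,y)$ being $(-2A)$-connective passes through extensions and retracts, I can reduce to $x = \Sigma^i G$ with $i \le A$. The identification $\map(\Sigma^i G, y) = \Omega^i \map(G, y)$ shows that the worst connectivity occurs at $i = A$, so everything comes down to proving
\[
\map(G, y) \in \Sp_{\ge -A} \quad \text{for all } y \in \cC_{t \ge 0}.
\]

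For this estimate I would invoke condition~(i) of Definition~\ref{dfn:approximable} through Remark~\ref{rmk:connect_vanish_appr}: the latter reinterprets the vanishing of $\pi_0\Map(G,-)$ as saying that $\map(G,z)$ is $1$-connective whenever $z$ sits in the appropriate piece of the t-structure (namely $\cC_{t \ge A}$). Applied to $\Sigma^A y \in \cC_{t \ge A}$ this gives $\Sigma^A \map(G, y) \in \Sp_{\ge 1}$, hence $\map(G, y) \in \Sp_{\ge 1 - A} \subseteq \Sp_{\ge -A}$. Concatenating the two reductions yields $\map(\Sigma^A G, y) = \Omega^A \map(G, y) \in \Sp_{\ge -2A}$, which is the desired fcd bound.

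I expect the delicate point to be the passage from the $\pi_0$-vanishing in the literal statement of condition~(i) to the full spectral connectivity of $\map(G,-)$ used above; this is precisely what Remark~\ref{rmk:connect_vanish_appr} handles, essentially by an inductive shifting argument on $\pi_{-k}$ that combines the $\pi_0$ vanishing with $G \in \cC_{t \ge -A}$. Once this connectivity statement is in hand the remainder of the proof is a routine chase of shifts, and the $c$-structure of width $\le 2A$ drops out of Example~\ref{exm:from_heart_to_c}.
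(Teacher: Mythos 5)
Your proposal is correct and follows essentially the same route as the paper: reduce via extension/retract closure to $x=\Sigma^i G$ with $i\le A$, apply condition~(i) of Definition~\ref{dfn:approximable} in the form of Remark~\ref{rmk:connect_vanish_appr} to get the connectivity bound on $\map(G,-)$, and feed the resulting cohomological-dimension bound $\le 2A$ into Example~\ref{exm:from_heart_to_c}. The only cosmetic difference is that the paper shifts $y$ into $\cC^\omega_{\ge 2A}$ and checks $1$-connectivity in one step, whereas you distribute the $2A$ shifts between $x$ and $y$; the underlying computation is identical.
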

\begin{proof}
	It suffices to observe that for any $x\in \cC^\omega_{\le 0}$ and any 
	$y\in \cC^\omega_{\ge 2A}$ the spectrum $\map_\cC(x,y)$ is $1$-connective. 
	Condition~$(i)$ of Definition~\ref{dfn:approximable} yields that this is true for $x=\Sigma^i G$ for any $i<A$. 
	But this connectivity property is closed under extensions and retracts, so it is in fact true for any 
	$x\in \cC^\omega_{\le 0}= \langle G \rangle^{[-\infty,A]}$.
\end{proof}

\begin{dfn}\label{dfn:equivalent_tstr}
Let $\cC$ be a stable category. We say that two t-structures $t_1$ and $t_2$ on $\cC$ are equivalent if there 
exist integers $n,m$ such that 
\[
\cC_{t_1\ge n} \subset \cC_{t_2\ge 0}\subset \cC_{t_1\ge m} 
\]
or, equivalently 
\[
\cC_{t_1\le -m} \subset \cC_{t_2\le 0} \subset \cC_{t_1\le -n}.
\]
\end{dfn}

\begin{prop}\label{prop:equiv_t_str_appr}
Let $\cC$ be a stable presentable category compactly generated by an object $G$. Assume $t_1$ and $t_2$ are  
two equivalent t-structures on $\cC$. If the conditions $(i)$ and $(ii)$ in Definition~\ref{dfn:approximable} are satisfied 
for $t_1$ then they are satisfied for $t_2$, after increasing $A$ if necessary.
\end{prop}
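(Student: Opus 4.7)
The plan is to produce a single enlarged constant $A'\geq A$ that simultaneously witnesses both axioms of Definition~\ref{dfn:approximable} for the $t$-structure $t_2$. The defining inclusion $\cC_{t_1\ge n}\subseteq \cC_{t_2\ge 0}\subseteq \cC_{t_1\ge m}$, shifted by an arbitrary integer $k$ and combined with its orthogonal, yields the sandwiches
\[
\cC_{t_1\ge k+n}\subseteq \cC_{t_2\ge k}\subseteq \cC_{t_1\ge k+m}, \qquad \cC_{t_1\le k+m}\subseteq \cC_{t_2\le k}\subseteq \cC_{t_1\le k+n}.
\]
Consequently every $t_2$-(co)connectivity condition translates into a $t_1$-condition up to a bounded shift of size at most $\max(|n|,|m|)$, and vice versa.

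For condition~(i), the left sandwich immediately places $G\in \cC_{t_1\ge -A}$ inside $\cC_{t_2\ge -A-n}$, so any $A'\geq A+n$ handles the connectivity hypothesis on $G$. The orthogonality/vanishing hypothesis on $\pi_0\Map(G,y)$ transports in the same way by rewriting the class of test objects via the relevant sandwich, at the cost of another bounded enlargement of $A'$.

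For condition~(ii), let $F\in \cC_{t_2\ge 0}$, so that $F\in \cC_{t_1\ge m}$. Proposition~\ref{prop:properties_approximability}(2) with $N:=1-m$ produces a fiber sequence $E_0\to F\to F_1$ with $E_0\in \overline{\langle G\rangle}^{[-A-N,A]}$ and $F_1\in \cC_{t_1\ge 1}$. We then iterate the shifted form of condition~(ii) for $t_1$ on $F_1$ exactly $n$ more times, each iteration boosting $t_1$-connectivity by one and contributing a layer $E_i\in \overline{\langle G\rangle}^{[-A+i,A+i]}$. After these $n$ additional iterations $F_{n+1}\in \cC_{t_1\ge n+1}\subseteq \cC_{t_2\ge 1}$, and the total fiber of $F\to F_{n+1}$, being a sequential extension of $E_0,E_1,\ldots,E_n$, lies in $\overline{\langle G\rangle}^{[-A-N,A+n]}\subseteq \overline{\langle G\rangle}^{[-A',A']}$ for any $A'\geq A+\max(n,1-m)$.

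Setting $A':=A+\max(n,-m,1-m)$ fulfills both conditions simultaneously. The sole obstacle is bookkeeping the various shifts arising from the two sandwich inclusions and combining them into a uniform $A'$; no further conceptual input is required beyond the sandwich inclusions and the already-iterable approximation statement from Proposition~\ref{prop:properties_approximability}.
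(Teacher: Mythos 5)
Your argument is correct. Note, however, that the paper does not actually prove this statement: it simply cites \cite[Lemma~2.5]{neeman2018triangulated}, so you have supplied a self-contained proof where the authors defer to Neeman. What you write is essentially the standard argument (and, as far as I can tell, the same one as in Neeman's lemma): translate everything through the sandwich inclusions $\cC_{t_1\ge k+n}\subseteq \cC_{t_2\ge k}\subseteq \cC_{t_1\ge k+m}$ and their orthogonal counterparts, handle condition~(i) by a bounded shift, and handle condition~(ii) by first applying Proposition~\ref{prop:properties_approximability}(2) to bring $F$ from $\cC_{t_1\ge m}$ up to $\cC_{t_1\ge 1}$ and then iterating the shifted approximation step until the cofiber lands in $\cC_{t_1\ge n+1}\subseteq \cC_{t_2\ge 1}$; the fiber of the composite is an iterated extension of the layers $E_i$ and hence lies in $\overline{\langle G\rangle}^{[-A',A']}$ for a uniform $A'$.

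Two small points of bookkeeping. First, your iteration count implicitly assumes $n\ge 0$; this is harmless, since one may always enlarge the sandwich to arrange $n\ge 0$ (if $\cC_{t_1\ge n}\subseteq\cC_{t_2\ge 0}$ with $n<0$, then a fortiori $\cC_{t_1\ge 0}\subseteq\cC_{t_2\ge 0}$), but it is worth saying. Second, you only gesture at the orthogonality half of condition~(i); it does transport exactly as you say via $\cC_{t_2\le k}\subseteq\cC_{t_1\le k+n}$, though note the paper's Definition~\ref{dfn:approximable}(i) as printed has a sign slip in the class of test objects (compare Remark~\ref{rmk:connect_vanish_appr}), so one should work with the corrected form $\pi_0\Map(G,y)=0$ for $y\in\cC_{t\le -A}$, under which increasing $A$ weakens the condition as intended. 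Neither point affects the validity of your proof.
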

\begin{proof}
This is \cite[Lemma~2.5]{neeman2018triangulated}. 
\end{proof}

\begin{thm}[{\cite[Theorem~2.4.2(II.2)]{Bondarko_2021}, \cite[Theorem~3.2.3]{bondarko2024tstructures}}]\label{thm:adjacent_t}
On any compactly generated weighted category $\cC$ there exists a t-structure such that 
$\cC_{t\ge 0} = \cC_{w\ge 0}$.
\end{thm}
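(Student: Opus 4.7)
The plan is to apply the standard presentable $t$-structure construction (\cite[Proposition~1.4.4.11(1)]{HA}) to the subcategory $\cC_{w\geq 0}$, and check that the resulting $t$-structure satisfies $\cC_{t\geq 0} = \cC_{w\geq 0}$ on the nose.

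First I would exploit compact generation. By \Cref{dfn:generation} we may pick a set $S \subset \cC$ of compact objects that generate the weight structure. Remark~\ref{rmk:generated_ws} (together with the adjacent shift) gives the description
\[
\cC_{w\geq 0} = \{x \in \cC : \map(\Sigma^i y, x) \text{ is connective for every } y \in S \text{ and every } i\geq 0\}.
\]
Since each $y \in S$ is compact, the functor $\map(\Sigma^i y, -)$ preserves small colimits; since connective spectra form a subcategory of $\Sp$ closed under small colimits and extensions, this shows that $\cC_{w\geq 0}$ is closed under small colimits and extensions in $\cC$, and obviously under $\Sigma$. Presentability follows by writing $\cC_{w\geq 0}$ as a (small) intersection of accessible subcategories, each cut out by the pullback along $\map(\Sigma^i y,-) : \cC \to \Sp$ of the accessible inclusion $\Sp_{\geq 0}\subset \Sp$.

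Next I would invoke \cite[Proposition~1.4.4.11(1)]{HA}: any presentable full subcategory of a presentable stable $\infty$-category that is closed under colimits and extensions is the connective part of an accessible $t$-structure. Applied to $\cC_{w\geq 0}$, this produces a $t$-structure $t$ on $\cC$ with
\[
\cC_{t\geq 0} \;=\; \text{smallest such subcategory containing } \cC_{w\geq 0}.
\]
Because $\cC_{w\geq 0}$ already has the required closure properties, the inclusion $\cC_{w\geq 0}\subset \cC_{t\geq 0}$ is an equality, and the defining universal property of the construction automatically provides the coconnective part $\cC_{t\leq -1}$ as the right orthogonal.

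I do not expect a serious obstacle here: everything reduces to the two observations that compactness converts the weight-orthogonality characterization of $\cC_{w\geq 0}$ into a colimit-preserving condition, and that such conditions are both accessible and preserved by colimits and extensions. The only point that requires mild care is making sure that the characterization of $\cC_{w\geq 0}$ used above really is valid for the compactly generated weight structure of \Cref{thm:generated_ws}, so that no hypercompleteness or boundedness subtlety sneaks in; this is precisely the content of Remark~\ref{rmk:generated_ws} combined with the observation that $\cC_{w\leq 0}$ is generated from $S$ under the operations of \Cref{lem:closureofneg}, all of which are sent by $\map(-,x)$ to operations under which connective spectra are stable.
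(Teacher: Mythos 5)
The paper does not actually prove this statement---it is imported from Bondarko's work via citation---so there is no internal proof to compare against; your argument is a self-contained alternative, and it is essentially the standard $\infty$-categorical construction of the adjacent $t$-structure. The strategy is sound: by \Cref{dfn:generation} the weight structure is generated by a set $S$ of compact objects lying in $\cC_{w\le 0}$, Remark~\ref{rmk:generated_ws} identifies $\cC_{w\ge 0}$ as the objects $x$ with $\map(y,x)$ connective for all $y\in S$, compactness of the elements of $S$ makes this condition stable under small colimits and extensions (and accessible), and \cite[Proposition~1.4.4.11(1)]{HA} then produces a $t$-structure with $\cC_{t\ge 0}=\cC_{w\ge 0}$ on the nose. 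This buys a short, structural proof in the presentable stable setting, whereas Bondarko's cited arguments are carried out for triangulated categories and are correspondingly more hands-on; both hinge on the same orthogonality description of $\cC_{w\ge 0}$ against compact generators.

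One correction is needed in your displayed characterization: the sign on the shift is wrong. Since $\cC_{w\le 0}$ is closed under \emph{desuspension}, the test objects produced from $S$ are $\Sigma^{-i}y$ for $i\ge 0$, and the condition that $\map(\Sigma^{-i}y,x)\simeq\Sigma^{i}\map(y,x)$ be connective for all $i\ge 0$ collapses to the single condition that $\map(y,x)$ be connective, which is exactly Remark~\ref{rmk:generated_ws}. As literally written, requiring $\map(\Sigma^{i}y,x)\simeq\Omega^{i}\map(y,x)$ to be connective for all $i\ge 0$ forces $\map(y,x)$ to be $i$-connective for every $i$, i.e.\ it cuts out $\bigcap_{i}\cC_{w\ge i}$ rather than $\cC_{w\ge 0}$ (and this intersection vanishes for a hypercomplete weight structure). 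With the formula replaced by the one from Remark~\ref{rmk:generated_ws}, every subsequent step of your proof goes through unchanged, since the closure-under-colimits, closure-under-extensions, and accessibility arguments only use that the defining condition is corepresented by compact objects with target the colimit- and extension-closed subcategory $\Sp_{\ge 0}\subset\Sp$.
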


\begin{prop}\label{prop:weight_appr}
Any t-structure satisfying the conditions $(i)$ and $(ii)$ is equivalent to the t-structure provided by 
Theorem~\ref{thm:adjacent_t}.
\end{prop}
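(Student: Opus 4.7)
The plan is to establish the sandwich $\cC_{t_w \ge N} \subset \cC_{t \ge 0} \subset \cC_{t_w \ge -M}$ for suitable integers $N, M$, which by Definition~\ref{dfn:equivalent_tstr} is exactly the required equivalence of $t$-structures. First I would replace the compact generator $G$ appearing in the weak approximability hypothesis by a compact generator of $\cC$ that lies in the weight heart $\cC^{w\heartsuit}$. Proposition~\ref{prop:properties_approximability}(4) ensures that such a replacement still satisfies conditions (i) and (ii) after enlarging the constant $A$, and the existence of such a generator uses the compact generation of the weight structure together with Propositions~\ref{prop:hearts_gen} and~\ref{prop:hearts_cogen}. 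Once $G \in \cC^{w\heartsuit}$ is a compact generator of $\cC$, it also generates the weight structure: $\cC_{w \le 0}$ is then the closure of $\{\Sigma^{-i} G\}_{i \ge 0}$ under the operations of Lemma~\ref{lem:closureofneg}, because every object of $\cC^{w\heartsuit}$ is a retract of a coproduct of copies of $G$ by Lemma~\ref{lem:generation}(2).

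For the inclusion $\cC_{t_w \ge A} \subset \cC_{t \ge 0}$, Theorem~\ref{thm:adjacent_t} identifies $\cC_{t_w \ge 0}$ with $\cC_{w \ge 0}$, and Proposition~\ref{prop:hearts_gen} writes this as the small-colimit closure of $\cC^{w\heartsuit}$. Each weight-heart object is a retract of a small coproduct of copies of $G$, and $G \in \cC_{t \ge -A}$ by~(i); since $\cC_{t \ge -A}$ is closed under retracts and small colimits, this propagates to $\cC_{w \ge 0} \subset \cC_{t \ge -A}$, giving the first inclusion after shifting.

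For the inclusion $\cC_{t \ge A+1} \subset \cC_{w \ge 0}$, I would characterize $\cC_{w \ge 0}$ as the class of $y$ for which $\map(G, y)$ is connective. This follows because the property ``$\map(-, y)$ is connective'' is stable under all operations of Lemma~\ref{lem:closureofneg}: products of connective spectra are connective; finite limits become finite colimits on the mapping side; extensions yield fiber sequences; retracts are immediate; and the countable filtered colimit case runs a standard $\lim^1$-vanishing argument, where cofibers in $\cC_{w \le 0}$ have inductively connective mapping spectra into $y$, forcing surjectivity on $\pi_0$ of the inverse system. By Remark~\ref{rmk:connect_vanish_appr}, $\map(G, y)$ is $1$-connective whenever $y \in \cC_{t \ge A+1}$, hence connective, so $\cC_{t \ge A+1} \subset \cC_{w \ge 0}$, i.e.\ $\cC_{t \ge 0} \subset \cC_{t_w \ge -A-1}$. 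Combining the two inclusions yields $\cC_{t_w \ge A} \subset \cC_{t \ge 0} \subset \cC_{t_w \ge -A-1}$, which is the claimed equivalence. I expect the main obstacle to be the initial arrangement of $G$ inside the weight heart while preserving both its status as a single compact generator and the validity of (i) and (ii); once that is in place, the rest of the argument is a routine assembly of the weight-structure and approximability properties.
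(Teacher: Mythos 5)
Your second inclusion ($\cC_{t\ge A+1}\subset\cC_{w\ge0}$ via the characterization of $\cC_{w\ge0}$ as those $y$ with $\map(G,y)$ connective, plus Remark~\ref{rmk:connect_vanish_appr}) is exactly the paper's argument for that direction and is fine. The problem is the other inclusion, $\cC_{w\ge0}\subset\cC_{t\ge -A}$, where your argument hinges on first replacing $G$ by a compact generator lying in the weight heart. That existence claim is not delivered by the results you cite: Propositions~\ref{prop:hearts_gen} and~\ref{prop:hearts_cogen} only produce generating families inside $\cC^{\kappa,w\heartsuit}$ for an \emph{uncountable} $\kappa$ such that the weight structure restricts to $\cC^{\kappa}$ (Theorem~\ref{thm:generated_ws} guarantees restriction only to $\kappa$-compact objects for uncountable regular $\kappa$, not to $\cC^{\omega}$). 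Whether the weight structure generated by a compact object restricts to genuinely compact objects is essentially Question~\ref{question:width0}, which the authors expect to have a negative answer. A $\kappa$-compact substitute does not rescue the argument either: Proposition~\ref{prop:properties_approximability}(4) is stated for compact generators only, so you cannot transfer condition~(i) to it, and proving directly that a weight-heart object lies in $\cC_{t\ge -A'}$ is precisely the content of the inclusion you are trying to establish -- so the argument becomes circular at exactly the hard point.

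A symptom of the gap is that your proof of $\cC_{w\ge0}\subset\cC_{t\ge -A}$ never uses condition~(ii) of Definition~\ref{dfn:approximable}, whereas condition~(i) alone cannot control how far below the weight heart sits in the $t$-structure ($\cC_{w\le 0}$ is closed under desuspensions, so membership of a heart object in $\cC_{w\le0}$ gives no lower $t$-bound, and membership in $\cC_{w\ge0}$ is an orthogonality condition, not a generation one). The paper's route through this direction is: condition~(ii) yields the bounded heart structure on $\cC^{\omega}$ of Proposition~\ref{prop:approx_heart_str}, whose induced weight structure on $\cC=\Ind(\cC^{\omega})$ agrees (up to shift) with the one generated by $G$; then Corollary~\ref{cor:weight_heart_relation} -- whose engine is Lemma~\ref{lem:wtheartforheartstr}, presenting each weight-heart object as a retract of a transfinite colimit $\colim_i c_i$ with graded pieces sums of objects of $(\cC^{\omega})^{\heartsuit}\subset\cC_{t\ge0}$ -- gives $\cC_{w\ge0}\subset\Ind(\cC^{\omega}_{\ge0})\subset\cC_{t\ge0}$. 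If you want to keep the spirit of your argument, you should replace "retract of a coproduct of copies of a compact weight-heart generator" by the presentation supplied by Lemma~\ref{lem:wtheartforheartstr}, which is where condition~(ii) actually enters.
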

\begin{proof}
Assume $\cC$ satisfies conditions $(i)$ and $(ii)$ of Definition~\ref{dfn:approximable} for 
a t-structure $t$. We have that $\map(G,\cC_{t\ge A})$ is 1-connective, so $\cC_{t\ge A} \subset \cC_{w\ge 0}$. 
It suffices to prove that we have an inclusion $\cC_{w\ge 0} \subset \cC_{t\ge 0}$. Consider the heart structure of 
Proposition~\ref{prop:approx_heart_str}. The weight structure associated to this heart structure 
(see Construction~\ref{cnstr:t-str-and-weight-str}) is equal to the weight structure generated by $G$. 
Now by Corollary~\ref{cor:weight_heart_relation} we have inclusions 
\[
\cC_{w\ge 0} \subset \Ind(\cC^\omega_{\ge 0}) \subset \cC_{t \ge 0}.
\]
\end{proof}

\begin{rmk}
Choosing the t-structure of Theorem~\ref{thm:adjacent_t} to check weak approximability of $\cC$, 
we can see that the conditions $(i)$ and $(ii)$ correspond to the conditions for $\cC^\omega$ to be a $c$-category. 
Condition~$(i)$ says that any object of $\cC^\omega$ is weight bounded (so, $\cC^\omega$ is a pre-$c$-category) and 
condition~$(ii)$ ensures that for any object of the heart $F \in \cC^{w\heartsuit}$ there is a fiber sequence 
\[
E \to F \to D
\]
where $D \in \cC_{w\ge 1}$ and $E \in \overline{\langle G \rangle}^{[-A,A]}$. In this case the second map is 0, and the first map is a retraction, so $F$ belongs to $\overline{\langle G \rangle}^{[-A,A]}$. But any object of 
$\overline{\langle G \rangle}^{[-A,A]}$ is a colimit of objects of $\langle G \rangle^{[-A,+\infty]} \subset \cC^\omega \cap \cC_{w\ge -2A}$.
\tqed \end{rmk}

\section{\texorpdfstring{$-n$}{-n}-connective embeddings}

A $c$-category $\cC$ of width $\le n$ has the property that it has a strongly continuous (i.e with colimit preserving right 
adjoint) fully faithful embedding of $\Ind(\cC)$ into $\Ind$ of a boundedly weighted category, such that the induced comonad 
on the target sends objects of the weight heart to $-n$-connective objects. One may ask whether the property of admitting such 
an embedding characterizes those categories admitting $c$-structures. The goal of this section is to partially answer this, by 
giving an internal characterization of when a category admits such an embedding.

We begin by preparing to define the notion of interest in \Cref{dfn:-nconnemb}, which we call a $-n$-connective embedding.

\begin{dfn}\label{dfn:indprowtstr}
	Given a small category $\cD$ with weight structure, we define the induced weight structure on $\Ind(\cD)$ to be the  weight structure compactly generated\footnote{It is not necessary here that the weight structure be bounded, and the resulting weight structure may not be hypercomplete} by objects of $\cD_{\leq 0}$. Similarly one defines a weight structure on $\Pro(\cD)$ by writing $\Pro(\cD) = \Ind(\cD^{op})^{op}$.
\end{dfn}


\begin{lem}\label{lem:wtstrind}
The heart of $\Ind(\cD)$ consists of retracts of infinite sums of objects in the heart of $\cD$. 
Moreover, $\Ind(\cD)_{w\geq0}$ is generated under (filtered) colimits by $\cD_{w\geq0}$.
\end{lem}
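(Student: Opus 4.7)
The plan is to handle both the moreover statement and the heart statement using a common key ingredient: for any $x \in \Ind(\cD)_{w\geq 0}$, any $y \in \cD$, and any map $f : y \to x$, if $w_{\leq -1} y \to y \to w_{\geq 0} y$ is a weight decomposition of $y$ in $\cD$, then $f$ factors as $y \to w_{\geq 0} y \to x$ with $w_{\geq 0} y \in \cD_{w\geq 0}$. Indeed, $\Sigma w_{\leq -1} y \in \cD_{w\leq 0}$, so $\Omega\map(w_{\leq -1} y, x) \simeq \map(\Sigma w_{\leq -1} y, x)$ is connective, making $\map(w_{\leq -1} y, x)$ itself $1$-connective. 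Thus its $\pi_0$ vanishes, the composite $w_{\leq -1} y \to y \to x$ is null, and the factorization exists.

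For the moreover statement, first observe that $\cD_{w\geq 0} \subseteq \Ind(\cD)_{w\geq 0}$ by the orthogonality axiom for the weight structure on $\cD$, and that $\Ind(\cD)_{w\geq 0}$ is closed under all colimits in $\Ind(\cD)$ since $\cD_{w\leq 0}$ consists of compact generators and $\Sp_{\geq 0} \subseteq \Sp$ is closed under colimits. For the reverse inclusion, I would show that any $x \in \Ind(\cD)_{w\geq 0}$ is a filtered colimit indexed by the slice $(\cD_{w\geq 0})_{/x}$. Starting from the canonical presentation $x \simeq \colim_{\cD_{/x}} y$ as a filtered colimit of compact objects, the factorization principle implies the forgetful inclusion $(\cD_{w\geq 0})_{/x} \hookrightarrow \cD_{/x}$ is cofinal, using closure of $\cD_{w\geq 0}$ under finite colimits to handle higher coherences of comma slices. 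Alternatively, one can verify the equivalence $\colim_{(\cD_{w\geq 0})_{/x}} y \simeq x$ directly on $\map(z, -)$ for compact $z \in \cD$, where it reduces to a surjectivity statement on $\pi_0$ supplied once again by the factorization.

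For the heart statement, I would follow Example~\ref{exm:from_small_to_big_ws}. Given $x$ in the heart of $\Ind(\cD)$, form $A := \bigoplus_{[y_i \to x]} y_i$ over homotopy classes of maps from objects $y_i \in \cD^{w\heartsuit}$, with canonical map $\phi : A \to x$. The plan is to show the cofiber $x' := \cof(\phi)$ lies in $\Ind(\cD)_{w\geq 1}$. Once this is established, $x \in \Ind(\cD)_{w\leq 0}$ forces $\map(x, x') \in \Sp_{\geq 1}$, so the boundary $x \to x'$ is null, $\phi$ is split epi, and $x$ is a retract of $A$. To verify $x' \in \Ind(\cD)_{w\geq 1}$, I would examine the long exact sequence associated to $A \to x \to x'$ after applying $\map(u, -)$ for $u \in \cD_{w\leq 0}$. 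The groups $\pi_{-k}\map(u, A) = \bigoplus_i \pi_{-k}\map(u, y_i)$ vanish for $k \geq 1$ because each $\map(u, y_i)$ is connective, and $\pi_{-k}\map(u, x) = 0$ for $k \geq 1$ as $x \in \Ind(\cD)_{w\geq 0}$; the long exact sequence then forces $\pi_{-k}\map(u, x') = 0$ for $k \geq 1$. For $\pi_0\map(u, x') = 0$, the missing ingredient is surjectivity of $\pi_0\map(u, A) \to \pi_0\map(u, x)$: applying the factorization principle to a given $u \to x$ produces $u \to w_{\geq 0} u \to x$, and one checks $w_{\geq 0} u \in \cD^{w\heartsuit}$, since it sits in a fiber sequence $u \to w_{\geq 0} u \to \Sigma w_{\leq -1} u$ with both outer terms in $\cD_{w\leq 0}$, so $w_{\geq 0} u \in \cD_{w\leq 0} \cap \cD_{w\geq 0}$. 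Hence $w_{\geq 0} u$ appears as a summand of $A$, yielding the desired surjectivity.

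The hardest step will be rigorously establishing cofinality of $(\cD_{w\geq 0})_{/x} \hookrightarrow \cD_{/x}$ in the $\infty$-categorical sense: non-emptiness of comma categories follows immediately from the factorization principle, but weak contractibility requires iterating with direct sums and cofibers of parallel maps, both of which preserve $\cD_{w\geq 0}$. The cleanest path is to avoid explicit cofinality and verify the target equivalence of colimits pointwise on mapping spectra from compact objects of $\cD$, as outlined.
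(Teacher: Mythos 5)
Your proof is correct, and the two halves should be judged separately. The heart statement is handled essentially as in the paper (and as in Example~\ref{exm:from_small_to_big_ws}): form the sum $A$ over all homotopy classes of maps from heart objects, show the cofiber is weight $1$-connective, and split. Your verification that $w_{\geq0}u$ lands in $\cD^{w\heartsuit}$ for $u \in \cD_{w\leq0}$, which supplies the $\pi_0$-surjectivity, is exactly the right point and matches the paper's reduction to $z_{\geq0}$. The ``moreover'' statement is where you genuinely diverge. The paper writes $x$ as an extension involving $\colim_n w_{\leq n}x$ and then must separately analyze the residue $\bar{x} \in \bigcap_n \Ind(\cD)_{w\geq n}$, which can be nonzero because the induced weight structure on $\Ind(\cD)$ need not be hypercomplete (see the footnote to Definition~\ref{dfn:indprowtstr}); it disposes of this residue by showing it is generated by objects $\colim_n w_{\geq n}u$. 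You instead exhibit $x$ directly as the single filtered colimit $\colim_{(\cD_{w\geq0})_{/x}}y$, using that $\cD_{w\geq0}$ is closed under finite colimits so the index category is filtered. This sidesteps the hypercompleteness subtlety entirely and is arguably the cleaner argument; your factorization principle (maps $y \to x$ from compact $y$ factor through $w_{\geq0}y$) is exactly the engine that makes it run.

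One step is under-justified. Checking $\colim_{(\cD_{w\geq0})_{/x}}y \simeq x$ on $\map(z,-)$ does \emph{not} reduce only to surjectivity on $\pi_0$: you must also show injectivity of $\colim \pi_0\map(\Sigma^k z,y) \to \pi_0\map(\Sigma^k z,x)$, i.e.\ that a map $g:\Sigma^k z \to y$ whose composite to $x$ is null dies at a finite stage of the diagram. (Surjectivity on all homotopy groups of mapping spectra does not force the fiber to vanish --- consider a fold map.) The fix is routine but uses more than the factorization principle alone: extend $y \to x$ over $\cof(g)$ via the chosen nullhomotopy, then factor $\cof(g) \to x$ through $w_{\geq0}\cof(g)$; the resulting morphism $(y\to x) \to (w_{\geq0}\cof(g) \to x)$ in the slice kills $g$. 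The same cofiber-then-refactor move is what you need for the cofinality route: the cocone over two factorizations $y \to z \to x$ and $y \to z' \to x$ is not a direct sum but the pushout $z \sqcup_y z'$, which may leave $\cD_{w\geq0}$ since $y$ need not lie there, and must then be re-factored through $w_{\geq0}(z\sqcup_y z')$. With that supplement the argument is complete.
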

\begin{proof}
We start by verifying the first claim. It is easy to see that such objects are in the heart. Conversely, given an object $a$ 
in the heart, $\map(b,a)$ is connective for each $b$ in $\cD^{w\heartsuit}$. One can take $b'$ to be the direct sum over all 
homotopy classes of such maps, and consider $c$, the cofiber of the map $b' \to a$. We claim that $c$ is weight connected. To see this, 
given a map $z \to c$ with $z \in \cD_{\leq0}$, the map factors through $z_{\geq0} \in \cD^{w\heartsuit}$ since $c$ is weight 
connective, so we can assume $z$ is in the weight heart of $\cD$. The composite $z \to c \to \Sigma b'$ is null, so our map 
from $z$ factors through $a$. But then it factors through $b'$, so the original map must be null. It then follows that $a$ is 
a retract of $b'$.
	
Now we check the second claim. Objects that are colimits of those in $\cD_{w\geq0}$ are clearly in $\Ind(\cD)_{w\geq0}$. 
Conversely, consider the minimal subcategory $T$ of $\Ind(\cD)$ containing $\cD_{w\geq0}$ and closed under filtered 
colimits. Note that $T$ has all finite colimits and also contains the weight heart by the first claim. 
Now consider an arbitrary object $x$ in $\Ind(\cD)_{w\ge 0}$ and consider the fiber sequence 
\[
	\bar{x} := \colim\limits_n \Omega w_{\ge n+1} x \to \colim\limits_n w_{\le n} x \to x.
\]
The object on the middle belongs in $T$, since it is bounded in the weight structure, so is a filtered colimit of objects that are finite colimits of objects of the 
weight heart. So it suffices to show that the object on the left does. Consider the stable presentable subcategory 
$\bigcap\limits_n \Ind(\cD)_{w\ge n}$. Any object $y$ in there admits a non-zero map from some $u\in\cD$, 
and by orthogonality this factors through each step of the weight decomposition of $u$.
This allows us to construct a non-zero map $\colim w_{\ge n} u \to y$. In particular, $\bigcap\limits_n \Ind(\cD)_{w\ge n}$ is 
generated by objects of the form $\colim\limits_n w_{\ge n} u$ for $u\in \cD$. 
\end{proof}

\begin{lem}\label{lem:eqcondnconnemb}
	Suppose that $f:\cC \to \cD$ is a fully faithful embedding of small stable categories, where $\cD$ has a weight structure.
	Then the following conditions are equivalent:
	
	\begin{enumerate}
		\item For each $d \in \cD_{w\geq0}$, $f^*(d)$ is a filtered colimit of compact objects of $\Ind(\cC)$ such that for every compact object $x_{\alpha}$, all maps out of $x_{\alpha}$ in the diagram eventually kill weights $\le -n-1$.
		\item The endofunctor $f_!f^*$ of $\Ind(\cD)$ sends connective objects to $-n$-connective objects.
	\end{enumerate}
	
	The following two conditions are also equivalent to each other, and become equivalent to $(1)$ and $(2)$ under the assumption that the image of each object of $\cC$ lands inside bounded above objects of $\cD$.
	
	\begin{enumerate}[start=1,label={(\arabic*'):}]
		\item For each $d \in \cD^{w\heart}$, $f^*(d)$ is a filtered colimit of compact objects of $\Ind(\cC)$ such that for every compact object $x_{\alpha}$, all maps out of $x_{\alpha}$ in the diagram eventually kill weights $\le -n-1$.
		\item The endofunctor $f_!f^*$ of $\Ind(\cD)$ sends the weight heart to $-n$-connective objects.
	\end{enumerate}	
	
	Finally, if the image of $\cC$ lands inside bounded below objects, then $(1')$ and $(2')$ are equivalent to condition $(3)$:
	\begin{enumerate}[start=3,label={(\arabic*):}]
		\item If $L:\cD \to \cD/\cC$ is the quotient map, then for each $d \in \cD^{w\heart}$, the map $\End(d) \to \End(Ld)$ is $-n$-connective.
	\end{enumerate} 
\end{lem}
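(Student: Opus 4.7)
The plan is to establish $(1) \Leftrightarrow (2)$ (with the analogous argument giving $(1') \Leftrightarrow (2')$), then the equivalences between primed and unprimed conditions under the bounded-above hypothesis, and finally $(2') \Leftrightarrow (3)$ under bounded-below. For $(1) \Rightarrow (2)$: given $d \in \cD_{w\ge 0}$ with a presentation $f^*(d) = \colim_\alpha x_\alpha$ satisfying the kill-weights property, I check membership of $f_!f^*(d) = \colim_\alpha f(x_\alpha)$ in $\Ind(\cD)_{w\ge -n}$ via the equivalent characterization that $\map(t, f_!f^*(d))$ is connective for $t \in \cD_{w\le -n}$. By compactness of $t$, this spectrum is $\colim_\alpha \map(t, f(x_\alpha))$; a negative-degree class corresponds to a map $\Omega^j t \to f(x_\alpha)$ with $j\ge 1$ whose source lies in $\cD_{w\le -n-j}\subseteq \cD_{w\le -n-1}$, and Lemma~\ref{lem:weight0_morphism}(3) annihilates it at the stage $\alpha''$ provided by the kill-weights property. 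For $(2) \Rightarrow (1)$: present $f^*(d)$ canonically over $\cC_{/f^*(d)}$, and for each $\alpha$ weight-decompose $f(x_\alpha)$ in $\cD$ as $u_\alpha \to f(x_\alpha) \to v_\alpha$ with $u_\alpha \in \cD_{w\le -n-1}$; by orthogonality $\map(u_\alpha, f_!f^*(d))$ is $1$-connective, so the composite $u_\alpha \to f_!f^*(d)$ is null, and compactness of $u_\alpha$ in $\Ind(\cD)$ forces this nullity to occur at a finite stage $\alpha''$, which by Lemma~\ref{lem:weight0_morphism}(3) is equivalent to $f(x_\alpha) \to f(x_{\alpha''})$ killing weights $\le -n-1$. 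Restricting to $d \in \cD^{w\heartsuit}$ yields $(1') \Leftrightarrow (2')$.

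Under the bounded-above hypothesis, only $(2') \Rightarrow (2)$ requires argument. Since $f_!f^*$ preserves filtered colimits and $\Ind(\cD)_{w\ge 0}$ is generated under filtered colimits by $\cD_{w\ge 0}$ (Lemma~\ref{lem:wtstrind}), I reduce to $d \in \cD_{w\ge 0}$. The bounded-above hypothesis gives for each $x \in \cC$ with $f(x) \in \cD_{w\le k_x}$ that $\map(f(x), w_{\ge k+1}d)$ becomes arbitrarily connective as $k \to \infty$ via shifted orthogonality, yielding the equivalence $f^*(d) \simeq \colim_k f^*(w_{\le k}d)$ in $\Ind(\cC)$. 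Each $w_{\le k}d$ admits a finite weight filtration with graded pieces in shifted heart, so $f_!f^*(w_{\le k}d)$ is $(-n)$-connective by $(2')$ together with closure under extensions and shifts; stability of $(-n)$-connectivity under filtered colimits then delivers $(2)$.

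For $(2') \Leftrightarrow (3)$ under bounded-below, the recollement $\Ind(\cC) \xrightarrow{f_!} \Ind(\cD) \xrightarrow{L_!} \Ind(\cD/\cC)$ gives a fiber sequence $f_!f^*(d) \to d \to L^*L_!(d)$, identifying $\End_{\Ind(\cD/\cC)}(L_!d) \simeq \map_{\Ind(\cD)}(d, L^*L_!(d))$ so that the fiber of $\End(d) \to \End(Ld)$ is $\map(d, f_!f^*(d))$; condition $(3)$ thus becomes the $(-n)$-connectivity of this spectrum. The implication $(2') \Rightarrow (3)$ is immediate by orthogonality. For $(3) \Rightarrow (2')$, applying $(3)$ to $d \oplus h$ for $h \in \cD^{w\heartsuit}$ yields $\map(h, f_!f^*(d))$ is $(-n)$-connective, and for $y = \Omega^j h$ the identity $\map(y, f_!f^*(d)) = \Sigma^j \map(h, f_!f^*(d))$ delivers $(-n+j)$-connectivity, hence $(-n)$-connectivity for $j \ge 0$. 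The main obstacle will be extending this heart-and-loops verification to all of $\cD_{w\le 0}$: in an unbounded weight structure, $\cD_{w\le 0}$ need not be exhausted by the closure of $\cD^{w\heartsuit}$ under loops, extensions, and retracts, and the bounded-below hypothesis on $f(\cC)$ is what controls the colimit presentation of $f_!f^*(d)$ sufficiently that the orthogonality test against $\cD_{w\le 0}$ effectively reduces to heart test objects; making this extension precise is the most delicate ingredient.
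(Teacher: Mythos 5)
Your treatment of $(1)\Leftrightarrow(2)$, $(1')\Leftrightarrow(2')$, the reduction of $(2)$ to $(2')$ under the bounded-above hypothesis, and $(2')\Rightarrow(3)$ is correct and follows essentially the same route as the paper: testing weight connectivity of $f_!f^*(d)$ against compact coconnective objects and using compactness to push nullhomotopies to a finite stage of the diagram, writing $f^*(d)\simeq \colim_k f^*(w_{\le k}d)$ via the bounded-above hypothesis, and identifying the fiber of $\End(d)\to\End(Ld)$ with $\map(d,f_!f^*(d))$ through the recollement. (Minor point: in $(2)\Rightarrow(1)$ the relevant clause of \Cref{lem:weight0_morphism} is (2), not (3), but that is cosmetic.)

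The genuine gap is in $(3)\Rightarrow(2')$, and you have flagged it yourself without closing it. Knowing that $\map(h,d')$ is $-n$-connective for $h\in\cD^{w\heartsuit}$, where $d'=f_!f^*(d)$, does not directly give $d'\in\Ind(\cD)_{w\ge -n}$, because the test objects $t\in\cD_{w\le -n-1}$ need not lie in the closure of the heart under desuspensions, extensions and retracts when the weight structure on $\cD$ is unbounded; your "heart-and-loops verification" genuinely does not extend. The missing maneuver is to apply the bounded-below hypothesis on the \emph{source} side rather than to the test objects: $d'$ lies in the image of $f_!$, hence is a filtered colimit of objects $f(c)$ with $c\in\cC$, and it suffices to show that each structure map $f(c)\to d'$ kills weights $\le -n-1$. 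For this one uses \Cref{lem:weight0_morphism}: since $f(c)$ is weight bounded below, $w_{\le -n-1}f(c)$ is weight \emph{bounded}, hence a finite extension of objects $\Sigma^j h$ with $h\in\cD^{w\heartsuit}$ and $j\le -n-1$; the $-n$-connectivity of $\map(h,d')$ forces $\pi_0\map(\Sigma^j h,d')=\pi_j\map(h,d')=0$ for such $j$, so the composite $w_{\le -n-1}f(c)\to f(c)\to d'$ is null and $f(c)\to d'$ factors through a $-n$-connective object. Then any map $t\to d'$ with $t\in\cD_{w\le -n-1}$ factors through some $f(c)$ by compactness and is therefore null, giving $d'\in\Ind(\cD)_{w\ge -n}$. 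Without this step your proof of $(3)\Rightarrow(2')$ is incomplete.
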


\begin{proof}
	We first show that $(2)$ implies $(1)$, an identical argument shows $(2')$ implies $(1')$. Suppose $f_!f^*$ of $\Ind(\cD)$ 
	sends connective objects to $-n$-connective objects. Then for each $d \in \cD_{w\geq0}$, $f_!f^*(d)$ is $-n$-connective, 
	which by \Cref{lem:wtstrind} means that $f_!f^*(d)$ is a filtered colimit of compact $-n$-connective objects $y_{\beta} \in \cD$. $f^*(d)$ is also a 
	filtered colimit of compact objects $x_{\alpha} \in \cC$, so one learns that in the diagram for each $x_{\alpha}$ the maps eventually factor through some $y_{\beta}$ (viewed in $\cD$), and thus eventually kill weights $\le -n-1$.
	
	Next, we show that $(1)$ implies $(2)$, an identical argument showing that $(1')$ implies $(2')$. In order to show $(2)$, 
	by \Cref{lem:wtstrind}, it suffices to show that objects $d \in \cD_{w\geq0}$ are sent to $\Ind(\cD)_{w\geq-n}$ under 
	$f_!f^*$. In other words, we need to show that $f_!f^*(d)$ has no nonzero homotopy classes of maps from 
	$x \in \cD_{w\leq-n-1}$. However, any map $x \to f_!f^*(d)$ factors through a compact object, and so by assumption $(1)$ 
	it also factors through a $-n$-connective object, thus the map is null.
	
	To see that $(2)$ and $(2')$ are equivalent under the assumption that $\cC$ lands in bounded above objects, we note that 
	$(2')$ is equivalent to the claim that $f_!f^*$ sends connective bounded objects to $-n$-connective objects. But if the 
	image of $\cC$ is bounded above, then $f^*$ agrees on $\colim_i w_{\leq i}d$ and $d$, showing that $(2)$ and $(2')$ are 
	equivalent.
	
	We next assume $(2')$ and show $(3)$. Then we can identify the fiber of $\End(d) \to \End(Ld)$ with the fiber of 
	$\map(d,d) \to \map(d,L^*L_!d)$, which is $\map(d,f_!f^*d)$, which is $-n$-connective because of $(2')$. Conversely, 
	assuming $\map(d,f_!f^*d)$ to be $-n$-connective for each $d \in \cD^{\heartsuit}$, and that the image of $\cC$ to be 
	bounded below, we must show that $d'=f_!f^*d$ is in $\Ind(\cD)_{w \geq-n}$. The assumption implies that $\map(a,d')$ is $-n$-connective 
	whenever $a$ is in the weight heart, by replacing $d$ with $a\oplus d$. Now it suffices to show that for each compact 
	object $c$ in $\cC$ mapping to $d'$, the map factors through a $-n$-connective object. Indeed, this is because then $d'$ is a filtered colimit along maps that eventually kill weights $\leq -n-1$, so is $-n$-connective. But $c$ is bounded below, so 
	$w_{\leq -n-1}c$ is bounded, so we find that the composite $w_{\leq -n-1}c \to c \to d'$ is null since $w_{\leq -n-1}c$ is 
	a finite extensions of objects in shifts of the heart by $-n-1$ or less.
\end{proof}


\begin{rmk}
	In condition $(3)$, if $n\geq1$, the condition that $\End(d) \to \End(Ld)$ is $-n$-connective is equivalent to saying that $\End(Ld)$ is $1-n$-connective.
\tqed \end{rmk}

\begin{dfn}\label{dfn:-nconnemb}
	Let $\cD$ be a stable category with a weight structure. For a small category $\cC$, we say that an $-n$-connective embedding $f:\cC \to \cD$ is a fully faithful embedding such that:
	
	\begin{enumerate}
		\item The image of $\cC$ lands inside bounded objects of $\cD$.
		\item Any of the equivalent conditions of \Cref{lem:eqcondnconnemb} are satisfied.
	\end{enumerate}
\end{dfn}

\begin{lem}\label{lem:opp}
	The embedding $f:\cC \to \cD$ is an $-n$-connective embedding iff $f^{op}:\cC^{op} \to \cD^{op}$ is an $-n$-connective embedding.
\end{lem}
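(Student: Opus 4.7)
The plan is to reduce the statement to the self-duality of condition $(3)$ of Lemma~\ref{lem:eqcondnconnemb}, which characterizes $-n$-connectivity purely in terms of an endomorphism spectrum and the Verdier quotient $\cD/\cC$, both of which behave transparently under opposites.

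First I would verify that the boundedness hypothesis in Definition~\ref{dfn:-nconnemb}(1) is self-dual. The weight structure on $\cD^{\mathrm{op}}$ is obtained from that of $\cD$ by swapping the connective and coconnective parts (Remark~\ref{rmk:orthogonality}), so the class $\cD_{w\geq -n} \cap \cD_{w\leq n}$ is unchanged, and hence the image of $\cC$ lies in bounded objects of $\cD$ if and only if the image of $\cC^{\mathrm{op}}$ lies in bounded objects of $\cD^{\mathrm{op}}$. In particular, the image lies in bounded above \emph{and} bounded below objects in both cases, so by Lemma~\ref{lem:eqcondnconnemb} all five equivalent conditions $(1), (2), (1'), (2'), (3)$ apply on either side, and it suffices to transfer condition $(3)$ across the duality.

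Next I would check the three ingredients needed for condition $(3)$ to be symmetric: (a) the weight heart is self-dual, $\cD^{w\heart} = (\cD^{\mathrm{op}})^{w\heart}$, since it is defined as the intersection $\cD_{w\geq 0} \cap \cD_{w\leq 0}$; (b) the Verdier quotient commutes with taking opposites, $(\cD/\cC)^{\mathrm{op}} \simeq \cD^{\mathrm{op}}/\cC^{\mathrm{op}}$, and the localization functor $L^{\mathrm{op}}$ is the opposite of $L$ under this identification; and (c) for any exact functor $G:\cE \to \cE'$ of stable categories and any $d \in \cE$, the induced map $\End_{\cE}(d) \to \End_{\cE'}(Gd)$ has the same underlying map of spectra as the corresponding map $\End_{\cE^{\mathrm{op}}}(d) \to \End_{(\cE')^{\mathrm{op}}}(G^{\mathrm{op}}d)$ (the opposite of an $\EE_1$-algebra has the same underlying spectrum). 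Combining these, the map in condition $(3)$ for $f^{\mathrm{op}}$ is the same as the map for $f$ on the level of underlying spectra, and in particular one is $-n$-connective if and only if the other is.

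I do not anticipate any real obstacle: the entire argument is a formal verification that condition $(3)$ of Lemma~\ref{lem:eqcondnconnemb} is manifestly invariant under the self-duality of weight structures. The only mildly delicate point is ensuring that both orientations of the equivalences in Lemma~\ref{lem:eqcondnconnemb} are available, which is why one first records that bounded image is self-dual and therefore both the bounded-above and bounded-below hypotheses hold simultaneously.
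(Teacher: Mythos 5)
Your proposal is correct and follows essentially the same route as the paper's proof: reduce to condition (3) of Lemma~\ref{lem:eqcondnconnemb} after noting that the boundedness hypothesis is self-dual, and then observe that the endomorphism-spectrum condition is manifestly invariant under passing to opposites since the Verdier quotient commutes with $(-)^{\mathrm{op}}$. Your write-up is somewhat more careful than the paper's (in particular in spelling out why all the equivalences of Lemma~\ref{lem:eqcondnconnemb} are available on both sides), but the argument is the same.
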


\begin{proof}
	Condition $(1)$ of \Cref{dfn:-nconnemb} is closed under passing to the opposite category, so it suffices to show that the 
	condition $(3)$ of \Cref{lem:eqcondnconnemb} is closed under passing to the opposite functor. But the sequence 
	$\cC^{op} \to \cD^{op} \to (\cD/\cC)^{op}$ is a localization sequence, and $\End(d) \to \End(Ld)$ is clearly 
	$-n$-connective iff $\End(d)^{op} \to \End(Ld)^{op}$ is.
\end{proof}

\begin{lem}\label{lem:subcat}
	Let $\cC \to \cD$ be an $-n$-connective embedding, and let $\cD' \subset \cD$ be a stable weighted subcategory\footnote{this means that the inclusion into $\cD$ is weight exact.} of $\cD$ containing the image of $\cC$. Then $\cC \to \cD'$ is an $-n$-connective embedding.
\end{lem}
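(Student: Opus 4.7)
The plan is to verify the two conditions in Definition~\ref{dfn:-nconnemb} for the embedding $f':\cC \to \cD'$, namely that the image lands inside bounded objects of $\cD'$ and that one of the equivalent conditions of Lemma~\ref{lem:eqcondnconnemb} holds.

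First I would address boundedness. For $c \in \cC$, the object $f(c) = f'(c)$ is bounded in $\cD$ by hypothesis. Since $\cD'$ is a stable weighted subcategory of $\cD$, its weight structure is the one restricted from $\cD$ (equivalently, the inclusion $\cD' \hookrightarrow \cD$ is weight exact), so the bounds for $f(c)$ in $\cD$ transfer to bounds in $\cD'$.

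Next I would verify condition $(1')$ of Lemma~\ref{lem:eqcondnconnemb} for the embedding $f'$. The key observation is that for any $d \in \cD'$, we have $f'^*(d) \simeq f^*(d)$ as objects of $\Ind(\cC)$: both are represented by the presheaf $c \mapsto \map(f(c), d)$, and this mapping spectrum is the same in $\cD'$ and in $\cD$ since the inclusion $\cD' \hookrightarrow \cD$ is fully faithful. Moreover, weight exactness of the inclusion yields $(\cD')^{w\heart} \subset \cD^{w\heart}$. Thus, for $d \in (\cD')^{w\heart}$, condition $(1')$ applied to the original embedding $f$ provides a presentation of $f^*(d) = f'^*(d)$ as a filtered colimit of compact objects in $\Ind(\cC)$ for which all maps out of any given compact object eventually kill weights $\le -n-1$. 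The notion of ``killing weights'' depends only on the weight structure on $\Ind(\cC)$, which is intrinsic and unaffected by the choice of $\cD'$ versus $\cD$. Hence the very same presentation witnesses condition $(1')$ for $f'$.

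There is no substantial obstacle here: the proof reduces to the identification $f'^*(d) \simeq f^*(d)$ and the observation that every ingredient in condition $(1')$---the object $f^*(d)$, compact objects of $\Ind(\cC)$, and the weight structure on $\Ind(\cC)$---is intrinsic to $\cC$ and does not see the ambient weighted category. The only point to check carefully is that $(\cD')^{w\heart} \subset \cD^{w\heart}$, which is immediate from weight exactness of the inclusion.
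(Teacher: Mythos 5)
Your overall strategy---reduce to condition (1') of Lemma~\ref{lem:eqcondnconnemb}, observe that $f'^*(d)\simeq f^*(d)$ and that $(\cD')^{w\heartsuit}\subset \cD^{w\heartsuit}$, and reuse the same filtered-colimit presentation---is the same as the paper's, and your boundedness and identification steps are fine. But one justification is genuinely wrong: you assert that ``killing weights'' is measured against an intrinsic weight structure on $\Ind(\cC)$ and therefore cannot distinguish $\cD$ from $\cD'$. In this appendix $\cC$ carries no weight structure of its own: in Lemma~\ref{lem:eqcondnconnemb} the phrase ``$x_{\alpha}\to x_{\alpha''}$ kills weights $\le -n-1$'' refers to the map viewed in the ambient weighted category (see the proof of (2)$\Rightarrow$(1) there, where the witness is a factorization through a $-n$-connective compact object $y_{\beta}$ of $\cD$). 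A priori the condition for $f$ (tested against $\cD_{w\le -n-1}$, equivalently witnessed by a factorization through an object of $\cD_{w\ge -n}$) differs from the one you need for $f'$ (tested against $\cD'_{w\le -n-1}$), and the factoring object $y_{\beta}$ need not lie in $\cD'$. As written, the key step is unsupported.

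The conclusion is nevertheless correct and the repair is short. By criterion (3) of Lemma~\ref{lem:weight0_morphism}, a map $x\to y$ between objects of $\cD'$ kills weights $\le k$ in $\cD'$ exactly when all composites $t\to x\to y$ with $t\in \cD'_{w\le k}$ vanish; since weight exactness together with the orthogonality of Remark~\ref{rmk:orthogonality} gives $\cD'_{w\le k}=\cD'\cap\cD_{w\le k}\subset\cD_{w\le k}$, killing weights in $\cD$ implies killing weights in $\cD'$ because the latter is tested against fewer objects. (The paper packages the same transfer differently: the right adjoint $\iota^*:\Ind(\cD)\to\Ind(\cD')$ of the inclusion preserves weight connectivity, so the factoring object $y_{\beta}\in\cD_{w\ge -n}$ may be replaced by $\iota^*y_{\beta}\in\Ind(\cD')_{w\ge -n}$.) With this substitution for your ``intrinsic'' claim, your argument goes through.
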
{}
\begin{proof}
	Follows from Lemma~\ref{lem:eqcondnconnemb}(1) and the fact that the right adjoint to the inclusion $\Ind(\cD') \subset \Ind(\cD)$ preserves weight connectivity. 
\end{proof}

\begin{lem}\label{lem:ind}
	If the embedding $f:\cC \to \cD$ is an $-n$-connective embedding, then $\cC \to \cD \to \Ind(\cD)^{\kappa}$ and $\cC \to \cD \to \Pro(\cD)^{\kappa}$ are too.
\end{lem}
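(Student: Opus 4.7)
The plan is to verify condition $(2')$ of Lemma~\ref{lem:eqcondnconnemb} for the composite $g = i\circ f:\cC \to \Ind(\cD)^{\kappa}$, where $i:\cD \hookrightarrow \Ind(\cD)^{\kappa}$ is the canonical inclusion. Fully faithfulness of $g$ is immediate from that of its factors, and the image of $\cC$ lands in bounded objects of $\Ind(\cD)^{\kappa}$ because $i$ is weight exact and the image of $\cC$ already lies in $\cD^{\mathrm{b}}$.

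First I will show that $i_!:\Ind(\cD) \to \Ind(\Ind(\cD)^{\kappa})$ is weight exact. Preservation of the coconnective part holds because $i_!$ is colimit preserving and $\Ind(\cD)_{w\le 0}$ is compactly generated by $\cD_{w\le 0}$, which lands in $\Ind(\cD)^{\kappa}_{w\le 0} \subset \Ind(\Ind(\cD)^{\kappa})_{w\le 0}$. For the connective part, Lemma~\ref{lem:wtstrind} says that $\Ind(\cD)_{w\ge 0}$ is generated under filtered colimits by $\cD_{w\ge 0}$; and since $\Ind(\Ind(\cD)^{\kappa})_{w\ge 0}$ is the connective part of a compactly generated weight structure it is closed under filtered colimits, so it suffices to note that $i_!(\cD_{w\ge 0}) = \cD_{w\ge 0} \subset \Ind(\Ind(\cD)^{\kappa})_{w\ge 0}$.

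Next I will check $(2')$. By applying Lemma~\ref{lem:wtstrind} twice, every object of $\Ind(\Ind(\cD)^{\kappa})^{w\heartsuit}$ is a retract of an infinite coproduct of objects of $\cD^{w\heartsuit}$. Since the $-n$-connective part of $\Ind(\Ind(\cD)^{\kappa})$ is closed under coproducts and retracts, it suffices to show that $g_!g^*(d)$ is $-n$-connective when $d = \bigoplus_i d_i$ with $d_i \in \cD^{w\heartsuit}$ (viewed in $\Ind(\Ind(\cD)^{\kappa})$ via the iterated Yoneda embedding). The key computation is that since $g(c) = i(f(c))$ lies in $\Ind(\cD)^{\kappa}$, it is compact in $\Ind(\Ind(\cD)^{\kappa})$, so maps out of $g(c)$ commute with the coproduct, giving $g^*(d) \simeq \bigoplus_i f^*(d_i)$ in $\Ind(\cC)$. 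Factoring $g_! \simeq i_! \circ f_!$, the summands of $g_!g^*(d)$ become $i_!(f_!f^*(d_i))$, which are $-n$-connective by hypothesis $(2')$ for $f$ combined with the weight exactness of $i_!$ established above.

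For the $\Pro$ case, I apply Lemma~\ref{lem:opp} to reduce to the statement that $f^{\mathrm{op}}:\cC^{\mathrm{op}} \to \cD^{\mathrm{op}}$ is a $-n$-connective embedding, apply the Ind case above to produce a $-n$-connective embedding $\cC^{\mathrm{op}} \to \Ind(\cD^{\mathrm{op}})^{\kappa}$, and finally apply Lemma~\ref{lem:opp} once more using the identification $\Pro(\cD)^{\kappa} \simeq (\Ind(\cD^{\mathrm{op}})^{\kappa})^{\mathrm{op}}$ of weighted categories built into Definition~\ref{dfn:indprowtstr}. The main technical point is the weight exactness of $i_!$; all remaining work is routine bookkeeping enabled by Lemma~\ref{lem:wtstrind}.
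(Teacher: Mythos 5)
Your proof is correct and follows essentially the same route as the paper's: reduce the $\Pro$ case to the $\Ind$ case via Lemma~\ref{lem:opp}, then use Lemma~\ref{lem:wtstrind} to write objects of the relevant weight heart as retracts of infinite sums of objects of $\cD^{w\heartsuit}$ and verify one of the equivalent conditions of Lemma~\ref{lem:eqcondnconnemb} (you check $(2')$ where the paper invokes $(3)$, but these are interchangeable and the underlying computation is identical). Your explicit verification that $i_!$ is weight exact is a detail the paper leaves implicit, and it is a worthwhile addition rather than a deviation.
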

\begin{proof}
	By \Cref{lem:opp}, it suffices to show the result for $\Ind(\cD)^{\kappa}$. By \Cref{lem:wtstrind}, objects of the heart are (retracts of) infinite sums of objects of the heart of $\cD$, from which one sees that condition $(3)$ is satisfied.
\end{proof}

\begin{lem}\label{lem:changewtstr}
	Suppose $\cC \xrightarrow{f} \cD \xrightarrow{g} \cD'$ are fully faithful functors, and suppose $\cD$, $\cD'$ have weight structures such that the image of $\cC$ is bounded. Suppose further that $g$ sends connective objects to $a$-connective objects and $g_*$ sends connective objects to $b$-connective objects. Then:
	
	\begin{itemize}
		\item If $f$ is an $-n$-connective embedding, then $g\circ f$ is an $-n+a+b$-connective embedding.
		\item if $g\circ f$ is an $-n$-connective embedding, then $f$ is an $-n+a+b$-connective embedding.
	\end{itemize}
\end{lem}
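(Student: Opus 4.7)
The plan is to reduce both directions to condition~(2) of \Cref{lem:eqcondnconnemb}, which recasts the property of being a $-n$-connective embedding as the statement that the endofunctor $f_!f^*$ of $\Ind(\cD)$ carries $\Ind(\cD)_{w \ge 0}$ into $\Ind(\cD)_{w \ge -n}$. The hypotheses on $g$ and $g_*$ are read as providing weight-amplitude bounds for the adjoint triple $g_! \dashv g^* \dashv g_*$ associated to the fully faithful $g$: the first gives $g_!(\Ind(\cD)_{w \ge 0}) \subseteq \Ind(\cD')_{w \ge a}$, and the second, by the standard weight-structure dualization through orthogonality, gives $g^*(\Ind(\cD')_{w \ge 0}) \subseteq \Ind(\cD)_{w \ge b}$. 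Once this translation is in place, both directions reduce to tracking weight shifts through the factorization $(gf)_!(gf)^* \simeq g_! f_! f^* g^*$.

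For the forward implication, given $x \in \Ind(\cD')_{w \ge 0}$ I would chase through the four functors in turn: $g^*(x)$ sits in $\Ind(\cD)_{w \ge b}$ by the hypothesis on $g^*$; applying the $-n$-connective hypothesis on $f$ (which is stable, hence shifts by $b$) places $f_!f^*g^*(x)$ in $\Ind(\cD)_{w \ge b-n}$; finally, the hypothesis on $g_!$ puts $g_!f_!f^*g^*(x)$ into $\Ind(\cD')_{w \ge a+b-n}$. This verifies condition~(2) for $g \circ f$ with the expected parameter.

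For the reverse implication, the crucial input is that full faithfulness of $g$ gives $g^*g_! \simeq \id_{\Ind(\cD)}$. Starting with $d \in \Ind(\cD)_{w \ge 0}$, the object $g_!(d)$ lies in $\Ind(\cD')_{w \ge a}$, so the hypothesis on $g \circ f$ (shifted by $a$) places $(gf)_!(gf)^*(g_!d)$ in $\Ind(\cD')_{w \ge a - n}$. Unfolding the definition and using $g^*g_! \simeq \id$ collapses this to $g_!f_!f^*(d)$, which therefore lies in $\Ind(\cD')_{w \ge a - n}$. Applying $g^*$ once more, using its shift of $+b$ and a second application of $g^*g_! \simeq \id$, gives $f_!f^*(d) \in \Ind(\cD)_{w \ge a - n + b}$, as required.

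Finally, the image-boundedness condition of \Cref{dfn:-nconnemb} transfers in both directions via the same machinery: the stated shifts on $g_!$ and on $g^*$ control both weight-connective and weight-coconnective behavior through orthogonality, so $g$ and $g^*$ each preserve two-sided boundedness up to a finite shift, and the identity $g^*g_! \simeq \id$ then lets one reflect boundedness back and forth between $\cD$ and $\cD'$. The only real obstacle I foresee is pinning down the precise interpretation of the hypothesis on $g_*$ and the accompanying orthogonality translation to $g^*$; once that is settled, everything reduces to bookkeeping of inclusions among weight-connectivity classes.
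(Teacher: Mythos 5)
Your proof is correct and follows essentially the same route as the paper's: both directions reduce to condition~(2) of \Cref{lem:eqcondnconnemb} and track weight shifts through the factorization $(gf)_!(gf)^* \simeq g_!f_!f^*g^*$, using $g^*g_! \simeq \id$ (full faithfulness of $g$) for the converse, and your reading of the hypothesis on $g_*$ as a connectivity bound on $g^*$ is exactly how the paper uses it. Your closing paragraph on transferring boundedness of the image of $\cC$ is unnecessary, since boundedness in both $\cD$ and $\cD'$ is already assumed in the statement.
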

\begin{proof}
	Suppose $f$ is a $-n$-connective embedding. Given $d \in \cD'_{\geq0}$, we can first apply $g^*$ to get an object in $\Ind(\cD)_{\geq b}$, and then apply $f_!f^*$ to get a $-n+a$-connective object. We can then apply $g_!$ to get a $-n+a+b$-connective object, showing that $g\circ f$ is a $-n+a+b$-connective embedding.
	
	Suppose that $g\circ f$ is an $-n$-connective embedding. Given $d \in \cD_{\geq0}$, $g(d)$ is $a$-connective, so $(gf)_!(gf)^*g(d)$ is $-n+a$-connective, and so $f_!(gf)^*g(d)$ is $-n+a+b$-connective, showing that $f$ is an $-n+a+b$-connective embedding.
\end{proof}

\begin{thm}\label{thm:weakimpliesstrong}
	Suppose that $f:\cC \to \cD$ is an $-n$-connective embedding. Then there is a natural weight structure on $\Pro(\Ind(\cC)^{\kappa})^{\kappa'}$ for all sufficiently large $\kappa$, $\kappa'$ depending on $\kappa$, such that the canonical embedding $\cC \to \Pro(\Ind(\cC)^{\kappa})^{\kappa'}$ is a $-2n$-connective embedding.
\end{thm}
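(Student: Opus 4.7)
The plan is to use the given $-n$-connective embedding $f : \cC \to \cD$ to transfer the weight structure from $\cD$ (after enlarging) onto $\Pro(\Ind(\cC)^{\kappa})^{\kappa'}$, and then to derive the $-2n$ bound by applying \Cref{lem:changewtstr} to the natural factorization of the canonical embedding. The factor of $2$ comes from the fact that both $g$ and its right adjoint contribute, each losing an amount of order $n$ controlled by the hypothesis.

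First I would enlarge the ambient weighted category without losing connectivity. Applying \Cref{lem:ind} once and then, via \Cref{lem:opp}, a second time to the opposite direction, one obtains, for $\kappa$ large enough (so that $\cC$ and a set of weight generators of $\cD$ are $\kappa$-compact) and then $\kappa'$ large enough depending on $\kappa$, a $-n$-connective embedding
\[
\cC \hookrightarrow \cD \hookrightarrow \Ind(\cD)^{\kappa} \hookrightarrow \Pro(\Ind(\cD)^{\kappa})^{\kappa'} =: \cE.
\]
The fully faithful functor $f_! : \Ind(\cC)^{\kappa} \hookrightarrow \Ind(\cD)^{\kappa}$ extends by the $\Pro(-)^{\kappa'}$ construction to a fully faithful functor $g : \Pro(\Ind(\cC)^{\kappa})^{\kappa'} \hookrightarrow \cE$, through which the composite above factors canonically as $\cC \xrightarrow{\iota} \Pro(\Ind(\cC)^{\kappa})^{\kappa'} \xrightarrow{g} \cE$. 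The functor $g$ admits a right adjoint $g^{R}$ (coming from $f^{*}$), and the adjunction extends to presentable ind-completions with right adjoint $g^{*}$ in the sense relevant to \Cref{lem:changewtstr}.

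I would then define the weight structure on $\Pro(\Ind(\cC)^{\kappa})^{\kappa'}$ by pullback along $g$: declare $X$ to be weight coconnective (respectively weight connective) if and only if $g(X)$ is weight coconnective (respectively weight connective) in $\cE$. Orthogonality is automatic from the full faithfulness of $g$. For existence of decompositions, given $X$, take a weight decomposition $g(X)_{\le 0} \to g(X) \to g(X)_{\ge 1}$ in $\cE$, apply $g^{R}$ and use $g^{R} g \simeq \id$ to obtain $g^{R} g(X)_{\le 0} \to X \to g^{R} g(X)_{\ge 1}$; one verifies, using condition $(2)$ of \Cref{lem:eqcondnconnemb} applied to $f$, that the images of the outer terms under $g$ remain weight coconnective and weight connective in $\cE$ respectively, so that this is indeed a weight decomposition in the transferred structure.

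Finally I would apply \Cref{lem:changewtstr} to the factorization $\cC \xrightarrow{\iota} \Pro(\Ind(\cC)^{\kappa})^{\kappa'} \xrightarrow{g} \cE$. By construction $g$ sends connective objects to connective objects, giving $a = 0$; the $-n$-connectivity of $f$, translated through the Pro-Ind adjunctions, forces $g^{*}$ to send connective objects to $-n$-connective objects, giving $b = -n$. Since the composite $g \circ \iota$ is $-n$-connective by the first step, the lemma concludes that $\iota$ is a $(-n + a + b) = -2n$-connective embedding, as required. The main obstacle will be the construction of the weight structure together with the bound $b = -n$: both rest on a careful analysis of how the counit $g g^{R} \to \id$ interacts with weight truncations in $\cE$, and the $-n$-connective property of $f$ is precisely what controls the weight-connectivity defect of this counit, with the tracking through the Pro-of-Ind layers requiring careful bookkeeping.
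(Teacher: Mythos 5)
Your overall skeleton matches the paper's: enlarge to $\Pro(\Ind(\cD)^{\kappa})^{\kappa'}$ via \Cref{lem:ind}, equip $\Pro(\Ind(\cC)^{\kappa})^{\kappa'}$ with a weight structure, and run \Cref{lem:changewtstr} on the factorization $\cC \to \Pro(\Ind(\cC)^{\kappa})^{\kappa'} \to \Pro(\Ind(\cD)^{\kappa})^{\kappa'}$ with $a+b=-n$. But there is a genuine gap in your construction of the weight structure. You define it by pullback along the fully faithful $g$, and for existence of decompositions you apply $g^{R}$ to a weight decomposition of $g(X)$ and assert that the outer terms stay (co)connective after applying $g$ again. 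The connective outer term becomes $gg^{R}(g(X)_{\ge 1})$, and $gg^{R}\simeq \Pro(f_!f^*)^{\kappa'}$; condition $(2)$ of \Cref{lem:eqcondnconnemb} --- which is exactly your hypothesis --- says this comonad sends connective objects only to $-n$-connective objects, not to connective ones. So $g^{R}(g(X)_{\ge 1})$ is merely $(1-n)$-connective in your transferred structure, and for $n\geq 1$ your candidate fiber sequence is not a weight decomposition. Pulling back a weight structure along a fully faithful functor does not in general yield a weight structure, and the defect here is precisely of size $n$, so the hypothesis cannot rescue the verification; it is the obstruction.

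The paper's proof sidesteps this by \emph{cogenerating} a new weight structure on $\Pro(\Ind(\cC)^{\kappa})^{\kappa'}$, taking $f^*(\cD_{w\ge 0})$ as cocompact generators of the nonnegative part (the $\Pro$-dual of \Cref{thm:generated_ws} via \Cref{dfn:indprowtstr}), so that decompositions exist by the general machinery of generated weight structures rather than by transport along $g$. The price is that the roles of $a$ and $b$ are swapped relative to your accounting: with that structure $g_*=\Pro(f^*)^{\kappa'}$ preserves connectivity on the nose ($b=0$) because it carries cogenerators to cogenerators, while $g=\Pro(f_!)^{\kappa'}$ only sends connective objects to $-n$-connective ones ($a=-n$), since its image on the connective part is generated by $f_!f^*(\cD_{w\ge 0})$. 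The arithmetic $-n+a+b=-2n$ comes out the same, which is why your final bound is correct, but the weight structure you would need for your version of the bookkeeping ($a=0$, $b=-n$) is the one whose existence you have not established. To repair your argument, replace the pullback definition by the cogenerated one and redo the $a,b$ computation accordingly.
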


\begin{proof}
	For large enough $\kappa$, $f^*$ restricts to $\kappa$-compact objects. For large enough $\kappa'$, we can generate a 
	weight structure on $\Pro(\Ind(\cC)^{\kappa})^{\kappa'}$ by using $f^*(\cD_{w\geq0})$ as cocompact generators for the 
	nonnegative objects. Clearly $\Pro(f^*)^{\kappa'}:\Pro(\Ind(\cD)^{\kappa})^{\kappa'} \to \Pro(\Ind(\cC)^{\kappa})^{\kappa'}$ 
	sends connective objects to connective objects in this weight structure. We claim that the left 
	adjoint\footnote{Since $\Pro(-)$ is a $2$-functor, it preserves adjunctions.} $\Pro(f_!(-))^{\kappa'}$ sends connective 
	objects to $-n$-connective objects. To see this, its image is generated by $f_!f^*(\cD_{w\ge 0})$ 
	under operations that $-n$-connective objects are closed under.
	
	Now we have the diagram $\cC \to \Pro(\Ind(\cC)^{\kappa})^{\kappa'} \to \Pro(\Ind(\cD)^{\kappa})^{\kappa'}$. The embedding 
	into $\Pro(\Ind(\cD)^{\kappa})^{\kappa'}$ is an $-n$-connective embedding by applying \Cref{lem:ind}, and the image of 
	$\cC$ is bounded. It follows by applying \Cref{lem:changewtstr} that the embedding into 
	$\Pro(\Ind(\cC)^{\kappa})^{\kappa'}$ is $-2n$-connective.
\end{proof}

\nocite{}
\printbibliography

\end{document}